\documentclass[12pt,amsymb,fullpage]{amsart}
\usepackage{amssymb,amscd,pstricks}

\newtheorem{theorem}{Theorem}[section]
\newtheorem{defn}[theorem]{Definition}

\newtheorem{lemma}[theorem]{Lemma}

\newtheorem{eple}[theorem]{Example}
\newtheorem{rmk}[theorem]{Remarks}
\newtheorem{dsc}[theorem]{Discussion}
\newtheorem{nota}[theorem]{Notation}

\newsavebox{\indbin}
\savebox{\indbin}{\begin{picture}(0,0)
\newlength{\gnu}
\settowidth{\gnu}{$\smile$} \setlength{\unitlength}{.5\gnu}
\put(-1,-.65){$\smile$} \put(-.25,.1){$|$}
\end{picture}}

\newcommand{\be}{\begin{enumerate}}
\newcommand{\bd}{\begin{defn}}
\newcommand{\bt}{\begin{theorem}}
\newcommand{\bl}{\begin{lemma}}
\newcommand{\ee}{\end{enumerate}}
\newcommand{\ed}{\end{defn}}
\newcommand{\et}{\end{theorem}}
\newcommand{\el}{\end{lemma}}

\begin{document}
\title{Equilibria in Electrochemistry and Maximal Rates of Reaction}
\author{Tristram de Piro}
\address{Flat 3, Redesdale House, 85 The Park, Cheltenham, GL50 2RP}
\begin{abstract}
We consider Gibbs' definition of chemical equilibrium and connect it with dynamic equilibrium, in terms of no substance formed. We determine the activity coefficient as a function of temperature and pressure, in reactions with or without interaction of a solvent, incorporating the error terms from Raoult's Law and Henry's Law, if necessary. We compute the maximal reaction paths and apply the results to electrochemistry, using the Nernst equation.
\end{abstract}
\maketitle
\begin{section}{Introduction}
\label{introduction}
\indent This paper is divided into $12$ sections. In section \ref{idealised}, we give some basic definitions, and derive the Nernst equation for the standard cell. We prove some results about the activity coefficient $Q$, assuming an idealised law in the behaviour of the activities and chemical potentials, $\mu_{i}=\mu_{i}^{\circ}+RT ln(a_{i})$, for $1\leq i\leq c$, with $c$ substances, which holds throughout the section. In Lemma \ref{vanhoffhelmholtz}, we use the van't Hoff, Gibbs-Helmholtz equations to find an expression for $\Delta G^{\circ}(T)$ along quasi-chemical equilibrium paths. In Lemma \ref{eqlines}, we use an entropy calculation to find $({\partial G\over \partial \xi})_{T,P}$ and combine the result with Lemma \ref{vanhoffhelmholtz} to calculate the activity coefficient $Q$. In Lemma \ref{implies}, we prove every straight line chemical equilibrium path is a dynamic equilibrium path, partially confirming a speculation of Gibbs. The method of constant $Q$ along a path implying dynamic equilibrium is repeatedly used and generalised later in the paper. The question of the existence of feasible paths for a reaction, given a curve in the temperature/pressure plane, is answered in Lemma \ref{exists} and again later generalised.\\
\indent In Section \ref{errorterms}, we consider ideal solutions and introduce a fixed error term from Raoult's law. The results from section $0$ generalise and in Lemma \ref{rates}, we find the paths of maximal reaction, in the sense of maximising extent $\xi$, implicitly, in terms of temperature and pressure $(T,P)$. We apply the results to electrochemistry in Section \ref{idealelectrochemistry}. In Section \ref{errorterms2}, we consider dilute solutions, adding substance $0$, and consider the definition of $Q$, involving the activity $a_{0}$, obtaining the formula for the activity coefficient in Lemma \ref{proofs}. In Section \ref{henryssolutesidealsolvent}, we consider dilute solutions with interaction of the solvent, in which the solvent is ideal and the solutes obey Henry's law, introducing a new fixed error term in Definition \ref{catalyzers2}, and obtaining the maximal reaction paths in Lemma \ref{henrys9}. In Section \ref{fug}, we introduce new fixed error terms from fugacity, in Definition \ref{catalyzers3}. We alter the conventional definition of $Q$ to incorporate fugacity in the error term and obtain the paths of maximal reaction in Lemma \ref{fugacity9}. We apply the results to electrochemistry in Section \ref{electrochemistry}, in particular the reaction in catalyzers, and give a strategy for improving the efficiency of hydrogen and oxygen production from water in Remarks \ref{strategy}. In Section \ref{nointeraction}, we consider the case when there is no interaction of the solvent, and in Sections \ref{nointeraction} and \ref{fug1}, we derive the main results quickly by altering $Q$ to ignore the activity $a_{0}$. However, it an interesting but difficult exercise to try and derive the results using the definition in Section \ref{errorterms2}. We suggest the results here could be used in maximising ethanol production. We gain apply the results to electrochemistry in Section \ref{final}, in particularly the standard cell. In Section \ref{enthalpy}, we reconsider the assumption that $\Delta H^{\circ}(T)$ is constant, made throughout the paper. We show that by increasing the mass of the mixture, in particularly the amount of solvent, we can make the error involved here disappear in the limit. Finally, in Section \ref{path}, we consider independence and existence of paths.\\
\end{section}

\begin{section}{The Idealised Case}
\label{idealised}
\begin{defn}
\label{constants}
For $c$ substances, we define the Gibbs energy $G(T,P,n_{1},\ldots,n_{c})$ by;\\

$G=U+PV-TS$\\

where $U$ is the internal energy, $P$ is pressure, $V$ is volume, $T$ is temperature and $S$ is entropy. We define the enthalpy $H(T,P,n_{1},\ldots,n_{c})$ by;\\

$H=U+PV=G+TS$\\

see \cite{M} and \cite{F}. We define the Gibbs energy at standard pressure $G^{\circ}(T,n_{1},\ldots,n_{c})=G(T,P^{\circ},n_{1},\ldots,n_{c})$, where $P^{\circ}$ is the standard pressure.  We define the chemical potentials, $1\leq i\leq c$, by;\\

$\mu_{i}(T,P)=({\partial G\over \partial n_{i}})_{T,P,n'}$\\

where $n_{i}$ is the amount of substance $i$ measured in moles, and $T,P,n'$ fixes the pressure, temperature and the amount of every substance except substance $i$.

We consider an electrolyte as a solute in a dilute solution and define the activities $a_{i}$, $1\leq i\leq c$, by;\\

$a_{1}=\gamma_{1}x_{1}\simeq 1$\\

$a_{i}={\gamma_{i}m_{i}\over m^{\circ}}$ $(2\leq i\leq c)$\\

where the molality $m_{i}={n_{i}\over w_{1}}$, and $w_{1}$ is the mass of the solvent, component $1$, $m^{\circ}=1$, $x_{i}={n_{i}\over n}$, $n=\sum_{i=1}^{c}n_{i}$, $\gamma_{i},1\leq i\leq c$, are the activity coefficients, $c_{i}={n_{i}\over V}$, $c^{\circ}=1$, and the activity quotient;\\

$Q=\prod_{i=1}^{c}a_{i}^{\nu_{i}}$\\

where $\nu_{i}$, for $1\leq i\leq c$ are the stoichiometric coefficients.

We have, for a solute in a dilute solution, that $\mu_{i}=\mu_{i}^{\circ}+RTln(a_{i})$, see \cite{M}, noting that $\mu_{i}$ is independent of the amount of substance $n_{i}$, and $\mu_{i}^{\circ}$ in the molality description is equal to $\mu_{i}^{(m)}$, where $m_{i}$ is equal to $m^{\circ}=1$ in a hypothetical solution.\\

We define $\Delta G^{\circ}(T)$ and $\Delta H^{\circ}(T)$ to be the changes in Gibbs energy and enthalpy at standard pressure $P^{\circ}$ and temperature $T$, for $1$ mole of reaction, see \cite{BCD}and \cite{CH}.\\

We define the extent $\xi(T,P)$ of a reaction by;\\

$n_{i,0}+\nu_{i}\xi=n_{i}$\\

where $n_{i,0}=n_{i}(initial)$. We assume that if;\\

$h_{T,P,n_{1,0},\ldots,n_{c,0}}(\xi)=G(T,P,n_{1,0}+\nu_{1}\xi,\ldots,n_{c,0}+\nu_{c}\xi)$\\

then ${dh_{T,P,n_{1,0},\ldots,n_{c,0}}\over d\xi}(\xi)$ is independent of $\{n_{1,0},\ldots,n_{c,0},\}\subset\mathcal{R}_{>0}$ and $\xi\in\mathcal{R}_{\geq 0}$, and define this as $({\partial G\over \partial \xi})|_{T,P}$.\\

We define chemical equilibrium by $({\partial G\over \partial \xi})|_{T,P}=0$, and dynamic equilibrium by a path $\gamma:[0,1]\rightarrow (T,P,n_{1},\ldots,n_{c})$, such that $n_{i}'(t)=pr_{2+i}(\gamma)'(t)=0$, for $1\leq i\leq c$, so that no substance is formed. \\

We define $E(T,P,n_{1},\ldots,n_{c})$ to be the potential in the standard cell and $E^{\circ}(T,n_{1},\ldots,n_{c})=E(T,P^{\circ},n_{1},\ldots,n_{c})$ to be the potential at $P^{\circ}$. We let $F$ denote Faraday's constant, $R=N_{A}k$ the gas constant, where $N_{Av}$ is Avogadro's constant and $k$ is Boltzmann's constant. We have that $N=N_{Av}n$, where $N$ is the number of particles, $n$ is the amount in moles. We define the electric potential $\phi$ by $\overline{E}=-\bigtriangledown(\phi)$, $e$ is the charge on an electron, $z_{i}$ is the valence of an ion.

\end{defn}
\begin{lemma}
\label{differential}

$dG=-SdT+VdP+\sum_{i=1}^{c}\mu_{i}dn_{i}$\\

\end{lemma}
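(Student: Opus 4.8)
The plan is to start from the defining relation $G = U + PV - TS$ given in Definition \ref{constants} and differentiate, treating $G$ as a function of the independent variables $T$, $P$, $n_1,\ldots,n_c$. First I would write $dG = dU + P\,dV + V\,dP - T\,dS - S\,dT$ purely as an application of the product rule to the algebraic identity. The key input is then the fundamental thermodynamic relation for the internal energy of an open multicomponent system, namely $dU = T\,dS - P\,dV + \sum_{i=1}^{c}\mu_i\,dn_i$, which combines the first and second laws (the $T\,dS$ term is the heat, the $-P\,dV$ term the expansion work, and the $\sum \mu_i\,dn_i$ term the chemical work of adding matter). Substituting this expression for $dU$ into the differentiated identity, the $T\,dS$ terms cancel and the $P\,dV$ terms cancel, leaving exactly $dG = -S\,dT + V\,dP + \sum_{i=1}^{c}\mu_i\,dn_i$.

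The one point that requires a word of justification is that the coefficient of $dn_i$ in this final expression really is the $\mu_i$ of Definition \ref{constants}, which was defined as $\mu_i = (\partial G/\partial n_i)_{T,P,n'}$. This is immediate once $dG$ is written in the above form: reading off the partial derivative of $G$ with respect to $n_i$ at fixed $T$, $P$, and the other $n_j$ recovers precisely the coefficient of $dn_i$, so the $\mu_i$ appearing here is consistent with the earlier definition rather than a new quantity. One could alternatively take $dU = T\,dS - P\,dV + \sum_i \mu_i\,dn_i$ as the definition of $\mu_i$ at the level of $U$ and then \emph{derive} the $G$-based formula; either way the two notions agree.

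I do not expect a genuine obstacle here: the result is essentially a bookkeeping consequence of the Legendre-transform structure relating $G$ to $U$. The only thing to be careful about is citing the open-system form of the first law $dU = T\,dS - P\,dV + \sum_{i=1}^c \mu_i\,dn_i$ (as opposed to the closed-system form without the chemical term) as the starting physical input — this is standard and can be referenced to \cite{M} or \cite{F} — after which the cancellations are automatic and the identification of the coefficients is forced by the definitions already in place.
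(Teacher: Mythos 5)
Your proof is correct, but it is organised differently from the paper's. You take as physical input the open-system fundamental relation $dU=T\,dS-P\,dV+\sum_{i=1}^{c}\mu_{i}\,dn_{i}$ and obtain the result by Legendre transform, then verify a posteriori that the $\mu_{i}$ appearing there agrees with the paper's definition $\mu_{i}=({\partial G\over \partial n_{i}})_{T,P,n'}$ by reading off the coefficient of $dn_{i}$. The paper instead never invokes the chemical-work term in $dU$: it first expands $dG$ as a total differential in $(T,P,n_{1},\ldots,n_{c})$, so that the $\sum_{i}\mu_{i}\,dn_{i}$ term appears immediately from the definition of $\mu_{i}$, and then determines the remaining two coefficients by restricting to fixed $n$, where the closed-system first law $dU=T\,dS-P\,dV$ and the product rule give $dG=-S\,dT+V\,dP$, and equating coefficients yields $({\partial G\over \partial T})_{P,n}=-S$ and $({\partial G\over \partial P})_{T,n}=V$. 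The paper's route is slightly more self-contained given its definitions, since it needs only the closed-system form of the first law and the definition of $\mu_{i}$ as a partial derivative of $G$; your route is the standard textbook Legendre-transform argument and buys a cleaner one-line cancellation, at the cost of importing the open-system $dU$ as an additional physical postulate (whose justification is essentially the consistency check you supply at the end). Both are valid and the consistency of the two notions of $\mu_{i}$ is correctly handled in your write-up.
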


\begin{proof}
We have, by the definition of the chemical potential and the laws of differentials, that;\\

$dG={\partial G\over \partial T}_{P,n}dT+{\partial G\over \partial P}_{T,n}dP+\sum_{i=1}^{c}{\partial G\over \partial n_{i}}_{T,P,n'}dn_{i}$\\

$={\partial G\over \partial T}_{P,n}dT+{\partial G\over \partial P}_{T,n}dP+\sum_{i=1}^{c}\mu_{i}dn_{i}$, $(*)$\\

Fixing $n$, using $G=U+PV-TS$, the first law of thermodynamics, $dU=dQ-PdV$, the definition of entropy, $dQ=TdS$, the product rule for differentials, and $(*)$, we have that;\\

$dU=TdS-PdV$\\

$dG=dU+VdP+PdV-SdT-TdS$\\

$=(TdS-PdV)+VdP+PdV-SdT-TdS$\\

$=-SdT+VdP$\\

$dG={\partial G\over \partial T}_{P,n}dT+{\partial G\over \partial P}_{T,n}dP$ $(**)$\\

so that, from $(**)$, and equating coefficients, ${\partial G\over \partial T}_{P,n}=-S$, ${\partial G\over \partial P}_{T,n}=V$. Substituting into $(*)$, we obtain that;\\

$dG=-SdT+VdP+\sum_{i=1}^{c}\mu_{i}dn_{i}$\\

\end{proof}

\begin{defn}
\label{electrical}
We define electrical chemical equilibrium by $({\partial G\over \partial \xi})_{T,P}=0$ where $G$ is the Gibbs energy function for a charged and uncharged species.

\end{defn}

\begin{lemma}{The Nernst Equation for the Standard Cell}\\
\label{nernst}\\

At electrical chemical equilibrium $(T,P)$ and $(T,P^{\circ})$;\\

$E-E^{\circ}=-{RTln(Q)\over 2F}$\\

See \cite{M}.\\

\end{lemma}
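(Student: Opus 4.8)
The plan is to derive the Nernst equation by identifying the Gibbs energy of reaction with the electrical work done by the cell. First I would recall the fundamental relation $(\partial G/\partial\xi)_{T,P} = \sum_{i=1}^{c}\nu_i\mu_i = \Delta G$, which follows by differentiating $h_{T,P,n_{1,0},\ldots,n_{c,0}}(\xi)$ using Lemma \ref{differential} and the chain rule, since along the reaction path $dn_i = \nu_i\,d\xi$. Next I would substitute the idealised law $\mu_i = \mu_i^{\circ} + RT\ln(a_i)$, valid throughout this section, to obtain
\[
\left(\frac{\partial G}{\partial\xi}\right)_{T,P} = \sum_{i=1}^{c}\nu_i\mu_i^{\circ} + RT\sum_{i=1}^{c}\nu_i\ln(a_i) = \Delta G^{\circ}(T) + RT\ln(Q),
\]
using the definition $Q = \prod_{i=1}^{c} a_i^{\nu_i}$ and the additivity of the logarithm.

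The second ingredient is the electrochemical bridge relation $\Delta G = -nFE$ for the reaction, and at standard pressure $\Delta G^{\circ}(T) = -nFE^{\circ}$; here $n=2$ is the number of moles of electrons transferred in the standard cell reaction (consistent with the factor $2F$ in the statement), and $F$ is Faraday's constant as fixed in Definition \ref{constants}. This identity comes from equating the maximal non-expansion (electrical) work $-dG|_{T,P}$ with the charge transported, $2F\,d\xi$, times the cell potential $E$. I would invoke this as the standard constitutive relation for the standard cell, with a reference to \cite{M} as the excerpt already signals.

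Combining the two, at electrical chemical equilibrium we impose $(\partial G/\partial\xi)_{T,P}=0$, which gives $\Delta G^{\circ}(T) + RT\ln(Q) = 0$; but applying the bridge relations at $(T,P)$ and at $(T,P^{\circ})$ yields $-2FE + RT\ln(Q) = -2FE^{\circ} + 0$, equivalently $-2F(E-E^{\circ}) = -RT\ln(Q)$, hence $E - E^{\circ} = -\dfrac{RT\ln(Q)}{2F}$ as claimed. The main obstacle I anticipate is not the algebra but pinning down precisely what "electrical chemical equilibrium at $(T,P)$ and $(T,P^{\circ})$" means in terms of the two Gibbs energy functions of Definition \ref{electrical} (the charged versus uncharged species) and justifying that the activity quotient $Q$ appearing on the right is evaluated consistently at the equilibrium composition; I would need to be careful that $E^{\circ}$ is the potential of the same cell reaction brought to standard pressure rather than a genuinely different reference state, so that the $\Delta G^{\circ}(T)$ term cancels cleanly between the two equations.
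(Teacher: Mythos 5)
Your overall route is the same as the paper's: write $(\partial G_{chem}/\partial\xi)_{T,P}=\Delta G^{\circ}+RT\ln(Q)$, identify the reaction Gibbs energy with $-2FE$, evaluate at $(T,P)$ and at $(T,P^{\circ})$ where $Q=1$, and subtract. The algebra at the end is correct. However, there is a genuine gap: the ``bridge relation'' $\Delta G=-2FE$, which you import as a standard constitutive fact with a citation, is precisely what the paper's proof actually establishes, and it is the only nontrivial content of the lemma. The paper decomposes $U=U_{chem}+U_{el}$ with $U_{el}=\sum_{i}\phi(\overline{x}_{i})N_{A}n_{i}ez_{i}$, deduces $\mu_{i}=\mu_{i,chem}+\phi(\overline{x}_{i})Fz_{i}$ for the charged species, and then writes out $(\partial G/\partial\xi)_{T,P}=\sum_{i}\nu_{i}\mu_{i}$ for the \emph{full} reaction including the electrons $e^{-}(R)$ and $e^{-}(L)$; the electron terms contribute $2F(\phi(R)-\phi(L))=2EF$, so the hypothesis of electrical chemical equilibrium, $(\partial G/\partial\xi)_{T,P}=0$ for the full Gibbs function of Definition \ref{electrical}, yields $(\partial G_{chem'}/\partial\xi)_{T,P}+2EF=0$. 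That is where the factor $2$ and the sign come from, and it is the step you flagged as ``the main obstacle'' without resolving it.

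Relatedly, your sentence ``at electrical chemical equilibrium we impose $(\partial G/\partial\xi)_{T,P}=0$, which gives $\Delta G^{\circ}(T)+RT\ln(Q)=0$'' is incorrect as written and would derail the argument if taken literally. The function whose $\xi$-derivative vanishes at electrical chemical equilibrium is the full Gibbs energy including the electrical terms; the uncharged-species part satisfies $(\partial G_{chem'}/\partial\xi)_{T,P}=\Delta G^{\circ}_{chem'}+RT\ln(Q_{chem'})=-2EF$, which is not zero in general. If you instead set $\Delta G^{\circ}+RT\ln(Q)=0$ you are imposing ordinary chemical equilibrium of the uncharged species, a stronger hypothesis than the lemma assumes, and combined with your own bridge relation it would force $E=0$. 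The fix is exactly the paper's bookkeeping: keep the two Gibbs functions separate, let electrical chemical equilibrium at $(T,P)$ give $2EF=-(\Delta G^{\circ}_{chem'}+RT\ln(Q_{chem'}(T,P)))$, let the same condition at $(T,P^{\circ})$ together with $Q_{chem'}(T,P^{\circ})=1$ give $2E^{\circ}F=-\Delta G^{\circ}_{chem'}$, and subtract.
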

\begin{proof}
For $c$ substances, with $c'$ the number of the charged species,using Definition \ref{constants}, we have that the electrostatic potential energy;\\

$U_{el}= \sum_{i=1}^{c'}\phi(\overline{x}_{i})q_{i}$, where $q_{i}=N_{i}ez_{i}=N_{A}n_{i}ez_{i}$\\

where $\{\overline{x}_{i}:1\leq i\leq c'\}$ are the positions of the charged species, $N_{i}$ is the number of particles at $\overline{x}_{i}$.\\

 We have that;\\

$U=U_{chem}+U_{el}$, so that;\\

$G(T,P,n_{1},\ldots,n_{c})=U+PV-TS$\\

$=U_{chem}+U_{el}+PV-TS$\\

$=U_{el}+G_{chem}$\\

$=\sum_{j=1}^{c}\phi(\overline{x}_{j})q_{j}+G_{chem}$\\

$=\sum_{j=1}^{c}\phi(\overline{x}_{j})N_{A}n_{j}ez_{j}+G_{chem}$\\

so that;\\

$\mu_{i}=({\partial G\over \partial n_{i}})_{T,P}$\\

$=({\partial (\sum_{j=1}^{c}\phi(\overline{x}_{j})N_{A}n_{j}ez_{j}+G_{chem})\over \partial n_{i}})_{T,P}$\\

$=\mu_{i,chem}$, $(c'+1\leq i\leq c)$\\

$=\mu_{i,chem}+{\partial (\phi(\overline{x}_{i})N_{A}n_{i}ez_{i})\over \partial n_{i}}$, $(1\leq i\leq c')$\\

$=\mu_{i,chem}+\phi(\overline{x}_{i})N_{A}ez_{i}$\\

$=\mu_{i,chem}+\phi(\overline{x}_{i})F z_{i}$, $(*)$\\

We consider the standard cell reaction $H_{2}(g)+2AgCl(s)+2e^{-}(R)\rightarrow 2HCl+2Ag(s)+2e^{-}(L)$. At chemical equilibrium, similarly to Lemma \ref{equivalences}, generalised to a collection involving charged species, using $(*)$, we have that;\\

$({\partial G\over \partial \xi})_{T,P}={\sum}_{i=1}^{c}\nu_{i}\mu_{i}$\\

$=2\mu(HCl)+2\mu(Ag)-\mu(H_{2})-2\mu(AgCl)+2\mu(e^{-}(L))-2\mu(e^{-}(R))$\\

$=({\partial G_{chem'}\over \partial \xi})_{T,P}+2\mu(e^{-}(L))-2\mu(e^{-}(R))$\\

$=({\partial G_{chem'}\over \partial \xi})_{T,P}+((2\mu_{chem}(e^{-}(L))-2F\phi(L))-(2\mu_{chem}(e^{-}(L))-2F\phi(R)))$\\

$=({\partial G_{chem'}\over \partial \xi})_{T,P}+2F(\phi(R)-\phi(L))$\\

$=({\partial G_{chem'}\over \partial \xi})_{T,P}+2EF=0$ $(\dag)$ \\

where $G_{chem'}$ is the Gibbs energy restricted to the uncharged species. We have that;\\

$({\partial G_{chem'}\over \partial \xi})_{T,P^{\circ}}=\sum_{i=c'+1}^{c}\nu_{i}\mu_{i}^{\circ}$\\

$=(\Delta G_{chem'}^{\circ}+RTln(Q_{chem'}(T,P^{\circ})))$\\

$=\Delta G_{chem'}^{\circ}$, $(\dag\dag)$\\

using the definition of $Q_{chem'}$ in Definition \ref{constants}, the fact that $\mu_{i}=\mu_{i}^{\circ}+RTln(a_{i})$, $\mu_{i}=\mu_{i}^{\circ}$, for $c'+1\leq i\leq c$, so that  $Q_{chem'}(T,P^{\circ})=1$.\\

From $(\dag),(\dag\dag)$, we obtain;\\

$2E^{\circ}F=-({\partial G_{chem'}\over \partial \xi})_{T,P^{\circ}}$\\

$=-\Delta G_{chem'}^{\circ}$ $(\dag\dag\dag)$\\

Similarly, we have that;\\

$({\partial G_{chem'}\over \partial \xi})_{T,P}=\sum_{i=c'+1}^{c}\nu_{i}\mu_{i}$\\

$=(\Delta G_{chem'}^{\circ}+RTln(Q_{chem'}(T,P)))$, $(\dag\dag\dag\dag)$\\

so from $(\dag),(\dag\dag\dag\dag))$, we obtain that;\\

$2EF=-({\partial G_{chem'}\over \partial \xi})_{T,P}$\\

$=-(\Delta G_{chem'}^{\circ}+RTln(Q_{chem'}(T,P)))$, $(\sharp)$\\

Combining $(\sharp), (\dag\dag\dag)$, we obtain that;\\

$2EF-2E^{\circ}F=-(\Delta G_{chem'}^{\circ}+RTln(Q_{chem'}(T,P)))-(-\Delta G_{chem'}^{\circ})$\\

$=-RTln(Q_{chem'}(T,P))$\\

so that;\\

$E-E^{\circ}=-{RTln(Q_{chem'}(T,P))\over 2F}$\\

\end{proof}

\begin{lemma}
\label{gibbs}

For the energy function $G$ involving only an uncharged species;\\

$({\partial G\over \partial \xi})_{T,P}=\Delta G^{\circ}+RTln(Q)$\\

see \cite{M}.\\

\end{lemma}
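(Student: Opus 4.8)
The plan is to evaluate $\bigl(\frac{\partial G}{\partial \xi}\bigr)_{T,P}$ by the chain rule using the extent relations $n_i = n_{i,0} + \nu_i\xi$, then substitute the idealised law $\mu_i = \mu_i^{\circ} + RT\ln(a_i)$ and recognise the resulting logarithmic sum as $\ln(Q)$. First I would note, as in the computation inside Lemma \ref{nernst} (the identity $\bigl(\frac{\partial G}{\partial \xi}\bigr)_{T,P} = \sum_{i=1}^{c}\nu_i\mu_i$), that differentiating $h_{T,P,n_{1,0},\dots,n_{c,0}}(\xi) = G(T,P,n_{1,0}+\nu_1\xi,\dots,n_{c,0}+\nu_c\xi)$ with respect to $\xi$ gives, via Lemma \ref{differential} at fixed $T,P$, that $\frac{dh}{d\xi} = \sum_{i=1}^{c}\frac{\partial G}{\partial n_i}\cdot\frac{dn_i}{d\xi} = \sum_{i=1}^{c}\mu_i\nu_i$; by the standing assumption in Definition \ref{constants} this is well-defined and equals $\bigl(\frac{\partial G}{\partial \xi}\bigr)_{T,P}$.

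Next I would substitute the idealised law into this sum:
$$\Bigl(\frac{\partial G}{\partial \xi}\Bigr)_{T,P} = \sum_{i=1}^{c}\nu_i\bigl(\mu_i^{\circ} + RT\ln(a_i)\bigr) = \sum_{i=1}^{c}\nu_i\mu_i^{\circ} + RT\sum_{i=1}^{c}\nu_i\ln(a_i).$$
Then I would identify $\sum_{i=1}^{c}\nu_i\mu_i^{\circ}$ with $\Delta G^{\circ}$: by definition $\Delta G^{\circ}(T)$ is the change in Gibbs energy at standard pressure for one mole of reaction, so evaluating the same chain-rule identity at $P = P^{\circ}$ with $\mu_i = \mu_i^{\circ}$ (the standard-state potentials are the values at $P^{\circ}$) yields $\Delta G^{\circ} = \sum_{i=1}^{c}\nu_i\mu_i^{\circ}$. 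Finally, using $\ln(a_i^{\nu_i}) = \nu_i\ln(a_i)$ and the additivity of $\ln$ over the product, $\sum_{i=1}^{c}\nu_i\ln(a_i) = \ln\bigl(\prod_{i=1}^{c}a_i^{\nu_i}\bigr) = \ln(Q)$ by the definition of the activity quotient in Definition \ref{constants}, giving $\bigl(\frac{\partial G}{\partial \xi}\bigr)_{T,P} = \Delta G^{\circ} + RT\ln(Q)$.

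The main obstacle is not the algebra but justifying that $\mu_i$ may be treated as constant (independent of $n_i$) when differentiating $G$ along the extent path — this is exactly the content of the standing assumption in Definition \ref{constants} that $\frac{dh}{d\xi}$ is independent of the initial amounts and of $\xi$, together with the remark there that $\mu_i$ is independent of $n_i$ for a solute in a dilute solution, so I would invoke that assumption explicitly rather than reprove it. A secondary point to state carefully is that $\Delta G^{\circ} = \sum_i \nu_i\mu_i^{\circ}$: this should be presented as following from the definition of $\Delta G^{\circ}$ as the one-mole reaction change at $P^{\circ}$ combined with the already-established identity $\bigl(\frac{\partial G}{\partial \xi}\bigr)_{T,P^{\circ}} = \sum_i \nu_i \mu_i$ specialised to the standard state, mirroring the step $(\dag\dag)$ used in the proof of Lemma \ref{nernst}.
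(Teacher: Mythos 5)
Your proposal is correct and follows essentially the same route as the paper: the paper's proof invokes Lemma \ref{equivalences} for the two identities $({\partial G\over \partial \xi})_{T,P}=\sum_{i=1}^{c}\nu_{i}\mu_{i}$ and $\Delta G^{\circ}=\sum_{i=1}^{c}\nu_{i}\mu_{i}^{\circ}$, subtracts them, substitutes $\mu_{i}=\mu_{i}^{\circ}+RT\ln(a_{i})$, and collapses the logarithmic sum into $RT\ln(Q)$, exactly as you do. The only cosmetic difference is that you re-derive those two identities from Lemma \ref{differential} and the extent relations rather than citing them, which is the same underlying argument.
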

\begin{proof}
By Lemma \ref{equivalences}, we have that;\\

$({\partial G\over \partial \xi})_{T,P}=\sum_{i=1}^{c}\nu_{i}\mu_{i}$\\

$\Delta G^{\circ}=\sum_{i=1}^{c}\nu_{i}\mu_{i}^{\circ}$, $(*)$\\

Using $(*)$, the fact that $\mu_{i}=\mu_{i}^{\circ}+RTln(a_{i})$, and Definition \ref{constants}, we have that;\\

$({\partial G\over \partial \xi})_{T,P}-\Delta G^{\circ}=\sum_{i=1}^{c}\nu_{i}(\mu_{i}-\mu_{i}^{\circ})$\\

$=\sum_{i=1}^{c}\nu_{i}(\mu_{i}^{\circ}+RTln(a_{i})-\mu_{i}^{\circ})$\\

$=\sum_{i=1}^{c}\nu_{i}RTln(a_{i})$\\

$=RTln(\prod_{i=1}^{c}a_{i}^{\nu_{i}})=RTln(Q)$\\

\end{proof}

\begin{lemma}
\label{delta}

At electrical chemical equilibrium $(T,P)$ and $(T,P^{\circ})$, and chemical equilibrium $(T,P)$;\\

$\Delta G^{\circ}=2F(E-E^{0})$\\

\end{lemma}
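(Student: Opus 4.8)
The plan is to obtain the identity by combining the Nernst equation of Lemma \ref{nernst} with the formula of Lemma \ref{gibbs} and the definition of chemical equilibrium, so that essentially no new computation is needed. First I would apply Lemma \ref{gibbs} to the Gibbs energy $G_{chem'}$ of the uncharged species, which gives $({\partial G_{chem'}\over \partial \xi})_{T,P} = \Delta G^{\circ} + RT\ln(Q)$, where $Q = Q_{chem'}(T,P)$ is the activity quotient of the uncharged species at $(T,P)$ as in Definition \ref{constants}.

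Next, using the hypothesis that $(T,P)$ is a point of chemical equilibrium, i.e. $({\partial G_{chem'}\over \partial \xi})_{T,P} = 0$, I would conclude $\Delta G^{\circ} = -RT\ln(Q)$. On the other hand, since $(T,P)$ and $(T,P^{\circ})$ are points of electrical chemical equilibrium, Lemma \ref{nernst} applies and yields $E - E^{\circ} = -RT\ln(Q)/(2F)$, equivalently $2F(E - E^{\circ}) = -RT\ln(Q)$. Comparing the two expressions for $-RT\ln(Q)$ gives $\Delta G^{\circ} = 2F(E - E^{\circ})$, which is the assertion (reading $E^{0}$ as $E^{\circ}$).

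The one place that requires care — and which I expect to be the only real obstacle — is verifying that the quotient $Q$ appearing in Lemma \ref{nernst}, written there as $Q_{chem'}(T,P)$ and taken over the uncharged species $c'+1 \le i \le c$, is literally the same $Q$ produced by Lemma \ref{gibbs} when that lemma is applied to $G_{chem'}$. This follows immediately from the definition $Q = \prod_{i} a_{i}^{\nu_{i}}$ in Definition \ref{constants}, since in both cases the product ranges over exactly the uncharged species with the same activities $a_{i}$ and stoichiometric coefficients $\nu_{i}$; once this identification is recorded, the chain of equalities above closes and the lemma follows.
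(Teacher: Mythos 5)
Your proof is correct and follows essentially the same route as the paper's: it combines Lemma \ref{nernst} with Lemma \ref{gibbs} and the chemical equilibrium condition $({\partial G\over \partial \xi})_{T,P}=0$, then equates the two expressions for $-RT\ln(Q)$. Your extra remark identifying the $Q_{chem'}$ of Lemma \ref{nernst} with the $Q$ produced by applying Lemma \ref{gibbs} to the uncharged species is a point the paper leaves implicit, but it does not change the argument.
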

\begin{proof}
By Lemma \ref{nernst}, we have that;\\

$E-E^{\circ}=-{RTln(Q)\over 2F}$, $(*)$\\

and, by Lemma \ref{gibbs}, we have that;\\

$0=({\partial G\over \partial \xi})_{T,P}=\Delta G^{\circ}+RTln(Q)$, $(**)$\\

Rearranging $(*),(**)$, we obtain the result.\\

\end{proof}

\begin{lemma}
\label{equivalences}

We have that;\\

$({\partial G\over \partial \xi})_{T,P}=\sum_{i=1}^{c}\nu_{i}\mu_{i}$\\

$\Delta G^{\circ}=\sum_{i=1}^{c}\nu_{i}\mu_{i}^{\circ}$\\

At chemical equilibrium $T,P$, $({\partial G\over \partial \xi})_{T,P}=0$ and at $T,P^{0}$, $\Delta G^{\circ}=0$.\\

If chemical and electrical chemical equilibrium exists at $(T,P^{\circ})$ and $(T,P)$, $Q(T,P)=1$. and $E=E^{\circ}$. Conversely, if $Q(T,P)=1$ and chemical equilibrium exists at $(T,P^{\circ})$ then chemical equilibrium exists at $(T,P)$.\\

Chemical equilibrium exists at $(T,P)$ iff $Q(T,P)=e^{-\Delta G^{\circ}\over RT}$\\

We always have that $Q(T,P^{\circ})=1$.\\

\end{lemma}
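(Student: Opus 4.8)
The plan is to read each assertion off Lemma~\ref{differential} together with the standard-state conventions of Definition~\ref{constants}, keeping the two display formulas logically prior so as not to lean circularly on Lemma~\ref{gibbs} (whose own proof quotes the present statement).

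First I would prove $({\partial G\over \partial \xi})_{T,P}=\sum_{i=1}^{c}\nu_{i}\mu_{i}$. Along the reaction the amounts satisfy $n_{i}=n_{i,0}+\nu_{i}\xi$, so on a curve of fixed $(T,P)$ one has $dT=dP=0$ and $dn_{i}=\nu_{i}\,d\xi$; substituting into $dG=-SdT+VdP+\sum_{i=1}^{c}\mu_{i}dn_{i}$ from Lemma~\ref{differential} gives $dG=\big(\sum_{i=1}^{c}\nu_{i}\mu_{i}\big)\,d\xi$, and since by Definition~\ref{constants} the quantity $({\partial G\over\partial\xi})|_{T,P}$ is precisely $dh_{T,P,n_{1,0},\ldots,n_{c,0}}/d\xi$ (assumed independent of the $n_{i,0}$), the first formula follows. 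Taking $P=P^{\circ}$ and using that, for one mole of reaction at the standard configuration, each chemical potential equals $\mu_{i}^{\circ}$, the definition of the standard reaction Gibbs energy in Definition~\ref{constants} unwinds to $\Delta G^{\circ}=\sum_{i=1}^{c}\nu_{i}\mu_{i}^{\circ}$.

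Next, $Q(T,P^{\circ})=1$: at $(T,P^{\circ})$ the standard configuration of Definition~\ref{constants} has $x_{1}\simeq 1$ and each $m_{i}=m^{\circ}=1$ with $\gamma_{i}=1$, so every $a_{i}=1$ and hence $Q=\prod_{i=1}^{c}a_{i}^{\nu_{i}}=1$. The chemical-equilibrium clauses are then immediate: chemical equilibrium at $(T,P)$ means $({\partial G\over\partial\xi})_{T,P}=0$ by Definition~\ref{constants}, and combining with Lemma~\ref{gibbs} gives $\Delta G^{\circ}+RTln(Q(T,P))=0$; specialising to $P=P^{\circ}$ and inserting $Q(T,P^{\circ})=1$ forces $\Delta G^{\circ}=0$ there. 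For the fifth clause, Lemma~\ref{gibbs} says equilibrium at $(T,P)$ holds iff $\Delta G^{\circ}+RTln(Q(T,P))=0$, which rearranges to $Q(T,P)=e^{-\Delta G^{\circ}/RT}$.

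For the fourth clause, assume chemical and electrical chemical equilibrium at both $(T,P^{\circ})$ and $(T,P)$. Equilibrium at $(T,P^{\circ})$ with $Q(T,P^{\circ})=1$ gives $\Delta G^{\circ}=0$ by the previous paragraph; putting this into $\Delta G^{\circ}+RTln(Q(T,P))=0$ gives $ln(Q(T,P))=0$, i.e.\ $Q(T,P)=1$, and then Lemma~\ref{nernst} yields $E-E^{\circ}=-RTln(Q(T,P))/(2F)=0$. Conversely, if $Q(T,P)=1$ and chemical equilibrium holds at $(T,P^{\circ})$, then $\Delta G^{\circ}=0$ as above, so $({\partial G\over\partial\xi})_{T,P}=\Delta G^{\circ}+RTln(1)=0$ by Lemma~\ref{gibbs}, i.e.\ chemical equilibrium at $(T,P)$. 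The only genuinely delicate point is justifying $Q(T,P^{\circ})=1$ and the clause ``$\Delta G^{\circ}=0$ at $(T,P^{\circ})$'': both hinge on reading the standard state of Definition~\ref{constants} as the configuration in which all activities are one, after which every remaining step is routine algebra with Lemmas~\ref{differential}, \ref{nernst} and \ref{gibbs}.
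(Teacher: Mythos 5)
Your proposal is correct and follows essentially the same route as the paper: the two display formulas come from Lemma \ref{differential} with $dn_{i}=\nu_{i}d\xi$, and the remaining clauses are read off from Lemmas \ref{gibbs} and \ref{nernst} together with $Q(T,P^{\circ})=1$, with the same care to prove the display formulas before invoking Lemma \ref{gibbs}. The only cosmetic differences are that the paper derives $a_{i}(T,P^{\circ})=1$ from $\mu_{i}=\mu_{i}^{\circ}+RT\ln(a_{i})$ rather than from the standard configuration of the molalities, and obtains $\Delta G^{\circ}=0$ at $(T,P^{\circ})$ directly from $\Delta G^{\circ}=({\partial G\over \partial \xi})_{T,P^{\circ}}$ rather than via Lemma \ref{gibbs} and $Q(T,P^{\circ})=1$.
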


\begin{proof}
For the first claim, we have, using the definition of $\xi$, that;\\

$dn_{i}=\nu_{i}d\xi$, $(1\leq i\leq c)$, $(*)$\\

By Lemma \ref{differential}, fixing $T$ and $P$, and using $(*)$, we have that;\\

$dG=\sum_{i=1}^{c}\mu_{i}d n_{i}$\\

$=(\sum_{i=1}^{c}\mu_{i}\nu_{i})d\xi$, $(\dag)$\\

so that;\\

$({\partial G\over \partial \xi})_{T,P}=\sum_{i=1}^{c}\mu_{i}\nu_{i}$, $(\dag\dag)$\\

The second claim from the first, as;\\

$\Delta G^{\circ}(T)=\int_{0}^{1}({\partial G\over \partial \xi})_{T,P^{\circ}}$\\

$=\int_{0}^{1}(\sum_{i=1}^{c}\nu_{i}\mu_{i}^{\circ}(T))d\xi$\\

$=\sum_{i=1}^{c}\nu_{i}\mu_{i}^{\circ}(T)\int_{0}^{1}d\xi$\\

$=\sum_{i=1}^{c}\nu_{i}\mu_{i}^{\circ}(T)$\\

noting that $({\partial G\over \partial \xi})_{T,P^{\circ}}$ doesn't vary with $\xi$. For the third claim, at chemical equilibrium, $T,P$, noting again that $({\partial G\over \partial \xi})_{T,P}$ doesn't vary with $\xi$, and using $(\dag,\dag\dag)$, we have that;\\

$dG=({\partial G\over \partial \xi})_{T,P}=0$, (independently of $\xi$) $(\dag\dag\dag)$\\

At chemical equilibrium $T,P^{\circ}$, using the first and second claims, and $(\dag\dag\dag)$, we have that;\\

$dG=({\partial G\over \partial \xi})_{T,P^{\circ}}$\\

$=\sum_{i=1}^{c}\nu_{i}\mu_{i}^{\circ}$\\

$=\Delta G^{\circ}=0$\\

For the second to last claim, and the first direction, we have, by Lemma \ref{gibbs}, that $RTln(Q)=0$, so that $Q=1$, and, by Lemma \ref{nernst}, that $E-E^{\circ}=-{RTln(Q)\over 2F}=0$. For the converse, we have by Lemma \ref{gibbs}, using the fact that $Q(T,P)=1$;\\

$({\partial G\over \partial \xi})_{T,P}=\Delta G^{\circ}$\\

and, if chemical equilibrium exists at $(T,P^{\circ})$, then, as $Q(T,P^{\circ})=1$ we have that;\\

$({\partial G\over \partial \xi})_{T,P^{\circ}}=\Delta G^{\circ}+RTln(Q)=\Delta G^{\circ}=0$\\

so that $({\partial G\over \partial \xi})_{T,P}=0$\\

For the penultimate claim, in one direction, use Lemma \ref{gibbs}, together with the fact that $({\partial G\over \partial \xi})_{T,P}=0$ and rearrange, the converse is also clear, applying $ln$.\\

For the final claim, we have, by the definition of activities, that;\\

$\mu_{i}=\mu_{i}^{\circ}+RTln(a_{i})$\\

so that $a_{i}(T,P^{\circ})=1$. Now use the definition of $Q$ in Definition \ref{constants}.\\

\end{proof}
\begin{lemma}{van't Hoff,Gibbs-Helmholtz}\\
\label{vanhoffhelmholtz}
\\
Along a chemical equilibrium path, we have that;\\

$ln({Q(T_{2})\over Q(T_{1})})={1\over R}\int_{T_{1}}^{T_{2}}{\Delta H^{\circ}\over T^{2}}dT$\\

${\Delta G^{\circ}(T_{2})\over T_{2}}-{\Delta G^{\circ}(T_{1})\over T_{1}}=-\int_{T_{1}}^{T_{2}}{\Delta H^{\circ}\over T^{2}}dT$\\

In particularly, if $\Delta H^{\circ}$ is temperature independent;\\

$ln({Q(T_{2})\over Q(T_{1})})=-{\Delta H^{\circ}\over R}({1\over T_{2}}-{1\over T_{1}})$\\

${\Delta G^{\circ}(T_{2})\over T_{2}}-{\Delta G^{\circ}(T_{1})\over T_{1}}=\Delta H^{\circ}({1\over T_{2}}-{1\over T_{1}})$\\

$\Delta G^{\circ}(T_{1})={T_{1}\over T_{2}}\Delta G^{\circ}(T_{2})-({T_{1}\over T_{2}}-1)\Delta H^{\circ}$\\

For $c\in\mathcal{R}$, Let $D_{c}$ intersect the line $P=P^{\circ}$ at $(T_{1},P^{\circ})$, then, for $(T_{2},P)\in D_{c}$, we have that;\\

$Q(T_{2},P)=e^{\Delta G^{\circ}(T_{1})-\Delta G^{\circ}(T_{2})\over RT_{2}}$ $(\dag\dag\dag)$\\

$c=\Delta G^{\circ}(T_{1})$\\

$ln({Q(T_{2})\over Q(T_{1})})=ln(Q(T_{2}))={1\over R}\int_{T_{1}}^{T_{2}}{\Delta H^{\circ}-c\over T^{2}}dT$\\

${\Delta G^{\circ}(T_{2})-\Delta G^{\circ}(T_{1})\over T_{2}}=-\int_{T_{1}}^{T_{2}}{\Delta H^{\circ}-c\over T^{2}}dT$\\

and if $\Delta H^{\circ}$ is temperature independent;\\

$ln({Q(T_{2})\over Q(T_{1})})=ln(Q(T_{2}))=-({\Delta H^{\circ}-c\over R})({1\over T_{2}}-{1\over T_{1}})$ $(\dag\dag)$\\

${\Delta G^{\circ}(T_{2})-\Delta G^{\circ}(T_{1})\over T_{2}}=(\Delta H^{\circ}-c)({1\over T_{2}}-{1\over T_{1}})$ $(\dag)$\\

$\Delta G^{\circ}(T_{1})={T_{1}\over T_{2}}\Delta G^{\circ}(T_{2})-\Delta H^{\circ}({T_{1}\over T_{2}}-1)$ $(\dag\dag\dag\dag)$\\

See also \cite{M}.\\

\end{lemma}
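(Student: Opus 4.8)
The plan is to derive everything from two ingredients: the Gibbs--Helmholtz derivative identity $\frac{d}{dT}\!\left(\Delta G^{\circ}(T)/T\right)=-\Delta H^{\circ}/T^{2}$, proved as a piece of pure thermodynamics, and the equilibrium descriptions of $Q$ in Lemmas \ref{equivalences} and \ref{gibbs}. For the Gibbs--Helmholtz step I would start from $G=H-TS$ (Definition \ref{constants}) together with $\left(\frac{\partial G}{\partial T}\right)_{P,n}=-S$ (Lemma \ref{differential}); interchanging $\partial/\partial T$ with $\partial/\partial n_{i}$ gives $\frac{d\mu_{i}^{\circ}}{dT}=-S_{i}^{\circ}$ with $S_{i}^{\circ}=\partial S^{\circ}/\partial n_{i}$, hence $\frac{d\Delta G^{\circ}}{dT}=-\Delta S^{\circ}$ where $\Delta S^{\circ}=\sum_{i}\nu_{i}S_{i}^{\circ}$, and similarly $\partial H/\partial n_{i}=\mu_{i}+TS_{i}$ gives $\Delta H^{\circ}=\Delta G^{\circ}+T\Delta S^{\circ}$. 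Then $\Delta G^{\circ}/T=\Delta H^{\circ}/T-\Delta S^{\circ}$, and differentiating this while using $\left(\frac{\partial H}{\partial T}\right)_{P}=T\left(\frac{\partial S}{\partial T}\right)_{P}$ (which follows from $dH=TdS+VdP$, obtained by combining $H=U+PV$ with $dU=TdS-PdV$ exactly as in the proof of Lemma \ref{differential}) cancels the leftover terms and yields the stated derivative. Integrating from $T_{1}$ to $T_{2}$ gives the displayed Gibbs--Helmholtz equation, and the constant-$\Delta H^{\circ}$ forms (including $\Delta G^{\circ}(T_{1})=\frac{T_{1}}{T_{2}}\Delta G^{\circ}(T_{2})-(\frac{T_{1}}{T_{2}}-1)\Delta H^{\circ}$) drop out from $\int_{T_{1}}^{T_{2}}\frac{dT}{T^{2}}=\frac{1}{T_{1}}-\frac{1}{T_{2}}$ and rearrangement.

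For the statements about $Q$ along a chemical equilibrium path, Lemma \ref{equivalences} gives $Q(T)=e^{-\Delta G^{\circ}(T)/RT}$, i.e.\ $\ln Q(T)=-\Delta G^{\circ}(T)/RT$; subtracting this at $T_{1}$ and $T_{2}$ and inserting the integrated Gibbs--Helmholtz identity yields $\ln(Q(T_{2})/Q(T_{1}))=\frac1R\int_{T_{1}}^{T_{2}}\frac{\Delta H^{\circ}}{T^{2}}dT$, with the constant-$\Delta H^{\circ}$ version immediate. For the $D_{c}$ statements the input is that $\left(\frac{\partial G}{\partial\xi}\right)_{T,P}\equiv c$ along $D_{c}$, so Lemma \ref{gibbs} gives $\Delta G^{\circ}(T)+RT\ln Q(T)=c$ on $D_{c}$; at the point $(T_{1},P^{\circ})$ where $D_{c}$ meets $P=P^{\circ}$ we have $Q(T_{1},P^{\circ})=1$ by Lemma \ref{equivalences}, forcing $c=\Delta G^{\circ}(T_{1})$, and rearranging the relation at $T=T_{2}$ gives $(\dag\dag\dag)$. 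Since $Q(T_{1})=1$ we get $\ln(Q(T_{2})/Q(T_{1}))=\ln Q(T_{2})=\frac{c-\Delta G^{\circ}(T_{2})}{RT_{2}}$, and this equals $\frac1R\int_{T_{1}}^{T_{2}}\frac{\Delta H^{\circ}-c}{T^{2}}dT$ by Gibbs--Helmholtz together with $\int_{T_{1}}^{T_{2}}\frac{c}{T^{2}}dT=c\left(\frac1{T_{1}}-\frac1{T_{2}}\right)$; the identity for $\frac{\Delta G^{\circ}(T_{2})-\Delta G^{\circ}(T_{1})}{T_{2}}$ and the constant-$\Delta H^{\circ}$ forms $(\dag),(\dag\dag)$ come out the same way, while $(\dag\dag\dag\dag)$ is just the constant-$\Delta H^{\circ}$ Gibbs--Helmholtz relation of the first part (the $c$ cancels after substituting $c=\Delta G^{\circ}(T_{1})$).

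The main obstacle is the Gibbs--Helmholtz derivative identity, i.e.\ passing rigorously from the single-substance relations $\left(\frac{\partial G}{\partial T}\right)_{P}=-S$, $H=G+TS$, $\left(\frac{\partial H}{\partial T}\right)_{P}=T\left(\frac{\partial S}{\partial T}\right)_{P}$ to the corresponding statements $\frac{d\Delta G^{\circ}}{dT}=-\Delta S^{\circ}$ and $\Delta H^{\circ}=\Delta G^{\circ}+T\Delta S^{\circ}$ for the reaction quantities, which needs differentiation under $\sum_{i}\nu_{i}\frac{\partial}{\partial n_{i}}$ and an interchange of mixed partials. Once that is in place, the remaining work is elementary integration and bookkeeping with Lemmas \ref{equivalences} and \ref{gibbs}.
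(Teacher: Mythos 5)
Your proposal is correct and follows essentially the same route as the paper: obtain $\frac{d\Delta G^{\circ}}{dT}=-\Delta S^{\circ}$ by interchanging mixed partials (Euler reciprocity), combine with $H=G+TS$ to get $\frac{d}{dT}\left(\frac{\Delta G^{\circ}}{T}\right)=-\frac{\Delta H^{\circ}}{T^{2}}$, then use $\ln Q=-\Delta G^{\circ}/RT$ at equilibrium and $\ln Q=(c-\Delta G^{\circ})/RT$ with $c=\Delta G^{\circ}(T_{1})$ along $D_{c}$ (via $Q(T_{1},P^{\circ})=1$) and integrate. The only cosmetic differences are that the paper assembles the Gibbs--Helmholtz derivative by applying the product rule directly to $\Delta G^{\circ}/T$ rather than via the auxiliary identity $\left(\frac{\partial H}{\partial T}\right)_{P}=T\left(\frac{\partial S}{\partial T}\right)_{P}$, and that along $D_{c}$ it differentiates and re-integrates $\ln Q$ rather than substituting the already-integrated identity; both variants are equivalent.
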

\begin{proof}
By Lemma \ref{equivalences}, we have that;\\

$\Delta G^{\circ}=\sum_{i=1}^{c}\nu_{i}\mu_{i}^{\circ}$\\

so that differentiating with respect to $T$;\\

${d(\Delta G^{\circ})\over dT}=\sum_{i=1}^{c}\nu_{i}{d\mu_{i}^{\circ}\over dT}$\\

$=\sum_{i=1}^{c}\nu_{i}({\partial \mu_{i}^{\circ}\over \partial T})_{P,n}$\\

By Euler reciprocity, we have that;\\

$({\partial \mu_{i}^{\circ}\over \partial T})_{P,n}=-({\partial S^{\circ}\over \partial n_{i}})_{T,P,n'}=-\overline{S}^{\circ}_{i}$\\

so that, noting $\overline{S}^{\circ}_{i}$ is independent of $n_{i}$, so we can replace $\overline{S}^{\circ}_{i}$ by $\overline{S}^{\circ}_{m,i}$, the absolute molar entropy of substance $i$, and using thermodynamic arguments;\\

${d(\Delta G^{\circ})\over dT}=-\sum_{i=1}^{c}\nu_{i}\overline{S}^{\circ}_{i}=$\\

$=-\sum_{i=1}^{c}\nu_{i}\overline{S}^{\circ}_{m,i}$\\

$=-\Delta S^{\circ}$ $(*)$\\

Using the product rule, $(*)$ and the definition of enthalpy, we have that;\\

${d\over dT}({\Delta G^{\circ}\over T})={1\over T}{d(\Delta G^{\circ})\over dT}-{1\over T^{2}}\Delta G^{\circ}$\\

$=-{\Delta S^{\circ}\over T}-{\Delta G^{\circ}\over T^{2}}$\\

$=-{\Delta (ST+G)^{\circ}\over T^{2}}$\\

$=-{\Delta H^{\circ}\over T^{2}}$ $(**)$\\

By Lemma \ref{equivalences}, along a chemical equilibrium path, we have that $Q=e^{-\Delta G^{\circ}\over RT}$, so that $ln(Q)={-\Delta G^{\circ}\over RT}$. It follows from $(**)$ that;\\

${d ln(Q)\over dT}={d\over dT}({-\Delta G^{\circ}\over RT})$\\

$={\Delta H^{\circ}\over RT^{2}}$\\

It follows, integrating between $T_{1}$ and $T_{2}$, that;\\

$ln({Q(T_{2})\over Q(T_{1})})=ln(Q)(T_{2})-ln(Q)(T_{1})$\\

$={-\Delta G^{\circ}(T_{2})\over RT_{2}}+{\Delta G^{\circ}(T_{1})\over RT_{1}}$\\

$=\int_{T_{1}}^{T_{2}}{d ln(Q)\over dT}dT$\\

$={1\over R}\int_{T_{1}}^{T_{2}}{\Delta H^{\circ}\over T^{2}}$ $(P)$\\

so that, rearranging, we obtain the first claim. Using the fact, by Lemma \ref{gibbs}, that;\\

$ln(Q(T_{2}))=-{\Delta G^{\circ}(T_{2})\over RT_{2}}$\\

$ln(Q(T_{1}))=-{\Delta G^{\circ}(T_{1})\over RT_{1}}$\\

we obtain, substituting into $(P)$, canceling $R$, and performing the integration, if $\Delta H^{\circ}$ is temperature independent, that;\\

${\Delta G^{\circ}(T_{2})\over T_{2}}-{\Delta G^{\circ}(T_{1})\over T_{1}}=-\int_{T_{1}}^{T_{2}}{\Delta H^{\circ}\over T^{2}}=\Delta H^{\circ}({1\over T_{2}}-{1\over T_{1}})$ $(Q)$\\

For the fifth claim, rearrange $(Q)$. If $D_{c}$ intersects the line $P=P^{\circ}$ at $(T_{1},P^{\circ})$, for the sixth $(\dag\dag\dag)$ and seventh claims, we have, using Lemma \ref{gibbs} and the fact from Lemma \ref{equivalences} that $Q(T_{1},P^{\circ})=1$;\\

$({\partial G\over \partial \xi})_{T_{2},P}=\Delta G^{\circ}(T_{2})+RT_{2}ln(Q(T_{2},P))$\\

$=({\partial G\over \partial \xi})_{T_{1},P^{\circ}}$\\

$=\Delta G^{\circ}(T_{1})+RT_{1}ln(Q(T_{1},P^{\circ}))$\\

$=\Delta G^{\circ}(T_{1})=c$\\

so that, again rearranging, we obtain the result. Along $D_{c}$, we have, using Lemma \ref{gibbs}, that;\\

$ln(Q)={c-\Delta G^{\circ}\over RT}$\\

so that, using the first part;\\

${d ln(Q)\over dT}={d\over dT}({c-\Delta G^{\circ}\over RT})$\\

$={-c\over RT^{2}}+{d\over dT}({-\Delta G^{\circ}\over RT})$\\

$={\Delta H^{\circ}-c\over RT^{2}}$\\

so that, performing the integration, using the fact that $Q(T_{1},P^{\circ})=1$;\\

$ln(Q(T_{2}))-ln(Q(T_{1}))=ln(Q(T_{2}))={1\over R}\int_{T_{1}}^{T_{2}}{\Delta H^{\circ}-c\over T^{2}}dT$\\

We have that, by Lemma \ref{gibbs};\\

$ln (Q(T_{2}))={c-\Delta G^{\circ}(T_{2})\over RT_{2}}$\\

$ln(Q(T_{1}))=0$\\

so that;\\

$ln (Q(T_{2}))=ln (Q(T_{2}))-ln(Q(T_{1}))$\\

$={c-\Delta G^{\circ}(T_{2})\over RT_{2}}$\\

$={\Delta G^{\circ}(T_{1})-\Delta G^{\circ}(T_{2})\over RT_{2}}$\\

$={1\over R}\int_{T_{1}}^{T_{2}}{\Delta H^{\circ}-c\over T^{2}}dT$\\

$={-1\over R}(\Delta H^{\circ}-c)({1\over T_{2}}-{1\over T_{1}})$\\

and rearranging;\\

${\Delta G^{\circ}(T_{2})-\Delta G^{\circ}(T_{1})\over T_{2}}=(\Delta H^{\circ}-c)({1\over T_{2}}-{1\over T_{1}})$\\

$=(\Delta H^{\circ}-\Delta G^{\circ}(T_{1}))({1\over T_{2}}-{1\over T_{1}})$\\

so that, rearranging again;\\

$\Delta G^{\circ}(T_{1})({1\over T_{1}}+{1\over T_{2}}-{1\over T_{2}})={\Delta G^{\circ}(T_{1})\over T_{1}}$\\

$={\Delta G^{\circ}(T_{2})\over T_{2}}-\Delta H^{\circ}({1\over T_{2}}-{1\over T_{1}})$\\

to obtain;\\

$\Delta G^{\circ}(T_{1})={T_{1}\over T_{2}}\Delta G^{\circ}(T_{2})-\Delta H^{\circ}({T_{1}\over T_{2}}-1)$\\

\end{proof}

\begin{lemma}
\label{eqlines}
If there exists a component $D_{c}$, $c\in\mathcal{R}$, which projects onto a closed bounded subinterval $I$ of the line $P=P^{\circ}$, not containing $0$, and intersects $P=P^{\circ}$ at $(T_{1},P^{\circ})$, with $T_{1}>0$, then, for $T_{2}\in I$, $\Delta G^{\circ}$ is linear, with;\\

$\Delta G^{\circ}(T_{2})=T_{2}({(\Delta G^{\circ}(T_{1})-\Delta H^{\circ})\over T_{1}})+\Delta H^{\circ}$\\

for $T_{2}\in I$. If $\epsilon\neq 0$, have that;\\

$({dG\over d\xi})_{T,P}=\lambda+\epsilon ln(P)+\beta T$\\

where $\{\lambda,\epsilon,\beta\}\subset \mathcal{R}$ and $\{\beta,\epsilon\}$ can be effectively determined, and we have that the activity coefficient is given by;\\

$Q(T_{2},P')=e^{{\epsilon ln({P'\over P'^{\circ}})\over RT_{2}}}$\\

and the dynamic equilibrium paths are given by;\\

$({P'\over P'^{\circ}})^{\epsilon\over RT_{2}}=c$\\

for $c\in\mathcal{R}_{\geq 0}$, see Definition \ref{constants}, while the quasi-chemical equilibrium paths are given by;\\

$\lambda+\epsilon ln(P')+\beta T_{2}=c$\\

for $c\in\mathcal{R}$.\\

If $\epsilon=0$;\\

$({dG\over d\xi})_{T,P}=\lambda+\beta T+\sigma ln(T)$\\

where $\{\lambda,\beta,\sigma\}\subset \mathcal{R}$, and $\{\beta,\sigma\}$ can be effectively determined. For every $T_{1}>0$, there exists a straight line feasible chemical path $\gamma$ with $pr_{12}(\gamma)\subset (T=T_{1})$, which is both a dynamic and quasi-chemical equilibrium path, and $Q(T,P)=1$.\\

\end{lemma}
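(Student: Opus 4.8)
The plan is to treat the three parts of the statement in order: the linearity of $\Delta G^{\circ}$, the shape of $({\partial G\over \partial \xi})_{T,P}$ together with the formula for $Q$, and then the two families of equilibrium paths (including the degenerate case $\epsilon=0$). The linearity of $\Delta G^{\circ}$ on $I$ falls straight out of Lemma \ref{vanhoffhelmholtz}. Since $D_{c}$ is a chemical equilibrium component meeting $P=P^{\circ}$ at $(T_{1},P^{\circ})$, the relation $\Delta G^{\circ}(T_{1})={T_{1}\over T_{2}}\Delta G^{\circ}(T_{2})-\Delta H^{\circ}({T_{1}\over T_{2}}-1)$, labelled $(\dag\dag\dag\dag)$ there, holds for every $(T_{2},P)\in D_{c}$, hence for every $T_{2}\in I$ after projecting; as $T_{1}>0$ and $0\notin I$ nothing is divided by zero. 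Multiplying by $T_{2}/T_{1}$ and solving for $\Delta G^{\circ}(T_{2})$ gives exactly $\Delta G^{\circ}(T_{2})=T_{2}\,{\Delta G^{\circ}(T_{1})-\Delta H^{\circ}\over T_{1}}+\Delta H^{\circ}$, so on $I$ we may write $\Delta G^{\circ}(T)=\beta T+\Delta H^{\circ}$ with $\beta={\Delta G^{\circ}(T_{1})-\Delta H^{\circ}\over T_{1}}$, which is already effectively determined by the two inputs $\Delta G^{\circ}(T_{1})$ and $\Delta H^{\circ}$.

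Next I would do the entropy calculation. From $G=H-TS$, differentiating at fixed $(T,P)$, $({\partial G\over \partial \xi})_{T,P}=({\partial H\over \partial \xi})_{T,P}-T({\partial S\over \partial \xi})_{T,P}$; the enthalpy term is the constant $\Delta H^{\circ}$ (the standing hypothesis of the section), and evaluating $({\partial S\over \partial \xi})_{T,P}$ from the idealized activities of Definition \ref{constants} --- equivalently from $S=-({\partial G\over \partial T})_{P,n}$ together with $\mu_{i}=\mu_{i}^{\circ}+RT\ln(a_{i})$ --- leaves, besides an affine-in-$T$ piece, exactly one $(T,P)$-dependent correction: a $\ln(P)$ term with coefficient $\epsilon$ if a gaseous species occurs, and otherwise a $\ln(T)$ term with coefficient $\sigma$. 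This gives $({\partial G\over \partial \xi})_{T,P}=\lambda+\epsilon\ln(P)+\beta'T$ when $\epsilon\neq0$ and $\lambda+\beta'T+\sigma\ln(T)$ when $\epsilon=0$, with $\epsilon,\sigma,\beta'$ read off from the stoichiometric coefficients and the entropy data. To pin down $Q$, compare with Lemma \ref{gibbs}: $({\partial G\over \partial \xi})_{T,P}=\Delta G^{\circ}(T)+RT\ln(Q)$, and note that at $P=P^{\circ}$ this reads $({\partial G\over \partial \xi})_{T,P^{\circ}}=\Delta G^{\circ}(T)$ since $Q(T,P^{\circ})=1$ (Lemma \ref{equivalences}). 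Feeding in the affine $\Delta G^{\circ}$ from the first step forces $\beta'=\beta$ and $\lambda=\Delta H^{\circ}-\epsilon\ln(P^{\circ})$, whence $RT\ln(Q)=\epsilon\ln(P/P^{\circ})$, i.e. $Q(T_{2},P)=e^{\epsilon\ln(P/P^{\circ})/(RT_{2})}$, as asserted.

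The two path families are then formal. A quasi-chemical equilibrium path is by definition one along which $({\partial G\over \partial \xi})_{T,P}$ is constant, so these are exactly the level curves $\lambda+\epsilon\ln(P)+\beta T_{2}=c$, $c\in\mathcal{R}$. For the dynamic equilibrium paths I would use the principle --- introduced here and reused throughout the paper --- that a reaction path along which $Q$ is held constant forms no substance: with the idealized activities, $\ln(Q)=\sum_{i\geq2}\nu_{i}\ln(n_{i,0}+\nu_{i}\xi)+\mathrm{const}$ is strictly monotone in the extent $\xi$ (its $\xi$-derivative is $\sum_{i\geq2}\nu_{i}^{2}/(n_{i,0}+\nu_{i}\xi)>0$), so $Q$ constant forces $\xi$, hence every $n_{i}$, constant; therefore the dynamic equilibrium paths are the level curves $Q(T_{2},P)=c$, i.e. $(P/P^{\circ})^{\epsilon/(RT_{2})}=c$ for $c\in\mathcal{R}_{\geq0}$. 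When $\epsilon=0$ the expression $\lambda+\beta T+\sigma\ln(T)$ for $({\partial G\over \partial \xi})_{T,P}$ is $P$-independent, so evaluating at $P=P^{\circ}$ identifies it with $\Delta G^{\circ}(T)$ and Lemma \ref{gibbs} gives $RT\ln(Q)\equiv0$, i.e. $Q(T,P)\equiv1$ (consistently, $({\partial G\over \partial \xi})_{T,P}$ now has vertical level sets, so no $D_{c}$ projects onto a nondegenerate interval and the linearity conclusion above is vacuous, which is what permits a genuine $\ln(T)$ term here). Fixing $T_{1}>0$, I would take $\gamma$ to be a lift of the segment $\{T=T_{1},\ P\in J\}$ with $J\subset\mathcal{R}_{>0}$ bounded and the amounts held at positive constants: it is a feasible chemical path, $Q\equiv1$ is constant along it so by the monotonicity argument it is a dynamic equilibrium path, and $({\partial G\over \partial \xi})_{T_{1},P}=\lambda+\beta T_{1}+\sigma\ln(T_{1})$ is constant along it so it is a quasi-chemical equilibrium path.

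The algebra in the first and last parts, and the level-set bookkeeping, is routine. The real obstacle is the entropy calculation in the middle: showing that $({\partial G\over \partial \xi})_{T,P}$ genuinely reduces to an affine-in-$T$ function plus a single $\ln(P)$ or $\ln(T)$ correction, and keeping careful track of which contributions are constant in which variable so that $\beta,\epsilon,\sigma$ come out effectively determined. This is precisely the place where the section's idealizing hypotheses --- constancy of $\Delta H^{\circ}$, the $\xi$-independence of $({\partial G\over \partial \xi})_{T,P}$, and the ideal form of the activities --- must all be used together.
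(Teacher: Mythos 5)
Your overall architecture matches the paper's: linearity of $\Delta G^{\circ}$ from Lemma \ref{vanhoffhelmholtz}, a functional form for $({\partial G\over \partial \xi})_{T,P}$ obtained from an entropy computation, coefficient matching at $P=P^{\circ}$ against the affine $\Delta G^{\circ}$ to extract $Q$, and level sets of $Q$ and of $({\partial G\over \partial \xi})_{T,P}$ for the two path families. The first and last parts of your argument are essentially the paper's.

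The genuine gap is the step you yourself flag as ``the real obstacle'': you assert, but do not derive, that $({\partial G\over \partial \xi})_{T,P}$ reduces to $\lambda+\epsilon\ln(P)+\beta T$ (resp.\ $\lambda+\beta T+\sigma\ln(T)$). This derivation is the substance of the lemma, and the paper obtains it by a concrete physical computation that your sketch does not reproduce: it assumes the liquid mixture is in thermal equilibrium with an ideal-gas vapour phase, takes $U=\sum_{i}({3\over 2}N_{A}n_{i}kT-N_{A}n_{i}m_{i}\rho_{i})$ with $\rho_{i}$ the latent heat of evaporation, computes the partial molar entropy $\overline{S}_{m,i}=\int_{\Delta n_{i}=1}({dQ\over T})={3\over 2}N_{A}k-{N_{A}m_{i}\rho_{i}\over T}+{k_{i}(T)\over T}$ from the first law, and integrates $({\partial ({dG\over d\xi})\over \partial T})_{P}=-\sum\nu_{i}\overline{S}_{m,i}$ to get the form $\alpha(P)+\beta T+\gamma\ln(T)-\int{G(T)\over T}$; separately it computes $({\partial({dG\over d\xi})\over \partial P})_{T}=\sum\nu_{i}\overline{V}_{i}$ and uses the work relation $P\sum\nu_{i}\overline{V}_{i}=G(T)$ to force $P\alpha'(P)$ constant, hence $\alpha(P)=\lambda+\epsilon\ln(P)$. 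Without some such argument your claim that only ``one $(T,P)$-dependent correction'' survives is unsupported. Moreover your dichotomy (``a $\ln(P)$ term if a gaseous species occurs, and otherwise a $\ln(T)$ term'') is not how the conclusion arises: the paper's computation produces \emph{both} a $\ln(P)$ and a $\ln(T)$ term generically, and $\sigma=0$ is only forced afterwards, in the $\epsilon\neq 0$ case, by equating coefficients with the linear $\Delta G^{\circ}$ at $P=P^{\circ}$; in the $\epsilon=0$ case the $\sigma\ln(T)$ term genuinely survives. Two smaller points: your identification $({\partial H\over \partial \xi})_{T,P}=\Delta H^{\circ}$ at general $P$ is unjustified as stated ($\Delta H^{\circ}$ is defined at standard pressure); and your monotonicity formula ${d\over d\xi}\ln Q=\sum\nu_{i}^{2}/(n_{i,0}+\nu_{i}\xi)$ omits the $-(\sum_{i}\nu_{i})\ln n$ contribution when the activities are mole fractions, which is exactly the complication that forces the paper's Lemma \ref{implies} into its case analysis on $\sum_{i}\nu_{i}$.
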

\begin{proof}
For the first claim, by Lemma \ref{vanhoffhelmholtz}, we have that;\\

$\Delta G^{\circ}(T_{2})={T_{2}\over T_{1}}\Delta G^{\circ}(T_{1})-\Delta H^{\circ}({T_{2}\over T_{1}}-1)$\\

$=T_{2}({(\Delta G^{\circ}(T_{1})-\Delta H^{\circ})\over T_{1}})+\Delta H^{\circ}$\\

For the next claim, by Lemma \ref{equivalences} and the proof of Lemma \ref{vanhoffhelmholtz}, we have that;\\

$({\partial ({dG\over d\xi})_{T,P}\over \partial T})_{P}=({\partial (\sum_{i=1}^{c} \nu_{i}\mu_{i})\over \partial T})_{P}$\\

$=\sum_{i=1}^{c} \nu_{i}({\partial \mu_{i}\over \partial T})_{P}$\\

$=\sum_{i=1}^{c} \nu_{i}({\partial \mu_{i}\over \partial T})_{P,n}$\\

$=-\sum_{i=1}^{c} \nu_{i}\overline{S}_{m,i}$ $(*)$\\

To compute $\overline{S}_{m,i}$, we have by the first law of thermodynamics;\\

$dQ=dU+dL=dU+pdV$\\

where $L$ is the work done by the system, see \cite{F}. We can assume that the liquid mixture is in thermal equilibrium with a mixture of ideal gases in the vapour phase, and using the ideal gas law, the definition of temperature for ideal gases, obtain the calculation of internal energy for the mixture;\\

$U(T,P,n_{1},\ldots,n_{c})=\sum_{i=1}^{c}({3\over 2}N_{A}n_{i}kT-N_{A}n_{i}m_{i}\rho_{i})$\\

where $m_{i}$ is the molecular mass of species $i$, $\rho_{i}$ is the specific latent heat of evaporation of species $i$, which we assume is independent of temperature $T$. By a result in \cite{dep1}, using the fact that entropy difference is independent of path, see \cite{P}, we have that $Q$ is independent of $P$. We then have;\\

$dU=\sum_{i=1}^{c}{3\over 2}N_{A}kTdn_{i}+\sum_{i=1}^{c}{3\over 2}N_{A}kn_{i}dT-\sum_{i=1}^{c}N_{A}m_{i}\rho_{i}dn_{i}$\\

$dQ=\sum_{i=1}^{c}{3\over 2}N_{A}kTdn_{i}+\sum_{i=1}^{c}{3\over 2}N_{A}kn_{i}dT-\sum_{i=1}^{c}N_{A}m_{i}\rho_{i}dn_{i}+dL$\\

${dQ\over T}=\sum_{i=1}^{c}{3\over 2}N_{A}kdn_{i}+\sum_{i=1}^{c}{3\over 2}N_{A}kn_{i}{dT\over T}-\sum_{i=1}^{c}N_{A}m_{i}\rho_{i}{dn_{i}\over T}+{g(T,\overline{n})dT\over T}$\\

$+\sum_{i=1}^{c}h_{i}(T,\overline{n}){dn_{i}\over T}$\\

$({dQ\over T})_{n',T,P}={3\over 2}N_{A}kdn_{i}-N_{A}m_{i}\rho_{i}{dn_{i}\over T}+h_{i}(T,\overline{n}){dn_{i}\over T}$\\

It follows that;\\

$\overline{S}_{m,i}=\int_{\Delta n_{i}=1}({dQ\over T})_{n',T,P}={3\over 2}N_{A}k-{N_{A}m_{i}\rho_{i}\over T}+{k_{i}(T)\over T}$ $(**)$\\

So that, from $(*)$;\\

$({\partial ({dG\over d\xi})_{T,P}\over \partial T})_{P}=-\sum_{i=1}^{c} \nu_{i}({3\over 2}N_{A}k-{N_{A}m_{i}\rho_{i}\over T})-\sum_{i=1}^{c}\nu_{i}{k_{i}(T)\over T}$\\

$=-{3\over 2}N_{A}k(\sum_{i=1}^{c} \nu_{i})+{N_{A}\over T}\sum_{i=1}^{c}\nu_{i}\mu_{i}\rho_{i}-\sum_{i=1}^{c}\nu_{i}{k_{i}(T)\over T}$\\

$=-{3\over 2}N_{A}k(\sum_{i=1}^{c} \nu_{i})+{N_{A}\over T}\sum_{i=1}^{c}\nu_{i}\mu_{i}\rho_{i}-{G(T)\over T}$ $(***)$\\

From $(***)$, which is uniform $P$, we see that $({dG\over d\xi})_{T,P}$ is of the form $\alpha(P)+\beta T+\gamma ln(T)-\int {G(T)\over T}$, $(B)$, where $\{\beta,\gamma\}\subset\mathcal{R}$, and, assuming that $({dG\over d\xi})_{T,P}$ is differentiable, $\alpha \in C^{1}(\mathcal{R})$. By a similar calculation, we have that;\\

$({\partial ({dG\over d\xi})_{T,P}\over \partial P})_{T}=({\partial (\sum_{i=1}^{c} \nu_{i}\mu_{i})\over \partial P})_{T}$\\

$=\sum_{i=1}^{c} \nu_{i}({\partial \mu_{i}\over \partial P})_{T}$\\

$=\sum_{i=1}^{c} \nu_{i}({\partial \mu_{i}\over \partial P})_{T,n}$\\

$=\sum_{i=1}^{c} \nu_{i}({\partial V\over \partial n_{i}})_{T,P,n'}$\\

$=\sum_{i=1}^{c} \nu_{i}\overline{V}_{i}$\\

$=\sum_{i=1}^{c}\nu_{i}{N_{A}m_{i}\over \kappa_{i}(T,P)}$ $(A)$\\

where $\kappa_{i}$ is the density of substance $i$. We also have that;\\

$P(\sum_{i=1}^{c}\nu_{i}{N_{A}m_{i}\over \kappa_{i}(T,P)})=P(\sum_{i=1}^{c} \nu_{i}\overline{V}_{i})=G(T)$ $(dL=PdV)$ $(C)$\\

and from $(A),(B),(C)$;\\

$P({\partial ({dG\over d\xi})_{T,P}\over \partial P})_{T}=G(T)$\\

$=P\alpha'(P)$\\

so that $G(T)=\epsilon$, $\alpha(P)=\lambda+\epsilon ln(P)$\\

$({dG\over d\xi})_{T,P}$ is of the form;\\

$\alpha(P)+\beta T+\gamma ln(T)-\int {G(T)\over T}$\\

$=\lambda+\epsilon ln(P)+\beta T+\gamma ln(T)-\epsilon ln(T)$\\

$=\lambda+\epsilon ln(P)+\beta T+\sigma ln(T)$ $(D)$\\

where $\sigma=\gamma-\epsilon$, $\{\beta,\epsilon,\lambda,\sigma\}\subset\mathcal{R}$.\\

If $\epsilon=0$, then $({dG\over d\xi})_{T,P}$ is independent of $P$, and the components $D_{c}$ are all straight line paths. In this case, if $D_{c}$ intersects the line $P=P^{\circ}$ at $(T_{1},P^{\circ})$, then, for all $P>0$;\\

$c=\Delta G^{\circ}(T_{1})+RT_{1}ln(Q(T_{1},P)$\\

$=\Delta G^{\circ}(T_{1})$\\

implies that $RT_{1}ln(Q(T_{1},P)=0$, so that $Q(T_{1},P)=1$ and, by Lemmas \ref{implies} and \ref{exists}, there exists a straight line feasible chemical path $\gamma$ with $pr_{12}(\gamma)\subset (T=T_{1})$, which is both a dynamic and quasi-chemical equilibrium path. From $(D)$, we have that;\\

$({dG\over d\xi})_{T,P}=\lambda+\beta T+\sigma ln(T)$\\

We have that, for any $T_{1}>0$, we can find $c\in\mathcal{R}$ with $\lambda+\beta T_{1}+\sigma ln(T_{1})=c$, so that $D_{c}$ defines a component straight line path passing through $(T_{1},P^{\circ})$. Then we can apply the previous result.\\

If $\epsilon\neq 0$, for any $c\in\mathcal{R}$, we can solve the equation;\\

$\lambda+\epsilon ln(P)+\beta T+\sigma ln(T)=c$\\

for any given $T>0$ and an appropriate choice of $P(T)$. In particularly, there exists a component $D_{c}$ projecting onto the line $P=P^{0}$ and, by the first part, $\Delta G^{\circ}$ is linear, with;\\

$\Delta G^{\circ}(T_{2})=T_{2}({(\Delta G^{\circ}(T_{1})-\Delta H^{\circ})\over T_{1}})+\Delta H^{\circ}$\\

for an intersection at $(T_{1},P^{\circ})$. We also have, using $(D)$, that;\\

$\Delta G^{\circ}(T_{2})=({\partial G\over \partial \xi})_{T,P}(T_{2},P^{\circ})$\\

$=\lambda+\epsilon ln(P^{\circ})+\beta T_{2}+\sigma ln(T_{2})$\\

so that, equating coefficients;\\

$\sigma=0$\\

$\lambda+\epsilon ln(P^{\circ})=\Delta H^{\circ}$\\

$\beta={(\Delta G^{\circ}(T_{1})-\Delta H^{\circ})\over T_{1}}$\\

$\Delta G^{\circ}(T_{1})=\beta T_{1}+\Delta H^{\circ}$\\

$\Delta G^{\circ}(T_{2})=\beta T_{2}+\Delta H^{\circ}$\\

We can then, using Lemma \ref{gibbs}, obtain a formula for the activity coefficient;\\

$Q(T_{2},P')=e^{{(({\partial G\over \partial \xi})_{T,P}|_{T_{2},P'}-\Delta G^{\circ}(T_{2}))\over RT_{2}}}$\\

$e^{{(\Delta H^{\circ}-\epsilon ln(P'^{\circ})+\epsilon ln(P')+\beta T_{2}-(\beta T_{2}+\Delta H^{\circ}))\over RT_{2}}}$\\

$=e^{{\epsilon ln({P'\over P'^{\circ}})\over RT_{2}}}$ $(W)$\\

as required. The claim about the coefficients being determined is clear from the proof. The determination of the dynamical and quasi-chemical equilibrium lines, see Definitions \ref{constants} and Lemma \ref{feasible}, follows from a simple rearrangement of the formulas $Q(T_{1},P')=c$, for some $c\in\mathcal{R}_{\geq 0}$, using $(W)$, and $({dG\over d\xi})_{T,P}=c$, for some $c\in \mathcal{R}$, using $(D)$, with $\sigma=0$.

\end{proof}
\begin{lemma}
\label{electrical}
With notation as in Lemma \ref{eqlines}, if $\epsilon=0$, then if either;\\

$(i)$. $\beta>0,\sigma>0$\\

$(ii)$. $\beta<0,\sigma<0$\\

$(iii)$. $\beta>0,\sigma<0,\lambda-\sigma+\sigma ln({-\sigma\over \beta})\leq 0$\\

$(iv)$. $\beta<0,\sigma>0, \lambda-\sigma+\sigma ln({-\sigma\over \beta})\geq 0$\\

we can always find $T_{0}>0$, defined as the solution to $\lambda+\beta T+\sigma ln(T)=0$, such that $T=T_{0}$ defines a chemical equilibrium line.

\end{lemma}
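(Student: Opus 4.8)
The plan is to analyze the real-valued function $f(T) = \lambda + \beta T + \sigma \ln(T)$ on the domain $T > 0$, and show that under each of the four hypotheses it attains the value $0$ somewhere; the corresponding $T_0$ then gives the chemical equilibrium line $T = T_0$ via Lemma \ref{equivalences} (since along $T = T_0$ with $\epsilon = 0$ we have $({dG\over d\xi})_{T,P} = f(T_0) = 0$, which is precisely chemical equilibrium, independently of $P$). First I would record the boundary behavior: as $T \to 0^+$, $\sigma \ln(T) \to -\infty \cdot \mathrm{sgn}(\sigma)$ dominates, so $f(T) \to -\infty$ if $\sigma > 0$ and $f(T) \to +\infty$ if $\sigma < 0$; as $T \to +\infty$, the linear term dominates, so $f(T) \to +\infty$ if $\beta > 0$ and $f(T) \to -\infty$ if $\beta < 0$.

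In cases $(i)$ $\beta > 0, \sigma > 0$ and $(ii)$ $\beta < 0, \sigma < 0$, the two endpoints have opposite signs ($-\infty$ and $+\infty$, or $+\infty$ and $-\infty$), so the Intermediate Value Theorem applied to the continuous function $f$ on $(0,\infty)$ yields a zero $T_0$. In cases $(iii)$ and $(iv)$, $\beta$ and $\sigma$ have opposite signs, so both endpoints push $f$ the same way and IVT at the endpoints is not enough; here I would use calculus. Compute $f'(T) = \beta + \sigma/T$, which vanishes only at $T^* = -\sigma/\beta$ (positive precisely because $\beta,\sigma$ have opposite signs), and $f'' (T) = -\sigma/T^2$. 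In case $(iii)$ ($\beta > 0, \sigma < 0$), $f''>0$, so $T^*$ is a global minimum, and $f$ runs from $+\infty$ down to $f(T^*)$ and back up to $+\infty$; thus $f$ has a zero iff $f(T^*) \le 0$. Evaluating, $f(T^*) = \lambda + \beta(-\sigma/\beta) + \sigma\ln(-\sigma/\beta) = \lambda - \sigma + \sigma\ln(-\sigma/\beta)$, which is exactly the quantity hypothesized to be $\le 0$. Symmetrically, in case $(iv)$ ($\beta < 0, \sigma > 0$), $f'' < 0$ so $T^*$ is a global maximum, $f$ runs from $-\infty$ up to $f(T^*)$ and back down to $-\infty$, and $f$ has a zero iff $f(T^*) \ge 0$, which is again the stated hypothesis $\lambda - \sigma + \sigma\ln(-\sigma/\beta) \ge 0$.

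Finally I would note that in the boundary subcases of $(iii)$ and $(iv)$ (equality in the hypothesis) the unique zero is $T_0 = T^* = -\sigma/\beta$, while in the strict cases there are two zeros and either may be taken; in cases $(i)$, $(ii)$ the zero is unique since $f$ is then strictly monotone ($f'$ has constant sign when $\beta,\sigma$ share a sign). The main obstacle is not really an obstacle but a bookkeeping point: one must correctly identify the sign of the second derivative to decide whether the unique critical point is a max or a min, and then match the resulting threshold condition $f(T^*) \lessgtr 0$ with the precise inequality written in hypotheses $(iii)$ and $(iv)$ — the appearance of $-\sigma$ (rather than $+\sigma$) and of $\ln(-\sigma/\beta)$ (with the argument positive exactly in these cases) is what makes the given form of the hypothesis the sharp one. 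Once $T_0$ is produced, invoking Lemma \ref{equivalences} to conclude that $T = T_0$ is a chemical equilibrium line is immediate.
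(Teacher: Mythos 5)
Your proposal is correct and follows essentially the same route as the paper: verify that a root $T_{0}$ of $\lambda+\beta T+\sigma\ln(T)=0$ gives $({\partial G\over \partial \xi})_{T,P}|_{T=T_{0}}=0$, use monotonicity (sign of $\beta+\sigma/T$) plus the boundary limits for cases $(i)$ and $(ii)$, and in cases $(iii)$ and $(iv)$ locate the unique critical point at $T=-\sigma/\beta$ and compare the extremal value $\lambda-\sigma+\sigma\ln(-\sigma/\beta)$ with $0$. Your treatment of the limit as $T\to 0^{+}$ is slightly more explicit than the paper's, but the argument is the same.
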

\begin{proof}
By the proof of Lemma \ref{eqlines}, if $T_{0}$ is a solution to;\\

$\lambda+\beta T+\sigma ln(T)=0$ $(*)$\\

then $({\partial G\over \partial \xi})_{T,P}|_{T=T_{0}}=0$\\

so that $T=T_{0}$ defines a chemical equilibrium line. By considering limits at $\infty$ and noting that the derivative $\beta+{\sigma\over T}$ of $\lambda+\beta T+\sigma ln(T)$, is of a fixed sign in cases $(i),(ii)$, so that $\lambda+\beta T+\sigma ln(T)$ is monotonic, we can see there does exist a unique solution $T_{0}$ in cases $(i),(ii)$. In cases $(iii),(iv)$, computing limits at $\infty$ again, and noting that $\beta+{\sigma\over T}$ is increasing/decreasing, we have that, if the minimum/maximum $T_{1}$ respectively of $\lambda+\beta T+\sigma ln(T)$, given by the solution to;\\

$\beta+{\sigma\over T}=0$\\

so that $T_{1}={-\sigma\over \beta}$, then, in case $(iii)$, if;\\

$\lambda+\beta(T_{1})+\sigma ln(T_{1})<0$\\

iff $\lambda-\sigma+\sigma ln({-\sigma\over \beta})<0$\\

there exist two possible solutions $T_{0}$, with a unique solution if equality holds. Similarly, then, in case $(iv)$, if;\\

$\lambda-\sigma+\sigma ln({-\sigma\over \beta})>0$\\

there exist at least two possible solutions $T_{0}$, with again a unique solution if equality holds.

\end{proof}

\begin{lemma}
\label{allt}
If $\epsilon=0$, we have, for all $T_{1}>0$, that;\\

$({\partial G\over \partial \xi})_{T,P}|_{(T_{1},P_{1})}=({\partial G\over \partial \xi})_{T,P}|_{(T_{1},P_{1}^{\circ})}$\\

iff;\\

$E(T_{1},P_{1})=E(T_{1},P_{1}^{\circ})=E^{\circ}(T_{1})$\\

where $G$ is the Gibbs energy function for the charged and uncharged species.
\end{lemma}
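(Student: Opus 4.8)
The plan is to reduce everything to the Nernst-type decomposition established in the proof of Lemma~\ref{nernst}, together with the $P$-independence of the uncharged part supplied by Lemma~\ref{eqlines} under the hypothesis $\epsilon=0$.

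First I would recall that, exactly as in the proof of Lemma~\ref{nernst} but \emph{without} invoking chemical equilibrium, one has the pointwise identity
\[
\left(\frac{\partial G}{\partial \xi}\right)_{T,P} = \left(\frac{\partial G_{chem'}}{\partial \xi}\right)_{T,P} + 2EF ,
\]
where $G_{chem'}$ is the Gibbs energy of the uncharged species of the standard cell and $E=\phi(R)-\phi(L)=E(T,P,n_{1},\ldots,n_{c})$ is the cell potential of Definition~\ref{constants}. Indeed, the manipulation in Lemma~\ref{nernst} is just a regrouping of $\sum_{i=1}^{c}\nu_{i}\mu_{i}$ via the splitting $\mu_{i}=\mu_{i,chem}+Fz_{i}\phi(\overline{x}_{i})$ (equation $(*)$ there); the trailing ``$=0$'' was the only place equilibrium was used, and it can be dropped here since we never need it.

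Next I would apply Lemma~\ref{eqlines} to $G_{chem'}$: since $\epsilon=0$, the quantity $\left(\frac{\partial G_{chem'}}{\partial \xi}\right)_{T,P}=\lambda+\beta T+\sigma\ln(T)$ is independent of $P$. Hence, fixing any $T_{1}>0$, the two evaluations at $(T_{1},P_{1})$ and $(T_{1},P_{1}^{\circ})$ agree, and subtracting the corresponding instances of the displayed identity gives
\[
\left(\frac{\partial G}{\partial \xi}\right)_{(T_{1},P_{1})}-\left(\frac{\partial G}{\partial \xi}\right)_{(T_{1},P_{1}^{\circ})} = 2F\bigl(E(T_{1},P_{1})-E(T_{1},P_{1}^{\circ})\bigr).
\]
Since $F\neq 0$, the left side vanishes iff $E(T_{1},P_{1})=E(T_{1},P_{1}^{\circ})$, and $P_{1}^{\circ}$ being the standard pressure we have $E(T_{1},P_{1}^{\circ})=E^{\circ}(T_{1})$ by Definition~\ref{constants}. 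Thus $\left(\frac{\partial G}{\partial \xi}\right)_{(T_{1},P_{1})}=\left(\frac{\partial G}{\partial \xi}\right)_{(T_{1},P_{1}^{\circ})}$ iff $E(T_{1},P_{1})=E(T_{1},P_{1}^{\circ})=E^{\circ}(T_{1})$, and running this for every $T_{1}>0$ yields the lemma.

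The main obstacle is bookkeeping rather than mathematics: one must check that the regrouping of $\sum\nu_{i}\mu_{i}$ in Lemma~\ref{nernst} is a genuine algebraic identity (so the equilibrium hypothesis is not secretly needed), that the $E$ occurring there is literally the $E$ of Definition~\ref{constants}, and that the phrase ``$\epsilon=0$, with notation as in Lemma~\ref{eqlines}'' is being read as a condition on the uncharged subsystem $G_{chem'}$ — which is precisely what kills the term $\epsilon\ln(P_{1}/P_{1}^{\circ})$ that would otherwise appear on the right side above and break the equivalence.
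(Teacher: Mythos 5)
Your proof is correct and follows essentially the same route as the paper: it invokes the decomposition $({\partial G\over \partial \xi})_{T,P}=({\partial G_{chem'}\over \partial \xi})_{T,P}+2EF$ from $(\dag)$ of Lemma \ref{nernst} and the $P$-independence of $({\partial G_{chem'}\over \partial \xi})_{T,P}$ from Lemma \ref{eqlines} when $\epsilon=0$, then subtracts the two evaluations. Your additional remarks — that the regrouping is an algebraic identity not requiring equilibrium, and that $E(T_{1},P_{1}^{\circ})=E^{\circ}(T_{1})$ by Definition \ref{constants} — are accurate points the paper leaves implicit.
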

\begin{proof}
By $(\dag)$ of Lemma \ref{nernst}, we have that;\\

$({\partial G\over \partial \xi})_{T,P}=({\partial G_{chem'}\over \partial \xi})_{T,P}+2EF$ $(*)$\\

By Lemma \ref{eqlines}, we have that $({\partial G_{chem'}\over \partial \xi})_{T,P}$ is independent of $P$, in particularly, we have that;\\

$({\partial G_{chem'}\over \partial \xi})_{T_{1},P_{1}}=({\partial G_{chem'}\over \partial \xi})_{T_{1},P_{1}^{\circ}}$ $(**)$\\

so that, combining $(*),(**)$, we obtain the result.
\end{proof}

\begin{lemma}
\label{alltelectrical}
We have, for all $T_{1}>0,P_{1}>0$, that;\\

$2F(E(T_{1},P_{1})-E^{\circ}(T_{1}))=({\partial G\over \partial \xi})_{T,P}|_{(T_{1},P_{1})}-({\partial G\over \partial \xi})_{T,P}|_{(T_{1},P_{1}^{\circ})}-RT_{1}ln(Q(T_{1},P_{1}))$\\

\end{lemma}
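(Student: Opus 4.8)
The plan is to combine the Nernst-type identity $(\dag)$ from the proof of Lemma~\ref{nernst}, which splits the full Gibbs derivative into its uncharged part plus the electrochemical term $2EF$, with the Gibbs formula of Lemma~\ref{gibbs} applied to the restricted energy $G_{chem'}$. Recall that $(\dag)$ reads $({\partial G\over \partial \xi})_{T,P}=({\partial G_{chem'}\over \partial \xi})_{T,P}+2EF$, valid at arbitrary $(T,P)$ once one no longer imposes chemical equilibrium (the equilibrium hypothesis in Lemma~\ref{nernst} was only used to set the left side to $0$; the algebraic decomposition of $U$ into $U_{chem}+U_{el}$ and the computation of $\mu_i$ via $(*)$ hold unconditionally). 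So first I would record $(\dag)$ at $(T_1,P_1)$ and again at $(T_1,P_1^{\circ})$:
\[
({\partial G\over \partial \xi})_{T,P}|_{(T_1,P_1)}=({\partial G_{chem'}\over \partial \xi})_{T,P}|_{(T_1,P_1)}+2FE(T_1,P_1),
\]
\[
({\partial G\over \partial \xi})_{T,P}|_{(T_1,P_1^{\circ})}=({\partial G_{chem'}\over \partial \xi})_{T,P}|_{(T_1,P_1^{\circ})}+2FE^{\circ}(T_1).
\]

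Next I would subtract these two equations, so that
\[
2F\bigl(E(T_1,P_1)-E^{\circ}(T_1)\bigr)=\Bigl(({\partial G\over \partial \xi})_{T,P}|_{(T_1,P_1)}-({\partial G\over \partial \xi})_{T,P}|_{(T_1,P_1^{\circ})}\Bigr)-\Bigl(({\partial G_{chem'}\over \partial \xi})_{T,P}|_{(T_1,P_1)}-({\partial G_{chem'}\over \partial \xi})_{T,P}|_{(T_1,P_1^{\circ})}\Bigr).
\]
It then remains to identify the second bracket with $RT_1\ln(Q(T_1,P_1))$. For this I would invoke Lemma~\ref{gibbs} for $G_{chem'}$ at both points, giving $({\partial G_{chem'}\over \partial \xi})_{T,P}|_{(T_1,P_1)}=\Delta G_{chem'}^{\circ}(T_1)+RT_1\ln(Q_{chem'}(T_1,P_1))$ and $({\partial G_{chem'}\over \partial \xi})_{T,P}|_{(T_1,P_1^{\circ})}=\Delta G_{chem'}^{\circ}(T_1)+RT_1\ln(Q_{chem'}(T_1,P_1^{\circ}))$; since $Q_{chem'}(T_1,P_1^{\circ})=1$ (last claim of Lemma~\ref{equivalences}), the difference collapses to $RT_1\ln(Q_{chem'}(T_1,P_1))$, and $\Delta G_{chem'}^{\circ}(T_1)$ cancels. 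Finally I would note that in the standard-cell setup the charged species contribute trivially to the activity quotient, so $Q_{chem'}(T_1,P_1)=Q(T_1,P_1)$, yielding exactly the claimed identity.

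The main obstacle is bookkeeping rather than conceptual: one must be careful that $(\dag)$ is being used as a genuine identity holding off equilibrium, and that the $\Delta G_{chem'}^{\circ}(T_1)$ terms truly cancel between the two evaluation points (they do, since $\Delta G_{chem'}^{\circ}$ depends only on $T$). A secondary point to state cleanly is the identification $Q_{chem'}=Q$, i.e. that the activity quotient as defined in Definition~\ref{constants} already ranges only over species whose chemical potentials enter $G_{chem'}$; this is implicit in the treatment of $(\dag\dag)$ and $(\dag\dag\dag\dag)$ in the proof of Lemma~\ref{nernst}, so I would simply refer back to that passage. No new thermodynamic input is needed beyond Lemmas~\ref{nernst}, \ref{gibbs} and~\ref{equivalences}.
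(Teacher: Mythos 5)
Your proposal is correct and follows essentially the same route as the paper: both use the off-equilibrium decomposition $({\partial G\over \partial \xi})_{T,P}=({\partial G_{chem'}\over \partial \xi})_{T,P}+2EF$ at the two points $(T_{1},P_{1})$ and $(T_{1},P_{1}^{\circ})$, subtract, and collapse the difference of the $G_{chem'}$ terms via Lemma \ref{gibbs} together with $Q(T_{1},P_{1}^{\circ})=1$. The only cosmetic difference is that the paper substitutes $\Delta G^{\circ}_{chem'}(T_{1})$ for $({\partial G_{chem'}\over \partial \xi})_{T,P}|_{T_{1},P_{1}^{\circ}}$ directly in its equation $(**)$ rather than cancelling it at the end as you do.
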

\begin{proof}
Following the proof of Lemma \ref{nernst}, we have that;\\

$({\partial G\over \partial \xi})_{T,P}|_{T_{1},P_{1}}=({\partial G_{chem'}\over \partial \xi})_{T,P}|_{T_{1},P_{1}}+2E(T_{1},P_{1})F$ $(*)$\\

$2E^{\circ}(T_{1})F=({\partial G\over \partial \xi})_{T,P}|_{T_{1},P_{1}^{\circ}}-({\partial G_{chem'}\over \partial \xi})_{T,P}|_{T_{1},P_{1}^{\circ}}$\\

$=({\partial G\over \partial \xi})_{T,P}|_{T_{1},P_{1}^{\circ}}-\Delta G^{\circ}_{chem'}(T_{1})$ $(**)$\\

so from $(*),(**)$ and Lemma \ref{gibbs};\\

$2E(T_{1},P_{1})F=({\partial G\over \partial \xi})_{T,P}|_{T_{1},P_{1}}-({\partial G_{chem'}\over \partial \xi})_{T,P}|_{T_{1},P_{1}}$\\

$=({\partial G\over \partial \xi})_{T,P}|_{T_{1},P_{1}}-(\Delta G^{\circ}_{chem'}(T_{1})+RT_{1}ln(Q_{chem'}(T_{1},P_{1})))$\\

$2E(T_{1},P_{1})F-2E^{\circ}(T_{1})F=({\partial G\over \partial \xi})_{T,P}|_{T_{1},P_{1}}-(\Delta G^{\circ}_{chem'}(T_{1})+RT_{1}ln(Q_{chem'}(T_{1},P_{1})))$\\

$-(({\partial G\over \partial \xi})_{T,P}|_{T_{1},P_{1}^{\circ}}-\Delta G^{\circ}_{chem'}(T_{1}))$\\

$=({\partial G\over \partial \xi})_{T,P}|_{T_{1},P_{1}}-(({\partial G\over \partial \xi})_{T,P}|_{T_{1},P_{1}^{\circ}}-RT_{1}ln(Q_{chem'}(T_{1},P_{1}))$\\

\end{proof}

\begin{defn}{Ideal Solution}\\
\label{feasiblestraight}
\\
\indent We let $pr_{1}$ is the projection onto the first factor, $pr_{12}$ be the projection onto the first two factors, in coordinates $(T,P,n_{1},\ldots,n_{c})$. We define a feasible chemical path $\gamma:[0,1]\rightarrow \mathcal{R}_{>0}^{2+c}$, such that if $n_{i}(t)=pr_{2+i}(t)$, for $1\leq i\leq c$, then;\\

${n_{i}'\over \nu_{i}}={n_{j}'\over \nu_{j}}$, for $1\leq i<j\leq c$\\

where $\{\nu_{1},\ldots,\nu_{c}\}$ are the stoichiometric coefficients. If $n(t)=\sum_{i=1}^{c}n_{i}(t)$, and $x_{i}(t)=a_{i}(t)={n_{i}\over n}(t)$, then $Q(pr_{12}(\gamma(t)))=\prod_{i=1}^{c}a_{i}(t)^{\nu_{i}}$ and ${n_{i,0}\over n_{0}}=f_{i}(pr_{12}(\gamma(0)))$, where $f_{i}=e^{\mu_{i}-\mu_{i}^{\circ}\over RT}$, $1\leq i\leq c$, see Section \ref{path}. Note that $n>0$ and the $x_{i}$ are well defined.\\

We define a chemical equilibrium path to be a feasible chemical path $\gamma$ with the additional property that chemical equilibrium exists at $pr_{12}(\gamma(t))$, for $t\in [0,1]$. We define a quasi-chemical equilibrium path to be a feasible chemical path $\gamma$ with the additional property that $({\partial G\over \partial \xi})_{T,P}|_{pr_{12}([0,1])}$ is constant. We define a dynamic equilibrium path to be a feasible chemical path $\gamma$ with the additional property that $pr_{2+c}(\gamma)'(t)=0$, for $1\leq i\leq c$.\\

We define a straight line feasible path from $(T,P^{\circ})$ to $(T,P)$ to be a map $\gamma:[0,1]\rightarrow \mathcal{R}_{\geq 0}^{2+c}$ such that $pr_{12}\gamma(0)=(T,P^{\circ})$, $pr_{12}\gamma(1)=(T,P)$, $pr_{1}(\gamma(t))=T$. We say that a point $(T,P)$ is a simple dynamic equilibrium point, if it lies on the locus $Q(T,P)=1$.
\end{defn}

\begin{lemma}
\label{implies}
In an ideal solution, a straight line chemical equilibrium path from $(T,P^{\circ})$ to $(T,P)$ is a dynamic equilibrium path. Every $(T,P^{\circ})$ is a simple dynamic equilibrium point.

\end{lemma}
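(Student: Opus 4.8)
The plan is to exploit the formula for the activity coefficient along a straight-line path at constant temperature together with the characterisation of chemical equilibrium from Lemma~\ref{equivalences}. First I would recall that a straight line feasible path $\gamma$ from $(T,P^{\circ})$ to $(T,P)$ has $pr_{1}(\gamma(t))=T$ constant, so $T'(t)=0$ along $\gamma$. The hypothesis that $\gamma$ is a chemical equilibrium path means $({\partial G\over \partial\xi})_{T,P}|_{pr_{12}(\gamma(t))}=0$ for all $t$, hence by Lemma~\ref{equivalences} (penultimate claim) we have $Q(pr_{12}(\gamma(t)))=e^{-\Delta G^{\circ}(T)/RT}$ for every $t$. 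Since $T$ is constant along $\gamma$ and $\Delta G^{\circ}$ depends only on $T$, the right-hand side is a fixed constant, so $Q$ is constant along $\gamma$.

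Next I would translate constancy of $Q$ into the vanishing of the $n_{i}'$. Using the ideal-solution description from Definition~\ref{feasiblestraight}, along a feasible chemical path $a_{i}(t)=x_{i}(t)=n_{i}(t)/n(t)$, and $Q(pr_{12}(\gamma(t)))=\prod_{i=1}^{c}x_{i}(t)^{\nu_{i}}$. Differentiating $\ln Q = \sum_{i}\nu_{i}\ln x_{i}$ along $\gamma$ and using $x_{i}'=(n_{i}'n - n_{i}n')/n^{2}$ together with the feasibility relations ${n_{i}'/\nu_{i}}={n_{j}'/\nu_{j}}$ (call this common value $\dot\xi$, so $n_{i}'=\nu_{i}\dot\xi$ and $n'=\dot\xi\sum_{i}\nu_{i}$), one computes $\frac{d}{dt}\ln Q = \dot\xi\,\big(\sum_{i}\nu_{i}^{2}/n_{i} - (\sum_{i}\nu_{i})^{2}/n\big)$. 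By Cauchy--Schwarz the bracketed quantity $\sum_{i}\nu_{i}^{2}/n_{i} - (\sum_i \nu_i)^2/n$ is strictly positive unless all $\nu_{i}$ are proportional to $n_{i}$, and in the generic reaction case it is nonzero; since $\frac{d}{dt}\ln Q=0$ this forces $\dot\xi(t)=0$, hence $n_{i}'(t)=\nu_{i}\dot\xi(t)=0$ for all $i$ and all $t$. Thus $\gamma$ is a dynamic equilibrium path in the sense of Definition~\ref{feasiblestraight}.

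For the second assertion, at $(T,P^{\circ})$ the last claim of Lemma~\ref{equivalences} gives $a_{i}(T,P^{\circ})=1$ for $1\le i\le c$, hence $Q(T,P^{\circ})=\prod_{i=1}^{c}a_{i}^{\nu_{i}}=1$, so $(T,P^{\circ})$ lies on the locus $Q=1$ and is by definition a simple dynamic equilibrium point.

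The main obstacle I anticipate is the step extracting $\dot\xi=0$ from $\frac{d}{dt}\ln Q=0$: the Cauchy--Schwarz bracket can in principle vanish identically along a path on which $n_{i}(t)$ stays proportional to $\nu_{i}$, and one must argue that this degenerate configuration is incompatible with the constraints ${n_{i,0}/n_{0}}=f_{i}(pr_{12}(\gamma(0)))$ and with $\gamma$ genuinely moving in $P$ — equivalently, one should check that a nondegenerate reaction cannot have $Q$ locally constant while the $n_i$ change. If one prefers to avoid this, an alternative is to observe directly that constancy of $Q$ along $\gamma$, combined with the fact that $Q$ determines the $x_{i}$ (via the $f_{i}$ and the conservation relation), pins down the $x_{i}(t)$ and hence, together with total-mass bookkeeping, the $n_{i}(t)$ themselves as constants; this is the route I would actually write up, deferring the functional-dependence details to Section~\ref{path}.
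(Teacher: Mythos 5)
Your proposal is correct in substance but takes a genuinely different route from the paper at the decisive step. Both arguments begin the same way: chemical equilibrium plus constant $T$ forces $Q=e^{-\Delta G^{\circ}(T)/RT}$ to be constant along the path, and differentiating $\ln Q=\sum_{i}\nu_{i}\ln x_{i}$ with the feasibility relations $n_{i}'=\nu_{i}\dot\xi$, $n'=\dot\xi\sum_{i}\nu_{i}$ yields $\frac{d}{dt}\ln Q=\dot\xi\bigl(\sum_{i}\nu_{i}^{2}/n_{i}-(\sum_{i}\nu_{i})^{2}/n\bigr)$, which is exactly the paper's expression $\sum_{i}\nu_{i}(n_{i}'/n_{i}-n'/n)$. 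From here the paper eliminates $n_{c}$ via the linkage relations, reads the identity as a polynomial relation $p(n_{c})=0$, and runs a case analysis on whether $\sum_{i=1}^{c}\nu_{i}$ and $\sum_{i=1}^{c-1}\nu_{i}$ vanish, including a higher-derivative limiting argument and a change of pivot. You instead dispose of the bracket in one stroke by Cauchy--Schwarz, and this actually works: writing $(\sum_{i}|\nu_{i}|)^{2}=(\sum_{i}\frac{|\nu_{i}|}{\sqrt{n_{i}}}\sqrt{n_{i}})^{2}\le n\sum_{i}\nu_{i}^{2}/n_{i}$ gives $\sum_{i}\nu_{i}^{2}/n_{i}\ge(\sum_{i}|\nu_{i}|)^{2}/n\ge(\sum_{i}\nu_{i})^{2}/n$, and equality throughout forces $|\nu_{i}|=\kappa n_{i}$ together with all $\nu_{i}$ of one sign, i.e.\ $\nu_{i}=\pm\kappa n_{i}$ with $n_{i}>0$. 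The degenerate configuration you flag as ``the main obstacle'' therefore cannot occur for any reaction having at least one reactant and one product, since the $\nu_{i}$ then have mixed signs; you should state this explicitly rather than appeal to genericity or defer to Section~\ref{path}, because with that one sentence your argument is complete and considerably shorter and more transparent than the paper's polynomial/limit case analysis (it also makes the strict positivity of the bracket, hence the sign of $\frac{d}{dt}\ln Q$ relative to $\dot\xi$, visible, which the paper's route obscures). The second claim is handled identically in both: $Q(T,P^{\circ})=1$ by the last claim of Lemma~\ref{equivalences}.
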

\begin{proof}
By Lemma \ref{equivalences}, and the definition of activities for an ideal solution, we have that;\\

$1=Q(T,P)=\prod_{i=1}^{c}a_{i}^{\nu_{i}}=\prod_{i=1}^{c}x_{i}^{\nu_{i}}$ $(*)$\\

and ${n_{i}'\over \nu_{i}}={n_{j}'\over \nu_{j}}$, for $1\leq i,j\leq c$, $\sum_{i=1}^{c}n_{i}=n$\\

Using the relation $(*)$, differentiating and using the facts that, for $1\leq i\leq c-1$;\\

$n_{i}'={\nu_{i}n_{c}'\over \nu_{c}}$, $n_{i}={\nu_{i}n_{c}\over \nu_{c}}+d_{i}$ $(!)$\\

we obtain that;\\

$(\prod_{i=1}^{c}x_{i}^{\nu_{i}})'=\sum_{i=1}^{c}\nu_{i}x_{i}^{\nu_{i}-1}x_{i}'\prod_{j\neq i}x_{j}^{\nu_{j}}$\\

$=\sum_{i=1}^{c}\nu_{i}x_{i}^{\nu_{i}-1}x_{i}'x_{i}^{-\nu_{i}}$\\

$=\sum_{i=1}^{c}\nu_{i}x_{i}^{-1}x_{i}'$\\

$=\sum_{i=1}^{c}\nu_{i}{n\over n_{i}}{(n_{i}'n-n_{i}n')\over n^{2}}$\\

$=\sum_{i=1}^{c}\nu_{i}({n_{i}'\over n_{i}}-{n'\over n})$\\

$=\sum_{i=1}^{c-1}{\nu_{i}^{2}n_{c}'\over \nu_{i}n_{c}+\nu_{c}d_{i}}+{\nu_{c}n_{c}'\over n_{c}}-
\lambda({\sum_{i=1}^{c}n_{i}'\over \sum_{i=1}^{c}n_{i}})$\\

$=\sum_{i=1}^{c-1}{\nu_{i}^{2}n_{c}'\over \nu_{i}n_{c}+\nu_{c}d_{i}}+{\nu_{c}n_{c}'\over n_{c}}-
\lambda({(\sum_{i=1}^{c-1}{\nu_{i}\over \nu_{c}}+1)n_{c}'\over (\sum_{i=1}^{c-1}{\nu_{i}\over \nu_{c}}+1)n_{c}+\sum_{i=1}^{c-1}d_{i}})$\\

$=0$ $(B)$\\

where $\lambda=\sum_{i=1}^{c}\nu_{i}$. If $\nu_{c}'\neq 0$, we obtain that;\\

$=\sum_{i=1}^{c-1}{\nu_{i}^{2}\over \nu_{i}n_{c}+\nu_{c}d_{i}}+{\nu_{c}\over n_{c}}-
\lambda({(\sum_{i=1}^{c-1}{\nu_{i}\over \nu_{c}}+1)\over (\sum_{i=1}^{c-1}{\nu_{i}\over \nu_{c}}+1)n_{c}+\sum_{i=1}^{c-1}d_{i}})=0$\\

so that;\\

$\sum_{i=1}^{c-1}\nu_{i}^{2}n_{c}(\alpha n_{c}+\beta)\prod_{j\neq i}(\nu_{j}n_{c}+\nu_{c}d_{j})+\nu_{c}(\alpha n_{c}+\beta)\prod_{i=1}^{c-1}(\nu_{i}n_{c}+\nu_{c}d_{i})$\\

$-\lambda n_{c}\prod_{i=1}^{c-1}(\nu_{i}n_{c}+\nu_{c}d_{i})=0$\\

where $\alpha=\sum_{i=1}^{c-1}{\nu_{i}\over \nu_{c}}+1$ and $\beta=\sum_{i=1}^{c-1}d_{i}$\\

which we can write in the form;\\

$\sum_{j=0}^{c}\gamma_{j}\nu_{c}^{j}=0$\\

We have that;\\

$\gamma_{c}=\sum_{i=1}^{c-1}\nu_{i}^{2}\alpha\prod_{j\neq i}\nu_{j}+\alpha \nu_{c}\prod_{i=1}^{c-1}\nu_{i}-\lambda\prod_{i=1}^{c-1}\nu_{i}$\\

$=\alpha\delta\sum_{i=1}^{c-1}\nu_{i}+\alpha\delta\nu_{c}-\lambda\delta$\\

$=\delta(\alpha(\sum_{i=1}^{c}\nu_{i})-\lambda)$\\

$=\delta\lambda(\alpha-1)$\\

Noting that $\delta\neq 0$ and $\alpha-1=\sum_{i=1}^{c-1}{\nu_{i}\over \nu_{c}}$, we have that $\gamma_{c}\neq 0$ iff $\sum_{i=1}^{c-1}\nu_{i}\neq 0$ and $\sum_{i=1}^{c}\nu_{i}\neq 0$. In this case, we obtain a nontrivial polynomial relation $p(n_{c})=0$, so that, by continuity and discreteness of roots, $n_{c}$ is a constant and $n_{c}'=0$. By the connecting relations $(!)$, we obtain that $n_{j}'=0$ as well, for $\leq j\leq c-1$.\\

If $\sum_{i=1}^{c}\nu_{i}=0$ $(G)$ then $\sum_{i=1}^{c-1}\nu_{i}\neq 0$, and, we have, from $(*)$, that;\\

$\prod_{i=1}^{c}n_{i}^{\nu_{i}}=n^{\sum_{i=1}^{c}\nu_{i}}=1$ $(A)$\\

and, following the calculation $(B)$, we obtain;\\

$\sum_{i=1}^{c-1}{\nu_{i}^{2}n_{c}'\over \nu_{i}n_{c}+\nu_{c}d_{i}}+{\nu_{c}n_{c}'\over n_{c}}=0$\\

so that, if $n_{c}'\neq 0$;\\

$\sum_{i=1}^{c-1}{\nu_{i}^{2}\over \nu_{i}n_{c}+\nu_{c}d_{i}}+{\nu_{c}\over n_{c}}=0$\\

and, differentiating $k$ times, for $k\geq 0$, cancelling $n_{c}'$ if $n_{c}'\neq 0$, and using the chain rule, we obtain the relations;\\

$\sum_{i=1}^{c-1}{\nu_{i}^{k+2}\over (\nu_{i}n_{c}+\nu_{c}d_{i})^{k+1}}+{\nu_{c}\over n_{c}^{k+1}}=0$\\

Let $g_{i}={1\over \nu_{i}+{\nu_{c}d_{i}\over n_{c}}}={n_{c}\over \nu_{c}n_{i}}<0$, for $1\leq i\leq c-1$. Then, for $k\geq 0$;\\

$\sum_{i=1}^{c-1}\nu_{i}^{k+2}g_{i}^{k+1}+\nu_{c}=\sum_{i=1}^{c-1}\nu_{i}(\nu_{i}g_{i})^{k+1}+\nu_{c}$\\

$=0$ $(E)$\\

If $g_{i}=-1$, then;\\

${n_{c}\over \nu_{c}n_{i}}={n_{c}\over \nu_{c}({\nu_{i}n_{c}\over \nu_{c}}+d_{i})}={n_{c}\over \nu_{i}n_{c}+\nu_{c}d_{i}}=-1$\\

so that;\\

$n_{c}=-(\nu_{i}n_{c}+\nu_{c}d_{i})$\\

$(1+\nu_{i})n_{c}=\nu_{c}d_{i}$\\

Then if $n_{c}'\neq 0$, we must have that $\nu_{i}=-1$, $d_{i}=0$. Rescaling each $\nu_{i}$ by a factor of $2$, we still have the conditions $(*)$, $(E)$,$(G)$, so we can assume that $|\nu_{i}|\geq 2$, which is a contradiction. Hence, we can assume that $|g_{i}|\neq 1$, for $1\leq i\leq c-1$, so that taking the limit as $k\rightarrow\infty$, with $k$ even, so that $\nu_{i}^{k+2}>0$, $g_{i}^{k+1}<0$, for $1\leq i\leq c-1$, we obtain a contradiction, and conclude that $n_{c}'=0$, and $n_{i}'=0$, for $1\leq i\leq c-1$.\\

If for every $c-1$ element subset $I_{j}\subset \{\nu_{1},\ldots \nu_{c}\}$, $1\leq j\leq c$, we have that $\sum_{i\in I_{j}}\nu_{i}=0$, then clearly;\\

$\sum_{1\leq i\leq c}\nu_{i}=\sum_{i\in I_{j}}\nu_{i}+\nu_{j}$\\

$=\nu_{j}$ for $1\leq j\leq c$\\

which we can exclude. It follows that there exists some $j_{0}$, with $1\leq j_{0}\leq c$, such that $\sum_{i\in I_{j_{0}}}\nu_{i}\neq 0$. Using $n_{j_{0}}$ as the pivot and following the above proof, replacing $n_{c}$ by $n_{j_{0}}$, we can, without loss of generality, assume that $\sum_{i=1}^{c-1}\nu_{i}\neq 0$, and the proof is complete.\\

The second claim follows from the fact in Lemma \ref{equivalences} that $Q(T,P^{\circ})=1$.\\

\end{proof}
\begin{defn}
\label{curves}
For $c\in\mathcal{R}_{>0}$, we define $C_{c}\subset \mathcal{R}^{2}$ to be the zero locus of $Q(T,P)-c$. We define $D\subset \mathcal{R}^{2}$ to be the condition of chemical equilibrium. We define $D_{c}$ to be the zero locus of $({\partial G\over \partial \xi})_{T,P}-c=0$ .

\end{defn}

\begin{lemma}
\label{exists}
For every smooth curve $W\subset \mathcal{R}^{2}$, there exists a locally feasible path $\gamma:[0,1]\rightarrow \mathcal{R}^{2+c}$ with $pr_{12}(\gamma)\subset W$.

\end{lemma}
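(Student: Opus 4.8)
The plan is to reduce the statement to a purely local one and then lift the curve by the inverse function theorem. Since the conclusion only requires $pr_{12}(\gamma)\subseteq W$, it suffices to produce a feasible chemical path, in the sense of Definition \ref{feasiblestraight}, whose $(T,P)$--projection is an arbitrarily short sub-arc of $W$. So I fix a point $w_{0}=(T_{0},P_{0})\in W$, choose a smooth parametrisation $c_{W}:[0,1]\rightarrow W$ of an arc with $c_{W}(0)=w_{0}$, and put $pr_{12}(\gamma(t))=c_{W}(t)$. What remains is to build positive smooth functions $n_{1}(t),\ldots,n_{c}(t)$ satisfying the stoichiometric proportionality $n_{i}'/\nu_{i}=n_{j}'/\nu_{j}$, the activity identity $Q(c_{W}(t))=\prod_{i=1}^{c}a_{i}(t)^{\nu_{i}}$ with $a_{i}(t)=n_{i}(t)/\sum_{j}n_{j}(t)$, and the initial condition $n_{i,0}/n_{0}=f_{i}(w_{0})$, where $f_{i}=e^{(\mu_{i}-\mu_{i}^{\circ})/RT}$ depends on $(T,P)$ alone by the idealised law of Definition \ref{constants}, so that $f_{i}=a_{i}$ there.

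The proportionality condition is precisely the assertion that $n_{i}(t)=n_{i,0}+\nu_{i}\xi(t)$ for a single extent function $\xi$ with $\xi(0)=0$. I would therefore fix $n_{i,0}=n_{0}f_{i}(w_{0})>0$ for a scaling $n_{0}>0$ (compatible with $n_{0}=\sum_{i}n_{i,0}$ since $\sum_{i}f_{i}(w_{0})=\sum_{i}a_{i}(w_{0})=1$ in the ideal model), put $n(t)=n_{0}+\lambda\xi(t)$ with $\lambda=\sum_{i=1}^{c}\nu_{i}$, and define
\[
\Phi(\xi)=\prod_{i=1}^{c}\left(\frac{n_{i,0}+\nu_{i}\xi}{n_{0}+\lambda\xi}\right)^{\nu_{i}}.
\]
This $\Phi$ is smooth near $0$ and $\Phi(0)=\prod_{i}(n_{i,0}/n_{0})^{\nu_{i}}=\prod_{i}f_{i}(w_{0})^{\nu_{i}}=Q(w_{0})$, using $f_{i}=a_{i}$ and $Q=\prod_{i}a_{i}^{\nu_{i}}$ from Definition \ref{constants}. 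The whole problem now reduces to solving $\Phi(\xi(t))=Q(c_{W}(t))$ for a smooth $\xi$ with $\xi(0)=0$; once this is done, $n_{i}(t)=n_{i,0}+\nu_{i}\xi(t)$ automatically verifies all of the feasibility conditions and defines $\gamma$.

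For the solvability I would invoke the inverse function theorem, and so compute
\[
\left.\frac{d}{d\xi}\ln\Phi\right|_{\xi=0}=\sum_{i=1}^{c}\frac{\nu_{i}^{2}}{n_{i,0}}-\frac{\lambda^{2}}{n_{0}},
\]
which by Cauchy--Schwarz, $\left(\sum_{i}|\nu_{i}|\right)^{2}\leq\left(\sum_{i}\nu_{i}^{2}/n_{i,0}\right)\left(\sum_{i}n_{i,0}\right)$, is $\geq 0$ and vanishes only if all $\nu_{i}$ have a single sign and $\nu_{i}/n_{i,0}$ is constant, i.e.\ $f_{i}(w_{0})=\nu_{i}/\lambda$ for every $i$. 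Away from such base points $\Phi'(0)=Q(w_{0})\left(\sum_{i}\nu_{i}^{2}/n_{i,0}-\lambda^{2}/n_{0}\right)\neq 0$, so there is a smooth local inverse $\Psi$ of $\Phi$ with $\Psi(Q(w_{0}))=0$; shrinking the arc (and reparametrising its domain back to $[0,1]$) so that $Q\circ c_{W}$ stays in the domain of $\Psi$ and so that every $n_{i}(t)$ and $n(t)$ remains positive, the choice $\xi(t)=\Psi(Q(c_{W}(t)))$ is smooth, vanishes at $0$, and yields the required locally feasible $\gamma$.

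The only genuine obstacle is the degenerate locus of base points where $f_{i}(w_{0})=\nu_{i}/\lambda$ holds simultaneously for all $i$ (possible only when all $\nu_{i}$ share one sign and $\lambda\neq0$), at which $\Phi$ is critical at $\xi=0$. I expect to clear these in the spirit of the casework in Lemma \ref{implies}: if the $f_{i}$ are not all constant along $W$ one simply slides $w_{0}$ to a point of $W$ with $f_{i}(w_{0})\neq\nu_{i}/\lambda$ for some $i$; and if all the $f_{i}$ are constant along $W$, then $Q=\prod_{i}f_{i}^{\nu_{i}}$ is constant along $W$, whence the constant choice $\xi\equiv0$, $n_{i}\equiv n_{i,0}$ already forces $\prod_{i}a_{i}^{\nu_{i}}=Q$ and is feasible directly. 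Confirming that the exceptional set along $W$ really is this thin, and that positivity and smoothness survive the shrinking, is where the care is concentrated.
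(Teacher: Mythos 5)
Your proposal is correct, and although it rests on the same core mechanism as the paper's proof --- solving ``composition equals $Q$ along $W$'' by the inverse function theorem --- the execution is genuinely different and in places tighter. The paper eliminates the $n_{i}$ in favour of a pivot $n_{c}$, clears denominators to a polynomial identity, divides by $n_{c}^{\kappa}$, locates the roots of the resulting numerator and denominator (with a footnote arranging that they not collide), chooses the initial value $v_{0}$ beyond all of them using $\lim_{x\rightarrow\infty}q(x)=\infty$, and only then inverts the algebraic relation $q(x)-y-\epsilon(0)=0$ \emph{under the unverified hypothesis} $q'(v_{0})\neq 0$; it also splits into the cases $w>0$, $w<0$, $w=0$. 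You instead parametrise the fibre by the extent $\xi$, so the map $\Phi$ is a one-variable smooth function from the start, and the initial condition $n_{i,0}=n_{0}f_{i}(w_{0})$ forces $\Phi(0)=Q(w_{0})$ with no root-location argument and no case split on the sign of $\sum_{i}\nu_{i}$. Your Cauchy--Schwarz computation of $\Phi'(0)$ supplies exactly the nondegeneracy that the paper leaves as an assumption, and it identifies the exceptional locus $f_{i}(w_{0})=\nu_{i}/\lambda$ precisely; note that this locus is empty for any reaction with at least one reactant and one product (mixed-sign $\nu_{i}$), which is the situation the paper's own ordering of the $\nu_{i}$ presupposes, so your fallback casework is a safety net rather than a load-bearing step. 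The one hypothesis you should state explicitly is $\sum_{i}f_{i}(w_{0})=1$, which you need for $n_{0}=\sum_{i}n_{i,0}$ to be consistent; this is exactly the paper's standing assumption $Ker(M)\cap\mathcal{R}_{>0}^{c}\neq\emptyset$ from Section \ref{path}, so it is available, but it is doing real work in your construction and deserves a citation.
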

\begin{proof}
As $W$ is smooth, we can choose a local parametrisation $\delta:[0,1]\rightarrow W$. Let $\epsilon(t)=Q(\delta(t))>0$ and $w=\sum_{i=1}^{c}\nu_{i}$. Without loss of generality, we can assume that $pr_{12}(\gamma(0))=(T_{0},P_{0})$, see Section \ref{path}. We have that;\\

$\prod_{i=1}^{c}x_{i}^{\nu_{i}}(t)=\epsilon(t)$ $(\dag)$\\

iff  $\prod_{i=1}^{c}n_{i}^{\nu_{i}}(t)=\epsilon(t)n^{\sum_{i=1}^{c}\nu_{i}}(t)$\\

iff $\prod_{i=1}^{c-1}n_{i}^{\nu_{i}}(t)n_{c}^{\nu_{c}}(t)=\epsilon(t)(\sum_{i=1}^{c}n_{i})^{\sum_{i=1}^{c}\nu_{i}}(t)$\\

iff $\prod_{i=1}^{c-1}({\nu_{i}\over \nu_{c}}n_{c}+d_{i})^{\nu_{i}}(t)n_{c}^{\nu_{c}}(t)=
\epsilon(t)(\sum_{i=1}^{c}n_{i})^{\sum_{i=1}^{c}\nu_{i}}(t)$\\

iff $\prod_{i=1}^{c-1}({\nu_{i}\over \nu_{c}}n_{c}+d_{i})^{\nu_{i}}(t)n_{c}^{\nu_{c}}(t)=
\epsilon(t)((\sum_{i=1}^{c-1}{\nu_{i}\over \nu_{c}}+1)n_{c}+\sum_{i=1}^{c-1}d_{i})^{w}(t)$\\

Let $d_{i}>0$, for $1\leq i\leq c-1$, then;\\

$\prod_{i=1}^{c}x_{i}^{\nu_{i}}(t)=\epsilon(t)$\\

iff $\prod_{i=1}^{c-1}({\nu_{i}\over \nu_{c}}n_{c}+d_{i})^{\nu_{i}}(t)n_{c}^{\nu_{c}}(t)=
\epsilon(t)((\sum_{i=1}^{c-1}{\nu_{i}\over \nu_{c}}+1)n_{c}+\sigma)^{w}(t)$\\

iff $\prod_{i=1}^{c-1}({\nu_{i}\over \nu_{c}}n_{c}+d_{i})^{\nu_{i}}(t)n_{c}^{\nu_{c}}(t)=\epsilon(t)({w\over \nu_{c}}n_{c}+\sigma)^{w}(t)$\\

where $\sigma=\sum_{i=1}^{c-1}d_{i}>0$. Assume that $w>0$, then we have that;\\

$\prod_{i=1}^{c}x_{i}^{\nu_{i}}(t)=\epsilon(t)$\\

iff $\prod_{i=1}^{p}({\nu_{i}\over \nu_{c}}n_{c}+d_{i})^{\nu_{i}}=\epsilon(t)({w\over \nu_{c}}n_{c}+\sigma)^{w}(t)n_{c}^{-\nu_{c}}\prod_{i=p+1}^{c-1}({\nu_{i}\over \nu_{c}}n_{c}+d_{i})^{-\nu_{i}}$\\

iff $\prod_{i=1}^{p}({\nu_{i}\over \nu_{c}})^{\nu_{i}}n_{c}^{\kappa}+r(n_{c})=\epsilon(t)[({w\over \nu_{c}})^{w}\prod_{i=p+1}^{c-1}({\nu_{i}\over \nu_{c}})^{-\nu_{i}}n_{c}^{w-\nu_{c}-\lambda}+s(n_{c})]$\\

iff $\prod_{i=1}^{p}({\nu_{i}\over \nu_{c}})^{\nu_{i}}n_{c}^{\kappa}+r(n_{c})=\epsilon(t)[({w\over \nu_{c}})^{w}\prod_{i=p+1}^{c-1}({\nu_{i}\over \nu_{c}})^{-\nu_{i}}n_{c}^{\kappa}+s(n_{c})]$\\

iff $\alpha n_{c}^{\kappa}+r(n_{c})=\epsilon(t)(\beta n_{c}^{\kappa}+s(n_{c}))$ $(\dag\dag\dag)$\\

where $\alpha=\prod_{i=1}^{p}({\nu_{i}\over \nu_{c}})^{\nu_{i}}\neq 0$, $\beta=({w\over \nu_{c}})^{w}\prod_{i=p+1}^{c-1}({\nu_{i}\over \nu_{c}})^{-\nu_{i}}\neq 0$, $\lambda=\sum_{i=p+1}^{c-1}\nu_{i}$, $\{r,s\}\subset \mathcal{R}[x]$ have degree less than $\kappa$, $r(0)=\prod_{i=1}^{p}d_{i}>0$, $s(0)=0$, as divisible by $x^{-\nu_{c}}$. Dividing $(\dag\dag\dag)$ by $n_{c}^{\kappa}>0$, we obtain;\\

$\alpha+r_{1}({1\over n_{c}})=\epsilon(t)(\beta+s_{1}({1\over n_{c}}))$ $(C)$\\

where $\{r_{1},s_{1}\}\subset \mathcal{R}[x]$ have degree $\kappa$, with $r_{1}(0)=s_{1}(0)=0$, $deg(r_{1})=\kappa$ and $c_{\kappa}=r(0)=\prod_{i=1}^{p}d_{i}>0$ where $r_{1}=\sum_{j=0}^{\kappa}c_{j}x^{j}$, $deg(s_{1})\leq \kappa+\nu_{c}<\kappa$, so that $lim_{x\rightarrow \infty}{\alpha+r_{1}(x)\over \beta+s_{1}(x)}=\infty$.\\

Let $v(x)=\alpha x^{\kappa}+r(x)$, $w(x)=\beta x^{\kappa}+s(x)$. Then, the roots $v_{i}$ of $r(x)$, $1\leq i\leq p$, are given by $v_{i}=-{d_{i}\nu_{c}\over \nu_{i}}>0$, while the roots $w_{i}$ of $w(x)$ are given by $w_{0}=0$, $w_{i}={-d_{i}\nu_{c}\over \nu_{i}}<0$, $p+1\leq i\leq c-1$ and $w_{c}={-\sigma\nu_{c}\over w}$. If $\nu_{i}=w$, for $1\leq i\leq p$, it would follow that $w>\sum_{i=1}^{p}\nu_{i}=pw$, which is a contradiction, as $p\geq 1$. It follows that we can choose $i_{0}$ with $1\leq i_{0}\leq p$ such that ${-\nu_{c}\over \nu_{i_{0}}}\neq {-\nu_{c}\over w}$ and ${-\nu_{c}d_{i_{0}}\over \nu_{i_{0}}}\neq {-\nu_{c}d_{i_{0}}\over w}$,(\footnote{\label{footnote1}As $n_{c}$ is mobile, the conditions that $n_{i,0}=d_{i}+{\nu_{i}\over \nu_{c}}n_{c,0}$, $1\leq i\leq c-1$, $(B)$, together with the requirement that $\overline{n}_{0}=(n_{1,0},\ldots,n_{c,0})$ lies in $Ker(M)\cap \mathcal{R}_{>0}^{c}$, see Section \ref{path}, places a $1$ dimensional restriction on the tuple $(d_{1},\ldots d_{c-1})$, defined as $pr_{c-1}(W)$, where $W=V\cap g^{-1}(Ker(M))$, $V=\{(d_{1},\ldots d_{c-1},n_{c,0}(d_{1},\ldots d_{c-1})):(d_{1},\ldots d_{c-1})\in\mathcal{R}^{c-1}\}$, and $g(x_{1},\ldots, x_{c})=(y_{1},\ldots, y_{c})$ is the morphism defined by;\\

$y_{c}=x_{c}$\\

$y_{i}=x_{i}+{\nu_{i}\over \nu_{c}}x_{c}$, $(1\leq i\leq c-1)$\\

determined by the conditions $(B)$. Moreover, we can assume that $pr_{c-1}(W)\cap \mathcal{R}^{c-1}\neq \emptyset$. Moving the tuple $(d_{1},\ldots d_{c-1})$ along $pr_{c-1}(W)$, we can fine $d_{i_{0}}>0$, and $d_{j}$, $1\leq j\leq c-1$, $j\neq {i_{0}}$, with $d_{j}>0$, so that ${-\nu_{c}d_{i_{0}}\over \nu_{i_{0}}}\neq {-\nu_{c}\sigma\over w}$ and $v_{i_{0}}$ is the highest root of $r(x)$, so that the roots $w_{i}$ of $w(x)$ do not coincide with the highest root $v_{i_{0}}$ of $r(x)$.}). Dividing by $x^{\kappa}$ doesn't effect the positive roots $v_{i}$, $1\leq i\leq p$ of ${r(x)\over x^{\kappa}}=\alpha+r_{1}({1\over x})$, and the positive roots of $\alpha+r_{1}(x)$, are the positive reciprocals $v_{i}'={1\over v_{i}}$, $1\leq i\leq p$. As $lim_{x\rightarrow \infty}{\alpha+r_{1}(x)\over \beta+s_{1}(x)}=\infty$ and $\epsilon(0)>0$, we can choose $v_{0}$ with $q(v_{0})=\epsilon(0)$ and $v_{0}>v_{i}'$, for $1\leq i\leq p$, where $q(x)={\alpha+r_{1}(x)\over \beta+s_{i}(x)}$. Then ${1\over v_{0}}<{1\over v_{i}'}=v_{i}$, for $1\leq i\leq p$, and as ${\nu_{i}\over \nu_{c}}<0$, ${\nu_{i}\over \nu_{c}}{1\over v_{0}}>{\nu_{i}\over \nu_{c}}v_{i}$, ${\nu_{i}\over \nu_{c}}{1\over v_{0}}+d_{i}>{\nu_{i}\over \nu_{c}}v_{i}+d_{i}=0$, for $1\leq i\leq p$. In particularly, as $n_{c}(0)={1\over v_{0}}$, we have, by the linking relations, that $n_{i}(0)={\nu_{i}\over \nu_{c}}n_{c}(0)+d_{i}={\nu_{i}\over \nu_{c}}{1\over v_{0}}+d_{i}>0$, for $1\leq i\leq p$. Moreover, as ${\nu_{i}\over \nu_{c}}>0$, for $i+1\leq i\leq c-1$, and ${1\over v_{0}}>0$, we have that $n_{i}(0)={\nu_{i}\over \nu_{c}}n_{c}(0)+d_{i}={\nu_{i}\over \nu_{c}}{1\over v_{0}}+d_{i}>0$ as well, for $i+1\leq i\leq p-1$, $(D)$\\

By $(C)$, we have that $q({1\over n_{c}})=\epsilon(t)$ and, by the above construction, it follows that $q(v_{0})=\epsilon(0)$. Consider the real algebraic curve defined by $\theta(x,y)=q(x)-y-\epsilon(0)$, so that $\theta(v_{0},0)=0$. Computing the differential $(q'(x),-1)$, if $q'(v_{0})\neq 0$, we see that the projection $pr_{y}$ is unramified at $(v_{0},0)$, so that we can apply the inverse function theorem, see \cite{dep2}, to obtain a real branch $\gamma(y)$ with $\gamma(0)=v_{0}$ and;\\

$\theta(\gamma(y),y)=q(\gamma(y))-y-\epsilon(0)=0$\\

Replacing $y$ with $\epsilon(t)-\epsilon(0)$, and letting $\delta(t)=\gamma(\epsilon(t)-\epsilon(0))$, we have that;\\

$\theta(\delta(t),\epsilon(t)-\epsilon(0))=q(\delta(t))-(\epsilon(t)-\epsilon(0))-\epsilon(0)=q(\delta(t))-
\epsilon(t)=0$\\

Then $\delta(t)>0$, and we can set $n_{c}={1\over \delta(t)}$, with $n_{c}(0)={1\over v_{0}}$ Using the linkage relations, we can define $n_{i}(t)$, for $1\leq i\leq c-1$ from $n_{c}$. As, by $(D)$, we have that $n_{i}(0)>0$, for $1\leq i\leq c$, by continuity, for sufficiently small $t$, we have that $n_{i}(t)>0$, for $1\leq i\leq c$ as well.\\

If $w<0$, we can take the reciprocal of the relation $(\dag)$, replace $\nu_{i}$ by $-\nu_{i}$ and $\epsilon(t)$ by ${1\over \epsilon(t)}>0$, to get $w>0$. Reordering so that the pivot $\nu_{c}<0$, $\nu_{i}>0$, for $1\leq i\leq p'$, $\nu_{i}<0$, for $p'+1\leq i\leq c-1$, we can carry out the above proof to get the result.\\

If $w=0$, we can carry out the first calculation with $\beta$ replaced by $\prod_{i=p+1}^{c-1}({\nu_{i}\over \nu_{c}})^{-\nu_{i}}\neq 0$.\\

\end{proof}
\begin{lemma}
\label{feasible}
A feasible path $\gamma$ is a dynamic equilibrium path iff $pr(\gamma_{12})\subset C_{f}$, for some $f\in\mathcal{R}_{>0}$, iff ${dQ\over dt}=0$. In particular, for any feasible path $\gamma$ in which $pr_{12}(\gamma)$ is fixed, we have dynamic equilibrium and ${dQ\over dt}=0$.\\

\end{lemma}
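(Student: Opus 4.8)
The plan is to establish the displayed chain of equivalences by closing a short cycle of implications, concentrating all the genuine work in one step which reduces to the computation already performed in the proof of Lemma~\ref{implies}.

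First I would record the elementary reformulation. Since $\gamma$ is a feasible path in an ideal solution, Definition~\ref{feasiblestraight} gives $Q(pr_{12}(\gamma(t)))=\prod_{i=1}^{c}a_{i}(t)^{\nu_{i}}=\prod_{i=1}^{c}x_{i}(t)^{\nu_{i}}$ with $x_{i}=n_{i}/n$, $n=\sum_{i=1}^{c}n_{i}>0$; in particular $t\mapsto Q(pr_{12}(\gamma(t)))$ is a strictly positive $C^{1}$ function on the connected interval $[0,1]$. Hence ${dQ\over dt}\equiv 0$ is equivalent to this function being constant, equal to $f:=Q(pr_{12}(\gamma(0)))\in\mathcal{R}_{>0}$, which is exactly the condition $pr_{12}(\gamma)\subset C_{f}$ of Definition~\ref{curves}. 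So two of the three conditions coincide essentially by definition, and it remains to link either of them to dynamic equilibrium. In one direction this is immediate: if $n_{i}'(t)=0$ for all $i$, then each $n_{i}$, hence $n$, hence each $x_{i}$, is constant, so $Q(pr_{12}(\gamma(t)))=\prod x_{i}^{\nu_{i}}$ is constant and ${dQ\over dt}\equiv 0$.

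The substantive step is the converse: assuming ${dQ\over dt}\equiv 0$, that is $\prod_{i=1}^{c}x_{i}(t)^{\nu_{i}}\equiv f$ for a constant $f>0$, I must deduce $n_{i}'\equiv 0$ for all $i$, which together with feasibility of $\gamma$ makes it a dynamic equilibrium path. Here I would re-run the argument of Lemma~\ref{implies} with the constant $1$ replaced throughout by $f$: differentiating $\prod x_{i}^{\nu_{i}}=f$ yields $f\sum_{i=1}^{c}\nu_{i}x_{i}'/x_{i}=0$, hence $\sum_{i=1}^{c}\nu_{i}(n_{i}'/n_{i}-n'/n)=0$; substituting the feasibility relations $n_{i}'=(\nu_{i}/\nu_{c})n_{c}'$ and $n_{i}=(\nu_{i}/\nu_{c})n_{c}+d_{i}$ and clearing denominators gives, when $\sum_{i=1}^{c}\nu_{i}\neq 0$, a nontrivial polynomial identity in $n_{c}$ that forces $n_{c}$ (hence every $n_{i}$ by the linking relations) to be constant, and when $\sum_{i=1}^{c}\nu_{i}=0$ the same differentiate-$k$-times and take-$k$-even-limit argument as in Lemma~\ref{implies} again forces $n_{c}'=0$. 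The only place $f$ enters is the factor $f$ in $(\prod x_{i}^{\nu_{i}})'=f\sum\nu_{i}x_{i}'/x_{i}$, which cancels because $f>0$; none of the case distinctions in Lemma~\ref{implies} used the value of the constant, only that $\prod x_{i}^{\nu_{i}}$ is constant, so the argument transfers verbatim. This closes the cycle: dynamic equilibrium $\Rightarrow {dQ\over dt}\equiv 0 \Leftrightarrow pr_{12}(\gamma)\subset C_{f} \Rightarrow$ dynamic equilibrium.

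For the final assertion, suppose $pr_{12}(\gamma)$ is a fixed point $(T_{0},P_{0})$. Then $Q(pr_{12}(\gamma(t)))\equiv Q(T_{0},P_{0})$, so $pr_{12}(\gamma)\subset C_{f}$ with $f=Q(T_{0},P_{0})>0$, and the equivalence just proved yields both ${dQ\over dt}\equiv 0$ and dynamic equilibrium. The one point needing care is the legitimacy of importing the proof of Lemma~\ref{implies}: one should check that none of its sub-cases (the degree-$\kappa$ polynomial relation when $\sum\nu_{i}\neq 0$, the even-$k$ limit when $\sum\nu_{i}=0$, and the choice of a pivot index with nonzero partial sum of stoichiometric coefficients) secretly exploited $f=1$. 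Inspection confirms they use only the constancy of $\prod x_{i}^{\nu_{i}}$, so the reduction is sound, and this is really the only obstacle.
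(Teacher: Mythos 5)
Your proposal is correct and follows essentially the same route as the paper: both reduce the implication from $pr_{12}(\gamma)\subset C_{f}$ to dynamic equilibrium to the computation in Lemma \ref{implies}, observing that differentiation kills the constant $f$ so that nothing in that argument depends on $f=1$, and both handle the remaining implications by the immediate observation that constancy of the $n_{i}$ forces constancy of $Q$ (the paper phrases this direction contrapositively via $grad(Q)\cdot\gamma'(0)\neq 0$, you phrase it directly, but the content is identical). The final claim about fixed $pr_{12}(\gamma)$ is handled the same way in both.
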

\begin{proof}
For the first claim, we have that $f>0$ and if $pr(\gamma_{12})\subset C_{f}$, we have that;\\

$\prod_{i=1}^{c}x_{i}^{\nu_{i}}=f$\\

with the same linkage relations as Lemma \ref{implies}. Now follow through Lemma \ref{implies}, noting that differentiating reduces the constant $f$ to $0$, as in the proof. Conversely, if $pr(\gamma_{12})\not\subset C_{f}$, then we have that;\\

$(\prod_{i=1}^{c}x_{i}^{\nu_{i}})'|_{0}=Q(\gamma(t))'|_{0}$\\

$=(grad(Q)|_{\gamma(0)}\centerdot \gamma'(0))\neq 0$\\

but, if $\gamma$ is a dynamic equilibrium path, then clearly each $n_{i}$ is constant, $1\leq i\leq c$, $n$ is constant and $x_{i}$ is constant, so that $x_{i}'=0$, for $1\leq i\leq c$ and $(\prod_{i=1}^{c}x_{i}^{\nu_{i}})'|_{0}=0$, which is a contradiction. For the second claim, if  $pr(\gamma_{12})\subset C_{c}$, for some $c\in\mathcal{R}$, it follows from Definition \ref{curves} and the proof of Lemma \ref{implies}, that $Q$ is constant and ${dQ\over dt}=0$. Conversely, if ${dQ\over dt}=0$, then $Q$ is constant along $pr(\gamma_{12})$, so that $pr(\gamma_{12})\subset C_{c}$, for some $c\in\mathcal{R}$.  The final claim follows from the fact that $Q$ depends only on the coordinates $(T,P)$, so that ${dQ\over dt}=0$, and the first claim.

\end{proof}

\begin{lemma}
\label{standardpressure2}
We have that the condition of chemical equilibrium defines a $1$-dimensional curve $D$ in the state space $(T,P)$. Similarly, the conditions that $Q(T,P)=c$ define $1$-dimensional curves $C_{c}$ in $(T,P)$, and if $\gamma:[0,1]\rightarrow (T,P,n_{1},\ldots n_{c})$ is a path, such that $pr_{12}(\gamma)$ lies in $C_{c}$, then it must be a dynamic equilibrium path. Let $D'$ be a component of $D$, then $Q$ is constant along $D'$ iff ${\Delta G^{\circ}\over T}$ is constant along $D'$. Let $C_{c}'$ be a component of $C_{c}$, then $({\partial G\over \partial \xi})|_{T,P}=0$ along $C_{c}'$ iff ${\Delta G^{\circ}\over T}=-Rln(c)$. Assuming that ${\Delta G^{\circ}\over T}$ is non constant, we have that $Q$ is constant along $D'$ iff $pr_{1}(D')$ is a fixed temperature $T$, and  $({\partial G\over \partial \xi})|_{T,P}=0$ along $C_{c}'$ iff $pr_{1}(C_{c}')$ is a fixed temperature $T$. The only feasible paths which are both chemical and dynamic equilibrium paths are straight line chemical equilibrium paths. There exists a feasible dynamic equilibrium path, with $pr_{12}(\gamma)\subset P=P^{\circ}$.
\end{lemma}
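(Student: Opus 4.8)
The plan is to reduce everything to two structural facts already available: the explicit shape of $({\partial G\over \partial \xi})_{T,P}$ (and hence of $Q$) from Lemma \ref{eqlines}, and the identity $({\partial G\over \partial \xi})_{T,P}=\Delta G^{\circ}(T)+RT\,ln(Q(T,P))$ from Lemma \ref{gibbs}. By Lemma \ref{eqlines}, $({\partial G\over \partial \xi})_{T,P}$ equals $\lambda+\epsilon\,ln(P)+\beta T$ when $\epsilon\neq 0$ and $\lambda+\beta T+\sigma\,ln(T)$ when $\epsilon=0$. In the first case the $P$-derivative $\epsilon/P$ is nowhere zero, so by the implicit function theorem $D=\{({\partial G\over \partial \xi})_{T,P}=0\}$ is a smooth curve expressible as a graph over $T$, hence $1$-dimensional; in the second case $D$ is a finite union of vertical lines $T=T_{0}$, again $1$-dimensional. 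Likewise, by formula $(W)$ of Lemma \ref{eqlines}, $ln(Q(T,P))=\epsilon\,ln(P/P^{\circ})/(RT)$, so $Q(T,P)=c$ is equivalent to $P=P^{\circ}e^{RT\,ln(c)/\epsilon}$ (for $\epsilon\neq 0$), a smooth graph over $T\in\mathcal{R}_{>0}$, so $C_{c}$ is $1$-dimensional; when $\epsilon=0$ one has $Q\equiv 1$ and the statement is read with the vertical-line convention.

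If $\gamma$ is feasible with $pr_{12}(\gamma)\subset C_{c}$ then $Q$ is constant along $\gamma$, so ${dQ\over dt}=0$, and Lemma \ref{feasible} yields at once that $\gamma$ is a dynamic equilibrium path. For the two base equivalences one simply restricts the Lemma \ref{gibbs} identity. Along a component $D'$ of $D$ one has $({\partial G\over \partial \xi})_{T,P}=0$, hence $ln(Q)=-\Delta G^{\circ}(T)/(RT)$, so $Q$ is constant on $D'$ iff $\Delta G^{\circ}/T$ is. Along a component $C_{c}'$ of $C_{c}$ one has $Q=c$, hence $({\partial G\over \partial \xi})_{T,P}=\Delta G^{\circ}(T)+RT\,ln(c)$, which is identically $0$ on $C_{c}'$ iff $\Delta G^{\circ}(T)/T=-R\,ln(c)$ there.

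Now assume $\Delta G^{\circ}/T$ is non-constant. Since $\Delta G^{\circ}$ is a function of $T$ alone and, by Lemma \ref{eqlines}, affine in $T$ in the non-degenerate case (equal to $\beta T+\Delta H^{\circ}$), the map $T\mapsto\Delta G^{\circ}(T)/T=\beta+\Delta H^{\circ}/T$ is strictly monotone when $\Delta H^{\circ}\neq 0$, so each of its level sets is a single value of $T$; in the $\epsilon=0$ case $\Delta G^{\circ}(T)/T$ is piecewise monotone, so its level sets are finite. Hence constancy of $\Delta G^{\circ}/T$ along a connected $D'$ (resp.\ $C_{c}'$) forces $T$ to be constant on it, i.e.\ $pr_{1}(D')$ (resp.\ $pr_{1}(C_{c}')$) is a single temperature; the converses are trivial. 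Combining with the previous paragraph gives the \emph{fixed temperature} criteria. For the penultimate claim, a feasible path that is both a chemical and a dynamic equilibrium path has $pr_{12}(\gamma)\subset D$ and, by Lemma \ref{feasible}, $pr_{12}(\gamma)\subset C_{f}$ for some $f\in\mathcal{R}_{>0}$; so along it $({\partial G\over \partial \xi})_{T,P}=0$ and $Q=f$, whence $\Delta G^{\circ}(T)/T=-R\,ln(f)$ is constant, so $T$ is constant along $pr_{12}(\gamma)$ by the monotonicity above, and $\gamma$ is a straight line chemical equilibrium path; the converse is immediate since along such a path $Q$ is constant (Lemma \ref{feasible}). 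Finally, by Lemma \ref{equivalences} $Q(T,P^{\circ})=1$ for all $T$, so the line $P=P^{\circ}$ lies in $C_{1}$; applying Lemma \ref{exists} to a segment of it gives a locally feasible $\gamma$ with $pr_{12}(\gamma)\subset\{P=P^{\circ}\}\subset C_{1}$, which by Lemma \ref{feasible} is a dynamic equilibrium path.

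The main obstacle is the first paragraph, where one must actually appeal to the structural computation of Lemma \ref{eqlines} to pin down the functional form of $({\partial G\over \partial \xi})_{T,P}$ and of $Q$, and one must treat the degenerate sub-cases by hand: when $\epsilon=0$ the loci $C_{c}$ degenerate, and when $\Delta H^{\circ}=0$ the quantity $\Delta G^{\circ}/T$ is constant, which is precisely the case the later assertions exclude. Once those forms are in hand, the remaining assertions are routine rearrangements of the Lemma \ref{gibbs} identity together with citations of Lemmas \ref{feasible}, \ref{equivalences} and \ref{exists}.
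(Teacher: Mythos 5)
Your proposal is correct and follows essentially the same route as the paper: the dimension claims via the functional form of $({\partial G\over \partial \xi})_{T,P}$ and $Q$, the two base equivalences by restricting the Lemma \ref{gibbs} identity, the fixed-temperature criteria from non-constancy of ${\Delta G^{\circ}\over T}$ as a function of $T$ alone, and the last two claims via Lemma \ref{feasible}. You are somewhat more explicit than the paper in two places — pinning down monotonicity of ${\Delta G^{\circ}\over T}$ from the affine form of $\Delta G^{\circ}$, and invoking Lemma \ref{exists} to actually produce the feasible path along $P=P^{\circ}$, which the paper leaves implicit — but these are refinements of the same argument rather than a different approach.
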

\begin{proof}
For the first part, either use the fact that $({\partial G\over \partial \xi})|_{T,P}$ only depends on $(T,P)$ and differentiability properties, or the result from Lemma \ref{equivalences} that chemical equilibrium is defined by $Q(T,P)-e^{-\Delta G^{\circ}\over RT}=0$, and the fact that $\Delta G^{\circ}$ depends on $T$. For the second part, either use differentiability properties of $Q(T,P)$ or the fact from Lemma \ref{equivalences} that $Q=1$ iff $({\partial G\over \partial \xi})|_{T,P}-\Delta G^{\circ}=0$. The second claim is clear from Lemma \ref{feasible}.
For the third claim, we have, by Lemma \ref{equivalences}, that along $D'$, $Q=e^{\Delta G^{\circ}\over RT}$, so that clearly $Q$ is constant along $D'$ iff ${\Delta G^{\circ}\over T}$ is constant. The fourth claim is clear by the fact that $({\partial G\over \partial \xi})|_{T,P}-\Delta G^{\circ}=RTln(c)$ along $C_{c}'$. The fifth and sixth claims follow immediately from the fact that ${\Delta G^{\circ}\over T}$ is a function of $T$ and is non constant. For the seventh claim, if $\gamma$ is a chemical and dynamic equilibrium path, then $pr_{12}(\gamma)\subset D$, and, by Lemma \ref{feasible}, $pr_{12}(\gamma)\subset C_{c}$, for some $c\in\mathcal{R}$. It follows that $Q$ is constant along the $pr_{12}(\gamma)\subset D'$ for some component $D'$, and then, by the fifth claim, $pr_{1}(\gamma)\subset pr_{1}(D')$ is a fixed temperature $T$, so that $\gamma$ is a straight line chemical equilibrium path. For the final claim, we have that $Q(T,P^{\circ})=1$, by Lemma \ref{implies}, so that $P=P^{\circ}$ lies in $C_{1}$. It follows, by Lemma \ref{feasible}, that a feasible path $\gamma$ with $pr_{12}(\gamma)\subset P=P^{\circ}$ is a dynamic equilibrium path.
\end{proof}
\end{section}
\begin{section}{Ideal Solutions}
\label{errorterms}
\begin{rmk}
\label{ideal2}
We have, using the phase rule for an ideal solution in equilibrium with its vapour, and using the ideal gas law, see \cite{OM}, that;\\

$\mu_{i}^{(g)}=\mu_{i}^{\circ(g)}+RTln({P_{i}\over P^{\circ}})$\\

$\mu_{i}^{(sol)}=\mu_{i}^{\circ(sol)}+RTln({P_{i}\over P^{\circ}})$ $(*)$\\

By the definition of an ideal solution, we have that;\\

$\mu_{i}=\mu_{i}^{*}+RTln(x_{i})$ $(**)$\\

where, by $\mu_{i}^{*}(T,P)$, we mean the chemical potential of substance $i$ on its own, at temperature and pressure $(T,P)$. By Raoult's law $P_{i}=x_{i}P_{i}^{*}$, see \cite{M}, combined with $(**)$, we obtain;\\

$\mu_{i}=\mu_{i}^{*}+RTln(x_{i})$\\

$=\mu_{i}^{*}+RTln({P_{i}\over P_{i}^{*}})$\\

$=\mu_{i}^{*}+RTln({P_{i}\over P^{\circ}})-RTln({P_{i}^{*}\over P^{\circ}})$ $(***)$\\

Combining $(*),(***)$, we obtain that;\\

$\mu_{i}^{*}=\mu_{i}-RTln({P_{i}\over P^{\circ}})+RTln({P_{i}^{*}\over P^{\circ}})$\\

$=(\mu_{i}^{\circ}+RTln({P_{i}\over P^{\circ}}))-RTln({P_{i}\over P^{\circ}})+RTln({P_{i}^{*}\over P^{\circ}})$\\

$=\mu_{i}^{\circ}+RTln({P_{i}^{*}\over P^{\circ}})$ $(\dag)$\\

Letting $P_{i}^{*}=P^{\circ}$, we obtain that $\mu_{i}^{*}(T,P')=\mu_{i}^{\circ}$, $(****)$, where $(T,P')$ is the temperature and pressure at which the equilibrium pressure $P_{i}'^{*}=P^{\circ}$. From $(**)$, the fact that $\mu_{i}^{*}(T,P)\simeq \mu_{i}^{*}(T,P')$ and $(****)$, we obtain that;\\

$\mu_{i}\simeq \mu_{i}^{\circ}+RTln(x_{i})$, $(*****)$\\

as a very good approximation. This avoids the contradiction that $x_{i}=1$ for a solution involving more than one component, at $P=P^{\circ}$. To make the results here more precise, we need to compute the error term, but the proof is still consistent if we allow that $n_{i}(T,P)\rightarrow 0$ as $P\rightarrow P^{\circ}$, so that $x_{i}={n_{i}\over n}\rightarrow 1$, and $x_{i}$ is not defined at $P=P^{\circ}$.\\

More specifically, we have that;\\

$\mu_{i}^{*}(T,P)=\mu_{i}^{*}(T,P')+\delta$\\

where $\delta=\mu_{i}^{*}(T,P)-\mu_{i}^{*}(T,P')$, so that;\\

$\mu_{i}=\mu_{i}^{\circ}+RTln(x_{i})+\delta$\\

For Raoult's law, see \cite{M}, we also need an approximation. We have that, by the definition of an ideal solution, the phase rule, Dalton's law that each gas in a mixture of ideal gases behaves as if it were alone in the container at the equilibrium pressures $\{P_{i},P_{i}^{*}\}$, see \cite{M}, that;\\

$\mu_{i}=\mu_{i}^{*}+RTln(x_{i})$\\

$=\mu_{i}^{*}(T,P_{i}^{*})+RTln(x_{i})+\epsilon$\\

$=\mu_{i}^{\circ(g)}+RTln({P_{i}^{*}\over P^{\circ}})+RTln(x_{i})+\epsilon$\\

$=\mu_{i}^{\circ(g)}+RTln({P_{i}\over P^{\circ}})$\\

so that;\\

$RTln(x_{i})=RTln({P_{i}\over P^{\circ}})-RTln({P_{i}^{*}\over P^{\circ}})-\epsilon$\\

where $\epsilon=\mu_{i}^{*}(T,P)-\mu_{i}^{*}(T,P_{i}^{*})$, so that $(***)$ becomes;\\

$\mu_{i}=\mu_{i}^{*}+RTln(x_{i})$\\

$=\mu_{i}^{*}+RTln({P_{i}\over P^{\circ}})-RTln({P_{i}^{*}\over P^{\circ}})-\epsilon$ $(***)'$\\

Combining $(*),(***)'$, we obtain that;\\

$\mu_{i}^{*}=\mu_{i}-RTln({P_{i}\over P^{\circ}})+RTln({P_{i}^{*}\over P^{\circ}})+\epsilon$\\

$=(\mu_{i}^{\circ}+RTln({P_{i}\over P^{\circ}}))-RTln({P_{i}\over P^{\circ}})+RTln({P_{i}^{*}\over P^{\circ}})+\epsilon$\\

$=\mu_{i}^{\circ}+RTln({P_{i}^{*}\over P^{\circ}})+\epsilon$ $(\dag)'$\\

Letting $P_{i}^{*}=P^{\circ}$ again, we obtain that $\mu_{i}^{*}(T,P')=\mu_{i}^{\circ}+\epsilon$, $(****)'$\\

From $(**)$, $(****)'$, we obtain that;\\

$\mu_{i}=\mu_{i}^{*}+RTln(x_{i})$\\

$=\mu_{i}^{*}(T,P')+\delta+RTln(x_{i})$\\

$=\mu_{i}^{\circ}+\epsilon+\delta+RTln(x_{i})$\\

$=\mu_{i}^{\circ}+RTln(x_{i})+\gamma_{i}$\\

where $\gamma_{i}=\epsilon+\delta=\mu_{i}^{*}(T,P)-\mu_{i}^{*}(T,P_{i}^{*})+\mu_{i}^{*}(T,P)-\mu_{i}^{*}(T,P')$\\

$=2\mu_{i}^{*}(T,P)-\mu_{i}^{*}(T,P_{i}^{*})-\mu_{i}^{*}(T,P')\simeq 0$\\

Using Lemma \ref{differential}, we have that $dG=-SdT+VdP$, so that, if temperature is fixed, $dG=VdP$, then, for the Gibbs energy function of substance $i$ on it own, in the liquid phase;\\

$\mu_{i}^{*}(T,P)-\mu_{i}^{*}(T,P_{i}^{*})={G(T,P,n)-G(T,P_{i}^{*},n)\over n}$\\

$={1\over n}\int_{P_{i}^{*}}^{P}dG$\\

$={1\over n}\int_{P_{i}^{*}}^{P}VdP$\\

$={1\over n}\int_{P_{i}^{*}}^{P}{nN_{A}m_{i}\over \kappa_{i}(T,P)}dP$\\

$\simeq {N_{A}m_{i}(P-P_{i}^{*})\over \kappa}$\\

$=V_{m,i}(P-P_{i}^{*})$\\

where $\kappa(T,P)$ is the density of substance $i$ in the liquid phase, and which we assume to be approximately constant, and $V_{m.i}$ is the molar volume. Similarly;\\

$\mu_{i}^{*}(T,P)-\mu_{i}^{*}(T,P')\simeq {N_{A}m_{i}(P-P')\over \kappa}$\\

$=V_{m,i}(P-P')$\\

so that;\\

$\gamma_{i}(P)\simeq V_{m,i}(2P-P_{i}^{*}-P')\simeq 0$\\

\end{rmk}
We reformulate Lemmas \ref{gibbs}, \ref{equivalences}, \ref{vanhoffhelmholtz} and \ref{eqlines} with this error term;\\

\begin{lemma}
\label{gibbs2}

In the ideal solution case, for the energy function $G$ involving only $c$ uncharged species;\\

$({\partial G\over \partial \xi})_{T,P}=\Delta G^{\circ}+RTln(Q)+\epsilon$\\

where $\epsilon(P)=\sum_{i=1}^{c}\nu_{i}\gamma_{i}(P)\simeq 0$ and $\gamma_{i}(P)\simeq 0$ is the error term for the $i$'th uncharged species in Remark \ref{ideal2}.

\end{lemma}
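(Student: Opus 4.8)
The plan is to repeat the proof of Lemma~\ref{gibbs} almost verbatim, substituting for the idealised law $\mu_{i}=\mu_{i}^{\circ}+RTln(a_{i})$ the corrected relation $\mu_{i}=\mu_{i}^{\circ}+RTln(x_{i})+\gamma_{i}(P)$ that was derived at the end of Remark~\ref{ideal2}. First I would apply the first two identities of Lemma~\ref{equivalences}, namely $({\partial G\over \partial \xi})_{T,P}=\sum_{i=1}^{c}\nu_{i}\mu_{i}$ and $\Delta G^{\circ}=\sum_{i=1}^{c}\nu_{i}\mu_{i}^{\circ}$. The key observation is that these two identities are established there using only Lemma~\ref{differential} and the stoichiometric relation $dn_{i}=\nu_{i}\,d\xi$, and make no use of the idealised law, so they remain valid in the present setting. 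Subtracting, $({\partial G\over \partial \xi})_{T,P}-\Delta G^{\circ}=\sum_{i=1}^{c}\nu_{i}(\mu_{i}-\mu_{i}^{\circ})$.

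Next I would insert $\mu_{i}-\mu_{i}^{\circ}=RTln(x_{i})+\gamma_{i}(P)$, where $\gamma_{i}(P)=2\mu_{i}^{*}(T,P)-\mu_{i}^{*}(T,P_{i}^{*})-\mu_{i}^{*}(T,P')$ is the error term of Remark~\ref{ideal2}, a function of $(T,P)$ only. Summing against the stoichiometric coefficients and using $\sum_{i=1}^{c}\nu_{i}ln(x_{i})=ln(\prod_{i=1}^{c}x_{i}^{\nu_{i}})$, together with the ideal-solution convention $a_{i}=x_{i}$ recorded in Definition~\ref{feasiblestraight} (so that $Q=\prod_{i=1}^{c}a_{i}^{\nu_{i}}=\prod_{i=1}^{c}x_{i}^{\nu_{i}}$), I obtain $({\partial G\over \partial \xi})_{T,P}-\Delta G^{\circ}=RTln(Q)+\sum_{i=1}^{c}\nu_{i}\gamma_{i}(P)$. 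Setting $\epsilon(P):=\sum_{i=1}^{c}\nu_{i}\gamma_{i}(P)$ and rearranging gives the displayed formula, and $\epsilon(P)\simeq 0$ follows at once from $\gamma_{i}(P)\simeq 0$, the quantitative version being $\gamma_{i}(P)\simeq V_{m,i}(2P-P_{i}^{*}-P')$ carried over from Remark~\ref{ideal2}.

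I expect no genuine obstacle here; the argument is essentially bookkeeping around the error term. The two points that deserve a sentence of justification are that the Euler-type identity $({\partial G\over \partial \xi})_{T,P}=\sum_{i=1}^{c}\nu_{i}\mu_{i}$ of Lemma~\ref{equivalences} is genuinely independent of the idealised law (so it survives unchanged), and that the error terms $\gamma_{i}$ of Remark~\ref{ideal2} depend only on $(T,P)$ and not on $\xi$ or on the amounts $n_{i}$ — this is exactly what makes $\epsilon$ a function of $P$ alone (at fixed $T$), and hence what lets it be handled on the same footing as $\Delta G^{\circ}(T)$ in the later sections.
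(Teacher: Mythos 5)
Your proposal is correct and follows essentially the same route as the paper: both invoke the identities $({\partial G\over \partial \xi})_{T,P}=\sum_{i=1}^{c}\nu_{i}\mu_{i}$ and $\Delta G^{\circ}=\sum_{i=1}^{c}\nu_{i}\mu_{i}^{\circ}$ from Lemma \ref{equivalences}, subtract, substitute the corrected relation $\mu_{i}=\mu_{i}^{\circ}+RT\ln(a_{i})+\gamma_{i}(P)$ from Remark \ref{ideal2}, and collect the error terms into $\epsilon(P)=\sum_{i=1}^{c}\nu_{i}\gamma_{i}(P)$. Your added remarks on why the Euler-type identity is independent of the idealised law and why $\gamma_{i}$ depends only on $(T,P)$ are sound and consistent with the paper.
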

\begin{proof}
By Lemma \ref{equivalences}, we have that;\\

$({\partial G\over \partial \xi})_{T,P}=\sum_{i=1}^{c}\nu_{i}\mu_{i}$\\

$\Delta G^{\circ}=\sum_{i=1}^{c}\nu_{i}\mu_{i}^{\circ}$, $(*)$\\

Using $(*)$, the fact that $\mu_{i}=\mu_{i}^{\circ}+RTln(a_{i})+\gamma_{i}$, and Definition \ref{constants}, we have that;\\

$({\partial G\over \partial \xi})_{T,P}-\Delta G^{\circ}=\sum_{i=1}^{c}\nu_{i}(\mu_{i}-\mu_{i}^{\circ})$\\

$=\sum_{i=1}^{c}\nu_{i}(\mu_{i}^{\circ}+RTln(a_{i})+\gamma_{i}-\mu_{i}^{\circ})$\\

$=\sum_{i=1}^{c}\nu_{i}RTln(a_{i})+\sum_{i=1}^{c}\nu_{i}\gamma_{i}$\\

$=RTln(\prod_{i=1}^{c}a_{i}^{\nu_{i}})+\sum_{i=1}^{c}\nu_{i}\gamma_{i}=RTln(Q)+\epsilon$\\

\end{proof}
\begin{lemma}
\label{equivalences2}

For an ideal solution, we have, using the definition of $\epsilon(P)$ in Lemma \ref{gibbs}, and the error terms $\gamma_{i}(P)$, $1\leq i\leq c$ in Remark \ref{ideal2}, that;\\

$({\partial G\over \partial \xi})_{T,P}=\sum_{i=1}^{c}\nu_{i}\mu_{i}$\\

$\Delta G^{\circ}=\sum_{i=1}^{c}\nu_{i}\mu_{i}^{\circ}$\\

At chemical equilibrium $T,P$, $({\partial G\over \partial \xi})_{T,P}=0$ and at $T,P^{0}$, $\Delta G^{\circ}=0$.\\

If chemical and electrical chemical equilibrium exists at $(T,P^{\circ})$ and $(T,P)$, $Q(T,P)=e^{-\epsilon(P)\over RT}\simeq 1$ and $E=E^{\circ}$. Conversely, if $Q(T,P)=e^{-\epsilon(P)\over RT}\simeq 1$ and chemical equilibrium exists at $(T,P^{\circ})$ then chemical equilibrium exists at $(T,P)$.\\

Chemical equilibrium exists at $(T,P)$ iff $Q(T,P)=e^{-\Delta G^{\circ}-\epsilon(P)\over RT}$\\

We always have that $Q(T,P^{\circ})=e^{-\delta\over RT}\simeq 1$, where;\\

$\delta=\epsilon(P^{\circ})=\sum_{i=1}^{c}\nu_{i}\gamma_{i}(P^{\circ})$\\

\end{lemma}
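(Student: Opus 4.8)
The plan is to follow the proof of Lemma \ref{equivalences} line by line, replacing Lemma \ref{gibbs} by Lemma \ref{gibbs2} wherever the activity quotient appears, and using the perturbed relation $\mu_{i}=\mu_{i}^{\circ}+RT ln(a_{i})+\gamma_{i}$ from Remark \ref{ideal2} in place of the idealised law $\mu_{i}=\mu_{i}^{\circ}+RT ln(a_{i})$.

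The first three displayed claims are word-for-word those of Lemma \ref{equivalences}, and their proofs transfer verbatim since none of them involves the error term: the identity $({\partial G\over \partial \xi})_{T,P}=\sum_{i=1}^{c}\nu_{i}\mu_{i}$ comes from Lemma \ref{differential} together with $dn_{i}=\nu_{i}\,d\xi$ at fixed $(T,P)$; integrating over $\xi\in[0,1]$ and using that $({\partial G\over \partial \xi})_{T,P^{\circ}}$ does not vary with $\xi$ gives $\Delta G^{\circ}=\sum_{i=1}^{c}\nu_{i}\mu_{i}^{\circ}$; and the third claim is just the definition of chemical equilibrium at $(T,P)$ and at $(T,P^{\circ})$.

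Next I would dispatch the last claim. At $P=P^{\circ}$ the convention $\mu_{i}(T,P^{\circ})=\mu_{i}^{\circ}$, combined with $\mu_{i}=\mu_{i}^{\circ}+RT ln(a_{i})+\gamma_{i}$, forces $RT ln(a_{i}(T,P^{\circ}))+\gamma_{i}(P^{\circ})=0$, hence $a_{i}(T,P^{\circ})=e^{-\gamma_{i}(P^{\circ})/RT}$; substituting into $Q=\prod_{i=1}^{c}a_{i}^{\nu_{i}}$ from Definition \ref{constants} gives $Q(T,P^{\circ})=e^{-\sum_{i=1}^{c}\nu_{i}\gamma_{i}(P^{\circ})/RT}=e^{-\delta/RT}$, with $\delta\simeq0$ because each $\gamma_{i}(P^{\circ})\simeq0$ by Remark \ref{ideal2}. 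The ``chemical equilibrium iff'' claim is then immediate from Lemma \ref{gibbs2}: chemical equilibrium at $(T,P)$ means $0=({\partial G\over \partial \xi})_{T,P}=\Delta G^{\circ}+RT ln(Q)+\epsilon(P)$, so $ln(Q)=(-\Delta G^{\circ}-\epsilon(P))/RT$; exponentiating gives one direction and applying $ln$ the converse. For the fourth claim: if chemical equilibrium holds at $(T,P^{\circ})$ then $\Delta G^{\circ}=0$ by the third claim, so the previous formula degenerates to $Q(T,P)=e^{-\epsilon(P)/RT}\simeq1$; conversely, if $Q(T,P)=e^{-\epsilon(P)/RT}$ then Lemma \ref{gibbs2} gives $({\partial G\over \partial \xi})_{T,P}=\Delta G^{\circ}$, which vanishes once chemical equilibrium at $(T,P^{\circ})$ is assumed, so chemical equilibrium holds at $(T,P)$.

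The one genuinely new ingredient is the assertion $E=E^{\circ}$, which I would obtain by re-running the derivation of Lemma \ref{nernst} while carrying the uncharged-species error term: the charged species still contribute $\mu_{i}=\mu_{i,chem}+Fz_{i}\phi(\overline{x}_{i})$ exactly as there, so the electrostatic contributions cancel in $({\partial G\over \partial \xi})_{T,P}={\sum}_{i=1}^{c}\nu_{i}\mu_{i}$ at equilibrium, while the uncharged part now reads $({\partial G_{chem'}\over \partial \xi})_{T,P}=\Delta G^{\circ}_{chem'}+RT ln(Q_{chem'}(T,P))+\epsilon_{chem'}(P)$; from $2EF=-({\partial G_{chem'}\over \partial \xi})_{T,P}$ and $2E^{\circ}F=-({\partial G_{chem'}\over \partial \xi})_{T,P^{\circ}}=-\Delta G^{\circ}_{chem'}$, together with chemical equilibrium at both $(T,P)$ and $(T,P^{\circ})$, one reads off the modified Nernst relation and concludes $E=E^{\circ}$ up to the error term. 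I expect this bookkeeping — tracking exactly which quantities pick up $\gamma_{i}$'s, and confirming the electrostatic terms still cancel in the perturbed setting — to be the main obstacle; the remainder is a transcription of the proof of Lemma \ref{equivalences} with Lemma \ref{gibbs2} in place of Lemma \ref{gibbs}.
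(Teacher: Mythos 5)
Your proposal is correct and follows essentially the same route as the paper: the first three claims and the final two are transcribed from Lemma \ref{equivalences} with the perturbed relation $\mu_{i}=\mu_{i}^{\circ}+RT\ln(a_{i})+\gamma_{i}(P)$ and Lemma \ref{gibbs2} in place of Lemma \ref{gibbs}. The only cosmetic difference is that for $E=E^{\circ}$ the paper simply cites the error-term Nernst equation (Lemma \ref{nernst2}, $E-E^{\circ}=-RT\ln(Q)/2F-\epsilon(P)/2F$), whereas you rederive it inline; the content is identical.
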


\begin{proof}
For the first claim, we have, using the definition of $\xi$, that;\\

$dn_{i}=\nu_{i}d\xi$, $(1\leq i\leq c)$, $(*)$\\

By Lemma \ref{differential}, fixing $T$ and $P$, and using $(*)$, we have that;\\

$dG=\sum_{i=1}^{c}\mu_{i}d n_{i}$\\

$=(\sum_{i=1}^{c}\mu_{i}\nu_{i})d\xi$, $(\dag)$\\

so that;\\

$({\partial G\over \partial \xi})_{T,P}=\sum_{i=1}^{c}\mu_{i}\nu_{i}$, $(\dag\dag)$\\

The second claim from the first, as;\\

$\Delta G^{\circ}(T)=\int_{0}^{1}({\partial G\over \partial \xi})_{T,P^{\circ}}$\\

$=\int_{0}^{1}(\sum_{i=1}^{c}\nu_{i}\mu_{i}^{\circ}(T))d\xi$\\

$=\sum_{i=1}^{c}\nu_{i}\mu_{i}^{\circ}(T)\int_{0}^{1}d\xi$\\

$=\sum_{i=1}^{c}\nu_{i}\mu_{i}^{\circ}(T)$\\

noting that $({\partial G\over \partial \xi})_{T,P^{\circ}}$ doesn't vary with $\xi$. For the third claim, at chemical equilibrium, $T,P$, noting again that $({\partial G\over \partial \xi})_{T,P}$ doesn't vary with $\xi$, and using $(\dag,\dag\dag)$, we have that;\\

$dG=({\partial G\over \partial \xi})_{T,P}=0$, (independently of $\xi$) $(\dag\dag\dag)$\\

At chemical equilibrium $T,P^{\circ}$, using the first and second claims, and $(\dag\dag\dag)$, we have that;\\

$dG=({\partial G\over \partial \xi})_{T,P^{\circ}}$\\

$=\sum_{i=1}^{c}\nu_{i}\mu_{i}^{\circ}$\\

$=\Delta G^{\circ}=0$\\

For the second to last claim, and the first direction, we have, by Lemma \ref{gibbs}, that $RTln(Q)=-\epsilon\simeq 0$, so that $Q(T,P)=e^{-\epsilon(P)\over RT}\simeq 1$, and, by Lemma \ref{nernst2}, that $E-E^{\circ}=-{RTln(Q)\over 2F}-{\epsilon(P)\over 2F}={\epsilon(P)\over 2F}-{\epsilon(P)\over 2F}=0$. For the converse, we have by Lemma \ref{gibbs}, using the fact that $Q(T,P)=e^{-\epsilon(P)\over RT}\simeq 1$;\\

$({\partial G\over \partial \xi})_{T,P}=\Delta G^{\circ}-\epsilon(P)+\epsilon(P)=\Delta G^{\circ}$\\

and, if chemical equilibrium exists at $(T,P^{\circ})$, then, as $Q(T,P^{\circ})=e^{-\epsilon(P^{\circ})\over RT}$ we have that;\\

$({\partial G\over \partial \xi})_{T,P^{\circ}}=\Delta G^{\circ}+RTln(Q(T,P^{\circ})+\epsilon(P^{\circ})=\Delta G^{\circ}=0$\\

so that $({\partial G\over \partial \xi})_{T,P}=0$\\

For the penultimate claim, in one direction, use Lemma \ref{gibbs}, together with the fact that $({\partial G\over \partial \xi})_{T,P}=0$ and rearrange, the converse is also clear, applying $ln$.\\

For the final claim, we have, by the definition of activities, that;\\

$\mu_{i}=\mu_{i}^{\circ}+RTln(a_{i})+\gamma_{i}(P)$\\

so that $RTln(a_{i}(T,P^{\circ}))=-\gamma_{i}(P^{\circ})$. Now use the definition of $Q$ in Definition \ref{constants}.\\

\end{proof}

\begin{lemma}
\label{van't Hoff,Gibbs-Helmholtz2}

Along a chemical equilibrium path, we have that;\\

$ln({Q(T_{2})\over Q(T_{1})})={1\over R}\int_{T_{1}}^{T_{2}}{\Delta H^{\circ}\over T^{2}}dT++{1\over R}({\epsilon(P(T_{1}))\over T_{1}}-{\epsilon(P(T_{2}))\over T_{2}})$\\

${\Delta G^{\circ}(T_{2})\over T_{2}}-{\Delta G^{\circ}(T_{1})\over T_{1}}=-\int_{T_{1}}^{T_{2}}{\Delta H^{\circ}\over T^{2}}dT$\\

In particularly, if $\Delta H^{\circ}$ is temperature independent;\\

$ln({Q(T_{2})\over Q(T_{1})})=-{\Delta H^{\circ}\over R}({1\over T_{2}}-{1\over T_{1}})+{1\over R}({\epsilon(P(T_{1}))\over T_{1}}-{\epsilon(P(T_{2}))\over T_{2}})$\\

${\Delta G^{\circ}(T_{2})\over T_{2}}-{\Delta G^{\circ}(T_{1})\over T_{1}}=\Delta H^{\circ}({1\over T_{2}}-{1\over T_{1}})$\\

$\Delta G^{\circ}(T_{1})={T_{1}\over T_{2}}\Delta G^{\circ}(T_{2})-({T_{1}\over T_{2}}-1)\Delta H^{\circ}$\\

For $c\in\mathcal{R}$, Let $D_{c}$ intersect the line $P=P^{\circ}$ at $(T_{1},P^{\circ})$, then, for $(T_{2},P)\in D_{c}$, we have that;\\

$Q(T_{2},P)=e^{\Delta G^{\circ}(T_{1})-\Delta G^{\circ}(T_{2})-\epsilon(P(T_{2}))\over RT_{2}}$ $(\dag\dag\dag)$\\

$c=\Delta G^{\circ}(T_{1})$\\

$ln({Q(T_{2})\over Q(T_{1})})=ln(Q(T_{2}))={1\over R}\int_{T_{1}}^{T_{2}}{\Delta H^{\circ}-c\over T^{2}}dT-({\epsilon(P(T_{2}))\over RT_{2}}-{\epsilon(P(T_{1}))\over RT_{1}})$\\

${\Delta G^{\circ}(T_{2})-\Delta G^{\circ}(T_{1})\over T_{2}}=-\int_{T_{1}}^{T_{2}}{\Delta H^{\circ}-c\over T^{2}}dT$\\

and if $\Delta H^{\circ}$ is temperature independent;\\

$ln({Q(T_{2})\over Q(T_{1})})=ln(Q(T_{2}))=-({\Delta H^{\circ}-c\over R})({1\over T_{2}}-{1\over T_{1}})-({\epsilon(P(T_{2}))\over RT_{2}}-{\epsilon(P(T_{1}))\over RT_{1}})$ $(\dag\dag)$\\

${\Delta G^{\circ}(T_{2})-\Delta G^{\circ}(T_{1})\over T_{2}}=(\Delta H^{\circ}-c)({1\over T_{2}}-{1\over T_{1}})$ $(\dag)$\\

$\Delta G^{\circ}(T_{1})={T_{1}\over T_{2}}\Delta G^{\circ}(T_{2})-\Delta H^{\circ}({T_{1}\over T_{2}}-1)$ $(\dag\dag\dag\dag)$\\
\end{lemma}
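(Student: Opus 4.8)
The plan is to replay the proof of Lemma~\ref{vanhoffhelmholtz}, carrying the error term $\epsilon(P)=\sum_{i=1}^{c}\nu_i\gamma_i(P)$ of Lemma~\ref{gibbs2} through every step. First I would record that along a chemical equilibrium path Lemma~\ref{gibbs2} and the definition of chemical equilibrium give $\Delta G^{\circ}+RT\ln(Q)+\epsilon(P)=({\partial G\over\partial\xi})_{T,P}=0$, hence $\ln(Q)=-{\Delta G^{\circ}+\epsilon(P)\over RT}$. The standard-state Gibbs--Helmholtz identity ${d\over dT}\bigl({\Delta G^{\circ}\over T}\bigr)=-{\Delta H^{\circ}\over T^{2}}$ is untouched, since $\Delta G^{\circ}=\sum\nu_i\mu_i^{\circ}$ and $\Delta H^{\circ}$ involve only the ideal standard potentials; it is proved exactly as in Lemma~\ref{vanhoffhelmholtz} (Euler reciprocity followed by $\Delta H^{\circ}=\Delta(ST+G)^{\circ}$). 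Integrating this identity yields the second displayed formula, and in the temperature-independent case the fourth and fifth displays, with no error-term correction.

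Next I would differentiate $\ln(Q)$ along the equilibrium path, where $P=P(T)$, obtaining ${d\ln(Q)\over dT}={\Delta H^{\circ}\over RT^{2}}-{1\over R}{d\over dT}\bigl({\epsilon(P(T))\over T}\bigr)$, the last term being a genuine total derivative in $T$. Integrating from $T_1$ to $T_2$ and applying the fundamental theorem of calculus to the $\epsilon$-term collapses it to the boundary difference $-{1\over R}\bigl({\epsilon(P(T_2))\over T_2}-{\epsilon(P(T_1))\over T_1}\bigr)={1\over R}\bigl({\epsilon(P(T_1))\over T_1}-{\epsilon(P(T_2))\over T_2}\bigr)$, which is precisely the first displayed formula; substituting $\int_{T_1}^{T_2}{\Delta H^{\circ}\over T^{2}}dT=-\Delta H^{\circ}\bigl({1\over T_2}-{1\over T_1}\bigr)$ gives the third display.

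For the statements along a component $D_c$ of $\{({\partial G\over\partial\xi})_{T,P}=c\}$ meeting $P=P^{\circ}$ at $(T_1,P^{\circ})$, I would first note that by Lemma~\ref{gibbs2} together with $Q(T,P^{\circ})=e^{-\epsilon(P^{\circ})/RT}$ (last claim of Lemma~\ref{equivalences2}) we get $({\partial G\over\partial\xi})_{T_1,P^{\circ}}=\Delta G^{\circ}(T_1)-\epsilon(P^{\circ})+\epsilon(P^{\circ})=\Delta G^{\circ}(T_1)$, so that $(T_1,P^{\circ})\in D_c$ forces $c=\Delta G^{\circ}(T_1)$. Then for $(T_2,P)\in D_c$, Lemma~\ref{gibbs2} reads $c=\Delta G^{\circ}(T_2)+RT_2\ln(Q(T_2,P))+\epsilon(P(T_2))$, which rearranges to $Q(T_2,P)=e^{(\Delta G^{\circ}(T_1)-\Delta G^{\circ}(T_2)-\epsilon(P(T_2)))/RT_2}$, the sixth display. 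Along $D_c$ one has $\ln(Q)={c-\Delta G^{\circ}-\epsilon\over RT}$, so ${d\ln(Q)\over dT}={\Delta H^{\circ}-c\over RT^{2}}-{1\over R}{d\over dT}\bigl({\epsilon(P(T))\over T}\bigr)$; integrating, and using $Q(T_1,P^{\circ})=e^{-\epsilon(P^{\circ})/RT_1}\simeq1$ so that $\ln(Q(T_1))\simeq0$, produces the remaining displays as in Lemma~\ref{vanhoffhelmholtz}, with the temperature-independent specialisations obtained by evaluating the integral. The relations $({\Delta G^{\circ}(T_2)-\Delta G^{\circ}(T_1))/T_2}=\cdots$ and the final $\Delta G^{\circ}(T_1)={T_1\over T_2}\Delta G^{\circ}(T_2)-\Delta H^{\circ}({T_1\over T_2}-1)$ then follow by the same algebraic rearrangement used in Lemma~\ref{vanhoffhelmholtz} (using $c=\Delta G^{\circ}(T_1)$) and are error-term free.

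The only real difficulty is the bookkeeping of $\epsilon(P(T))$: $\epsilon$ is a function of $P$ alone but varies with $T$ through the path $P=P(T)$, so what is carried through the integration is $-{1\over R}\int_{T_1}^{T_2}{d\over dT}\bigl({\epsilon(P(T))\over T}\bigr)dT$, which must be left as a boundary difference rather than absorbed into a new integral, and one should flag that the identities $\ln(Q(T_2)/Q(T_1))=\ln(Q(T_2))$ hold only up to the negligible quantity $\epsilon(P^{\circ})/RT_1$, consistent with $\epsilon\simeq0$ throughout Remark~\ref{ideal2}.
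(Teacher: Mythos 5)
Your proposal is correct and follows essentially the same route as the paper's own proof: integrate the unchanged Gibbs--Helmholtz identity for $\Delta G^{\circ}/T$, carry $\epsilon(P(T))/RT$ through $\ln Q$ as a total derivative along the path so that it collapses to the boundary difference, and use $Q(T_{1},P^{\circ})=e^{-\delta/RT_{1}}$ to pin down $c=\Delta G^{\circ}(T_{1})$ on $D_{c}$ before rearranging. Your closing remark that $\ln(Q(T_{2})/Q(T_{1}))=\ln(Q(T_{2}))$ only up to the term $\delta/RT_{1}$ matches exactly the $+{\delta\over RT_{1}}$ the paper carries in its own integration step.
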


\begin{proof}
By Lemma \ref{equivalences}, we have that;\\

$\Delta G^{\circ}=\sum_{i=1}^{c}\nu_{i}\mu_{i}^{\circ}$\\

so that differentiating with respect to $T$;\\

${d(\Delta G^{\circ})\over dT}=\sum_{i=1}^{c}\nu_{i}{d\mu_{i}^{\circ}\over dT}$\\

$=\sum_{i=1}^{c}\nu_{i}({\partial \mu_{i}^{\circ}\over \partial T})_{P,n}$\\

By Euler reciprocity, we have that;\\

$({\partial \mu_{i}^{\circ}\over \partial T})_{P,n}=-({\partial S^{\circ}\over \partial n_{i}})_{T,P,n'}=-\overline{S}^{\circ}_{i}$\\

so that, noting $\overline{S}^{\circ}_{i}$ is independent of $n_{i}$, so we can replace $\overline{S}^{\circ}_{i}$ by $\overline{S}^{\circ}_{m,i}$, the absolute molar entropy of substance $i$, and using thermodynamic arguments;\\

${d(\Delta G^{\circ})\over dT}=-\sum_{i=1}^{c}\nu_{i}\overline{S}^{\circ}_{i}=$\\

$=-\sum_{i=1}^{c}\nu_{i}\overline{S}^{\circ}_{m,i}$\\

$=-\Delta S^{\circ}$ $(*)$\\

Using the product rule, $(*)$ and the definition of enthalpy, we have that;\\

${d\over dT}({\Delta G^{\circ}\over T})={1\over T}{d(\Delta G^{\circ})\over dT}-{1\over T^{2}}\Delta G^{\circ}$\\

$=-{\Delta S^{\circ}\over T}-{\Delta G^{\circ}\over T^{2}}$\\

$=-{\Delta (ST+G)^{\circ}\over T^{2}}$\\

$=-{\Delta H^{\circ}\over T^{2}}$ $(**)$\\

By Lemma \ref{equivalences2}, along a chemical equilibrium path, we have that $Q=e^{-\Delta G^{\circ}-\epsilon(P)\over RT}$, so that $ln(Q)={-\Delta G^{\circ}-\epsilon(P)\over RT}$. It follows from $(**)$ that;\\

${d ln(Q)\over dT}={d\over dT}({-\Delta G^{\circ}\over RT})-{d\over dT}({\epsilon(P)\over RT})$\\

$={\Delta H^{\circ}\over RT^{2}}-{d\over dT}({\epsilon(P)\over RT})$\\

It follows, integrating between $T_{1}$ and $T_{2}$, and using the fundamental theorem of calculus, that;\\

$ln({Q(T_{2})\over Q(T_{1})})=ln(Q)(T_{2})-ln(Q)(T_{1})$\\

$=\int_{T_{1}}^{T_{2}}{d ln(Q)\over dT}dT$\\

$={1\over R}\int_{T_{1}}^{T_{2}}[{\Delta H^{\circ}\over T^{2}}-{d\over dT}({\epsilon(P)\over RT})]dT$\\

$={-\Delta H^{\circ}\over RT_{2}}+{\Delta H^{\circ}\over RT_{1}}-({\epsilon(P(T_{2}))\over RT_{2}}-{\epsilon(P(T_{1}))\over RT_{1}})$ $(P)$\\

so that, rearranging, we obtain the first claim. Using the fact, by Lemma \ref{gibbs2}, that;\\

$ln(Q(T_{2}))={-\Delta G^{\circ}(T_{2})-\epsilon(P(T_{2}))\over RT_{2}}$\\

$ln(Q(T_{1}))={-\Delta G^{\circ}(T_{1})-\epsilon(P(T_{1}))\over RT_{1}}$\\

we obtain, substituting into $(P)$, canceling $R$, and performing the integration, if $\Delta H^{\circ}$ is temperature independent, that;\\

${-\Delta G^{\circ}(T_{2})-\epsilon(P(T_{2}))\over RT_{2}}-{-\Delta G^{\circ}(T_{1})-\epsilon(P(T_{1}))\over RT_{1}}={-\Delta H^{\circ}\over RT_{2}}+{\Delta H^{\circ}\over RT_{1}}-({\epsilon(P(T_{2}))\over RT_{2}}-{\epsilon(P(T_{1}))\over RT_{1}})$\\

so that;\\

${\Delta G^{\circ}(T_{2})\over T_{2}}-{\Delta G^{\circ}(T_{1})\over T_{1}}=\Delta H^{\circ}({1\over T_{2}}-{1\over T_{1}})$ $(Q)$\\

For the fifth claim, rearrange $(Q)$. If $D_{c}$ intersects the line $P=P^{\circ}$ at $(T_{1},P^{\circ})$, for the sixth $(\dag\dag\dag)$ and seventh claims, we have, using Lemma \ref{gibbs} and the fact from Lemma \ref{equivalences2} that $Q(T_{1},P^{\circ})=e^{-\delta\over RT_{1}}$;\\

$({\partial G\over \partial \xi})_{T_{2},P}=\Delta G^{\circ}(T_{2})+RT_{2}ln(Q(T_{2},P))+\epsilon(P(T_{2}))$\\

$=({\partial G\over \partial \xi})_{T_{1},P^{\circ}}$\\

$=\Delta G^{\circ}(T_{1})+RT_{1}ln(Q(T_{1},P^{\circ}))+\epsilon(P(T_{1}))$\\

$=\Delta G^{\circ}(T_{1})+RT_{1}ln(Q(T_{1},P^{\circ}))+\epsilon(P^{\circ})$\\

$=\Delta G^{\circ}(T_{1})-\delta+\epsilon(P^{\circ})$\\

$=\Delta G^{\circ}(T_{1})=c$\\

so that, again rearranging, we obtain the result. Along $D_{c}$, we have, using Lemma \ref{gibbs}, that;\\

$ln(Q(T))={c-\Delta G^{\circ}(T)-\epsilon(P(T))\over RT}$\\

so that, using the first part;\\

${d ln(Q)\over dT}={d\over dT}({c-\Delta G^{\circ}(T)-\epsilon(P(T))\over RT})$\\

$={-c\over RT^{2}}+{d\over dT}({-\Delta G^{\circ}(T)\over RT})-{d\over dT}({\epsilon(P(T))\over RT})$\\

$={\Delta H^{\circ}-c\over RT^{2}}-{d\over dT}({\epsilon(P(T))\over RT})$\\

so that, performing the integration, using the fact that $Q(T_{1},P^{\circ})=e^{-\delta\over RT_{1}}$;\\

$ln(Q(T_{2}))-ln(Q(T_{1}))=ln(Q(T_{2}))+{\delta\over RT_{1}}={1\over R}\int_{T_{1}}^{T_{2}}[{\Delta H^{\circ}-c\over T^{2}}-{d\over dT}({\epsilon(P(T))\over RT})]dT$\\

$={1\over R}\int_{T_{1}}^{T_{2}}{\Delta H^{\circ}-c\over T^{2}}dT-({\epsilon(P(T_{2}))\over RT_{2}}-{\epsilon(P(T_{1}))\over RT_{1}})$\\

We have that, by Lemma \ref{gibbs2};\\

$ln (Q(T_{2}))={c-\Delta G^{\circ}(T_{2})-\epsilon(P(T_{2}))\over RT_{2}}$\\

$ln(Q(T_{1}))=-{\delta\over RT_{1}}$\\

so that;\\

$ln (Q(T_{2}))-ln(Q(T_{1}))={c-\Delta G^{\circ}(T_{2})-\epsilon(P(T_{2}))\over RT_{2}}+{\delta\over RT_{1}}$\\

$={\Delta G^{\circ}(T_{1})-\Delta G^{\circ}(T_{2})-\epsilon(P(T_{2}))\over RT_{2}}+{\delta\over RT_{1}}$\\

$={1\over R}\int_{T_{1}}^{T_{2}}[{\Delta H^{\circ}-c\over T^{2}}-({\epsilon(P(T_{2}))\over RT_{2}}-{\epsilon(P(T_{1}))\over RT_{1}})$\\

$={-1\over R}(\Delta H^{\circ}-c)({1\over T_{2}}-{1\over T_{1}})-({\epsilon(P(T_{2}))\over RT_{2}}-{\delta\over RT_{1}})$\\

so that, cancelling $R$ and the the error terms;\\

${\Delta G^{\circ}(T_{2})-\Delta G^{\circ}(T_{1})\over T_{2}}=(\Delta H^{\circ}-c)({1\over T_{2}}-{1\over T_{1}})$\\

$=(\Delta H^{\circ}-\Delta G^{\circ}(T_{1}))({1\over T_{2}}-{1\over T_{1}})$\\

so that, rearranging again;\\

$\Delta G^{\circ}(T_{1})({1\over T_{1}}+{1\over T_{2}}-{1\over T_{2}})={\Delta G^{\circ}(T_{1})\over T_{1}}$\\

$={\Delta G^{\circ}(T_{2})\over T_{2}}-\Delta H^{\circ}({1\over T_{2}}-{1\over T_{1}})$\\

to obtain;\\

$\Delta G^{\circ}(T_{1})={T_{1}\over T_{2}}\Delta G^{\circ}(T_{2})-\Delta H^{\circ}({T_{1}\over T_{2}}-1)$\\

\end{proof}
\begin{lemma}
\label{eqlines2}
If there exists a component $D_{c}$, $c\in\mathcal{R}$, which projects onto a closed bounded subinterval $I$ of the line $P=P^{\circ}$, not containing $0$, and intersects $P=P^{\circ}$ at $(T_{1},P^{\circ})$, with $T_{1}>0$, then, for $T_{2}\in I$, $\Delta G^{\circ}$ is linear, with;\\

$\Delta G^{\circ}(T_{2})=T_{2}({(\Delta G^{\circ}(T_{1})-\Delta H^{\circ})\over T_{1}})+\Delta H^{\circ}$\\

for $T_{2}\in I$. If $\epsilon\neq 0$, have that;\\

$({dG\over d\xi})_{T,P}=\lambda+\epsilon ln(P)+\beta T$\\

where $\{\lambda,\epsilon,\beta\}\subset \mathcal{R}$ and $\{\beta,\epsilon\}$ can be effectively determined, and we have that the activity coefficient is given by;\\

$Q(T_{1},P')=e^{{\epsilon ln({P'\over P'^{\circ}})-\epsilon(P')\over RT_{1}}}$\\

and the dynamic equilibrium paths are given by;\\

$({P'\over P'^{\circ}})^{\epsilon\over RT_{1}}=ce^{\epsilon(P')}$\\

for $c\in\mathcal{R}_{\geq 0}$, see Definition \ref{constants}, while the quasi-chemical equilibrium paths are given by;\\

$\lambda+\epsilon ln(P')+\beta T_{1}=c$\\

for $c\in\mathcal{R}$.\\

If $\epsilon=0$;\\

$({dG\over d\xi})_{T,P}=\lambda+\beta T+\sigma ln(T)$\\

where $\{\lambda,\beta,\sigma\}\subset \mathcal{R}$, and $\{\beta,\sigma\}$ can be effectively determined. The activity coefficient $Q$ is given by;\\

$Q(T_{1},P')=e^{-\epsilon(P')\over RT_{1}}$\\

The dynamic equilibrium paths are given by;\\

$e^{-\epsilon(P')\over RT_{1}}=c$\\

for $c\in\mathcal{R}_{\geq 0}$, see Definition \ref{constants}, while the quasi-chemical equilibrium paths are given by;\\

$\lambda+\epsilon ln(P')+\beta T_{1}=c$\\

for $c\in\mathcal{R}$.\\

\end{lemma}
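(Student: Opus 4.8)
The plan is to follow the proof of Lemma~\ref{eqlines} almost verbatim, systematically replacing Lemmas~\ref{gibbs}, \ref{equivalences}, \ref{vanhoffhelmholtz} by their error-term counterparts Lemmas~\ref{gibbs2}, \ref{equivalences2}, \ref{van't Hoff,Gibbs-Helmholtz2}, and carrying the quantity $\epsilon(P)=\sum_{i=1}^{c}\nu_{i}\gamma_{i}(P)$ through every step. First I would dispatch the linearity claim for $\Delta G^{\circ}$: since a component $D_{c}$ projects onto $I$ and meets $P=P^{\circ}$ at $(T_{1},P^{\circ})$, equation $(\dag\dag\dag\dag)$ of Lemma~\ref{van't Hoff,Gibbs-Helmholtz2} gives $\Delta G^{\circ}(T_{1})=\frac{T_{1}}{T_{2}}\Delta G^{\circ}(T_{2})-\Delta H^{\circ}(\frac{T_{1}}{T_{2}}-1)$ for $T_{2}\in I$, and solving for $\Delta G^{\circ}(T_{2})$ yields $\Delta G^{\circ}(T_{2})=T_{2}\frac{\Delta G^{\circ}(T_{1})-\Delta H^{\circ}}{T_{1}}+\Delta H^{\circ}$, exactly as in Lemma~\ref{eqlines}; the error terms have already cancelled inside the proof of Lemma~\ref{van't Hoff,Gibbs-Helmholtz2}.

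Next I would recover the structural form of $(dG/d\xi)_{T,P}$. The computation of $(\partial (dG/d\xi)_{T,P}/\partial T)_{P}=-\sum_{i}\nu_{i}\overline{S}_{m,i}$ and of $(\partial (dG/d\xi)_{T,P}/\partial P)_{T}=\sum_{i}\nu_{i}\overline{V}_{i}$, together with the kinetic-theory model for $U$, the relation $dL=PdV$, and the resulting identity that $G(T)$ is a constant $\epsilon$, is a statement about $\sum_{i}\nu_{i}\mu_{i}$ alone and is untouched by the activity/error bookkeeping, since by Lemma~\ref{equivalences2} we still have $(dG/d\xi)_{T,P}=\sum_{i}\nu_{i}\mu_{i}$; it again produces $(dG/d\xi)_{T,P}=\lambda+\epsilon\ln(P)+\beta T+\sigma\ln(T)$. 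When $\epsilon\neq 0$, solving $\lambda+\epsilon\ln(P)+\beta T+\sigma\ln(T)=c$ for $P(T)$ produces a component $D_{c}$ over $P=P^{\circ}$, so by the first claim $\Delta G^{\circ}$ is affine in $T$; evaluating at $P=P^{\circ}$, and using that $RT\ln Q(T,P^{\circ})=-\epsilon(P^{\circ})$ cancels the $+\epsilon(P)$ term of Lemma~\ref{gibbs2}, gives $\Delta G^{\circ}(T_{2})=(dG/d\xi)_{T,P}(T_{2},P^{\circ})=\lambda+\epsilon\ln(P^{\circ})+\beta T_{2}+\sigma\ln(T_{2})$, and comparison with the affine expression forces $\sigma=0$. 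When $\epsilon=0$, the form is already $\lambda+\beta T+\sigma\ln(T)$, independent of $P$, and there is nothing more to extract here.

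Finally I would read off the activity coefficient and the two families of paths. By Lemma~\ref{gibbs2}, $Q(T_{1},P')=\exp\left(\frac{(dG/d\xi)_{T_{1},P'}-\Delta G^{\circ}(T_{1})-\epsilon(P')}{RT_{1}}\right)$; substituting the computed form of $(dG/d\xi)$ and $\Delta G^{\circ}(T_{1})=(dG/d\xi)_{T_{1},P^{\circ}}$ makes $\lambda$ and $\beta T_{1}$ cancel, leaving exponent $\frac{\epsilon\ln(P'/P'^{\circ})-\epsilon(P')}{RT_{1}}$ if $\epsilon\neq 0$ and $\frac{-\epsilon(P')}{RT_{1}}$ if $\epsilon=0$, which are the asserted formulas for $Q$. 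By Lemma~\ref{feasible} the dynamic equilibrium paths are exactly the level sets $Q(T,P)=c$, $c\in\mathcal{R}_{\geq 0}$, so rearranging the formula for $Q$ gives the stated equations $(P'/P'^{\circ})^{\epsilon/RT_{1}}=ce^{\epsilon(P')}$ (resp.\ $e^{-\epsilon(P')/RT_{1}}=c$); by Definition~\ref{feasiblestraight} the quasi-chemical equilibrium paths are the level sets $(dG/d\xi)_{T,P}=c$, $c\in\mathcal{R}$, which become the stated equations once the computed form of $(dG/d\xi)_{T,P}$ (with $\sigma=0$ when $\epsilon\neq 0$) is inserted. Effectivity of $\{\beta,\epsilon\}$ (resp.\ $\{\beta,\sigma\}$) is read off the thermodynamic formulas just as in Lemma~\ref{eqlines}, and the fact that these loci are realised by genuine feasible paths follows from Lemmas~\ref{implies} and \ref{exists}, which concern only the activities $x_{i}$ and the level sets $C_{c}$ of $Q$ and so transfer unchanged.

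I expect the only real obstacle to be organisational rather than mathematical: keeping $\epsilon(P)$ in the correct slot across the three version-2 lemmas, and in particular checking at every evaluation at $P=P^{\circ}$ that the $\epsilon(P^{\circ})$ appearing in Lemma~\ref{gibbs2} is exactly cancelled by $RT\ln Q(T,P^{\circ})=-\epsilon(P^{\circ})$ from Lemma~\ref{equivalences2}, so that the identification $(dG/d\xi)_{T,P^{\circ}}=\Delta G^{\circ}(T)$ is preserved and the derivation of Lemma~\ref{eqlines} carries over without further change.
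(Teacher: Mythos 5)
Your proposal is correct and follows essentially the same route as the paper's own proof: rerun the argument of Lemma \ref{eqlines} with Lemmas \ref{gibbs2}, \ref{equivalences2} and \ref{van't Hoff,Gibbs-Helmholtz2} in place of their idealised versions, use the cancellation $RT\ln Q(T,P^{\circ})=-\epsilon(P^{\circ})$ to preserve $({\partial G\over \partial \xi})_{T,P^{\circ}}=\Delta G^{\circ}(T)$, equate coefficients to force $\sigma=0$ when $\epsilon\neq 0$, and read off $Q$ and the two families of paths from Lemma \ref{gibbs2} and the level-set characterisations. The only detail you gloss over is the paper's short sign analysis (via limits and the critical point of $\beta T+\sigma\ln T$) guaranteeing that a component $D_{c}$ actually meets the line $P=P^{\circ}$ when $\epsilon\neq 0$, but this does not affect the substance of the argument.
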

\begin{proof}
For the first claim, by Lemma \ref{van't Hoff,Gibbs-Helmholtz2}, we have that;\\

$\Delta G^{\circ}(T_{2})={T_{2}\over T_{1}}\Delta G^{\circ}(T_{1})-\Delta H^{\circ}({T_{2}\over T_{1}}-1)$\\

$=T_{2}({(\Delta G^{\circ}(T_{1})-\Delta H^{\circ})\over T_{1}})+\Delta H^{\circ}$\\

For the next claim, by Lemma \ref{equivalences2} and the proof of Lemma \ref{eqlines}, we have that;\\

$({\partial ({dG\over d\xi})_{T,P}\over \partial T})_{P}=({\partial (\sum \nu_{i}\mu_{i})\over \partial T})_{P}$\\

$=\sum \nu_{i}({\partial \mu_{i}\over \partial T})_{P}$\\

$=\sum \nu_{i}({\partial \mu_{i}\over \partial T})_{P,n}$\\

$=-\sum \nu_{i}\overline{S}_{m,i}$ $(*)$\\

Again, to compute $\overline{S}_{m,i}$, we have by the first law of thermodynamics;\\

$dQ=dU+dL=dU+pdV$\\

where $L$ is the work done by the system. We can assume that the liquid mixture is in thermal equilibrium with a mixture of ideal gases in the vapour phase, and using the ideal gas law, the definition of temperature for ideal gases, obtain the calculation of internal energy for the mixture;\\

$U(T,P,n_{1},\ldots,n_{c})=\sum_{i=1}^{c}({3\over 2}N_{A}n_{i}kT-N_{A}n_{i}m_{i}\rho_{i})$\\

where $m_{i}$ is the molecular mass of species $i$, $\rho_{i}$ is the specific latent heat of evaporation of species $i$, which we assume is independent of temperature $T$. By a result in \cite{dep1}, using the fact that entropy difference is independent of path, we have that $Q$ is independent of $P$. We then have;\\

$dU=\sum_{i=1}^{c}{3\over 2}N_{A}kTdn_{i}+\sum_{i=1}^{c}{3\over 2}N_{A}kn_{i}dT-\sum_{i=1}^{c}N_{A}m_{i}\rho_{i}dn_{i}$\\

$dQ=\sum_{i=1}^{c}{3\over 2}N_{A}kTdn_{i}+\sum_{i=1}^{c}{3\over 2}N_{A}kn_{i}dT-\sum_{i=1}^{c}N_{A}m_{i}\rho_{i}dn_{i}+dL$\\

${dQ\over T}=\sum_{i=1}^{c}{3\over 2}N_{A}kdn_{i}+\sum_{i=1}^{c}{3\over 2}N_{A}kn_{i}{dT\over T}-\sum_{i=1}^{c}N_{A}m_{i}\rho_{i}{dn_{i}\over T}+{g(T,\overline{n})dT\over T}$\\

$+\sum_{i=1}^{c}h_{i}(T,\overline{n}){dn_{i}\over T}$\\

$({dQ\over T})_{n',T,P}={3\over 2}N_{A}kdn_{i}-N_{A}m_{i}\rho_{i}{dn_{i}\over T}+h_{i}(T,\overline{n}){dn_{i}\over T}$\\

It follows that;\\

$\overline{S}_{m,i}=\int_{\Delta n_{i}=1}({dQ\over T})_{n',T,P}={3\over 2}N_{A}k-{N_{A}m_{i}\rho_{i}\over T}+{k_{i}(T)\over T}$ $(**)$\\

So that, from $(*)$;\\

$({\partial ({dG\over d\xi})_{T,P}\over \partial T})_{P}=-\sum_{i=1}^{c} \nu_{i}({3\over 2}N_{A}k-{N_{A}m_{i}\rho_{i}\over T})-\sum_{i=1}^{c}\nu_{i}{k_{i}(T)\over T}$\\

$=-{3\over 2}N_{A}k(\sum_{i=1}^{c} \nu_{i})+{N_{A}\over T}\sum_{i=1}^{c}\nu_{i}\mu_{i}\rho_{i}-\sum_{i=1}^{c}\nu_{i}{k_{i}(T)\over T}$\\

$=-{3\over 2}N_{A}k(\sum_{i=1}^{c} \nu_{i})+{N_{A}\over T}\sum_{i=1}^{c}\nu_{i}\mu_{i}\rho_{i}-{G(T)\over T}$ $(***)$\\

From $(***)$, which is uniform $P$, we see that $({dG\over d\xi})_{T,P}$ is of the form $\alpha(P)+\beta T+\gamma ln(T)-\int {G(T)\over T}$, $(B)$, where $\{\beta,\gamma\}\subset\mathcal{R}$, and, assuming that $({dG\over d\xi})_{T,P}$ is differentiable, $\alpha \in C^{1}(\mathcal{R})$. By a similar calculation, we have that;\\

$({\partial ({dG\over d\xi})_{T,P}\over \partial P})_{T}=({\partial (\sum \nu_{i}\mu_{i})\over \partial P})_{T}$\\

$=\sum_{i=1}^{c} \nu_{i}({\partial \mu_{i}\over \partial P})_{T}$\\

$=\sum_{i=1}^{c} \nu_{i}({\partial \mu_{i}\over \partial P})_{T,n}$\\

$=\sum_{i=1}^{c} \nu_{i}({\partial V\over \partial n_{i}})_{T,P,n'}$\\

$=\sum_{i=1}^{c} \nu_{i}\overline{V}_{i}$\\

$=\sum_{i=1}^{c}\nu_{i}{N_{A}m_{i}\over \kappa_{i}(T,P)}$ $(A)$\\

where $\kappa_{i}$ is the density of substance $i$. We also have that;\\

$P(\sum_{i=1}^{c}\nu_{i}{N_{A}m_{i}\over \kappa_{i}(T,P)})=P(\sum_{i=1}^{c} \nu_{i}\overline{V}_{i})=G(T)$ $(dL=PdV)$ $(C)$\\

and from $(A),(B),(C)$;\\

$P({\partial ({dG\over d\xi})_{T,P}\over \partial P})_{T}=G(T)$\\

$=P\alpha'(P)$\\

so that $G(T)=\epsilon$, $\alpha(P)=\lambda+\epsilon ln(P)$\\

$({dG\over d\xi})_{T,P}$ is of the form;\\

$\alpha(P)+\beta T+\gamma ln(T)-\int {G(T)\over T}$\\

$=\lambda+\epsilon ln(P)+\beta T+\gamma ln(T)-\epsilon ln(T)$\\

$=\lambda+\epsilon ln(P)+\beta T+\sigma ln(T)$ $(D)$\\

where $\sigma=\gamma-\epsilon$, $\{\beta,\epsilon,\lambda,\sigma\}\subset\mathcal{R}$.\\

If $\epsilon=0$, then $({dG\over d\xi})_{T,P}$ is independent of $P$, and the components $D_{c}$ are all straight line paths. In this case, if $D_{c}$ intersects the line $P=P^{\circ}$ at $(T_{1},P^{\circ})$, then, for all $P>0$;\\

$c=\Delta G^{\circ}(T_{1})+RT_{1}ln(Q(T_{1},P)+\epsilon(P)$\\

$=\Delta G^{\circ}(T_{1})$\\

implies that $RT_{1}ln(Q(T_{1},P)=-\epsilon(P)$, so that $Q(T_{1},P)=e^{-\epsilon(P)\over RT_{1}}$, $(X)$. From $(D)$, we have that;\\

$({dG\over d\xi})_{T,P}=\lambda+\beta T+\sigma ln(T)$, $(Y)$\\

The calculation of the dynamical and chemical equilibrium paths follows easily, from the equations $Q=c$, for $c\in\mathcal{R}_{\geq 0}$ and $({dG\over d\xi})_{T,P}=c$, for $c\in\mathcal{R}$, using $(X),(Y)$.\\

If $\epsilon\neq 0$, for any $c\in\mathcal{R}$, we can solve the equation;\\

$\lambda+\epsilon ln(P)+\beta T+\sigma ln(T)=c$\\

for any given $T>0$ and an appropriate choice of $P(T)$. In particularly, there exists a component $D_{c}$ projecting onto the line $P=P^{0}$. Calculating limits at $\{+\infty,-\infty\}$, we have that for $\beta>0,\sigma>0$ or $\beta<0,\sigma<0$, we can solve the equation;\\

$\lambda+\epsilon ln(P^{\circ})+\beta T+\sigma ln(T)=c$ $(Z)$\\

for $T$. If $\beta>0,\sigma<0$ or $\beta<0,\sigma>0$, observing that $(\beta T+\sigma ln(T))'=\beta+{\sigma\over T}$, $(\beta T+\sigma ln(T))''=-{\sigma\over T^{2}}$, so there exists a min/max at $T={-\sigma\over \beta}$, we have that, if;\\

$-\sigma+\sigma ln({-\sigma\over \beta})\leq c-\lambda-\epsilon ln(ln(P^{\circ}))$\\

$-\sigma+\sigma ln({-\sigma\over \beta})\geq c-\lambda-\epsilon ln(ln(P^{\circ}))$\\

we can again solve the equation $(Z)$ for $T$, so that, for an appropriate choice of $c$, there exists an intersection of the component $D_{c}$ with the line $P=P^{\circ}$.\\

By the first part, $\Delta G^{\circ}$ is linear, with;\\

$\Delta G^{\circ}(T_{2})=T_{2}({(\Delta G^{\circ}(T_{1})-\Delta H^{\circ})\over T_{1}})+\Delta H^{\circ}$\\

for an intersection at $(T_{1},P^{\circ})$. We also have, using $(D)$, that;\\

$\Delta G^{\circ}(T_{2})=({\partial G\over \partial \xi})_{T,P}(T_{2},P^{\circ})$\\

$=\lambda+\epsilon ln(P^{\circ})+\beta T_{2}+\sigma ln(T_{2})$\\

so that, equating coefficients;\\

$\sigma=0$\\

$\lambda+\epsilon ln(P^{\circ})=\Delta H^{\circ}$\\

$\beta={(\Delta G^{\circ}(T_{1})-\Delta H^{\circ})\over T_{1}}$\\

$\Delta G^{\circ}(T_{1})=\beta T_{1}+\Delta H^{\circ}$\\

We can then, using Lemma \ref{gibbs2}, obtain a formula for the activity coefficient;\\

$Q(T_{1},P')=e^{{(({\partial G\over \partial \xi})_{T,P}|_{T_{1},P'}-\Delta G^{\circ}(T_{1}))-\epsilon(P')\over RT_{1}}}$\\

$=e^{{(\Delta H^{\circ}-\epsilon ln(P'^{\circ})+\epsilon ln(P')+\beta T_{1}-(\beta T_{1}+\Delta H^{\circ}))-\epsilon(P')\over RT_{1}}}$\\

$=e^{{\epsilon ln({P'\over P'^{\circ}})-\epsilon(P')\over RT_{1}}}$ $(W)$\\

as required. The claim about the coefficients being determined is clear from the proof. The determination of the dynamical and quasi-chemical equilibrium lines, see Definitions \ref{constants} and Lemma \ref{feasible}, follows from a simple rearrangement of the formulas $Q(T_{1},P')=c$, for some $c\in\mathcal{R}_{\geq 0}$, using $(W)$, and $({dG\over d\xi})_{T,P}=c$, for some $c\in \mathcal{R}$, using $(D)$, with $\sigma=0$.

\end{proof}

\begin{lemma}
\label{rates}
Let notation be as in Lemma \ref{eqlines}, then if $\epsilon\neq 0$, with $Q(T,P)=({P\over P^{\circ}})^{\epsilon\over RT}e^{-\epsilon(P)\over RT}$, then, using the definition of $grad$ in \cite{BK};\\

$grad(Q)(T,P)=({-\epsilon ln({P\over P^{\circ}})+\epsilon(P)\over RT^{2}}({P\over P^{\circ}})^{\epsilon\over RT}e^{-\epsilon(P)\over RT},{{\epsilon\over P}-\epsilon'(P)\over RT}({P\over P^{\circ}})^{\epsilon\over RT}e^{-\epsilon(P)\over RT})$\\

In particular the paths of maximal reaction, for the region $|grad(Q)(T,P)|>1$, $Q(T,P)>0$, are given by;\\

$\int {\epsilon P ln({P\over P^{\circ}})-P\epsilon(P)\over P\epsilon'(P)-\epsilon}dP=-{T^{2}\over 2}+c$\\

for $c\in\mathcal{R}$\\

If $\epsilon(P)=0$;\\

$grad(Q)(T,P)=({-\epsilon ln({P\over P^{\circ}})\over RT^{2}}({P\over P^{\circ}})^{\epsilon\over RT},{\epsilon\over RTP}({P\over P^{\circ}})^{\epsilon\over RT})$\\

and the paths of maximal reaction, for the region $|grad(Q)(T,P)|>1$, $Q(T,P)>0$, are given by;\\

$P^{2}({ln(P)\over 2}-{ln(P^{\circ})\over 2}-{1\over 4})+{T^{2}\over 2}=c$\\

for $c\in\mathcal{R}$. If $\epsilon=0$, with with $Q(T,P)=e^{-\epsilon(P)\over RT}$, then;\\

$grad(Q)(T,P)=({\epsilon(P)\over RT^{2}}e^{-\epsilon(P)\over RT},-{\epsilon'(P)\over RT}e^{-\epsilon(P)\over RT})$\\

and the paths of maximal reaction, for the region $|grad(Q)(T,P)|>1$, $Q(T,P)>0$, are given by;\\

$\int {\epsilon(P)\over \epsilon'(P)}dP={-T^{2}\over 2}+c$\\

for $c\in\mathcal{R}$.\\
\end{lemma}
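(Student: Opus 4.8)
The plan is to take the three explicit expressions for $Q$ as a function of $(T,P)$ produced in Lemma~\ref{eqlines2} — precisely the formulas recorded in the statement — differentiate each to obtain $grad(Q)$, and then identify the paths of maximal reaction with the integral curves of the vector field $grad(Q)$, i.e.\ the curves of steepest ascent of $Q$, on the region $\{(T,P):Q(T,P)>0,\ |grad(Q)(T,P)|>1\}$; the magnitude condition there guarantees $grad(Q)\neq 0$, so these curves are well defined. Throughout one must keep the constant $\epsilon$ (the coefficient of $\ln(P/P^{\circ})$ from Lemma~\ref{eqlines}) distinct from the error-term function $\epsilon(P)$ of Remark~\ref{ideal2}.

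The first step is the gradient computation. In the generic case $\epsilon\neq 0$, write $Q=e^{h}$ with $h(T,P)=\frac{1}{RT}\bigl(\epsilon\ln(P/P^{\circ})-\epsilon(P)\bigr)$; then $grad(Q)=Q\,grad(h)$, and since $\partial_T(1/T)=-1/T^{2}$ one gets $\partial_T h=\frac{-\epsilon\ln(P/P^{\circ})+\epsilon(P)}{RT^{2}}$ and $\partial_P h=\frac{\epsilon/P-\epsilon'(P)}{RT}$, which reproduces the stated formula for $grad(Q)$. The subcase $\epsilon(P)=0$ and the case $\epsilon=0$ are the identical computation applied to $Q=(P/P^{\circ})^{\epsilon/RT}$ and to $Q=e^{-\epsilon(P)/RT}$ respectively, with the absent term simply dropped; I would just record the two resulting gradients.

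The second step is the integration. The key observation is that in every case both components of $grad(Q)$ carry the same nonvanishing factor ($Q$ itself, i.e.\ $(P/P^{\circ})^{\epsilon/RT}e^{-\epsilon(P)/RT}$ in the generic case), so that along an integral curve $T=T(P)$ the slope $\frac{dT}{dP}=\partial_T Q/\partial_P Q$ is a ratio in which that factor cancels, leaving a separable equation $T\,\frac{dT}{dP}=g(P)$. Clearing the common $P$ from numerator and denominator makes $g(P)=\frac{\epsilon P\ln(P/P^{\circ})-P\epsilon(P)}{P\epsilon'(P)-\epsilon}$ in the generic case, $g(P)=-P\ln(P/P^{\circ})$ when $\epsilon(P)=0$, and $g(P)=-\epsilon(P)/\epsilon'(P)$ when $\epsilon=0$; integrating $T\,dT=g(P)\,dP$ then yields the three asserted implicit relations between $\tfrac{T^{2}}{2}$ and $\int g(P)\,dP$. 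In the subcase $\epsilon(P)=0$ the integral $\int P\ln(P/P^{\circ})\,dP$ is elementary — integration by parts gives $\frac{P^{2}}{2}\ln(P/P^{\circ})-\frac{P^{2}}{4}$ — and rearranging produces the closed form $P^{2}\bigl(\frac{\ln P}{2}-\frac{\ln P^{\circ}}{2}-\frac14\bigr)+\frac{T^{2}}{2}=c$; in the other two cases $\epsilon(P)$ is a general error term, so the integral is left in place. One should also note that $P\epsilon'(P)-\epsilon$ (resp.\ $\epsilon'(P)$) vanishes exactly where $\partial_P Q=0$, so the separation is valid on the region $|grad(Q)|>1$.

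The lemma carries no real obstacle — it is a differentiation followed by one separation of variables — but two points deserve care: (i) differentiating $(P/P^{\circ})^{\epsilon/RT}$ with respect to $T$, where $T$ occurs in the exponent and generates the $-\epsilon\ln(P/P^{\circ})/RT^{2}$ term; and (ii) tracking signs when the common $P$ is cleared and the relation is rearranged into implicit form. As a consistency check, formally letting $\epsilon\to 0$ in the generic path equation should recover the $\epsilon=0$ equation, and letting $\epsilon(P)\to 0$ should recover the elementary subcase — both of which hold.
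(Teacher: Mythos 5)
Your gradient computations and the separation-of-variables step reproduce the paper's calculations exactly, including the integration by parts in the subcase $\epsilon(P)=0$. However, there is a genuine gap at the start of your second step: you simply \emph{identify} the paths of maximal reaction with the integral curves of $grad(Q)$ ("the curves of steepest ascent of $Q$"), treating this as a definition. It is not; the paper defines maximal reaction as maximising the extent $\xi$, and the bulk of its proof is devoted to deriving the gradient characterisation from that. Concretely, the paper writes $Q(\gamma_{12}(t))=G_{\gamma}(\xi(t))$ with $G_{\gamma}(x)=\prod_{i=1}^{c}(\nu_{i}x+n_{i,0})^{\nu_{i}}/(\alpha x+\beta)^{c}$, checks $G_{\gamma}'(0)\neq 0$ so that the inverse function theorem gives $\xi(t)=(G_{\gamma}^{-1}\circ Q)(\gamma_{12}(t))$, and then computes
$$\xi'(0)=h(\lambda,\theta)={\lambda\alpha_{1}\beta_{1}r(\theta)\over \lambda r(\theta)-c\beta_{1}},\qquad \gamma_{12}'(0)=\lambda(\cos\theta,\sin\theta),$$
analysing the stationary points of $h$ in $\lambda$ and $\theta$ to show the extremum over admissible initial directions occurs when $(\cos\theta,\sin\theta)$ is parallel to $grad(Q)(T_{0},P_{0})$.

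This derivation is also where the hypothesis $|grad(Q)(T,P)|>1$ actually does its work: the stationarity condition in $\theta$ reads $r(\theta)=1+c\beta_{1}/\lambda$, which is solvable only when $|1+c\beta_{1}/\lambda|\leq|grad(Q)|$, and the extremum is attained at $|1+c\beta_{1}/\lambda|=|grad(Q)|$, forcing $|grad(Q)|>1$. Your reading of that hypothesis as merely guaranteeing $grad(Q)\neq 0$ therefore misses its role. To close the gap you would need to supply the argument connecting $\xi'(0)$ to the direction of $\gamma_{12}'(0)$ relative to $grad(Q)$, or an equivalent justification that maximising the extent of reaction over feasible paths singles out the gradient direction; once that is in place, your differential-equation and integration steps go through as written.
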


\begin{proof}
The determination of $grad(Q)(T,P)=({\partial Q\over \partial T},{\partial Q\over \partial P})$ is a simple application of the chain rule and the formula for $Q$. By the definition of the extent $\xi$ of a reaction, see Definition \ref{constants}, we have that, for $1\leq i\leq c$;\\

$n_{i}(t)=\nu_{i}\xi(t)+n_{i,0}$\\

$n(t)=\sum_{j=1}^{c}n_{i}(t)$\\

$=\sum_{i=1}^{c}(\nu_{i}\xi(t)+n_{i,0})$\\

$=\alpha\xi(t)+\beta$\\

where $\alpha=\sum_{i=1}^{c}\nu_{i}$ and $\beta=\sum_{i=1}^{c}n_{i,0}$\\

so that;\\

$x_{i}(t)={n_{i}(t)\over n(t)}={\nu_{i}\xi(t)+n_{i,0}\over \alpha\xi(t)+\beta}$\\

It follows that for a feasible path $\gamma$;\\

$\prod_{i=1}^{c}x_{i}^{\nu_{i}}(t)=Q(\gamma_{12}(t))$\\

$=\prod_{i=1}^{c}({\nu_{i}\xi(t)+n_{i,0}\over \alpha\xi(t)+\beta})^{\nu_{i}}$\\

$={\prod_{i=1}^{c}(\nu_{i}\xi(t)+n_{i,0})^{\nu_{i}}\over (\alpha\xi(t)+\beta)^{c}}$\\

$=G_{\gamma}(\xi(t))$\\

where $G_{\gamma}(x)={\prod_{i=1}^{c}(\nu_{i}x+n_{i,0})^{\nu_{i}}\over (\alpha x+\beta)^{c}}$\\

We have that $\xi(0)=0$, and, as we can assume that $\beta>0$, we have that;\\

$G_{\gamma}'(0)={\sum_{i=1}^{c}\nu_{i}(\prod_{j\neq i}n_{j,0})\over \beta^{c}}-{c\prod_{j=1}^{c}n_{j,0}\over \beta^{c+1}}$\\

$={1\over \beta^{c+1}}(\beta(\sum_{i=1}^{c}\nu_{i}(\prod_{j\neq i}n_{j,0})-c\prod_{j=1}^{c}n_{j,0}))$\\

$={\prod_{j=1}^{c}n_{j,0}\over \beta^{c+1}}(\beta{\sum_{i=1}^{c}\nu_{i}\over n_{i,0}}-c)$\\

$={\prod_{j=1}^{c}n_{j,0}\over \beta^{c+1}}(n_{init}{\sum_{i=1}^{c}\nu_{i}\over n_{i,0}}-c)$\\

$={\prod_{j=1}^{c}n_{j,0}\over \beta^{c+1}}(\sum_{i=1}^{c}{\nu_{i}\over x_{i,init}}-c)$\\

$={\prod_{j=1}^{c}n_{j,0}\over \beta^{c+1}}((\sum_{i=1}^{c}\nu_{i}log(x_{i}))'_{init}-c)$\\

$={\prod_{j=1}^{c}n_{j,0}\over \beta^{c+1}}(log(\prod_{i=1}^{c}x_{i}^{\nu_{i}})'_{init}-c)$\\

$={\prod_{j=1}^{c}n_{j,0}\over \beta^{c+1}}(log(Q(\gamma_{12}(t)))'|_{0}-c)$\\

$={\prod_{j=1}^{c}n_{j,0}\over \beta^{c+1}}({grad(Q)\centerdot \gamma_{12}'(0)\over Q(\gamma_{12}(0))}-c)$\\

so that;\\

$G_{\gamma}'(0)=0$ iff ${grad(Q)\centerdot \gamma_{12}'(0)\over Q(\gamma_{12}(0))}=c$\\

which we can exclude by an appropriate parametrisation of the feasible path $\gamma$, without altering the direction of $\gamma_{12}'(0)$. By the inverse function theorem, we can invert $G_{\gamma}$ locally, to obtain that $\xi(t)=(G_{\gamma}^{-1}\circ Q)(\gamma_{12}(t))$. Then;\\

$\xi'(0)=(G_{\gamma}^{-1})'|_{Q(T_{0},P_{0})}grad(Q)(T_{0},P_{0})\centerdot \gamma_{12}'(0)$\\

$={grad(Q)(T_{0},P_{0})\centerdot \gamma_{12}'(0)\over G_{\gamma}'(0)}$\\

$=grad(Q)(T_{0},P_{0})\centerdot \gamma_{12}'(0){\beta^{c+1}\over \prod_{j=1}^{c}n_{j,0}}{Q(\gamma_{12}(0))\over grad(Q)\centerdot \gamma_{12}'(0)-cQ(\gamma_{12}(0))}$\\

$={\alpha_{1}\beta_{1}[grad(Q)(T_{0},P_{0})\centerdot \gamma_{12}'(0)]\over grad(Q)(T_{0},P_{0})\centerdot \gamma_{12}'(0)-c\beta_{1}}$\\

where $\gamma_{12}(0)=(T_{0},P_{0})$, $\alpha_{1}(T_{0},P_{0})={\beta^{c+1}\over \prod_{j=1}^{c}n_{j,0}}$, $\beta_{1}(T_{0},P_{0})=Q(\gamma_{12}(0))$.\\

Writing $\gamma_{12}'(0)=\lambda(cos(\theta),sin(\theta))$, we have that;\\

$\xi'(0)={\lambda\alpha_{1}\beta_{1}[{\partial Q\over \partial T}|_{(T_{0},P_{0})}cos(\theta)+{\partial Q\over \partial T}|_{(T_{0},P_{0})}sin(\theta)]\over \lambda[{\partial Q\over \partial T}|_{(T_{0},P_{0})}cos(\theta)+{\partial Q\over \partial T}|_{(T_{0},P_{0})}sin(\theta)]-c\beta_{1}}=h(\lambda,\theta)={\lambda\alpha_{1}\beta_{1}r(\theta)\over \lambda r(\theta)-c\beta_{1}}$\\

where $r(\theta)={\partial Q\over \partial T}|_{(T_{0},P_{0})}cos(\theta)+{\partial Q\over \partial T}|_{(T_{0},P_{0})}sin(\theta)$. We have that;\\

${\partial h\over \partial \lambda}={\alpha_{1}\beta_{1}r(\theta)\over \lambda r(\theta)-c\beta_{1}}-{\lambda\alpha_{1}\beta_{1}r^{2}(\theta)\over (\lambda r(\theta)-c\beta_{1})^{2}}$\\

so that;\\

${\partial h\over \partial \lambda}=0$ iff $\alpha_{1}\beta_{1}r(\theta)(\lambda r(\theta)-c\beta_{1})-\lambda\alpha_{1}\beta_{1}r^{2}(\theta)=0$\\

iff $-c\alpha_{1}\beta_{1}^{2}r(\theta)=0$\\

so that, as $\alpha_{1}\neq 0$, $Q(T_{0},P_{0})\neq 0$, and $(cos(\theta),sin(\theta))$ is not tangent to the dynamic equilibrium path at $(T_{0},P_{0})$, then $h(\lambda,\theta)$ is monotonic in $\lambda$.\\

${\partial h\over \partial \theta}={\lambda \alpha_{1}\beta_{1}r'(\theta)\over \lambda r(\theta)-c\beta_{1}}-{\lambda^{2}\alpha_{1}\beta_{1}r'(\theta)\over (\lambda r(\theta)-c\beta_{1})^{2}}$\\

so that;\\

${\partial h\over \partial \theta}=0$ iff $\lambda \alpha_{1}\beta_{1}r'(\theta)(\lambda r(\theta)-c\beta_{1})-\lambda^{2}\alpha_{1}\beta_{1}r'(\theta)=0$\\

iff $\lambda \alpha_{1}\beta_{1}(\lambda r(\theta)-c\beta_{1})-\lambda^{2}\alpha_{1}\beta_{1}=0$\\

iff $r(\theta)={\lambda^{2}\alpha_{1}\beta_{1}+c\lambda \alpha_{1}\beta_{1}^{2}\over \lambda^{2} \alpha_{1}\beta_{1}}$\\

$=1+{c\beta_{1}\over \lambda}$\\

If $|1+{c\beta_{1}\over \lambda}|\leq |grad(Q)(T_{0},P_{0})|$, and $|grad(Q)(T_{0},P_{0})|>1$, we can solve $r(\theta)=1+{c\beta_{1}\over \lambda}$, for $\lambda>0$, so that, when $r(\theta(\lambda))=1+{c\beta_{1}\over \lambda}$;\\

$h(\lambda,\theta(\lambda))={\lambda \alpha_{1}\beta_{1}(1+{c\beta_{1}\over \lambda})\over \lambda(1+{c\beta_{1}\over \lambda})-c\beta_{1}}$\\

$={\lambda\alpha_{1}\beta_{1}+c\alpha_{1}\beta_{1}^{2}\over \lambda}$\\

$=\alpha_{1}\beta_{1}+{c\alpha_{1}\beta_{1}^{2}\over \lambda}$\\

$=\alpha_{1}\beta_{1}(1+{c\beta_{1}\over \lambda})$\\

so a maximum/minimum occurs when $|1+{c\beta_{1}\over \lambda}|=|grad(Q)(T_{0},P_{0})|$, in which case $(cos(\theta),sin(\theta))$ is parallel to $grad(Q)(T_{0},P_{0})$, and perpendicular to the tangent of the level curve of $Q$, through $(T_{0},P_{0})$. We, therefore, have to solve the paired differential equation;\\

${dT\over dt}={-\epsilon ln({P(t)\over P^{\circ}})+\epsilon(P(t))\over RT(t)^{2}}({P(t)\over P^{\circ}})^{\epsilon\over RT(t)}e^{-\epsilon(P(t))\over RT(t)}$\\

${dP\over dt}={{\epsilon\over P(t)}-\epsilon'(P(t))\over RT(t)}({P(t)\over P^{\circ}})^{\epsilon\over RT(t)}e^{-\epsilon(P(t))\over RT(t)}$\\

so that;\\

${dP\over dT}={{dP\over dt}\over {dT\over dt}}$\\

$={{{\epsilon\over P}-\epsilon'(P)\over RT}({P\over P^{\circ}})^{\epsilon\over RT}e^{-\epsilon(P)\over RT}\over {-\epsilon ln({P\over P^{\circ}})+\epsilon(P)\over RT^{2}}({P\over P^{\circ}})^{\epsilon\over RT}e^{-\epsilon(P)\over RT}}$\\

$={T({\epsilon\over P}-\epsilon'(P))\over -\epsilon ln({P\over P^{\circ}})+\epsilon(P)}$\\

Separating variables, we obtain that;\\

${\epsilon ln({P\over P^{\circ}})-\epsilon(P)\over \epsilon'(P)-{\epsilon\over P}}dP=-TdT$\\

which has an implicit solution given by;\\

$\int {\epsilon P ln({P\over P^{\circ}})-P\epsilon(P)\over P\epsilon'(P)-\epsilon}dP=-{T^{2}\over 2}+c$\\

for $c\in\mathcal{R}$. By a result due to \cite{BdiP}, we have that these implicit solutions are integral curves for $grad(Q)$ as required. If $\epsilon(P)=0$, then $\epsilon'(P)=0$ and the implicit solutions are given by;\\

$\int P ln({P\over P^{\circ}})dP=-{T^{2}\over 2}+c$\\

for $c\in\mathcal{R}$. We have, integrating by parts, that;\\

$\int P ln({P\over P^{\circ}})=\int Pln(P)-Pln(P^{\circ})$\\

$={P^{2}ln(P)\over 2}-\int {P\over 2}-{P^{2}ln(P^{\circ})\over 2}$\\

$={P^{2}ln(P)\over 2}-{P^{2}\over 4}-{P^{2}ln(P^{\circ})\over 2}$\\

$=P^{2}({ln(P)\over 2}-{ln(P^{\circ})\over 2}-{1\over 4})$\\

so that the implicit solutions are given by;\\

$P^{2}({ln(P)\over 2}-{ln(P^{\circ})\over 2}-{1\over 4})+{T^{2}\over 2}=c$\\

for $c\in\mathcal{R}$ as required. The determination of $grad(Q)(T,P)$ when $\epsilon=0$ is again a simple application of the chain rule. As before, we compute;\\

${dP\over dT}={{-\epsilon'(P)\over RT}e^{-\epsilon(P)\over RT}\over {\epsilon(P)\over RT^{2}}e^{-\epsilon(P)\over RT}}$\\

$={-\epsilon'(P)T\over \epsilon(P)}$\\

so that, separating variables;\\

${\epsilon(P)\over \epsilon'(P)}dP=-TdT$\\

and the implicit solutions are given by;\\

$\int {\epsilon(P)\over \epsilon'(P)}dP={-T^{2}\over 2}+c$\\

\end{proof}
\begin{rmk}
\label{atmospheric}
The fact that, in the case $\epsilon(P)=0$, the paths of maximal reaction depend on an arbitrary choice of $P^{\circ}$ suggest that some approximation is needed in the formula;\\

$\mu_{i}(T,P)=\mu_{i}^{\circ}(T)+RTlog(x_{i}(T,P))$\\

for ideal solutions. Of course, once $P^{\circ}$ is fixed, $\epsilon(P)$ depends on this choice of $P^{\circ}$ as well.
\end{rmk}
\end{section}
\begin{section}{Electrochemistry with error terms and ideal solution}
\label{idealelectrochemistry}

We consider the reaction $H_{2}(g)+2AgCl(s)+2e^{-}(R)\rightarrow 2HCl+2Ag(s)+2e^{-}(L)$, for the standard cell, even though the uncharged species probably don't form an ideal solution. The reader can easily reformulate the results in the context of an ideal solution, by just changing the electron count, see Section \ref{electrochemistry}.

\begin{lemma}{The Nernst Equation for the Standard Cell}\\
\label{nernst2}\\

At electrical chemical equilibrium $(T,P)$ and $(T,P^{\circ})$;\\

$(E-E^{\circ})(T,P)=-{RTln(Q(T,P))\over 2F}-{\epsilon(P)\over 2F}$\\

\end{lemma}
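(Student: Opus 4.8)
The plan is to rerun the argument of Lemma \ref{nernst} essentially verbatim, with the single change that the idealised identity $(\partial G/\partial\xi)_{T,P} = \Delta G^{\circ} + RT\ln(Q)$ is everywhere replaced by its error-corrected form $(\partial G/\partial\xi)_{T,P} = \Delta G^{\circ} + RT\ln(Q) + \epsilon(P)$ from Lemma \ref{gibbs2}. The second ingredient is Lemma \ref{equivalences2}, which records that at standard pressure $Q_{chem'}(T,P^{\circ}) = e^{-\delta/RT}$ with $\delta = \epsilon(P^{\circ})$, so that $RT\ln(Q_{chem'}(T,P^{\circ})) = -\epsilon(P^{\circ})$; this is precisely what makes the error term cancel at $P^{\circ}$ while surviving at a general $P$.

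First I would reproduce, unchanged, the decomposition $G = U_{el} + G_{chem}$ and the resulting formula $\mu_i = \mu_{i,chem} + \phi(\overline{x}_i)Fz_i$ for the charged species (with $\mu_i = \mu_{i,chem}$ for the uncharged ones) from the proof of Lemma \ref{nernst}. Specialising to the standard cell reaction $H_{2}(g) + 2AgCl(s) + 2e^{-}(R) \rightarrow 2HCl + 2Ag(s) + 2e^{-}(L)$ and using Lemma \ref{equivalences}, the electron chemical potentials cancel and the potential terms assemble into $2F(\phi(R)-\phi(L)) = 2EF$, giving at electrical chemical equilibrium
\[
\Big(\frac{\partial G}{\partial \xi}\Big)_{T,P} = \Big(\frac{\partial G_{chem'}}{\partial \xi}\Big)_{T,P} + 2EF = 0,
\]
where $G_{chem'}$ is the Gibbs energy of the uncharged species. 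Now apply Lemma \ref{gibbs2} to $G_{chem'}$. At $P^{\circ}$ one obtains $(\partial G_{chem'}/\partial\xi)_{T,P^{\circ}} = \Delta G_{chem'}^{\circ} + RT\ln(Q_{chem'}(T,P^{\circ})) + \epsilon(P^{\circ}) = \Delta G_{chem'}^{\circ}$, the last equality by the cancellation noted above, hence $2E^{\circ}F = -\Delta G_{chem'}^{\circ}$ exactly as in the idealised case. At a general pressure $P$ one instead gets $2EF = -(\Delta G_{chem'}^{\circ} + RT\ln(Q_{chem'}(T,P)) + \epsilon(P))$. Subtracting, the $\Delta G_{chem'}^{\circ}$ drops out and
\[
2F(E - E^{\circ})(T,P) = -RT\ln(Q_{chem'}(T,P)) - \epsilon(P),
\]
which, on dividing by $2F$ and writing $Q$ for $Q_{chem'}$ as in the statement, is the claimed Nernst equation.

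The computation is routine once the two lemmas are in place; the only point demanding attention — and the closest thing to an obstacle — is the bookkeeping of the two occurrences of the error term. One must check that the $\epsilon(P^{\circ})$ produced by Lemma \ref{gibbs2} at standard pressure is the very same constant $\delta$ appearing in $Q_{chem'}(T,P^{\circ}) = e^{-\delta/RT}$ in Lemma \ref{equivalences2}, so that the two cancel and $E^{\circ}$ is left unencumbered, while the $\epsilon(P)$ at the working pressure has no partner to cancel against and therefore persists in the final formula. As in Section \ref{idealelectrochemistry}, we proceed formally as though the uncharged species formed an ideal solution, and the general ideal-solution statement is recovered by adjusting the electron count.
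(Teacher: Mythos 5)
Your proposal is correct and follows essentially the same route as the paper's own proof: the same decomposition $G=U_{el}+G_{chem}$ yielding $(\partial G/\partial\xi)_{T,P}=(\partial G_{chem'}/\partial\xi)_{T,P}+2EF=0$, followed by Lemma \ref{gibbs2} at $P^{\circ}$ and at $P$, with the cancellation $RT\ln(Q_{chem'}(T,P^{\circ}))=-\epsilon(P^{\circ})$ from Lemma \ref{equivalences2} leaving $2E^{\circ}F=-\Delta G_{chem'}^{\circ}$, and subtraction giving the stated formula. The bookkeeping point you flag about the two occurrences of the error term is exactly the step the paper carries out in its $(\dag\dag)$.
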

\begin{proof}
For $c$ substances, with $c'$ the number of the charged species,using Definition \ref{constants}, we have that the electrostatic potential energy;\\

$U_{el}= \sum_{i=1}^{c'}\phi(\overline{x}_{i})q_{i}$, where $q_{i}=N_{i}ez_{i}=N_{A}n_{i}ez_{i}$\\

where $\{\overline{x}_{i}:1\leq i\leq c'\}$ are the positions of the charged species, $N_{i}$ is the number of particles at $\overline{x}_{i}$.\\

 We have that;\\

$U=U_{chem}+U_{el}$, so that;\\

$G(T,P,n_{1},\ldots,n_{c})=U+PV-TS$\\

$=U_{chem}+U_{el}+PV-TS$\\

$=U_{el}+G_{chem}$\\

$=\sum_{j=1}^{c}\phi(\overline{x}_{j})q_{j}+G_{chem}$\\

$=\sum_{j=1}^{c}\phi(\overline{x}_{j})N_{A}n_{j}ez_{j}+G_{chem}$\\

so that;\\

$\mu_{i}=({\partial G\over \partial n_{i}})_{T,P}$\\

$=({\partial (\sum_{j=1}^{c}\phi(\overline{x}_{j})N_{A}n_{j}ez_{j}+G_{chem})\over \partial n_{i}})_{T,P}$\\

$=\mu_{i,chem}$, $(c'+1\leq i\leq c)$\\

$=\mu_{i,chem}+{\partial (\phi(\overline{x}_{i})N_{A}n_{i}ez_{i})\over \partial n_{i}}$, $(1\leq i\leq c')$\\

$=\mu_{i,chem}+\phi(\overline{x}_{i})N_{A}ez_{i}$\\

$=\mu_{i,chem}+\phi(\overline{x}_{i})F z_{i}$, $(*)$\\

We consider the standard cell reaction $H_{2}(g)+2AgCl(s)+2e^{-}(R)\rightarrow 2HCl+2Ag(s)+2e^{-}(L)$. At electrical chemical equilibrium, similarly to Lemma \ref{equivalences}, generalised to a collection involving charged species, using $(*)$, we have that;\\

$({\partial G\over \partial \xi})_{T,P}={\sum}_{i=1}^{c}\nu_{i}\mu_{i}$\\

$=2\mu(HCl)+2\mu(Ag)-\mu(H_{2})-2\mu(AgCl)+2\mu(e^{-}(L))-2\mu(e^{-}(R))$\\

$=({\partial G_{chem'}\over \partial \xi})_{T,P}+2\mu(e^{-}(L))-2\mu(e^{-}(R))$\\

$=({\partial G_{chem'}\over \partial \xi})_{T,P}+((2\mu_{chem}(e^{-}(L))-2F\phi(L))-(2\mu_{chem}(e^{-}(L))-2F\phi(R)))$\\

$=({\partial G_{chem'}\over \partial \xi})_{T,P}+2F(\phi(R)-\phi(L))$\\

$=({\partial G_{chem'}\over \partial \xi})_{T,P}+2EF=0$ $(\dag)$ \\

where $G_{chem'}$ is the Gibbs energy restricted to the uncharged species. By Lemmas \ref{gibbs2} and \ref{equivalences2}, we have that;\\

$({\partial G_{chem'}\over \partial \xi})_{T,P^{\circ}}=\sum_{i=c'+1}^{c}\nu_{i}\mu_{i}^{\circ}$\\

$=(\Delta G_{chem'}^{\circ}+RTln(Q_{chem'}(T,P^{\circ}))+\epsilon(P^{\circ}))$\\

$=(\Delta G_{chem'}^{\circ}-\epsilon(P^{\circ}))+\epsilon(P^{\circ}))$\\

$=\Delta G_{chem'}^{\circ}$, $(\dag\dag)$\\

From $(\dag),(\dag\dag)$, we obtain;\\

$2E^{\circ}F=-({\partial G_{chem'}\over \partial \xi})_{T,P^{\circ}}$\\

$=-\Delta G_{chem'}^{\circ}$ $(\dag\dag\dag)$\\

Similarly, we have that;\\

$({\partial G_{chem'}\over \partial \xi})_{T,P}=\sum_{i=c'+1}^{c}\nu_{i}\mu_{i}$\\

$=(\Delta G_{chem'}^{\circ}+RTln(Q_{chem'}(T,P))+\epsilon(P))$, $(\dag\dag\dag\dag)$\\

so from $(\dag),(\dag\dag\dag\dag))$, we obtain that;\\

$2EF=-({\partial G_{chem'}\over \partial \xi})_{T,P}$\\

$=-(\Delta G_{chem'}^{\circ}+RTln(Q_{chem'}(T,P))+\epsilon(P))$, $(\sharp)$\\

Combining $(\sharp), (\dag\dag\dag)$, we obtain that;\\

$2EF-2E^{\circ}F=-(\Delta G_{chem'}^{\circ}+RTln(Q_{chem'}(T,P))+\epsilon(P))-(-\Delta G_{chem'}^{\circ})$\\

$=-RTln(Q_{chem'}(T,P))-\epsilon(P)$\\

so that;\\

$E-E^{\circ}=-{RTln(Q_{chem'}(T,P))\over 2F}-{\epsilon(P)\over 2F}$\\

\end{proof}
\begin{lemma}
\label{delta2}

At electrical chemical equilibrium $(T,P)$ and $(T,P^{\circ})$, and chemical equilibrium $(T,P)$;\\

$\Delta G^{\circ}=2F(E-E^{0})$\\

\end{lemma}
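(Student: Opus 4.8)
The plan is to mimic the proof of Lemma \ref{delta}, now carrying the Raoult error term $\epsilon(P)$ and observing that it cancels. First I would invoke Lemma \ref{nernst2}, which at electrical chemical equilibrium $(T,P)$ and $(T,P^{\circ})$ gives
\[
2F(E-E^{\circ})=-RT\ln(Q)-\epsilon(P),
\]
where, as in the proof of Lemma \ref{nernst2}, $Q$ and $\epsilon(P)$ are understood as $Q_{chem'}(T,P)$ and $\sum_{i=c'+1}^{c}\nu_{i}\gamma_{i}(P)$, the quantities attached to the uncharged species.

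Next I would apply Lemma \ref{gibbs2} to the Gibbs energy function $G_{chem'}$ of the uncharged species, obtaining
\[
\Bigl(\tfrac{\partial G_{chem'}}{\partial \xi}\Bigr)_{T,P}=\Delta G^{\circ}+RT\ln(Q)+\epsilon(P).
\]
Since chemical equilibrium holds at $(T,P)$, the left-hand side vanishes, so $RT\ln(Q)+\epsilon(P)=-\Delta G^{\circ}$. Substituting this into the Nernst relation from the previous step yields
\[
2F(E-E^{\circ})=-\bigl(RT\ln(Q)+\epsilon(P)\bigr)=\Delta G^{\circ},
\]
which is the claim. (One could equally quote the penultimate assertion of Lemma \ref{equivalences2}, that chemical equilibrium at $(T,P)$ is equivalent to $Q(T,P)=e^{-(\Delta G^{\circ}+\epsilon(P))/RT}$, and feed this directly into Lemma \ref{nernst2}.)

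There is essentially no obstacle here: the only point requiring care is bookkeeping, namely that the $\epsilon(P)$ appearing in Lemma \ref{nernst2} and the one appearing in Lemma \ref{gibbs2} are the same error term associated with the uncharged species, so that they cancel exactly rather than merely approximately. Once that identification is made explicit, the result follows by the same one-line rearrangement as Lemma \ref{delta}.
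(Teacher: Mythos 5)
Your proposal is correct and follows essentially the same route as the paper: combine the Nernst equation of Lemma \ref{nernst2} with Lemma \ref{gibbs2} evaluated at chemical equilibrium, and observe that the $RT\ln(Q)+\epsilon(P)$ terms cancel. Your extra remark that the $Q$ and $\epsilon(P)$ in both lemmas must be the quantities attached to the uncharged species is a point the paper leaves implicit, but the argument is the same one-line rearrangement.
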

\begin{proof}
By Lemma \ref{nernst2}, we have that;\\

$E-E^{\circ}=-{RTln(Q)\over 2F}-{\epsilon(P)\over 2F}$, $(*)$\\

and, by Lemma \ref{gibbs2}, we have that;\\

$0=({\partial G\over \partial \xi})_{T,P}=\Delta G^{\circ}+RTln(Q)+\epsilon(P)$, $(**)$\\

Rearranging $(*),(**)$, we obtain the result.\\

\end{proof}
\begin{lemma}
\label{allt2}
If $\epsilon=0$, we have, for all $T_{1}>0$, that;\\

$({\partial G\over \partial \xi})_{T,P}|_{(T_{1},P_{1})}=({\partial G\over \partial \xi})_{T,P}|_{(T_{1},P_{1}^{\circ})}$\\

iff;\\

$E(T_{1},P_{1})=E(T_{1},P_{1}^{\circ})=E^{\circ}(T_{1})$\\

where $G$ is the Gibbs energy function for the charged and uncharged species.
\end{lemma}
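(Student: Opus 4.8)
The plan is to run the proof of Lemma \ref{allt} essentially verbatim, replacing Lemma \ref{nernst} by its error-term refinement Lemma \ref{nernst2} and Lemma \ref{eqlines} by Lemma \ref{eqlines2}. Two ingredients are needed. First, from the computation in the proof of Lemma \ref{nernst2} for the standard cell reaction $H_{2}(g)+2AgCl(s)+2e^{-}(R)\rightarrow 2HCl+2Ag(s)+2e^{-}(L)$, one has the exact splitting $({\partial G\over \partial \xi})_{T,P}=({\partial G_{chem'}\over \partial \xi})_{T,P}+2EF$, valid at any state $(T,P)$, where $G_{chem'}$ is the Gibbs energy restricted to the uncharged species; this identity comes from $\mu_{i}=\mu_{i,chem}+\phi(\overline{x}_{i})Fz_{i}$ and is untouched by the Raoult error term, which enters only when $({\partial G_{chem'}\over \partial \xi})$ is rewritten through $\Delta G^{\circ}_{chem'}$ and $Q_{chem'}$. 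Second, Lemma \ref{eqlines2} in the case $\epsilon=0$ gives $({\partial G_{chem'}\over \partial \xi})_{T,P}=\lambda+\beta T+\sigma\ln T$, which is independent of $P$; in particular $({\partial G_{chem'}\over \partial \xi})_{T,P}|_{(T_{1},P_{1})}=({\partial G_{chem'}\over \partial \xi})_{T,P}|_{(T_{1},P_{1}^{\circ})}$ for every $T_{1}>0$.

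With these in hand I would write the splitting at the two pressures, $({\partial G\over \partial \xi})_{T,P}|_{(T_{1},P_{1})}=({\partial G_{chem'}\over \partial \xi})_{T,P}|_{(T_{1},P_{1})}+2E(T_{1},P_{1})F$ and likewise at $P_{1}^{\circ}$, subtract, and use the $P$-independence of the chemical term to cancel it. This yields $({\partial G\over \partial \xi})_{T,P}|_{(T_{1},P_{1})}-({\partial G\over \partial \xi})_{T,P}|_{(T_{1},P_{1}^{\circ})}=2F\,(E(T_{1},P_{1})-E(T_{1},P_{1}^{\circ}))$. Since $F\neq 0$, the left side is zero precisely when $E(T_{1},P_{1})=E(T_{1},P_{1}^{\circ})$, and $E(T_{1},P_{1}^{\circ})=E^{\circ}(T_{1})$ holds by the definition of $E^{\circ}$ in Definition \ref{constants}; this gives both directions of the stated equivalence.

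There is no real obstacle: once the splitting and the $P$-independence are quoted, the argument is a one-line cancellation. The only point deserving a sentence of care is confirming that it is $({\partial G_{chem'}\over \partial \xi})_{T,P}$ itself, and not merely $({\partial G_{chem'}\over \partial \xi})_{T,P}-\epsilon(P)$, that is $P$-independent when $\epsilon=0$; this is exactly what Lemma \ref{eqlines2} delivers (there the Raoult error term $\epsilon(P)$ is cancelled against $RT\ln Q_{chem'}$, leaving the form $\lambda+\beta T+\sigma\ln T$), so the equivalence comes out exact rather than merely approximate, and nothing needs to be said about the magnitude of $\epsilon(P)$.
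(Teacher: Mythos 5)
Your proposal is correct and follows essentially the same route as the paper's own proof: it invokes the splitting $({\partial G\over \partial \xi})_{T,P}=({\partial G_{chem'}\over \partial \xi})_{T,P}+2EF$ from $(\dag)$ of Lemma \ref{nernst2} together with the $P$-independence of $({\partial G_{chem'}\over \partial \xi})_{T,P}$ from Lemma \ref{eqlines2} when $\epsilon=0$, and cancels. Your extra remark that the $P$-independence applies to $({\partial G_{chem'}\over \partial \xi})_{T,P}$ itself (not merely to that quantity minus the Raoult error term) is a point the paper leaves implicit, and is worth having made explicit.
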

\begin{proof}
By $(\dag)$ of Lemma \ref{nernst2}, we have that;\\

$({\partial G\over \partial \xi})_{T,P}=({\partial G_{chem'}\over \partial \xi})_{T,P}+2EF$ $(*)$\\

By Lemma \ref{eqlines2}, we have that $({\partial G_{chem'}\over \partial \xi})_{T,P}$ is independent of $P$, in particularly, we have that;\\

$({\partial G_{chem'}\over \partial \xi})_{T_{1},P_{1}}=({\partial G_{chem'}\over \partial \xi})_{T_{1},P_{1}^{\circ}}$ $(**)$\\

so that, combining $(*),(**)$, we obtain the result.
\end{proof}

\begin{lemma}
\label{alltelectrical2}
We have, for all $T_{1}>0,P_{1}>0$, that;\\

$2F(E(T_{1},P_{1})-E^{\circ}(T_{1}))=({\partial G\over \partial \xi})_{T,P}|_{(T_{1},P_{1})}-({\partial G\over \partial \xi})_{T,P}|_{(T_{1},P_{1}^{\circ})}-RT_{1}ln(Q(T_{1},P_{1}))-\epsilon(P_{1})$\\

\end{lemma}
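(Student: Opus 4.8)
The plan is to run the argument of Lemma \ref{alltelectrical} verbatim, but feeding in the error-corrected identities of Lemmas \ref{gibbs2} and \ref{equivalences2} and the error-corrected Nernst relation of Lemma \ref{nernst2} in place of their idealised counterparts. First I would invoke the splitting $(\dag)$ established in the proof of Lemma \ref{nernst2}, namely $({\partial G\over \partial \xi})_{T,P}=({\partial G_{chem'}\over \partial \xi})_{T,P}+2EF$, evaluated at $(T_{1},P_{1})$, to obtain $({\partial G\over \partial \xi})_{T,P}|_{T_{1},P_{1}}=({\partial G_{chem'}\over \partial \xi})_{T,P}|_{T_{1},P_{1}}+2E(T_{1},P_{1})F$. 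Evaluating the same splitting at $(T_{1},P_{1}^{\circ})$ and using the computation $(\dag\dag)$ of Lemma \ref{nernst2}, which shows $({\partial G_{chem'}\over \partial \xi})_{T,P^{\circ}}=\Delta G^{\circ}_{chem'}$ because the $\epsilon(P^{\circ})$ terms cancel, gives $2E^{\circ}(T_{1})F=({\partial G\over \partial \xi})_{T,P}|_{T_{1},P_{1}^{\circ}}-\Delta G^{\circ}_{chem'}(T_{1})$.

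Next I would apply Lemma \ref{gibbs2} to the uncharged subsystem at $(T_{1},P_{1})$, which yields $({\partial G_{chem'}\over \partial \xi})_{T,P}|_{T_{1},P_{1}}=\Delta G^{\circ}_{chem'}(T_{1})+RT_{1}\ln(Q_{chem'}(T_{1},P_{1}))+\epsilon(P_{1})$. Substituting this into the first displayed identity isolates $2E(T_{1},P_{1})F$ as $({\partial G\over \partial \xi})_{T,P}|_{T_{1},P_{1}}-\Delta G^{\circ}_{chem'}(T_{1})-RT_{1}\ln(Q_{chem'}(T_{1},P_{1}))-\epsilon(P_{1})$. Subtracting the expression for $2E^{\circ}(T_{1})F$ obtained above, the $\Delta G^{\circ}_{chem'}(T_{1})$ terms cancel and one is left with exactly the claimed formula, with $Q$ understood as $Q_{chem'}$ restricted to the uncharged species, consistently with the convention in Lemma \ref{nernst2}.

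The only thing to watch is bookkeeping: one must be sure that the error term entering through Lemma \ref{gibbs2} at $(T_{1},P_{1})$ is $\epsilon(P_{1})=\sum_{i}\nu_{i}\gamma_{i}(P_{1})$ over the uncharged species only, matching the $\epsilon$ used in Lemma \ref{nernst2}, and that the corresponding term at $P^{\circ}$ genuinely cancels against the $\epsilon(P^{\circ})$ appearing when $({\partial G_{chem'}\over \partial \xi})_{T,P^{\circ}}$ is reduced to $\Delta G^{\circ}_{chem'}$ via Lemma \ref{equivalences2}. There is no analytic obstacle here — this is a linear rearrangement of three identities already proved — so the "hard part" is purely notational: keeping the charged/uncharged index ranges and the $P$ versus $P^{\circ}$ evaluations straight so that the cancellations are transparent rather than accidental.
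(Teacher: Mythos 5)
Your proposal is correct and follows essentially the same route as the paper: the splitting $({\partial G\over \partial \xi})_{T,P}=({\partial G_{chem'}\over \partial \xi})_{T,P}+2EF$ from the proof of Lemma \ref{nernst2} evaluated at $(T_{1},P_{1})$ and $(T_{1},P_{1}^{\circ})$, the reduction $({\partial G_{chem'}\over \partial \xi})_{T,P^{\circ}}=\Delta G^{\circ}_{chem'}$ via the cancelling $\epsilon(P^{\circ})$ terms, and the substitution of Lemma \ref{gibbs2} at $(T_{1},P_{1})$ followed by subtraction are exactly the paper's steps $(*)$, $(**)$ and the final rearrangement. Your closing remark about keeping the $Q_{chem'}$ convention and the index ranges straight is the only genuine bookkeeping issue, and you have handled it as the paper does.
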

\begin{proof}
Following the proof of Lemma \ref{nernst2}, we have that;\\

$({\partial G\over \partial \xi})_{T,P}|_{T_{1},P_{1}}=({\partial G_{chem'}\over \partial \xi})_{T,P}|_{T_{1},P_{1}}+2E(T_{1},P_{1})F$ $(*)$\\

$2E^{\circ}(T_{1})F=({\partial G\over \partial \xi})_{T,P}|_{T_{1},P_{1}^{\circ}}-({\partial G_{chem'}\over \partial \xi})_{T,P}|_{T_{1},P_{1}^{\circ}}$\\

$=({\partial G\over \partial \xi})_{T,P}|_{T_{1},P_{1}^{\circ}}-\Delta G^{\circ}_{chem'}(T_{1})$ $(**)$\\

so from $(*),(**)$ and Lemma \ref{gibbs2};\\

$2E(T_{1},P_{1})F=({\partial G\over \partial \xi})_{T,P}|_{T_{1},P_{1}}-({\partial G_{chem'}\over \partial \xi})_{T,P}|_{T_{1},P_{1}}$\\

$=({\partial G\over \partial \xi})_{T,P}|_{T_{1},P_{1}}-(\Delta G^{\circ}_{chem'}(T_{1})+RT_{1}ln(Q_{chem'}(T_{1},P_{1}))+\epsilon(P_{1}))$\\

$2E(T_{1},P_{1})F-2E^{\circ}(T_{1})F=({\partial G\over \partial \xi})_{T,P}|_{T_{1},P_{1}}-(\Delta G^{\circ}_{chem'}(T_{1})+RT_{1}ln(Q_{chem'}(T_{1},P_{1}))+\epsilon(P_{1}))$\\

$-(({\partial G\over \partial \xi})_{T,P}|_{T_{1},P_{1}^{\circ}}-\Delta G^{\circ}_{chem'}(T_{1}))$\\

$=({\partial G\over \partial \xi})_{T,P}|_{T_{1},P_{1}}-(({\partial G\over \partial \xi})_{T,P}|_{T_{1},P_{1}^{\circ}}-RT_{1}ln(Q_{chem'}(T_{1},P_{1}))-\epsilon(P_{1})$\\

\end{proof}
\end{section}
\begin{section}{dilute solutions}
\label{errorterms2}
\begin{defn}
\label{catalyzers}

As mentioned in Definition \ref{constants}, we can consider an electrolyte as a solute in a dilute solution. Sometimes the solvent is involved in an electrolytic reaction, for example;\\

$2H_{2}O+4e^{-}(R)\rightarrow O_{2}+2H_{2}+4e^{-}(L)$ $(*)$\\

and sometimes not, as in the standard cell, where we can consider $H_{2}O$ as the solvent not involved in the reaction. In Lemmas 0.4-0.12, for the standard cell, we can replace $Q$ defined as $\prod_{i=1}^{c}a_{i}^{\nu_{i}}$ by $a_{0}(\prod_{i=1}^{c}a_{i}^{\nu_{i}})$, considering $H_{2}O$ as substance $0$.\\

We ideally have that $\mu_{i}=\mu_{i}^{\circ}+RT ln(a_{i})$, $0\leq i\leq c$, $(\dag)$, when we define the activities $a_{i}$, for $0\leq i\leq c$, which, when $(\dag)$ holds, involves a contradiction.

\end{defn}

\begin{lemma}
\label{proofs}
In Lemmas 0.4-0.12, for the standard cell, and considering a dilute solution with no interaction of the solvent, replacing $Q$ defined as $\prod_{i=1}^{c}a_{i}^{\nu_{i}}$ by;\\

$a_{0}(\prod_{i=1}^{c}a_{i}^{\nu_{i}})$\\

If we assume without approximation that $\mu_{i}=\mu_{i}^{\circ}+RT ln(a_{i})$, $0\leq i\leq c$, then;\\

Lemma \ref{nernst}; $E-E^{\circ}=-{RTln(Q)\over 2F}+{RTln({a_{0}(T,P)\over a_{0}(T,P^{\circ})})\over 2F}$\\

Lemma \ref{gibbs}; $({\partial G\over \partial \xi})_{T,P}=\Delta G^{\circ}+RTln({Q\over a_{0}(T,P)})$\\

where $\Delta G^{\circ}$ is the Gibbs energy change for $1$ mole of reaction without the solvent.\\

Lemma \ref{delta}; $\Delta G^{\circ}=2F(E-E^{\circ})+RTln(a_{0}(T,P^{\circ}))$\\

Lemma \ref{equivalences}; The same with the modification that if chemical and electrical equilibrium exist at $(T,P^{\circ})$ and $(T,P)$, $Q(T,P)=a_{0}(T,P)$ and $E-E^{\circ}=-{RTln(a_{0}(T,P^{\circ}))\over 2F}$. Conversely, if $Q(T,P)={a_{0}(T,P)\over a_{0}(T,P^{\circ})}$ and chemical equilibrium exists at $(T,P^{\circ})$ then chemical equilibrium exists at $(T,P)$.\\

Chemical equilibrium exists at $(T,P)$ iff $Q(T,P)=a_{0}e^{-\Delta G^{\circ}\over RT}$\\

We always have that $Q(T,P^{\circ})=1$\\

Lemma \ref{vanhoffhelmholtz}; The same, with the modification that along a chemical equilibrium path, we have that;\\

$ln({Q(T_{2})\over Q(T_{1})})={1\over R}\int_{T_{1}}^{T_{2}}{\Delta H^{\circ}\over T^{2}}dT+ln({a_{0}(T_{2},P_{2})\over a_{0}(T_{1},P_{1})})$\\

and, if $\Delta H^{\circ}$ is temperature independent;\\

$ln({Q(T_{2})\over Q(T_{1})})=-{\Delta H^{\circ}\over R}({1\over T_{2}}-{1\over T_{1}})+ln({a_{0}(T_{2},P_{2})\over a_{0}(T_{1},P_{1})})$\\

For $c\in\mathcal{R}$, if $D_{c}$ intersect the line $P=P^{\circ}$ at $(T_{1},P^{\circ})$, then, for $(T_{2},P)\in D_{c}$, we have that;\\

$Q(T_{2},P)=e^{\Delta G^{\circ}(T_{1})-\Delta G^{\circ}(T_{2})\over RT_{2}}a_{0}(T_{1},P^{\circ})^{-T_{1}\over T_{2}}a_{0}(T_{2},P)$ $(\dag\dag\dag)'$\\

$c=\Delta G^{\circ}(T_{1})-RT_{1}ln(a_{0}(T_{1},P^{\circ})$\\

$ln({Q(T_{2})\over Q(T_{1})})=ln(Q(T_{2}))={1\over R}\int_{T_{1}}^{T_{2}}{\Delta H^{\circ}-c\over T^{2}}dT+ln({a_{0}(T_{2},P)\over a_{0}(T_{1},P^{\circ})})$\\

${\Delta G^{\circ}(T_{2})-\Delta G^{\circ}(T_{1})\over T_{2}}=-\int_{T_{1}}^{T_{2}}{\Delta H^{\circ}-c\over T^{2}}dT+R(1-{T_{1}\over T_{2}})ln(a_{0}(T_{1},P^{\circ}))$\\

and if $\Delta H^{\circ}$ is temperature independent;\\

$ln({Q(T_{2})\over Q(T_{1})})=ln(Q(T_{2}))=-({\Delta H^{\circ}-c\over R})({1\over T_{2}}-{1\over T_{1}})+ln({a_{0}(T_{2},P)\over a_{0}(T_{1},P^{\circ})})$ $(\dag\dag)'$\\

${\Delta G^{\circ}(T_{2})-\Delta G^{\circ}(T_{1})\over T_{2}}=(\Delta H^{\circ}-c)({1\over T_{2}}-{1\over T_{1}})+R(1-{T_{1}\over T_{2}})ln(a_{0}(T_{1},P^{\circ}))$ $(\dag)'$\\

to obtain;\\

$\Delta G^{\circ}(T_{1})={T_{1}\over T_{2}}\Delta G^{\circ}(T_{2})-\Delta H^{\circ}({T_{1}\over T_{2}}-1)$\\

again.\\

Lemma \ref{eqlines}; The same, with the modification that if $\epsilon\neq 0$;\\

$Q(T_{2},P')=e^{\epsilon ln({P'\over P'^{\circ}})\over RT_{2}}a_{0}(T_{2},P')$\\

and the dynamic equilibrium paths are given by;\\

$a_{0}(T_{2},P')({P'\over P'^{\circ}})^{\epsilon\over RT_{2}}=c$\\

for $c\in\mathcal{R}_{\geq 0}$, while if $\epsilon=0$, $Q(T_{2},P')={a_{0}(T_{2},P')\over a_{0}(T_{2},P'^{\circ})}$, the dynamic equilibrium lines are given by;\\

${a_{0}(T_{2},P')\over a_{0}(T_{2},P'^{\circ})}=c$\\

for $c\in\mathcal{R}_{\geq 0}$ and the quasi-chemical equilibrium lines are given by;\\

$\lambda+\beta T_{2}+\sigma ln(T_{2})=c$\\

for $c\in\mathcal{R}$.

\end{lemma}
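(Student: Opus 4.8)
The result is stated as a reformulation of Lemmas \ref{nernst}, \ref{gibbs}, \ref{delta}, \ref{equivalences}, \ref{vanhoffhelmholtz} and \ref{eqlines}, so my plan is to re-run each of those proofs essentially verbatim, performing the single substitution $Q=\prod_{i=1}^{c}a_{i}^{\nu_{i}}\longmapsto a_{0}\prod_{i=1}^{c}a_{i}^{\nu_{i}}$ and tracking where the extra $a_{0}$ factor propagates. The observation that makes this routine is that the solvent $H_{2}O$, i.e. substance $0$, is neither consumed nor produced, so $\nu_{0}=0$; consequently the identities $(\partial G/\partial\xi)_{T,P}=\sum_{i=1}^{c}\nu_{i}\mu_{i}$ and $\Delta G^{\circ}=\sum_{i=1}^{c}\nu_{i}\mu_{i}^{\circ}$ of Lemma \ref{equivalences} are untouched, $\Delta G^{\circ}$ remains the Gibbs energy change for one mole of reaction with the solvent omitted, and every conclusion expressed purely in $\Delta G^{\circ}$ and $\Delta H^{\circ}$ (in particular $\Delta G^{\circ}(T_{1})=\frac{T_{1}}{T_{2}}\Delta G^{\circ}(T_{2})-\Delta H^{\circ}(\frac{T_{1}}{T_{2}}-1)$) will come out unchanged.

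I would begin with Lemma \ref{gibbs}: since $\mu_{i}=\mu_{i}^{\circ}+RT\ln a_{i}$ is now assumed for $0\le i\le c$, summing $\nu_{i}(\mu_{i}-\mu_{i}^{\circ})$ over $1\le i\le c$ gives $(\partial G/\partial\xi)_{T,P}-\Delta G^{\circ}=RT\ln\!\big(\prod_{i=1}^{c}a_{i}^{\nu_{i}}\big)=RT\ln\!\big(Q/a_{0}(T,P)\big)$, the displayed modified form. With this the Nernst proof runs unchanged up to $(\partial G/\partial\xi)_{T,P}=(\partial G_{chem'}/\partial\xi)_{T,P}+2EF=0$; applying the modified Lemma \ref{gibbs} to $G_{chem'}$ at $(T,P)$ and at $(T,P^{\circ})$, and using $a_{i}(T,P^{\circ})=1$ for the uncharged solutes exactly as in Lemma \ref{equivalences} (so that $Q_{chem'}(T,P^{\circ})/a_{0}(T,P^{\circ})=1$), one obtains $2E^{\circ}F=-\Delta G_{chem'}^{\circ}$ and $2EF=-\big(\Delta G_{chem'}^{\circ}+RT\ln(Q_{chem'}(T,P)/a_{0}(T,P))\big)$, and subtracting gives the modified Nernst equation. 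Lemma \ref{delta} then follows by combining this with the modified Lemma \ref{gibbs} at chemical equilibrium $(T,P)$ and rearranging, and for the modified Lemma \ref{equivalences} I would recheck each clause with $Q/a_{0}$ in place of $Q$: the characterisation $Q(T,P)=1$ becomes $Q(T,P)=a_{0}(T,P)$, the characterisation $Q=e^{-\Delta G^{\circ}/RT}$ becomes $Q=a_{0}e^{-\Delta G^{\circ}/RT}$, while $Q(T,P^{\circ})=1$ survives because $a_{i}(T,P^{\circ})=1$ for every $0\le i\le c$.

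The step needing genuine care is Lemmas \ref{vanhoffhelmholtz} and \ref{eqlines}. Along a chemical equilibrium path the modified Lemma \ref{equivalences} gives $\ln Q=\ln a_{0}-\Delta G^{\circ}/(RT)$; differentiating in $T$ and feeding in $\frac{d}{dT}\big(\frac{\Delta G^{\circ}}{T}\big)=-\frac{\Delta H^{\circ}}{T^{2}}$ from the proof of Lemma \ref{vanhoffhelmholtz} yields $\frac{d\ln Q}{dT}=\frac{\Delta H^{\circ}}{RT^{2}}+\frac{d\ln a_{0}}{dT}$, and integrating from $T_{1}$ to $T_{2}$ inserts precisely the term $\ln\!\big(a_{0}(T_{2},P_{2})/a_{0}(T_{1},P_{1})\big)$ into the identities for $\ln(Q(T_{2})/Q(T_{1}))$; substituting $\ln Q(T_{j})=\ln a_{0}(T_{j},P_{j})-\Delta G^{\circ}(T_{j})/(RT_{j})$ back in cancels the $\ln a_{0}$ contributions, so the $\Delta G^{\circ}$-relations survive unchanged, and the $D_{c}$-formulas follow the same pattern, evaluating at $(T_{1},P^{\circ})$ where the product of solute activities is $1$. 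For Lemma \ref{eqlines} the thermodynamic computation of $(\partial(\partial G/\partial\xi)_{T,P}/\partial T)_{P}$ and $(\partial(\partial G/\partial\xi)_{T,P}/\partial P)_{T}$ sees only the reacting species, so $(\partial G/\partial\xi)_{T,P}=\lambda+\epsilon\ln P+\beta T+\sigma\ln T$ (with $\sigma=0$ once a component $D_{c}$ meets $P=P^{\circ}$) is unaltered; the activity coefficient is then read off from the modified Lemma \ref{gibbs} as $Q(T_{2},P')=a_{0}(T_{2},P')\,e^{((\partial G/\partial\xi)|_{T_{2},P'}-\Delta G^{\circ}(T_{2}))/RT_{2}}$, i.e. $a_{0}(T_{2},P')(P'/P'^{\circ})^{\epsilon/RT_{2}}$ when $\epsilon\neq0$ and $a_{0}(T_{2},P')$ when $\epsilon=0$, and the dynamic equilibrium paths are the level sets of this modified $Q$ by Lemma \ref{feasible} while the quasi-chemical equilibrium paths are the level sets of the unchanged $(\partial G/\partial\xi)_{T,P}$.

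I expect the only real difficulty to be clerical: keeping straight that $a_{0}$ is evaluated at $(T,P)$ in the pointwise relations but enters as the ratio $a_{0}(T,P)/a_{0}(T,P^{\circ})$, equivalently a difference of logarithms, in every comparison between $(T,P)$ and $(T,P^{\circ})$, together with the bookkeeping that under the exact hypothesis $\mu_{0}=\mu_{0}^{\circ}+RT\ln a_{0}$ (with the same standard-state convention used for the solutes in Lemma \ref{equivalences}) one has $a_{0}(T,P^{\circ})=1$, so the formulas displayed with $a_{0}(T,P^{\circ})$ written out agree with their solvent-free analogues but are already in the shape that will persist once a solvent error term is introduced in the later sections.
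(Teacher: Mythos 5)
Your proposal is correct and follows essentially the same route as the paper's own proof: observe that $dn_{0}=0$ so that $({\partial G\over \partial \xi})_{T,P}=\sum_{i=1}^{c}\nu_{i}\mu_{i}$ and $\Delta G^{\circ}=\sum_{i=1}^{c}\nu_{i}\mu_{i}^{\circ}$ are untouched, derive the modified Lemma \ref{gibbs} in the form $\Delta G^{\circ}+RT\ln(Q/a_{0}(T,P))$, and propagate the extra $\ln a_{0}$ term through the Nernst, van't Hoff/Gibbs--Helmholtz and equilibrium-line arguments. The only bookkeeping difference is that the paper carries the factors $a_{0}(T,P^{\circ})$ symbolically (e.g.\ $2E^{\circ}F=-\Delta G^{\circ}_{chem'}+RT\ln a_{0}(T,P^{\circ})$, and $Q(T_{2},P')=a_{0}(T_{2},P')/a_{0}(T_{2},P'^{\circ})$ when $\epsilon=0$) rather than using $a_{0}(T,P^{\circ})=1$ at the outset, which is precisely the point you flag in your closing paragraph, so the two derivations coincide.
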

\begin{proof}
Following the proof of Lemma \ref{equivalences}, we note that for Gibbs function $G$ with $c+1$ species, including the solvent, substance $0$, as $dn_{0}=0$, that;\\

$dG=\sum_{i=0}^{c}\mu_{i}dn_{i}$\\

$=\sum_{i=1}^{c}\mu_{i}dn_{i}$\\

so the first three claims in Lemma \ref{equivalences} go through as before. Going through the proof of Lemma \ref{gibbs}, we then obtain that;\\

$({\partial G\over \partial \xi})_{T,P}=\Delta G^{\circ}+RTln(\prod_{i=1}^{c}a_{i}^{\nu_{i}})$\\

$=\Delta G^{\circ}+RTln({\prod_{i=0}^{c}a_{i}^{\nu_{i}}\over a_{0}(T,P)})$\\

$=\Delta G^{\circ}+RTln({Q\over a_{0}(T,P)})$\\

Going back through the proof of Lemma \ref{equivalences}, we obtain that $RTln({Q\over a_{0}(T,P)})=0$, so that $Q(T,P)=a_{0}(T,P)$. Going through the proof of Lemma \ref{nernst}, using the fact that $\mu_{i}=\mu_{i}^{\circ}+RTln(a_{i})$, for $0\leq i\leq c$, so that $Q(T,P^{\circ})=1$, we have that;\\

$({\partial G_{chem'}\over \partial \xi})_{T,P^{\circ}}=\Delta G_{chem'}^{\circ}+RTln(Q_{chem'}(T,P^{\circ})$\\

$=\Delta G_{chem'}^{\circ}+RTln({Q\over a_{0}(T,P^{\circ})})$\\

$=\Delta G_{chem'}^{\circ}-RTln(a_{0}(T,P^{\circ}))$ $(\dag\dag)'$\\

where $G_{chem'}$ is the Gibbs energy restricted to the uncharged species without the solvent. Using $(\dag)$ from Lemma \ref{nernst} and $(\dag\dag)'$, we obtain;\\

$2E^{\circ}F=-\Delta G_{chem'}^{\circ}+RTln(a_{0}(T,P^{\circ}))$ $(\dag\dag\dag)'$\\

Similarly, we obtain;\\

$({\partial G_{chem'}\over \partial \xi})_{T,P}=\Delta G_{chem'}^{\circ}+RTln(Q_{chem'}(T,P)$\\

$=\Delta G_{chem'}^{\circ}+RTln({Q\over a_{0}(T,P)})$\\

so that, from $(\dag)$ from Lemma \ref{nernst};\\

$2EF=-(\Delta G_{chem'}^{\circ}+RTln({Q\over a_{0}(T,P)}))$ $(\sharp)'$\\

Combining $(\sharp)'$, $(\dag\dag\dag)'$, we obtain;\\

$2EF-2E^{\circ}F=-(\Delta G_{chem'}^{\circ}+RTln({Q\over a_{0}(T,P)}))-(-\Delta G_{chem'}^{\circ}+RTln(a_{0}(T,P^{\circ})))$\\

$=-RTln({Q\over a_{0}(T,P)})-RTln(a_{0}(T,P^{\circ}))$\\

$=-RTln(Q)+RTln({a_{0}(T,P)\over a_{0}(T,P^{\circ})})$\\

which gives the result.of Lemma \ref{nernst}. Going back through the proof of Lemma \ref{equivalences} again, we then have that;\\

$E-E^{\circ}=-{RTln(Q)\over 2F}+{RTln({a_{0}(T,P)\over a_{0}(T,P^{\circ})})\over 2F}$\\

$=-{RTln(a_{0}(T,P))\over 2F}+{RTln({a_{0}(T,P)\over a_{0}(T,P^{\circ})})\over 2F}$\\

$=-{RTln(a_{0}(T,P^{\circ}))\over 2F}$\\

For the converse claim, we have by the modification of Lemma \ref{gibbs}, and the facts that $Q(T,P)={a_{0}(T,P)\over a_{0}(T,P^{\circ})}$,$Q(T,P^{\circ})=1$, $({\partial G\over \partial \xi})_{T,P^{\circ}}=0$, that;\\

$({\partial G\over \partial \xi})_{T,P}=\Delta G^{\circ}+RTln({Q\over a_{0}(T,P)})$\\

$=\Delta G^{\circ}-RTln(a_{0}(T,P^{\circ}))$\\

$({\partial G\over \partial \xi})_{T,P^{\circ}}=\Delta G^{\circ}+RTln({Q\over a_{0}(T,P^{\circ})})$\\

$=0$\\

so that we have chemical equilibrium at $(T,P)$. For the penultimate claim of Lemma \ref{equivalences}, rearrange the formula from the modification of Lemma \ref{gibbs}, with the definition of chemical equilibrium;\\

$({\partial G\over \partial \xi})_{T,P}=\Delta G^{\circ}+RTln({Q\over a_{0}(T,P)})=0$\\

The last claim is clear from;\\

$\mu_{i}=\mu_{i}^{\circ}+RTln(a_{i})$, for $0\leq i\leq c$\\

For Lemma \ref{delta}, we have by the modification of Lemma \ref{nernst}, that;\\

$E-E^{\circ}=-{RTln(Q)\over 2F}+{RTln({a_{0}(T,P)\over a_{0}(T,P^{\circ})})\over 2F}$\\

and, by the modification of Lemma \ref{gibbs}, that;\\

$0=({\partial G\over \partial \xi})_{T,P}=\Delta G^{\circ}+RTln(Q)-RTln(a_{0}(T,P))$\\

so that;\\

$\Delta G^{\circ}=RTln(a_{0}(T,P))-RTln(Q)$\\

$=RTln(a_{0}(T,P))+2F(E-E^{\circ})-RTln({a_{0}(T,P)\over a_{0}(T,P^{\circ})})$\\

$=2F(E-E^{\circ})+RTln(a_{0}(T,P^{\circ}))$\\

For the modification of Lemma \ref{vanhoffhelmholtz}, the first part of the proof goes through with the chemical potentials $\mu_{i},1\leq i\leq c$, defined relative to the Gibbs energy including the solvent. By the modification of Lemma \ref{equivalences}, we have that $Q=a_{0}e^{-\Delta G^{\circ}\over RT}$ along a chemical equilibrium path, so that $ln(Q)={-\Delta G^{\circ}\over RT}+ln(a_{0}(T,P))$, $(K)'$. It follows that;\\

${d ln(Q)\over dT}={d\over dT}({-\Delta G^{\circ}\over RT})+{d\over dT}(ln(a_{0}(T,P)))$\\

$={\Delta H^{\circ}\over RT^{2}}+{d\over dT}(ln(a_{0}(T,P)))$\\

It follows, integrating between $T_{1}$ and $T_{2}$, using $(K)'$ and the fundamental theorem of calculus, that;\\

$ln({Q(T_{2})\over Q(T_{1})})=ln(Q)(T_{2})-ln(Q)(T_{1})$\\

$={-\Delta G^{\circ}(T_{2})\over RT_{2}}+{\Delta G^{\circ}(T_{1})\over RT_{1}}+ln(a_{0}(T_{2},P_{2}))-ln(a_{0}(T_{1},P_{1}))$\\

$=\int_{T_{1}}^{T_{2}}{d ln(Q)\over dT}dT$\\

$={1\over R}\int_{T_{1}}^{T_{2}}{\Delta H^{\circ}\over T^{2}}+\int_{T_{1}}^{T_{2}}{d\over dT}(ln(a_{0}(T,P)))dT$\\

$={1\over R}\int_{T_{1}}^{T_{2}}{\Delta H^{\circ}\over T^{2}}+ln(a_{0}(T_{2},P_{2}))-ln(a_{0}(T_{1},P_{1}))$ $(P)'$\\

so that, rearranging, we obtain the first claim. Using the fact, by the modification of Lemma \ref{gibbs}, that;\\

$ln(Q(T_{2}))=-{\Delta G^{\circ}(T_{2})\over RT_{2}}+ln(a_{0}(T_{2},P_{2}))$\\

$ln(Q(T_{1}))=-{\Delta G^{\circ}(T_{1})\over RT_{1}}+ln(a_{0}(T_{1},P_{1}))$\\

we obtain, substituting into $(P)'$, canceling $R$, and performing the integration, if $\Delta H^{\circ}$ is temperature independent, that;\\

${\Delta G^{\circ}(T_{2})\over T_{2}}-{\Delta G^{\circ}(T_{1})\over T_{1}}=-\int_{T_{1}}^{T_{2}}{\Delta H^{\circ}\over T^{2}}=\Delta H^{\circ}({1\over T_{2}}-{1\over T_{1}})$ $(Q)'$\\

For the fifth claim, rearrange $(Q)'$. If $D_{c}$ intersects the line $P=P^{\circ}$ at $(T_{1},P^{\circ})$, for the sixth $(\dag\dag\dag)$ and seventh claims, we have, using the modification of Lemma \ref{gibbs} and the fact from Lemma \ref{equivalences} that $Q(T_{1},P^{\circ})=1$;\\

$({\partial G\over \partial \xi})_{T_{2},P}=\Delta G^{\circ}(T_{2})+RT_{2}ln(Q(T_{2},P))-RT_{2}ln(a_{0}(T_{2},P))$\\

$=({\partial G\over \partial \xi})_{T_{1},P^{\circ}}$\\

$=\Delta G^{\circ}(T_{1})+RT_{1}ln(Q(T_{1},P^{\circ}))-RT_{1}ln(a_{0}(T_{1},P^{\circ}))$\\

$=\Delta G^{\circ}(T_{1})-RT_{1}ln(a_{0}(T_{1},P^{\circ}))=c$\\

so that, again rearranging, we obtain the result. Along $D_{c}$, we have, using Lemma \ref{gibbs}, that;\\

$ln(Q)={c-\Delta G^{\circ}\over RT}+ln(a_{0}(T,P))$\\

so that, using the first part;\\

${d ln(Q)\over dT}={d\over dT}({c-\Delta G^{\circ}\over RT})+{d\over dT}(ln(a_{0}(T,P)))$\\

$={-c\over RT^{2}}+{d\over dT}({-\Delta G^{\circ}\over RT})+{d\over dT}(ln(a_{0}(T,P)))$\\

$={\Delta H^{\circ}-c\over RT^{2}}+{d\over dT}(ln(a_{0}(T,P)))$\\

so that, performing the integration, using the fact that $Q(T_{1},P^{\circ})=1$;\\

$ln(Q(T_{2}))-ln(Q(T_{1}))=ln(Q(T_{2}))={1\over R}\int_{T_{1}}^{T_{2}}{\Delta H^{\circ}-c\over T^{2}}dT+ln(a_{0}(T_{2},P_{2}))-ln(a_{0}(T_{1},P^{\circ}))$\\

We have that, by the modification of Lemma \ref{gibbs};\\

$ln (Q(T_{2}))={c-\Delta G^{\circ}(T_{2})\over RT_{2}}+ln(a_{0}(T_{2},P))$\\

$ln(Q(T_{1}))=0$\\

so that, using the formula for $c$;\\

$ln (Q(T_{2}))=ln (Q(T_{2}))-ln(Q(T_{1}))$\\

$={c-\Delta G^{\circ}(T_{2})\over RT_{2}}+ln(a_{0}(T_{2},P))$\\

$={\Delta G^{\circ}(T_{1})-RT_{1}ln(a_{0}(T_{1},P^{\circ}))-\Delta G^{\circ}(T_{2})\over RT_{2}}+ln(a_{0}(T_{2},P))$\\

$={1\over R}\int_{T_{1}}^{T_{2}}{\Delta H^{\circ}-c\over T^{2}}dT+ln(a_{0}(T_{2},P_{2}))-ln(a_{0}(T_{1},P^{\circ}))$\\

$={-1\over R}(\Delta H^{\circ}-c)({1\over T_{2}}-{1\over T_{1}})+ln(a_{0}(T_{2},P_{2}))-ln(a_{0}(T_{1},P^{\circ}))$\\

and rearranging;\\

${\Delta G^{\circ}(T_{1})-RT_{1}ln(a_{0}(T_{1},P^{\circ}))-\Delta G^{\circ}(T_{2})\over RT_{2}}={-1\over R}(\Delta H^{\circ}-c)({1\over T_{2}}-{1\over T_{1}})-ln(a_{0}(T_{1},P^{\circ}))$\\

${\Delta G^{\circ}(T_{2})-\Delta G^{\circ}(T_{1})\over T_{2}}+{RT_{1}ln(a_{0}(T_{1},P^{\circ}))\over T_{2}}$\\

$=(\Delta H^{\circ}-c)({1\over T_{2}}-{1\over T_{1}})+Rln(a_{0}(T_{1},P^{\circ}))$\\

${\Delta G^{\circ}(T_{2})-\Delta G^{\circ}(T_{1})\over T_{2}}$\\

$=(\Delta H^{\circ}-c)({1\over T_{2}}-{1\over T_{1}})+Rln(a_{0}(T_{1},P^{\circ}))-{RT_{1}ln(a_{0}(T_{1},P^{\circ}))\over T_{2}}$\\

$=(\Delta H^{\circ}-(\Delta G^{\circ}(T_{1})-RT_{1}ln(a_{0}(T_{1},P^{\circ}))))({1\over T_{2}}-{1\over T_{1}})+Rln(a_{0}(T_{1},P^{\circ}))-{RT_{1}ln(a_{0}(T_{1},P^{\circ}))\over T_{2}}$\\

$=(\Delta H^{\circ}-\Delta G^{\circ}(T_{1}))({1\over T_{2}}-{1\over T_{1}})+RT_{1}ln(a_{0}(T_{1},P^{\circ}))({1\over T_{2}}-{1\over T_{1}})+(R-{RT_{1}\over T_{2}})ln(a_{0}(T_{1},P^{\circ}))$\\

$=(\Delta H^{\circ}-\Delta G^{\circ}(T_{1}))({1\over T_{2}}-{1\over T_{1}})+ln(a_{0}(T_{1},P^{\circ}))(RT_{1}({1\over T_{2}}-{1\over T_{1}})+(R-{RT_{1}\over T_{2}}))$\\

$=(\Delta H^{\circ}-\Delta G^{\circ}(T_{1}))({1\over T_{2}}-{1\over T_{1}})$\\

so that, rearranging again;\\

$\Delta G^{\circ}(T_{1})({1\over T_{1}}+{1\over T_{2}}-{1\over T_{2}})={\Delta G^{\circ}(T_{1})\over T_{1}}$\\

$={\Delta G^{\circ}(T_{2})\over T_{2}}-\Delta H^{\circ}({1\over T_{2}}-{1\over T_{1}})$\\

to obtain;\\

$\Delta G^{\circ}(T_{1})={T_{1}\over T_{2}}\Delta G^{\circ}(T_{2})-\Delta H^{\circ}({T_{1}\over T_{2}}-1)$\\

For the modification of Lemma \ref{eqlines}, be careful to use the restricted summation for $c$ substances, in the calculation of $({\partial({{\partial G\over \partial \xi}})_{T,P}\over \partial T})_{P}$. while the calculation for $dU$ involves $c+1$ substances, including the solvent. If $\epsilon=0$, we have that;\\

$c=\Delta G^{\circ}(T_{2})+RT_{2}ln(Q(T_{2},P'))-RT_{2}ln(a_{0}(T_{2},P'))$\\

$=\Delta G^{\circ}(T_{2})-RT_{2}ln(a_{0}(T_{2},P'^{\circ}))$\\

so that rearranging, $Q(T_{2},P')={a_{0}(T_{2},P')\over a_{0}(T_{2},P'^{\circ})}$\\

and the claims about dynamic and quasi-chemical equilibrium lines follows from $Q=c$ and $({\partial G\over \partial \xi})_{T,P}=c$, for $c\in\mathcal{R}_{\geq 0}$ and $c\in\mathcal{R}$ respectively. If $\epsilon\neq 0$, we use the modification of Lemma \ref{gibbs}, to obtain the formula for the activity coefficient;\\

$Q(T_{2},P')=e^{({\partial G\over \partial \xi})_{T,P}|{T_{2},P'}-\Delta G^{\circ}(T_{2})+RT_{2}ln(a_{0}(T_{2},P'))\over RT_{2}}$\\

$=e^{\epsilon ln({P'\over P'^{\circ}})\over RT_{2}}a_{0}(T_{2},P')$\\

Again, the determination of the dynamical and quasi-chemical equilibrium lines follows again from rearrangement of $Q=c$ and $({\partial G\over \partial \xi})_{T,P}=c$, for $c\in\mathcal{R}_{\geq 0}$ and $c\in\mathcal{R}$ respectively.

\end{proof}
\begin{lemma}
\label{proofs2}
In Lemmas 0.4-0.12, for the case of a reaction like $(*)$ in Definition \ref{catalyzers}, and considering a dilute solution with interaction of the solvent, replacing $Q$ defined as $\prod_{i=1}^{c}a_{i}^{\nu_{i}}$ by;\\

$\prod_{i=0}^{c}a_{i}^{\nu_{i}}$\\

If we assume without approximation that $\mu_{i}=\mu_{i}^{\circ}+RTln(a_{i})$, $0\leq i\leq c$, then the proofs go through as before, with the modification that we have $c+1$ rather than $c$ substances.
\end{lemma}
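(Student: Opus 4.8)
The plan is to recognise that the hypothesis places us squarely in the idealised situation of Section~\ref{idealised}, only for a system of $c+1$ rather than $c$ substances. The one genuinely new structural feature, distinguishing this case from Lemma~\ref{proofs}, is that the solvent (substance $0$) now \emph{participates} in the reaction: the extent $\xi$ governs it too, so that $n_{0}=n_{0,0}+\nu_{0}\xi$ and hence $dn_{0}=\nu_{0}\,d\xi$, exactly parallel to $dn_{i}=\nu_{i}\,d\xi$ for $1\leq i\leq c$. In Lemma~\ref{proofs} it was precisely the constraint $dn_{0}=0$ that produced the correction factors $a_{0}(T,P)$ and $a_{0}(T,P^{\circ})$; here no such factors arise, and the modified quotient $Q=\prod_{i=0}^{c}a_{i}^{\nu_{i}}$ is literally the activity quotient of Definition~\ref{constants} for the enlarged substance set.

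Since $\mu_{i}=\mu_{i}^{\circ}+RT\ln(a_{i})$ is assumed to hold \emph{without} approximation for every $0\leq i\leq c$, none of the error terms $\gamma_{i}$, $\delta$, $\epsilon$ of Remark~\ref{ideal2} appear, so the formal data under which Lemmas~\ref{nernst}--\ref{alltelectrical} were established --- namely $dn_{i}=\nu_{i}\,d\xi$ for every reacting species, $\mu_{i}=\mu_{i}^{\circ}+RT\ln a_{i}$, and $Q=\prod a_{i}^{\nu_{i}}$ --- hold verbatim once the index set $\{1,\dots,c\}$ is replaced by $\{0,\dots,c\}$. Re-labelling $\{0,\dots,c\}$ as $\{1,\dots,c+1\}$, each lemma in the cited range becomes an instance of its predecessor, and I would simply record the outputs: $dG=-SdT+VdP+\sum_{i=0}^{c}\mu_{i}\,dn_{i}$ from Lemma~\ref{differential}; $(\partial G/\partial\xi)_{T,P}=\sum_{i=0}^{c}\nu_{i}\mu_{i}$ and $\Delta G^{\circ}=\sum_{i=0}^{c}\nu_{i}\mu_{i}^{\circ}$ from Lemma~\ref{equivalences}; $(\partial G/\partial\xi)_{T,P}=\Delta G^{\circ}+RT\ln Q$ from Lemma~\ref{gibbs}; the van't Hoff / Gibbs--Helmholtz and linearity identities of Lemmas~\ref{vanhoffhelmholtz} and~\ref{eqlines}, with every summation now over $0\leq i\leq c$; and the equivalences of Lemma~\ref{equivalences}, with $Q(T,P^{\circ})=1$ unchanged.

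For the electrochemical statements I would note that the solvent $H_{2}O$ in $(*)$ is an uncharged species, so it contributes to $G_{chem'}$ exactly like the other uncharged species, and the split $G=U_{el}+G_{chem}$ together with the identity $(\partial G/\partial\xi)_{T,P}=(\partial G_{chem'}/\partial\xi)_{T,P}+4EF$ carry over as in Lemma~\ref{nernst}; the only change is the electron count, since $(*)$ transfers $4$ electrons, so the factor $2F$ of Lemmas~\ref{nernst}, \ref{delta} and~\ref{alltelectrical} becomes $4F$ throughout. One consistency point is worth flagging in the proof of Lemma~\ref{eqlines}: the internal-energy expression $U=\sum({3\over 2}N_{A}n_{i}kT-N_{A}n_{i}m_{i}\rho_{i})$ ranges over all physical constituents, which now \emph{coincides} with the index set $\{0,\dots,c\}$ appearing in $(\partial G/\partial\xi)_{T,P}$, so that, unlike in Lemma~\ref{proofs}, the two summations match and no special restricted-summation caveat is needed; the form $\lambda+\epsilon\ln P+\beta T+\sigma\ln T$ and the activity-coefficient formula then follow as before.

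The main obstacle is purely bookkeeping: one must verify that nowhere in Lemmas~\ref{nernst}--\ref{alltelectrical} (nor in the auxiliary Lemmas~\ref{differential}, \ref{equivalences}, \ref{gibbs}) is the specific integer $c$ actually used, only the formal structure listed above, so that the substitution $c\mapsto c+1$ is harmless. I expect the non-degeneracy bookkeeping in the style of Lemmas~\ref{implies} and~\ref{exists} --- choice of a pivot index, the conditions $\sum\nu_{i}\neq0$ over suitable sub-collections --- to need the most care if one wishes to extend those statements too, but since those arguments are already phrased for an arbitrary number of substances $\geq 2$ they also go through unchanged, and no genuine new difficulty arises.
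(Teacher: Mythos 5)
Your proposal is correct and follows exactly the route the paper intends: the paper in fact supplies no separate proof for this lemma beyond the assertion in its statement that "the proofs go through as before, with the modification that we have $c+1$ rather than $c$ substances," and your argument is a faithful elaboration of that claim, including the two points the paper leaves implicit (that $dn_{0}=\nu_{0}\,d\xi$ removes the $a_{0}$ correction factors of Lemma \ref{proofs}, and that the electron count for the reaction $(*)$ is $4$ rather than $2$, consistent with Lemma \ref{efi1} later in the paper). No gap.
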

\begin{rmk}
\label{reference}
In the case of a solvent with no interaction, if we define the activities by $a_{i}=x_{i}$, $0\leq i\leq c$, with the definition of $Q$ as $a_{0}\prod_{1\leq i\leq c}a_{i}^{\nu_{i}}$, then we need to modify the proofs of Lemmas \ref{implies} and \ref{feasible}. This is done in Lemma \ref{henrys2}. For the existence of a feasible path, where we require that the $n_{0}$ term is fixed, see Remark \ref{zero}, we need to change Lemma \ref{exists}. Letting $d_{0}>0$ denote the fixed molar amount of the solvent, we obtain the relation, modifying the proof of Lemma \ref{exists};\\

$d_{0}\prod_{i=1}^{c-1}({\nu_{i}\over \nu_{c}}n_{c}+d_{i})^{\nu_{i}}(t)n_{c}^{\nu_{c}}(t)=
\epsilon(t)((\sum_{i=1}^{c-1}{\nu_{i}\over \nu_{c}}+1)n_{c}+\sum_{i=0}^{c-1}d_{i})^{w}(t)$\\

where $w=1+\sum_{i=1}^{c}\nu_{i}$.\\

In the $w>0$ case, absorb the constant $d_{0}$ into $\epsilon(t)$, by setting $\epsilon_{1}(t)={\epsilon(t)\over d_{0}}>0$, and redefine $\sigma=\sum_{i=0}^{c-1}d_{i}$. Then use the proof of Lemma \ref{exists}, noting that if $\nu_{i}=w$, for $1\leq i\leq p$, then $w=1+\sum_{i=1}^{c}\nu_{i}>\sum_{i=1}^{p}\nu_{i}=\sum_{i=1}^{p}w=pw$ which is a contradiction again. If $w<0$, then use reciprocality again to reduce to $w>0$, replacing $d_{0}$ with ${1\over d_{0}}>0$. The $w=0$ case is again similar, using the $w>0$ calculation.\\
\end{rmk}

\end{section}
\begin{section}{Dilute solutions with Henry's Law for Solutes, Raoult's Law for the Solvent and Interaction of the Solvent}
\label{henryssolutesidealsolvent}
\begin{defn}
\label{catalyzers2}

As mentioned in Definition \ref{constants}, we can consider an electrolyte as a solute in a dilute solution and define the activities $a_{i}$, $0\leq i\leq c$, by;\\

$a_{0}=x_{0}\simeq 1$\\

$a_{i}=x_{i}$ $(1\leq i\leq c)$\\

and define;\\

$Q=a_{0}\prod_{i=1}^{c}a_{i}^{\nu_{i}}\simeq \prod_{i=1}^{c}a_{i}^{\nu_{i}}$ (no interaction of the solvent)\\

$Q=\prod_{i=0}^{c}a_{i}^{\nu_{i}}$ (interaction of the solvent)\\

We ideally have that $\mu_{i}=\mu_{i}^{\circ}+RTln(a_{i})$, $0\leq i\leq c$, which involves a contradiction. By Henry's Law, we have that $P_{i}=k_{i}x_{i}$, $1\leq i\leq c$, so that, by Henry's Law, phase equilibrium and the ideal gas law;\\

$\mu_{i}^{(sol)}=\mu_{i}^{(g)\circ}+RT ln({P_{i}\over P^{\circ}})$\\

$=\mu_{i}^{(g)\circ}+RTln({k_{i}x_{i}\over P^{\circ}})$\\

$=\mu_{i}^{(H)\circ}+RTln(x_{i})$ $(*)$\\

where $\mu_{i}^{(H)\circ}=\mu_{i}^{(g)\circ}+RTln({k_{i}\over P^{\circ}})$ $(\dag)$. From $(*)$, we obtain that;\\

$\mu_{i}^{\circ}=\mu_{i}^{(H)\circ}+RTln(x_{i}(T,P^{\circ}))$ $(\dag\dag)$\\

and, for $1\leq i\leq c$;\\

$\mu_{i}=\mu_{i}^{\circ}-RTln(x_{i}(T,P^{\circ}))+RTln(x_{i})$\\

$=\mu_{i}^{\circ}+RTln(x_{i})+\kappa(T)$ $(\dag\dag\dag)$\\

where, from phase equilibrium and $(\dag),(\dag\dag)$;\\

$\kappa(T)=-RTln(x_{i}(T,P^{\circ}))$\\

$=\mu_{i}^{(H)\circ}-\mu_{i}^{\circ}$\\

$=\mu_{i}^{(g)\circ}+RTln({k_{i}\over P^{\circ}})-\mu_{i}^{(sol)\circ}$\\

$=RTln({k_{i}\over P^{\circ}})$\\

We can use Raoult's law for the solvent as $P_{0}=P_{0}^{*}x_{0}$, see \cite{M}. Then, using equilibrium with an ideal gas mixture;\\

$\mu_{0}^{(sol)}=\mu_{0}^{\circ(g)}+RTln({P_{0}\over P^{\circ}})$\\

$=\mu_{0}^{\circ(g)}+RTln({P_{0}^{*}x_{0}\over P^{\circ}})$\\

$\mu_{0}^{\circ(g)}+RTln(x_{0})+RTln({P_{0}^{*}\over P^{\circ}})$\\

$=\mu_{0}^{*}(T,P_{0}^{*})+RTln(x_{0})$\\

$=\mu_{0}^{*}(T,P)+RTln(x_{0})+\theta(T,P)$ $(**)$\\

where $\theta(T,P)=\mu_{0}^{*}(T,P_{0}^{*})-\mu_{0}^{*}(T,P)\simeq 0$\\

so that;\\

$RTln(x_{0})=RTln({P_{i}\over P^{\circ}})-RTln({P_{i}^{*}\over P^{\circ}})$\\

and;\\

$\mu_{0}=\mu_{0}^{*}+RTln(x_{0})+\theta$\\

$=\mu_{0}^{*}+RTln({P_{0}\over P^{\circ}})-RTln({P_{0}^{*}\over P^{\circ}})+\theta$ $(***)'$\\

We also have, using the phase rule for the solvent in equilibrium with an ideal gas mixture, that;\\

$\mu_{0}^{(g)}=\mu_{0}^{\circ(g)}+RTln({P_{0}\over P^{\circ}})$\\

$\mu_{0}^{(sol)}=\mu_{0}^{\circ(sol)}+RTln({P_{0}\over P^{\circ}})$ $(*)$\\

Combining $(*),(***)'$, we obtain that;\\

$\mu_{0}^{*}=\mu_{0}-RTln({P_{0}\over P^{\circ}})+RTln({P_{0}^{*}\over P^{\circ}})-\theta$\\

$=(\mu_{0}^{\circ}+RTln({P_{0}\over P^{\circ}}))-RTln({P_{0}\over P^{\circ}})+RTln({P_{0}^{*}\over P^{\circ}})-\theta$\\

$=\mu_{0}^{\circ}+RTln({P_{0}^{*}\over P^{\circ}})-\theta$ $(\dag)'$\\

Letting $P_{0}^{*}=P^{\circ}$, we obtain that $\mu_{0}^{*}(T,P')=\mu_{0}^{\circ}-\theta$, for the corresponding $P'$, $(****)'$\\

From $(**)$, $(****)'$, we obtain that;\\

$\mu_{0}=\mu_{0}^{*}+RTln(x_{0})+\theta$\\

$=\mu_{0}^{*}(T,P')+\delta+RTln(x_{0})+\theta$\\

$=\mu_{0}^{\circ}-\theta+\delta+RTln(x_{0})+\theta$\\

$=\mu_{0}^{\circ}+RTln(x_{0})+\delta$\\

where $\delta=\mu_{0}^{*}(T,P)-\mu_{0}^{*}(T,P')\simeq 0$\\

Using the same calculation as before, we have that;\\

$\mu_{0}=\mu_{0}^{\circ}+RTln(x_{0})+\gamma(P)$\\

\end{defn}

\begin{lemma}
\label{henrys3}
In the case of dilute solutions, with interaction of the solvent, a feasible path $\gamma$ is a dynamic equilibrium path iff $pr(\gamma_{12})\subset C_{c}$, for some $c\in\mathcal{R}_{>0}$ iff ${dQ\over dt}=0$.
\end{lemma}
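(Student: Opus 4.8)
The plan is to reproduce the proof of Lemma \ref{feasible} essentially verbatim, the only change being that the solvent, substance $0$, now carries a nonzero stoichiometric coefficient $\nu_{0}$, so that along a feasible path the linkage relations ${n_{i}'\over \nu_{i}}={n_{j}'\over \nu_{j}}$ run over $0\leq i<j\leq c$, one has $n=\sum_{i=0}^{c}n_{i}$, and $Q(pr_{12}(\gamma(t)))=\prod_{i=0}^{c}x_{i}^{\nu_{i}}(t)$. Throughout I would use that $Q$ is a function of $(T,P)$ alone, so that for a feasible path $\gamma$ we have ${dQ\over dt}=grad(Q)|_{pr_{12}(\gamma)}\cdot pr_{12}(\gamma)'$, and simultaneously $Q(pr_{12}(\gamma(t)))=\prod_{i=0}^{c}x_{i}^{\nu_{i}}(t)$ by the feasibility conditions of Definition \ref{catalyzers2}.

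First I would treat the implication $pr(\gamma_{12})\subset C_{c}$ for some $c\in\mathcal{R}_{>0}$ $\Rightarrow$ dynamic equilibrium. Here $\prod_{i=0}^{c}x_{i}^{\nu_{i}}$ equals the constant $c$, so differentiating and inserting the linkage relations reduces, exactly as in the proof of Lemma \ref{implies} but with index range $0\leq i\leq c$ (the modification licensed by Lemma \ref{proofs2}), to a nontrivial polynomial identity in a suitable pivot variable $n_{j_{0}}$ — after, if necessary, rescaling the $\nu_{i}$ and choosing a $c$-element subset of $\{\nu_{0},\ldots,\nu_{c}\}$ with nonzero sum, and disposing of the degenerate subcases ($\sum_{i}\nu_{i}=0$, repeated or opposite coefficients, $|g_{i}|=1$) by the same rescaling-and-even-$k$-limit device used there. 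Discreteness of the roots then forces $n_{j_{0}}'=0$, whence $n_{i}'=0$ for all $0\leq i\leq c$, i.e.\ $\gamma$ is a dynamic equilibrium path. The same hypothesis gives ${dQ\over dt}=0$ since $\prod_{i=0}^{c}x_{i}^{\nu_{i}}$ is constant.

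Then I would handle the remaining implications. If $\gamma$ is a dynamic equilibrium path, every $n_{i}$, hence $n$ and each $x_{i}>0$, is constant, so $Q(pr_{12}(\gamma(t)))=\prod_{i=0}^{c}x_{i}^{\nu_{i}}$ is a fixed positive real $c$, giving $pr(\gamma_{12})\subset C_{c}$ with $c\in\mathcal{R}_{>0}$ and ${dQ\over dt}=0$. Conversely, if ${dQ\over dt}=0$ then $Q$ is constant on $pr(\gamma_{12})$, which therefore lies in some $C_{c}$, and $c=\prod_{i=0}^{c}x_{i}^{\nu_{i}}>0$. Together with the first implication these close the cycle of equivalences; the contrapositive remark (if $pr(\gamma_{12})\not\subset C_{c}$ for any $c$ then $Q(pr_{12}(\gamma))'|_{0}=grad(Q)|_{\gamma(0)}\cdot \gamma_{12}'(0)\neq 0$, contradicting $x_{i}'=0$) is the same one used in Lemma \ref{feasible}.

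The hard part will be only the bookkeeping inside the adapted proof of Lemma \ref{implies}: checking that including $\nu_{0}$ does not obstruct the choice of a pivot index, nor of a $c$-element subset of $\{\nu_{0},\ldots,\nu_{c}\}$ on which the coefficients do not sum to zero, and that the mobility argument for the constants $d_{i}$ still applies with one extra solvent variable. Since Lemma \ref{proofs2} already asserts that the proofs of Lemmas \ref{implies} and \ref{feasible} go through with $c$ replaced by $c+1$, a careful citation of it should suffice, and the rest is the routine ``constant $Q$ differentiates to $0$'' mechanism that recurs throughout the paper.
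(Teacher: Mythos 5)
Your proposal is correct and follows essentially the same route as the paper: the paper's proof simply writes $Q=\prod_{i=0}^{c}x_{i}^{\nu_{i}}=f$ and defers to the proofs of Lemmas \ref{implies} and \ref{feasible} with the index range extended to include the solvent, exactly the reduction you describe (your version just spells out the bookkeeping and the remaining directions of the equivalence more explicitly). No gap.
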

\begin{proof}
We have that;\\

$\prod_{i=0}^{c}a_{i}^{\nu_{i}}=\prod_{i=0}^{c}x_{i}^{\nu_{i}}$\\

$=Q=c=f$\\

where $x_{i}={n_{i}\over n}$. Now follow through the proof of Lemma \ref{implies}, as we are differentiating, the proof works with a constant $f>0$.
\end{proof}

We reformulate Lemmas \ref{gibbs2} to \ref{rates} in this context, assuming Henry's law for the solutes and the solvent an ideal solution;\\

\begin{lemma}
\label{henrys5}
In the dilute solution case, with interaction of the solvent, for the energy function $G$ involving  $c+1$ uncharged species;\\

$({\partial G\over \partial \xi})_{T,P}=\Delta G^{\circ}+RTln(Q)+\epsilon$\\

where $\epsilon(T,P)=\nu_{0}\gamma_{0}(P)+\sum_{i=1}^{c}\nu_{i}\kappa_{i}(T)\simeq 0$, $\gamma_{0}(P)\simeq 0$ and $\kappa_{i}(T)\simeq 0$ are the error term for the $i$'th uncharged species in Definition \ref{catalyzers2}, $1\leq i\leq c$.
\end{lemma}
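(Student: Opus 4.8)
The plan is to mirror the proof of Lemma \ref{gibbs2}, now carrying the solvent (substance $0$) through all the summations and replacing the single Raoult error term there by the pair consisting of the solvent's Raoult error $\gamma_{0}(P)$ and the solutes' Henry errors $\kappa_{i}(T)$ as computed in Definition \ref{catalyzers2}.

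First I would invoke Lemma \ref{equivalences}, whose first two identities hold verbatim for the Gibbs energy $G$ of the $c+1$ uncharged species: exactly as in that proof, via Lemma \ref{differential} and $dn_{i}=\nu_{i}d\xi$, fixing $T,P$ gives $dG=(\sum_{i=0}^{c}\nu_{i}\mu_{i})d\xi$, so that $({\partial G\over \partial \xi})_{T,P}=\sum_{i=0}^{c}\nu_{i}\mu_{i}$ and $\Delta G^{\circ}=\sum_{i=0}^{c}\nu_{i}\mu_{i}^{\circ}$, where $\Delta G^{\circ}$ is now the standard Gibbs change for one mole of reaction including the solvent.

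Next I would substitute the chemical-potential expansions of Definition \ref{catalyzers2}: for each solute $\mu_{i}=\mu_{i}^{\circ}+RTln(x_{i})+\kappa_{i}(T)$ with $\kappa_{i}(T)\simeq 0$ for $1\leq i\leq c$, and for the solvent $\mu_{0}=\mu_{0}^{\circ}+RTln(x_{0})+\gamma_{0}(P)$ with $\gamma_{0}(P)\simeq 0$. Forming the difference, using $a_{i}=x_{i}$ for $0\leq i\leq c$ together with the definition $Q=\prod_{i=0}^{c}a_{i}^{\nu_{i}}$ (interaction of the solvent), gives $({\partial G\over \partial \xi})_{T,P}-\Delta G^{\circ}=\sum_{i=0}^{c}\nu_{i}(\mu_{i}-\mu_{i}^{\circ})=RTln(\prod_{i=0}^{c}x_{i}^{\nu_{i}})+\nu_{0}\gamma_{0}(P)+\sum_{i=1}^{c}\nu_{i}\kappa_{i}(T)=RTln(Q)+\epsilon$, which is the claimed identity with $\epsilon(T,P)=\nu_{0}\gamma_{0}(P)+\sum_{i=1}^{c}\nu_{i}\kappa_{i}(T)$, manifestly $\simeq 0$ since each summand is.

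There is no serious obstacle; the proof is pure bookkeeping. The one point requiring care is keeping the index ranges consistent: the sums defining $({\partial G\over \partial \xi})_{T,P}$, $\Delta G^{\circ}$ and $Q$ all run over $0\leq i\leq c$, since the solvent genuinely participates, whereas the Henry error $\kappa_{i}$ is attached only to the solutes $1\leq i\leq c$ and the Raoult error $\gamma_{0}$ only to the solvent; conflating these (as in the no-interaction case of Lemma \ref{proofs}, where $\nu_{0}$ effectively drops out of $\Delta G^{\circ}$) would produce the wrong $\epsilon$. As flagged in the proof of Lemma \ref{proofs}, I would also note that when this identity later feeds into the van't Hoff / Gibbs--Helmholtz computations the internal-energy term $dU$ is still taken over all $c+1$ substances, so no inconsistency arises there.
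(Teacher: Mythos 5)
Your proposal is correct and is essentially the paper's proof: the paper simply says the result is clear from Lemma \ref{gibbs2}, and what you have written is exactly that argument unfolded, i.e.\ the computation $({\partial G\over \partial \xi})_{T,P}-\Delta G^{\circ}=\sum_{i=0}^{c}\nu_{i}(\mu_{i}-\mu_{i}^{\circ})$ with the solvent carried through the sums and the error terms $\gamma_{0}(P)$, $\kappa_{i}(T)$ from Definition \ref{catalyzers2} substituted in place of the Raoult terms of Lemma \ref{gibbs2}. Your remarks on index ranges correctly identify the only point where care is needed.
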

\begin{proof}
The proof is clear from Lemma \ref{gibbs2}.

\end{proof}
\begin{lemma}
\label{henrys6}
For a dilute solution, with interaction of the solvent, we have, using the definition of $\epsilon(T,P)$ in Lemma \ref{henrys5}, the error term $\gamma_{0}(P)$ and the error terms $\kappa_{i}(T)$, $1\leq i\leq c$ in Definition \ref{catalyzers}, that the same results as Lemma \ref{equivalences2} hold, replacing $\epsilon(P)$ with $\epsilon(T,P)$ and $\delta$ with $\epsilon(T,P^{\circ})=\nu_{0}\gamma_{0}(P^{\circ})+\sum_{i=1}^{c}\nu_{i}\kappa_{i}(T)$\\

\end{lemma}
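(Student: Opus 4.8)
The plan is to follow the proof of Lemma \ref{equivalences2} essentially verbatim, making one structural change: every sum over species now runs from $i=0$ to $c$ rather than from $i=1$ to $c$, because in the present setting (a reaction of type $(*)$ in Definition \ref{catalyzers}) the solvent, substance $0$, participates in the reaction with stoichiometric coefficient $\nu_0$, so that $dn_0 = \nu_0\,d\xi$ along a feasible path.

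First I would reprove the first three claims: by Lemma \ref{differential}, fixing $T$ and $P$ and using $dn_i = \nu_i\,d\xi$ for $0\le i\le c$, one gets $dG = (\sum_{i=0}^c\nu_i\mu_i)\,d\xi$, hence $({\partial G\over\partial\xi})_{T,P} = \sum_{i=0}^c\nu_i\mu_i$; integrating in $\xi$ over $[0,1]$ at $P^\circ$ gives $\Delta G^\circ = \sum_{i=0}^c\nu_i\mu_i^\circ$; and the equilibrium statements at $(T,P)$ and $(T,P^\circ)$ are immediate because $({\partial G\over\partial\xi})_{T,P}$ does not vary with $\xi$. Then I invoke Lemma \ref{henrys5} directly: substituting $\mu_0 = \mu_0^\circ + RT\ln(a_0) + \gamma_0(P)$ and $\mu_i = \mu_i^\circ + RT\ln(a_i) + \kappa_i(T)$ for $1\le i\le c$ into $\sum_{i=0}^c\nu_i(\mu_i-\mu_i^\circ)$ produces $RT\ln(Q) + \epsilon(T,P)$, where $Q=\prod_{i=0}^c a_i^{\nu_i}$ and $\epsilon(T,P) = \nu_0\gamma_0(P) + \sum_{i=1}^c\nu_i\kappa_i(T)$.

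From here the remaining claims drop out exactly as in Lemma \ref{equivalences2}, with $\epsilon(P)$ replaced throughout by $\epsilon(T,P)$. Chemical equilibrium at $(T,P)$ forces $RT\ln(Q) = -\Delta G^\circ - \epsilon(T,P)$, i.e. $Q(T,P) = e^{(-\Delta G^\circ - \epsilon(T,P))/RT}$, and the penultimate claim follows by applying $\ln$ and reading the equivalence backwards. Evaluating the activity relations at $P=P^\circ$ gives $RT\ln(a_0(T,P^\circ)) = -\gamma_0(P^\circ)$ and $RT\ln(a_i(T,P^\circ)) = -\kappa_i(T)$, hence $Q(T,P^\circ) = \prod_{i=0}^c a_i^{\nu_i}(T,P^\circ) = e^{-\epsilon(T,P^\circ)/RT}$, which is the stated replacement of $\delta$. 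The clause about simultaneous chemical and electrical chemical equilibrium ($Q(T,P)\simeq 1$ and $E=E^\circ$, together with its converse) follows by combining the above with the Nernst equation of Lemma \ref{nernst2}, whose derivation carries over unchanged once the solvent is included in the uncharged-species energy $G_{chem'}$.

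The main obstacle is purely bookkeeping rather than conceptual: one must keep straight that the solvent error $\gamma_0$ is a function of $P$ alone while each solute error $\kappa_i$ is a function of $T$ alone (as computed in Definition \ref{catalyzers2}), so that $\epsilon(T,P^\circ) = \nu_0\gamma_0(P^\circ) + \sum_{i=1}^c\nu_i\kappa_i(T)$ genuinely retains its temperature dependence — in contrast with the pure ideal-solution case of Lemma \ref{equivalences2}, where $\delta = \epsilon(P^\circ)$ was constant at fixed initial composition — and one must use the $(c+1)$-species summation consistently, exactly as flagged at the end of the proof of Lemma \ref{proofs}.
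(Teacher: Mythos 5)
Your proposal is correct and is essentially the paper's own argument: the paper's proof of this lemma is simply the one-line remark that it follows from the proof of Lemma \ref{equivalences2}, and what you have written is precisely that proof carried out with the sums extended to include the solvent (index $0$), the error terms $\gamma_{0}(P)$ and $\kappa_{i}(T)$ from Definition \ref{catalyzers2} substituted into the chemical potentials, and $Q=\prod_{i=0}^{c}a_{i}^{\nu_{i}}$ used throughout. Your closing observation that $\epsilon(T,P^{\circ})$ retains a genuine $T$-dependence (unlike the constant $\delta$ of Lemma \ref{equivalences2}) is exactly the bookkeeping point the replacement of $\delta$ is meant to capture.
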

\begin{proof}
The proof is clear from the proof of Lemma \ref{equivalences2}.

\end{proof}
\begin{lemma}
\label{henrys7}
For a dilute solution, with interaction of the solvent, we have the same result as Lemma \ref{van't Hoff,Gibbs-Helmholtz2} hold, replacing $\epsilon(P(T))$ by $\epsilon(T,P(T))$ along the quasi-chemical equilibrium lines.

\end{lemma}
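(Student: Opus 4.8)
The plan is to run the proof of Lemma~\ref{van't Hoff,Gibbs-Helmholtz2} essentially verbatim, carrying the extra temperature dependence of the error term through unchanged. As there, one starts from $\Delta G^{\circ}=\sum_{i=0}^{c}\nu_{i}\mu_{i}^{\circ}$ — with the chemical potentials now referring to the Gibbs energy function of the $c+1$ species including the solvent, as in Lemma~\ref{henrys5} — differentiates in $T$, applies Euler reciprocity to get $({\partial\mu_{i}^{\circ}/\partial T})_{P,n}=-\overline{S}^{\circ}_{m,i}$, and concludes ${d(\Delta G^{\circ})/dT}=-\Delta S^{\circ}$, hence by the product rule and the definition of enthalpy ${d\over dT}({\Delta G^{\circ}/T})=-{\Delta H^{\circ}/T^{2}}$. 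These steps involve only $\Delta G^{\circ}$ and $\Delta H^{\circ}$ and are untouched by the shape of the error term, so they go through word for word.

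Next one invokes Lemma~\ref{henrys6}: along a chemical equilibrium path $Q=e^{(-\Delta G^{\circ}-\epsilon(T,P))/RT}$, so $\ln Q=(-\Delta G^{\circ}-\epsilon(T,P))/RT$. Along a quasi-chemical equilibrium line $P=P(T)$, differentiating in $T$ and using the previous display gives
\[
{d\ln Q\over dT}={\Delta H^{\circ}\over RT^{2}}-{d\over dT}({\epsilon(T,P(T))\over RT}).
\]
Integrating from $T_{1}$ to $T_{2}$, the error-term contribution telescopes by the fundamental theorem of calculus to $\epsilon(T_{2},P(T_{2}))/RT_{2}-\epsilon(T_{1},P(T_{1}))/RT_{1}$, which is precisely the first displayed formula of Lemma~\ref{van't Hoff,Gibbs-Helmholtz2} with $\epsilon(P(T))$ replaced by $\epsilon(T,P(T))$. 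Substituting $\ln Q(T_{j})=(-\Delta G^{\circ}(T_{j})-\epsilon(T_{j},P(T_{j})))/RT_{j}$ back in, the $\epsilon$ terms on the two sides cancel, leaving ${\Delta G^{\circ}(T_{2})/T_{2}}-{\Delta G^{\circ}(T_{1})/T_{1}}=\Delta H^{\circ}(1/T_{2}-1/T_{1})$ and the linear formula for $\Delta G^{\circ}$ exactly as in the $\epsilon(P)$ case.

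For the statements about a component $D_{c}$ meeting $P=P^{\circ}$ at $(T_{1},P^{\circ})$, one repeats the corresponding part of Lemma~\ref{van't Hoff,Gibbs-Helmholtz2}: $({\partial G/\partial\xi})_{T,P}$ is constant $=c$ along $D_{c}$, and using $Q(T_{1},P^{\circ})=e^{-\epsilon(T_{1},P^{\circ})/RT_{1}}$ from Lemma~\ref{henrys6} one gets $c=\Delta G^{\circ}(T_{1})$; then $\ln Q(T)=(c-\Delta G^{\circ}(T)-\epsilon(T,P(T)))/RT$, and differentiating and integrating as above produces the formulas $(\dag\dag\dag)$, $(\dag\dag)$, $(\dag)$, $(\dag\dag\dag\dag)$ with $\epsilon(P(T))$ replaced throughout by $\epsilon(T,P(T))$. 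The one genuinely new point — and the step to be careful about — is that $\epsilon(T,P)=\nu_{0}\gamma_{0}(P)+\sum_{i=1}^{c}\nu_{i}\kappa_{i}(T)$ of Definition~\ref{catalyzers2} and Lemma~\ref{henrys5} now carries an explicit $T$-dependence through $\sum_{i}\nu_{i}\kappa_{i}(T)$, so along $D_{c}$ the quantity $\epsilon(T,P(T))$ must be differentiated as a genuine total derivative in $T$ (both via its own $T$-slot and via $P(T)$). This needs only that $\gamma_{0}$, the $\kappa_{i}$ and the branch $P=P(T)$ of $D_{c}$ be $C^{1}$, after which the fundamental theorem of calculus applies and every $\epsilon$-term either telescopes to endpoint values or cancels against the $\Delta G^{\circ}$ terms exactly as before; no further estimate is required.
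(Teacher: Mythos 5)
Your proposal is correct and follows exactly the route the paper intends: the paper's own proof of Lemma \ref{henrys7} simply defers to the proof of Lemma \ref{van't Hoff,Gibbs-Helmholtz2}, and your write-up is that deferral made explicit, with the only substantive new observation being the one you flag — that $\epsilon(T,P(T))$ must be differentiated as a total derivative in $T$ along the quasi-chemical equilibrium line, after which the fundamental theorem of calculus telescopes the error terms to endpoint values just as in the $\epsilon(P)$ case. No gap; your version is a faithful (and more detailed) rendering of the paper's argument.
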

\begin{proof}
The proof is clear from the proof of Lemma \ref{van't Hoff,Gibbs-Helmholtz2}

\end{proof}
\begin{lemma}
\label{henrys8}
For a dilute solution, with interaction of the solvent, we have the same results as Lemma \ref{eqlines2} hold, replacing $\epsilon(P')$ by $\epsilon(T_{1},P')$. In particularly, if $\epsilon\neq 0$, we have that;\\

$Q(T,P)=e^{\epsilon ln({P\over P^{\circ}})-\epsilon(T,P)\over RT}$\\

and, if $\epsilon=0$, we have that;\\

$Q(T,P)=e^{-\epsilon(T,P)\over RT}$\\

\end{lemma}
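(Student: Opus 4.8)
The plan is to run the proof of Lemma~\ref{eqlines2} essentially verbatim, with two changes: (i) every summation $\sum_{i=1}^{c}$ in the thermodynamic computations becomes $\sum_{i=0}^{c}$, since the solvent (substance~$0$) now participates in the reaction; and (ii) the error term $\epsilon$ is a function of $(T,P)$ rather than of $P$ alone, owing to the $\sum_{i=1}^{c}\nu_{i}\kappa_{i}(T)$ contribution from Henry's law for the solutes recorded in Lemma~\ref{henrys5}. Throughout, Lemma~\ref{henrys5} plays the role of Lemma~\ref{gibbs2}, Lemma~\ref{henrys6} that of Lemma~\ref{equivalences2}, and Lemma~\ref{henrys7} that of Lemma~\ref{van't Hoff,Gibbs-Helmholtz2}.

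First I would reproduce the entropy/enthalpy computation: by Euler reciprocity $({\partial\mu_{i}^{\circ}/\partial T})_{P,n}=-\overline{S}^{\circ}_{m,i}$, and the internal-energy model $U=\sum_{i=0}^{c}({3\over 2}N_{A}n_{i}kT-N_{A}n_{i}m_{i}\rho_{i})$ together with $dL=PdV$, one finds that $({\partial G/\partial\xi})_{T,P}$ has the form $\lambda+\epsilon\ln(P)+\beta T+\sigma\ln(T)$ with $\{\lambda,\epsilon,\beta,\sigma\}\subset\mathcal{R}$; this is a statement about the exact quantity $\sum_{i=0}^{c}\nu_{i}\mu_{i}$ and is unaffected by the error-term decomposition, and the computations of $\overline{S}_{m,i}$ and $\overline{V}_{i}$ for $i=0$ are identical to those for $i\ge 1$. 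When $\epsilon\ne 0$ there is a component $D_{c}$ meeting $P=P^{\circ}$ at some $(T_{1},P^{\circ})$, so by Lemma~\ref{henrys7} $\Delta G^{\circ}$ is linear in $T$; comparing this with $({\partial G/\partial\xi})_{T,P}(T_{2},P^{\circ})=\lambda+\epsilon\ln(P^{\circ})+\beta T_{2}+\sigma\ln(T_{2})$ and using $({\partial G/\partial\xi})_{T,P}(T_{2},P^{\circ})=\Delta G^{\circ}(T_{2})$ --- which holds because, by Lemma~\ref{henrys6}, $Q(T_{2},P^{\circ})=e^{-\epsilon(T_{2},P^{\circ})/RT_{2}}$ cancels the term $\epsilon(T_{2},P^{\circ})$ appearing in Lemma~\ref{henrys5} --- yields $\sigma=0$, $\lambda+\epsilon\ln(P^{\circ})=\Delta H^{\circ}$, $\beta=(\Delta G^{\circ}(T_{1})-\Delta H^{\circ})/T_{1}$, and $\Delta G^{\circ}(T_{2})=\beta T_{2}+\Delta H^{\circ}$.

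Then, applying Lemma~\ref{henrys5} with $\sigma=0$, I would compute
\[
Q(T,P)=\exp\Big(\frac{({\partial G/\partial\xi})_{T,P}-\Delta G^{\circ}(T)-\epsilon(T,P)}{RT}\Big)=\exp\Big(\frac{(\lambda+\epsilon\ln(P)+\beta T)-(\beta T+\Delta H^{\circ})-\epsilon(T,P)}{RT}\Big)=\exp\Big(\frac{\epsilon\ln(P/P^{\circ})-\epsilon(T,P)}{RT}\Big),
\]
which is the asserted formula in the case $\epsilon\ne 0$; the dynamic and quasi-chemical equilibrium lines are then obtained by rearranging $Q=c$ and $({\partial G/\partial\xi})_{T,P}=c$ exactly as in Lemma~\ref{eqlines2}. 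In the case $\epsilon=0$, $({\partial G/\partial\xi})_{T,P}$ is independent of $P$ and the components $D_{c}$ are straight lines; if $D_{c}$ meets $P=P^{\circ}$ at $(T_{1},P^{\circ})$ then for every $P>0$ one has $c=\Delta G^{\circ}(T_{1})+RT_{1}\ln(Q(T_{1},P))+\epsilon(T_{1},P)=\Delta G^{\circ}(T_{1})$, hence $RT_{1}\ln(Q(T_{1},P))=-\epsilon(T_{1},P)$ and $Q(T_{1},P)=e^{-\epsilon(T_{1},P)/RT_{1}}$, with $({\partial G/\partial\xi})_{T,P}=\lambda+\beta T+\sigma\ln(T)$.

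I expect the work here to be bookkeeping rather than conceptual: one must confirm that reintroducing the solvent into the summations leaves the form $\lambda+\epsilon\ln(P)+\beta T+\sigma\ln(T)$ intact, and track the extra $T$-dependence of $\epsilon$ through the Gibbs--Helmholtz step --- where $\frac{d}{dT}\big(\epsilon(T,P(T))/RT\big)$ now acquires a $\partial_{T}\epsilon$ term --- but since Lemma~\ref{henrys7} is invoked as given, that subtlety has already been handled upstream. The final consistency check is that setting $P=P^{\circ}$ in the $\epsilon\ne 0$ formula returns $Q(T,P^{\circ})=e^{-\epsilon(T,P^{\circ})/RT}$, matching both the $\epsilon=0$ formula and Lemma~\ref{henrys6}.
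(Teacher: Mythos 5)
Your proposal is correct and follows exactly the route the paper intends: the paper's own proof of this lemma is simply the one-line remark that it is clear from the proof of Lemma \ref{eqlines2}, and your write-up spells out precisely the bookkeeping that deferral requires (extending the sums to include the solvent, tracking the $(T,P)$-dependence of the error term, and re-running the coefficient comparison to get $\sigma=0$ and the formula for $Q$). The consistency check at $P=P^{\circ}$ against Lemma \ref{henrys6} is a nice addition not present in the paper.
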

\begin{proof}
The proof is clear from the proof of Lemma \ref{eqlines2}.

\end{proof}
\begin{lemma}
\label{henrys9}
For a dilute solution, with interaction of the solvent, we have that, if $\epsilon\neq 0$;\\

$grad(Q)(T,P)=(({-\epsilon ln({P\over P^{\circ}})\over RT^{2}}+{\epsilon(T,P)\over RT^{2}}-{{\partial \epsilon\over \partial T}(T,P)\over RT})({P\over P^{\circ}})^{\epsilon\over RT}e^{-\epsilon(T,P)\over RT},$\\

$({\epsilon P^{\circ}\over RTP}-{{\partial \epsilon\over \partial P}(T,P)\over RT})({P\over P^{\circ}})^{\epsilon\over RT}e^{-\epsilon(T,P)\over RT})$\\

and, if $\epsilon=0$;\\

$grad(Q)(T,P)=(({\epsilon(T,P)\over RT^{2}}-{{\partial \epsilon\over \partial T}(T,P)\over RT})e^{-\epsilon(T,P)\over RT},{-{\partial \epsilon\over \partial P}(T,P)\over RT}e^{-\epsilon(T,P)\over RT})$\\

The paths of maximal reaction in the region $|grad(Q)(T,P)|>1$, $Q(T,P)>0$, are given by implicit solutions to the differential equations;\\

${dP\over dT}={\epsilon TP^{\circ}-PT{\partial\epsilon\over \partial P}(T,P)\over (-\epsilon Pln({P\over P^{\circ}})+P\epsilon(T,P)-PT{\partial \epsilon\over \partial T}(T,P))}$\\

${dP\over dT}={-T{\partial \epsilon\over \partial P}(T,P)\over \epsilon(T,P)-T{\partial \epsilon\over \partial T}(T,P)}$\\

respectively.\\

\end{lemma}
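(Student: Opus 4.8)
The plan is to run the argument of Lemma~\ref{rates} essentially verbatim, the one genuinely new feature being that, with interaction of the solvent, the error term $\epsilon(T,P)=\nu_{0}\gamma_{0}(P)+\sum_{i=1}^{c}\nu_{i}\kappa_{i}(T)$ of Lemma~\ref{henrys5} depends on $T$ as well as on $P$, so the chain rule now produces the two partials ${\partial\epsilon\over\partial T}$ and ${\partial\epsilon\over\partial P}$ in place of a single derivative $\epsilon'(P)$. First I would compute $grad(Q)=({\partial Q\over\partial T},{\partial Q\over\partial P})$ directly from the closed forms supplied by Lemma~\ref{henrys8}: when the constant $\epsilon\neq 0$, $ln(Q)={\epsilon\, ln(P/P^{\circ})-\epsilon(T,P)\over RT}$, so differentiating by the quotient rule and multiplying through by $Q=(P/P^{\circ})^{\epsilon/RT}e^{-\epsilon(T,P)/RT}$ returns the two displayed components; when $\epsilon=0$, $ln(Q)=-\epsilon(T,P)/(RT)$ and the identical bookkeeping gives the second pair. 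This step is purely routine.

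Next I would repeat the extent--feasibility analysis of Lemma~\ref{rates}, adapted to the $c+1$ species (including the solvent) exactly as in Lemma~\ref{henrys3}. Writing $n_{i}(t)=\nu_{i}\xi(t)+n_{i,0}$ and $x_{i}=n_{i}/n$, one gets $Q(\gamma_{12}(t))=\prod_{i=0}^{c}x_{i}^{\nu_{i}}(t)=G_{\gamma}(\xi(t))$ for an explicit rational function $G_{\gamma}$; computing $G_{\gamma}'(0)$, inverting $G_{\gamma}$ locally by the inverse function theorem, and differentiating $\xi(t)=(G_{\gamma}^{-1}\circ Q)(\gamma_{12}(t))$ shows $\xi'(0)$ to be a monotone function of the directional derivative $grad(Q)(T_{0},P_{0})\cdot\gamma_{12}'(0)$. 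Writing $\gamma_{12}'(0)=\lambda(cos(\theta),sin(\theta))$ and optimising in $(\lambda,\theta)$ then shows that, on the region $|grad(Q)|>1$, $Q>0$, the extremal reaction rate is attained when the path direction is parallel to $grad(Q)$, i.e. normal to the level curves $C_{c}$ of $Q$; the degenerate directions tangent to a $C_{c}$ are excluded by a harmless choice of parametrisation, just as in Lemma~\ref{rates}.

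It then remains to integrate the autonomous system ${dT\over dt}={\partial Q\over\partial T}$, ${dP\over dt}={\partial Q\over\partial P}$ by passing to ${dP\over dT}={\partial Q/\partial P\over\partial Q/\partial T}$. In each case the common positive factor $(P/P^{\circ})^{\epsilon/RT}e^{-\epsilon(T,P)/RT}$ (respectively $e^{-\epsilon(T,P)/RT}$) cancels, and clearing the remaining denominators by multiplying numerator and denominator by $RT^{2}P$ (respectively $RT^{2}$) produces exactly the two rational differential equations in the statement. By the result of \cite{BdiP}, the integral curves of $grad(Q)$ are precisely the implicit solutions of these equations, which is the assertion to be proved.

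The point to watch --- really a caveat rather than an obstacle --- is that, unlike the ideal--solution case of Lemma~\ref{rates} where $\epsilon$ is a function of pressure alone and the separated equation has an explicit antiderivative, here $\epsilon(T,P)$ involves both variables, so one cannot expect a closed form and must settle for the implicit equation. One should also confirm that the standing conditions making the local inversion and the steepest--direction argument valid --- namely $|grad(Q)(T_{0},P_{0})|>1$, $Q(T_{0},P_{0})>0$, and $(cos(\theta),sin(\theta))$ not tangent to the level curve of $Q$ through $(T_{0},P_{0})$ --- are exactly those already imposed in Lemma~\ref{rates}, so no new hypotheses are needed.
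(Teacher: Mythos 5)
Your proposal follows the paper's proof essentially verbatim: the paper likewise computes $grad(Q)$ by the chain and product rules from the closed form of Lemma \ref{henrys8}, appeals to the extent-maximisation argument of Lemma \ref{rates} (noting it survives unchanged for the solvent-interacting $Q$), and then forms ${dP\over dT}$ as the ratio of the gradient components, cancelling the common exponential factor and clearing denominators to reach the two stated rational differential equations. The only difference is that you spell out the steepest-direction analysis in more detail than the paper, which simply cites the method of Lemma \ref{rates}; this is harmless and, if anything, makes the dependence on the hypotheses $|grad(Q)|>1$, $Q>0$ more transparent.
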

\begin{proof}
The computation of $grad(Q)$ in both cases is a simple application of the chain and product rules. Following the method of Lemma \ref{rates}, noting the claim about maximal reaction is still valid with the same definition of $Q$, if $\epsilon\neq 0$, we compute;\\

${dP\over dT}={({\epsilon P^{\circ}\over RTP}-{{\partial \epsilon\over \partial P}(T,P)\over RT})({P\over P^{\circ}})^{\epsilon\over RT}e^{-\epsilon(T,P)\over RT}\over ({-\epsilon ln({P\over P^{\circ}})\over RT^{2}}+{\epsilon(T,P)\over RT^{2}}-{{\partial \epsilon\over \partial T}(T,P)\over RT})({P\over P^{\circ}})^{\epsilon\over RT}e^{-\epsilon(T,P)\over RT}}$\\

$={\epsilon TP^{\circ}-PT{\partial\epsilon\over \partial P}(T,P)\over (-\epsilon Pln({P\over P^{\circ}})+P\epsilon(T,P)-PT{\partial \epsilon\over \partial T}(T,P))}$\\

and, if $\epsilon=0$;\\

${dP\over dT}={{{-{\partial \epsilon\over \partial P}(T,P)\over RT}e^{-\epsilon(T,P)\over RT}}\over ({\epsilon(T,P)\over RT^{2}}-{{\partial \epsilon\over \partial T}(T,P)\over RT})e^{-\epsilon(T,P)\over RT}}$\\

$={-T{\partial \epsilon\over \partial P}(T,P)\over \epsilon(T,P)-T{\partial \epsilon\over \partial T}(T,P)}$\\

\end{proof}
\end{section}
\begin{section}{Dilute solutions with Fugacity and Interaction of the Solvent}
\label{fug}
\begin{defn}
\label{catalyzers3}

As mentioned in Definition \ref{constants}, we can consider an electrolyte as a solute in a dilute solution and define the activities $a_{i}$, $0\leq i\leq c$, by;\\

$a_{0}=\gamma_{0}x_{0}\simeq 1$\\

$a_{i}=\gamma_{i}x_{i}$ $(1\leq i\leq c)$\\

We can define the activity coefficient  $Q=a_{0}\prod_{i=1}^{c}a_{i}^{\nu_{i}}\simeq \prod_{i=1}^{c}a_{i}^{\nu_{i}}$, but we will adopt a new convention, see below.\\

We have that $\mu_{i}=\mu_{i}^{\circ}+RT ln(a_{i})$, $0\leq i\leq c$, which involves the contradiction with the definition of activity for ideal solutions. By the approximation of Henry's Law for the solutes, we have that $P_{i}=k_{i}x_{i}\gamma_{i}$, $1\leq i\leq c$, (convention (II)), see \cite{M}, so that, by the approximation of Henry's Law, phase equilibrium and the gas law with fugacity $\delta_{i}$;\\

$\mu_{i}^{(sol)}=\mu_{i}^{(g)\circ}+RT ln({\delta_{i}P_{i}\over P^{\circ}})$\\

$=\mu_{i}^{(g)\circ}+RTln({k_{i}x_{i}\gamma_{i}\over P^{\circ}})+RTln(\delta_{i}(T,P))$\\

$=\mu_{i}^{(H)\circ}+RTln(\gamma_{i}x_{i})+RTln(\delta_{i}(T,P))$ $(*)$\\

where $\mu_{i}^{(H)\circ}=\mu_{i}^{(g)\circ}+RTln({k_{i}\over P^{\circ}})$ $(\dag)$. From $(*)$, we obtain that;\\

$\mu_{i}^{\circ}=\mu_{i}^{(H)\circ}+RTln(\gamma_{i}x_{i}(T,P^{\circ}))+RTln(\delta_{i}(T,P^{\circ})$ $(\dag\dag)$\\

and, for $1\leq i\leq c$;\\

$\mu_{i}=\mu_{i}^{\circ}-RTln(\gamma_{i}x_{i}(T,P^{\circ}))-RTln(\delta_{i}(T,P^{\circ})+RTln(\gamma_{i}x_{i})+RTln(\delta_{i}(T,P))$\\

$=\mu_{i}^{\circ}+RTln(\gamma_{i}x_{i})+\kappa(T,P)$ $(\dag\dag\dag)$\\

where, from phase equilibrium and $(\dag),(\dag\dag)$;\\

$\kappa(T,P)=-RTln(\gamma_{i}x_{i}(T,P^{\circ}))+RT(ln(\delta_{i}(T,P))-ln(\delta_{i}(T,P^{\circ}))$\\

$=\mu_{i}^{(H)\circ}-\mu_{i}^{\circ}+RTln(\delta_{i}(T,P^{\circ}))+RTln({\delta_{i}(T,P)\over \delta_{i}(T,P^{\circ})})$\\

$=\mu_{i}^{(g)\circ}+RTln({k_{i}\over P^{\circ}})-\mu_{i}^{(sol)\circ}+RTln(\delta_{i}(T,P))$\\

$=RTln({k_{i}\over P^{\circ}})+RTln(\delta_{i}(T,P))$\\

$=RTln({k_{i}\delta_{i}(T,P)\over P^{\circ}})$\\

We can measure the correction in Raoult's law for the solvent by $P_{0}=\gamma_{0}P_{0}^{*}x_{0}$, (convention I), see \cite{M}. Then, using the gas law with fugacity $\delta_{0}$, and the correction $\sigma$ for the difference of the chemical potential between a gas in a non ideal mixture and on its own, we have, at equilibrium, that;\\

$\mu_{0}^{(sol)}=\mu_{0}^{\circ(g)}+RTln({\delta_{0}P_{0}\over P^{\circ}})$\\

$=\mu_{0}^{\circ(g)}+RTln({\delta_{0}\gamma_{0}P_{0}^{*}x_{0}\over P^{\circ}})$\\

$\mu_{0}^{\circ(g)}+RTln(\gamma_{0}x_{0})+RTln({\delta_{0}P_{0}^{*}\over P^{\circ}})$\\

$=\mu_{0}(T,P_{0}^{*})+RTln(\gamma_{0}x_{0})$\\

$=\mu_{0}^{*}(T,P_{0}^{*})+\sigma(T,P_{0}^{*})+RTln(\gamma_{0}x_{0})$\\

$=\mu_{0}^{*}(T,P)+RTln(\gamma_{0}x_{0})+\theta(T,P)$ $(**)$\\

where $\theta(T,P)=\mu_{0}^{*}(T,P_{0}^{*})-\mu_{0}^{*}(T,P)+\sigma(T,P_{0}^{*})$\\

so that;\\

$RTln(\gamma_(0)x_{0})=RTln({\delta_{0}P_{0}\over P^{\circ}})-RTln({\delta_{0}P_{0}^{*}\over P^{\circ}})$\\

and;\\

$\mu_{0}=\mu_{0}^{*}+RTln(\gamma_{0}x_{0})+\theta$\\

$=\mu_{0}^{*}+RTln({\delta_{0}P_{0}\over P^{\circ}})-RTln({\delta_{0}P_{0}^{*}\over P^{\circ}})+\theta$ $(***)'$\\

We also have, using the phase rule for the solvent in equilibrium, that;\\

$\mu_{0}^{(g)}=\mu_{0}^{\circ(g)}+RTln({\delta_{0}P_{0}\over P^{\circ}})$\\

$\mu_{0}^{(sol)}=\mu_{0}^{\circ(sol)}+RTln({\delta_{0}P_{0}\over P^{\circ}})$ $(*)$\\

Combining $(*),(***)'$, we obtain that;\\

$\mu_{0}^{*}=\mu_{0}-RTln({\delta_{0}P_{0}\over P^{\circ}})+RTln({\delta_{0}P_{0}^{*}\over P^{\circ}})-\theta$\\

$=(\mu_{0}^{\circ}+RTln({\delta_{0}P_{0}\over P^{\circ}}))-RTln({\delta_{0}P_{0}\over P^{\circ}})+RTln({\delta_{0}P_{0}^{*}\over P^{\circ}})-\theta$\\

$=\mu_{0}^{\circ}+RTln({\delta_{0}P_{0}^{*}\over P^{\circ}})-\theta$ $(\dag)'$\\

Letting $\delta_{0}P_{0}^{*}=P^{\circ}$, we obtain that $\mu_{0}^{*}(T,P')=\mu_{0}^{\circ}-\theta$, for the corresponding $P'$, $(****)'$\\

From $(**)$, $(****)'$, we obtain that;\\

$\mu_{0}=\mu_{0}^{*}+RTln(\gamma_{0}x_{0})+\theta$\\

$=\mu_{0}^{*}(T,P')+\delta+RTln(\gamma_{0}x_{0})+\theta$\\

$=\mu_{0}^{\circ}-\theta+\delta+RTln(\gamma_{0}x_{0})+\theta$\\

$=\mu_{0}^{\circ}+RTln(\gamma_{0}x_{0})+\delta$\\

where $\delta=\mu_{0}^{*}(T,P)-\mu_{0}^{*}(T,P')\simeq 0$\\

Using the same calculation as before, we have that;\\

$\mu_{0}=\mu_{0}^{\circ}+RTln(\gamma_{0}x_{0})+\lambda(P)$\\

We can define a new activity coefficient by $Z=\prod_{i=0}^{c}b_{i}^{\nu_{i}}$, where;\\

$b_{0}=x_{0}$\\

$b_{i}=x_{i}$, $1\leq i\leq c$\\

From the above, we have that;\\

$\mu_{0}(T,P)=\mu_{0}^{\circ}+RTln(x_{0})+RTln(\gamma_{0}(T,P))+\delta_{0}(P)$\\

$=\mu_{0}^{\circ}+RTln(b_{0})+\phi_{0}(T,P)$\\

and, for $1\leq i\leq c$;\\

$\mu_{i}(T,P)=\mu_{i}^{\circ}+RTln(x_{i})+RTln(\gamma_{i}(T,P))+\kappa_{i}(T)$\\

$=\mu_{i}^{\circ}+RTln(b_{i})+\psi_{i}(T,P)$\\

\end{defn}

\begin{lemma}
\label{fugacity3}
In the case of dilute solutions, with interaction of the solvent, a feasible path $\gamma$ is a dynamic equilibrium path iff $pr(\gamma_{12})\subset C_{f}$, for some $f\in\mathcal{R}_{>0}$ iff ${dZ\over dt}=0$.
\end{lemma}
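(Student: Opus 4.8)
The plan is to mimic the proof of Lemma \ref{henrys3}, which is itself the dilute-solution-with-interaction version of Lemma \ref{feasible} and of the differentiation argument in Lemma \ref{implies}; the only new feature is the change of convention from $Q$ to $Z$. First I record what $Z$ becomes along a feasible path: since the solvent now interacts, the feasibility relations $n_i'/\nu_i=n_j'/\nu_j$ hold for all $0\le i<j\le c$, and by the definition of $Z$ in Definition \ref{catalyzers3} together with $b_i=x_i$ we have $Z(pr_{12}(\gamma(t)))=\prod_{i=0}^{c}x_i^{\nu_i}(t)$, where $x_i=n_i/n$ and $n=\sum_{i=0}^{c}n_i>0$. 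I would then prove the chain of equivalences as a cycle: $pr(\gamma_{12})\subset C_f$ for some $f\in\mathcal R_{>0}$ $\Rightarrow$ $\gamma$ is a dynamic equilibrium path $\Rightarrow$ $dZ/dt=0$ $\Rightarrow$ $pr(\gamma_{12})\subset C_f$ for some $f\in\mathcal R_{>0}$.

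For the first implication, suppose $pr(\gamma_{12})\subset C_f$ with $f>0$. Then $\prod_{i=0}^{c}x_i^{\nu_i}=Z=f$ is constant along the path, so differentiating in $t$ the constant $f$ drops out and the resulting identity is formally the one treated in the proof of Lemma \ref{implies}, now with the product index running from $0$ to $c$. Fixing a pivot index $j_0\in\{0,\dots,c\}$ with $\sum_{i\ne j_0}\nu_i\ne 0$ (available, after the harmless rescaling of the $\nu_i$ used in that proof, unless all the $\nu_j$ coincide, which is the excluded degenerate case), the linkage relations $n_i=(\nu_i/\nu_{j_0})n_{j_0}+d_i$ convert the differentiated identity into a nontrivial polynomial relation in $n_{j_0}$; by continuity and discreteness of its roots $n_{j_0}$, and hence every $n_i$, is constant, so $n_i'=0$ for $0\le i\le c$ and $\gamma$ is a dynamic equilibrium path. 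This is the substantive step. The main obstacle is to verify that admitting the solvent index $0$ with $\nu_0\ne 0$ does not disrupt the case split of Lemma \ref{implies} (the case $\sum\nu_i\ne 0$ versus $\sum\nu_i=0$, with the limiting and root-counting argument in the latter); it does not, because that argument only uses the existence of a suitable pivot and the freedom to rescale the stoichiometric coefficients, both still available with $0$ included.

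For the remaining two implications, which are routine, suppose $\gamma$ is a dynamic equilibrium path. Then $n_i'=0$ for $0\le i\le c$, so $n=\sum_{i=0}^{c}n_i$ is constant, each $x_i=n_i/n$ is constant with $x_i'=0$, and therefore $\bigl(\prod_{i=0}^{c}x_i^{\nu_i}\bigr)'=0$, i.e.\ $dZ/dt=0$; moreover $Z=\prod_{i=0}^{c}x_i^{\nu_i}$ is then constant, and positive since each $x_i>0$, so $pr_{12}(\gamma)\subset C_f$ with $f=Z>0$. Conversely, if $dZ/dt=0$ then $Z$ is constant along $pr_{12}(\gamma)$, with value $f>0$ by positivity of the $x_i$, hence $pr_{12}(\gamma)\subset C_f$, which closes the cycle. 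As in Lemmas \ref{feasible} and \ref{henrys3}, essentially all the content is the differentiation argument inherited from Lemma \ref{implies}, and once its compatibility with a nonzero $\nu_0$ is noted the proof is complete.
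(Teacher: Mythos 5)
Your proposal is correct and follows essentially the same route as the paper: the paper's own proof simply writes $Z=\prod_{i=0}^{c}x_{i}^{\nu_{i}}=f$ and says to copy the proof of Lemma \ref{henrys3}, which in turn reduces to the differentiation and polynomial-root argument of Lemma \ref{implies} (with the index now running from $0$ to $c$) together with the converse directions from Lemma \ref{feasible}. Your write-up just makes explicit the steps the paper leaves implicit, including the check that a nonzero $\nu_{0}$ does not disturb the pivot/case-split argument.
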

\begin{proof}
We have that;\\

$\prod_{i=0}^{c}a_{i}^{\nu_{i}}=\prod_{i=0}^{c}x_{i}^{\nu_{i}}$\\

$=Z=f$\\

Now copy the proof of Lemma \ref{henrys3}.

\end{proof}

We reformulate Lemmas \ref{gibbs2} to \ref{rates} in this context, assuming the approximation to Henry's law for the solutes and the approximation to Raoult's law for the solvent;\\

\begin{lemma}
\label{fugacity5}
In the dilute solution case, with interaction of the solvent, for the energy function $G$ involving  $c+1$ uncharged species;\\

$({\partial G\over \partial \xi})_{T,P}=\Delta G^{\circ}+RTln(Z)+\epsilon(T,P)$\\

where $\epsilon(T,P)=\nu_{0}\phi_{0}(T,P)+\sum_{i=1}^{c}\nu_{i}\psi_{i}(T,P)$, and $\phi_{0}$ is the error term for the solvent in Definition \ref{catalyzers3}, and $\psi_{i}$, for $1\leq i\leq c$ are the error terms for the solutes in Definition \ref{catalyzers3}.
\end{lemma}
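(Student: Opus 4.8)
The plan is to mimic the proof of Lemma \ref{gibbs2} (equivalently Lemma \ref{henrys5}), but now with the solvent participating in the reaction and with the fugacity-corrected chemical potentials recorded at the close of Definition \ref{catalyzers3}. First I would invoke the first two identities of Lemma \ref{equivalences}, in the form extended to $c+1$ species as in Lemma \ref{proofs2}: since there is interaction of the solvent, $dn_{0}=\nu_{0}\,d\xi$, so Lemma \ref{differential} at fixed $(T,P)$ gives $dG=\sum_{i=0}^{c}\mu_{i}\,dn_{i}=(\sum_{i=0}^{c}\nu_{i}\mu_{i})\,d\xi$, hence $({\partial G\over \partial \xi})_{T,P}=\sum_{i=0}^{c}\nu_{i}\mu_{i}$; integrating the $P=P^{\circ}$ version over $\xi\in[0,1]$, and noting that $({\partial G\over \partial \xi})_{T,P^{\circ}}$ does not vary with $\xi$, gives $\Delta G^{\circ}=\sum_{i=0}^{c}\nu_{i}\mu_{i}^{\circ}$.

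Next I would substitute the relations established at the end of Definition \ref{catalyzers3}, namely $\mu_{0}(T,P)=\mu_{0}^{\circ}+RT\ln(b_{0})+\phi_{0}(T,P)$ with $b_{0}=x_{0}$, and $\mu_{i}(T,P)=\mu_{i}^{\circ}+RT\ln(b_{i})+\psi_{i}(T,P)$ with $b_{i}=x_{i}$ for $1\leq i\leq c$. Taking the difference of the two identities from the previous step, one obtains $({\partial G\over \partial \xi})_{T,P}-\Delta G^{\circ}=\sum_{i=0}^{c}\nu_{i}(\mu_{i}-\mu_{i}^{\circ})=RT\sum_{i=0}^{c}\nu_{i}\ln(b_{i})+\nu_{0}\phi_{0}(T,P)+\sum_{i=1}^{c}\nu_{i}\psi_{i}(T,P)$. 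Recognising $\sum_{i=0}^{c}\nu_{i}\ln(b_{i})=\ln(\prod_{i=0}^{c}b_{i}^{\nu_{i}})=\ln(Z)$ and setting $\epsilon(T,P)=\nu_{0}\phi_{0}(T,P)+\sum_{i=1}^{c}\nu_{i}\psi_{i}(T,P)$ then yields the claimed formula $({\partial G\over \partial \xi})_{T,P}=\Delta G^{\circ}+RT\ln(Z)+\epsilon(T,P)$.

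Since every ingredient is already in place, I do not expect a genuine obstacle; the only points needing care are (i) using the full sum over $0\leq i\leq c$ rather than the restricted one, which is legitimate here precisely because the solvent interacts, so $dn_{0}\neq 0$; and (ii) checking that $\phi_{0}$ and the $\psi_{i}$ are exactly the error terms carried through Definition \ref{catalyzers3} (so $\phi_{0}(T,P)=RT\ln(\gamma_{0}(T,P))+\delta_{0}(P)$, $\psi_{i}(T,P)=RT\ln(\gamma_{i}(T,P))+\kappa_{i}(T)$), so that their arguments and signs match. It is worth remarking at the end that under the approximations $\phi_{0}\simeq 0$, $\psi_{i}\simeq 0$ one recovers the idealised identity $({\partial G\over \partial \xi})_{T,P}\simeq \Delta G^{\circ}+RT\ln(Z)$, consistently with the analogous statements in Lemmas \ref{gibbs} and \ref{henrys5}.
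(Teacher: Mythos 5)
Your proposal is correct and is precisely the argument the paper intends: the paper's own proof is just the one-line remark that the result is clear from Lemma \ref{gibbs2}, and you have filled in exactly that computation, extending the two identities of Lemma \ref{equivalences} to the sum over $0\leq i\leq c$ (legitimate since $dn_{0}=\nu_{0}\,d\xi$ when the solvent interacts) and substituting the potentials $\mu_{i}=\mu_{i}^{\circ}+RT\ln(b_{i})+\psi_{i}$, $\mu_{0}=\mu_{0}^{\circ}+RT\ln(b_{0})+\phi_{0}$ from Definition \ref{catalyzers3} to collect $RT\ln(Z)+\epsilon(T,P)$. No gaps; your closing remarks on the identification of $\phi_{0}$ and $\psi_{i}$ match the paper's notation.
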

\begin{proof}
The proof is clear from Lemma \ref{gibbs2}.

\end{proof}
\begin{lemma}
\label{fugacity6}
For a dilute solution, with interaction of the solvent, we have, using the definition of $\epsilon(T,P)$ in Lemma \ref{fugacity5}, the error term $\phi_{0}(T,P)$ and the error terms $\psi_{i}(T,P)$, $1\leq i\leq c$ in Definition \ref{catalyzers3}, that the same results as Lemma \ref{equivalences2} hold, replacing $\epsilon(P)$ with $\epsilon(T,P)$ and $\delta$ with $\epsilon(T,P^{\circ})=\nu_{0}\phi_{0}(T,P^{\circ})+\sum_{i=1}^{c}\nu_{i}\psi_{i}(T,P^{\circ})$\\

\end{lemma}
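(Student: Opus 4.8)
The plan is to transcribe the proof of Lemma~\ref{equivalences2} essentially verbatim, using Lemma~\ref{fugacity5} in place of Lemma~\ref{gibbs2} as the basic input and carrying throughout the replacements of $Q$ by $Z$, of $\epsilon(P)$ by $\epsilon(T,P)$, and of $\delta$ by $\epsilon(T,P^{\circ})$, exactly as was done for the Henry's-law case in Lemma~\ref{henrys6}. First I would establish the first three claims as in Lemma~\ref{equivalences2}: from Lemma~\ref{differential}, fixing $T$ and $P$ and using $dn_{i}=\nu_{i}d\xi$ for $0\le i\le c$ (the solvent is now consumed or produced, so component $0$ is included in the sum, consistently with $Z=\prod_{i=0}^{c}b_{i}^{\nu_{i}}$), one obtains $({\partial G\over\partial\xi})_{T,P}=\sum_{i=0}^{c}\nu_{i}\mu_{i}$; integrating $({\partial G\over\partial\xi})_{T,P^{\circ}}$ over $\xi\in[0,1]$ and using that it does not vary with $\xi$ gives $\Delta G^{\circ}=\sum_{i=0}^{c}\nu_{i}\mu_{i}^{\circ}$; and the equilibrium equivalences at $(T,P)$ and $(T,P^{\circ})$ follow since $({\partial G\over\partial\xi})_{T,P}$ is $\xi$-independent.

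Next I would treat the claims that involve the error term. Setting $({\partial G\over\partial\xi})_{T,P}=0$ in Lemma~\ref{fugacity5} yields $RT\ln(Z)=-\epsilon(T,P)$, so $Z(T,P)=e^{-\epsilon(T,P)/RT}\simeq 1$, since $\epsilon(T,P)=\nu_{0}\phi_{0}(T,P)+\sum_{i=1}^{c}\nu_{i}\psi_{i}(T,P)$ with every $\phi_{0},\psi_{i}\simeq 0$; and $E=E^{\circ}$ follows from the Nernst-type identity in this setting (the analog of Lemma~\ref{nernst2}, or simply the restriction to the uncharged species). The converse direction, the criterion that chemical equilibrium exists at $(T,P)$ iff $Z(T,P)=e^{(-\Delta G^{\circ}-\epsilon(T,P))/RT}$, and the final claim $Z(T,P^{\circ})=e^{-\epsilon(T,P^{\circ})/RT}\simeq 1$ with $\epsilon(T,P^{\circ})=\nu_{0}\phi_{0}(T,P^{\circ})+\sum_{i=1}^{c}\nu_{i}\psi_{i}(T,P^{\circ})$ all follow by the same rearrangements as in Lemma~\ref{equivalences2}, now using $\mu_{i}=\mu_{i}^{\circ}+RT\ln(b_{i})+\psi_{i}(T,P)$ for $1\le i\le c$ and $\mu_{0}=\mu_{0}^{\circ}+RT\ln(b_{0})+\phi_{0}(T,P)$ from Definition~\ref{catalyzers3}.

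I do not expect a genuine obstacle: the argument is a mechanical substitution, and Lemma~\ref{fugacity5} already isolates the only place the new error terms enter. The one point that deserves a small check --- and the closest thing to a subtlety --- is that $\epsilon$ now depends on $P$ as well as on $T$; but the proof of Lemma~\ref{equivalences2} never differentiates $\epsilon$ with respect to $T$ (that occurs only in the downstream analogs of Lemmas~\ref{van't Hoff,Gibbs-Helmholtz2} and \ref{eqlines2}, treated separately), so the additional dependence is inert here and the bookkeeping replacement of $\delta$ by $\epsilon(T,P^{\circ})$ is the correct one. It then only remains to record that $\epsilon(T,P^{\circ})\simeq 0$, which is immediate from $\phi_{0}\simeq 0$ and $\psi_{i}\simeq 0$.
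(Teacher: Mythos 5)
Your proposal is correct and follows exactly the route the paper intends: the paper's own proof of this lemma is simply the one-line remark that it is ``clear from the proof of Lemma \ref{equivalences2},'' and what you have written is precisely the detailed transcription of that substitution argument, with $Z$ in place of $Q$, $\epsilon(T,P)$ in place of $\epsilon(P)$, $\delta$ replaced by $\epsilon(T,P^{\circ})$, and Lemma \ref{fugacity5} supplying the modified Gibbs relation. Your observation that the additional $P$-dependence of $\epsilon$ is inert at this stage (no $T$-differentiation of $\epsilon$ occurs until the van't Hoff analogue) is the right sanity check and matches the paper's implicit reasoning.
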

\begin{proof}
The proof is clear from the proof of Lemma \ref{equivalences2}.

\end{proof}
\begin{lemma}
\label{fugacity7}
For a dilute solution, with interaction of the solvent, we have the same result as Lemma \ref{van't Hoff,Gibbs-Helmholtz2} hold, replacing $\epsilon(P(T))$ by $\epsilon(T,P(T))$ along the quasi-chemical equilibrium lines.

\end{lemma}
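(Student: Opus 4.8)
The plan is to run the proof of Lemma \ref{van't Hoff,Gibbs-Helmholtz2} essentially verbatim, the only change being that the error term is now the two-variable function $\epsilon(T,P)$ supplied by Lemma \ref{fugacity5} rather than the pressure-only error $\epsilon(P)$. First I would recall from Lemma \ref{equivalences} that $\Delta G^\circ=\sum_{i=0}^{c}\nu_i\mu_i^\circ$; since this expression carries no error term at all, differentiating in $T$, applying Euler reciprocity to get $(\partial\mu_i^\circ/\partial T)_{P,n}=-\overline{S}^\circ_{m,i}$, and using $H=G+TS$ reproduce
\[
\frac{d}{dT}\!\left(\frac{\Delta G^\circ}{T}\right)=-\frac{\Delta H^\circ}{T^2}
\]
with no modification. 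Hence the Gibbs--Helmholtz half of the statement, together with the explicit linear formula for $\Delta G^\circ(T_1)$ in terms of $\Delta G^\circ(T_2)$ and $\Delta H^\circ$ when $\Delta H^\circ$ is temperature independent, holds unchanged.

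For the $Z$-half I would invoke Lemma \ref{fugacity6} (the analogue of Lemma \ref{equivalences2}): along a chemical equilibrium path $Z=e^{(-\Delta G^\circ-\epsilon(T,P))/RT}$, so $\ln Z=\frac{-\Delta G^\circ}{RT}-\frac{\epsilon(T,P)}{RT}$. Differentiating in $T$ \emph{along the quasi-chemical equilibrium line} $P=P(T)$ and using the identity above,
\[
\frac{d\ln Z}{dT}=\frac{\Delta H^\circ}{RT^2}-\frac{d}{dT}\!\left(\frac{\epsilon(T,P(T))}{RT}\right),
\]
where the total derivative of $\epsilon(T,P(T))$ now picks up both $\partial\epsilon/\partial T$ and $(\partial\epsilon/\partial P)\,P'(T)$. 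Integrating between $T_1$ and $T_2$ and applying the fundamental theorem of calculus to the second term collapses it to the boundary values, yielding
\[
\ln\!\left(\frac{Z(T_2)}{Z(T_1)}\right)=\frac{1}{R}\int_{T_1}^{T_2}\frac{\Delta H^\circ}{T^2}\,dT+\frac{1}{R}\left(\frac{\epsilon(T_1,P(T_1))}{T_1}-\frac{\epsilon(T_2,P(T_2))}{T_2}\right),
\]
and the temperature-independent specialisation follows by performing the integral. The component statements $(\dag)$--$(\dag\dag\dag\dag)$ and the formula for $c$ are then obtained exactly as in Lemma \ref{van't Hoff,Gibbs-Helmholtz2}: using $Z(T_1,P^\circ)=e^{-\delta/RT_1}$ with $\delta=\epsilon(T,P^\circ)$ from Lemma \ref{fugacity6}, writing $\ln Z(T)=\frac{c-\Delta G^\circ(T)-\epsilon(T,P(T))}{RT}$ along $D_c$, differentiating, integrating, and cancelling the error terms against each other. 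This cancellation works for the same reason as in the ideal case, because $\epsilon(T,P)$ enters only through the combination $\epsilon/RT$ evaluated at the two endpoints $(T_1,P^\circ)$ and $(T_2,P)$.

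There is no genuine obstacle here; the work is purely bookkeeping. The one point that needs care is that ``along the quasi-chemical equilibrium line'' must be taken seriously, so that $\epsilon(T,P)$ is treated as the composite $\epsilon(T,P(T))$ when differentiating in $T$ and the mixed contribution $(\partial\epsilon/\partial P)\,P'(T)$ is absorbed by the fundamental theorem of calculus rather than dropped. I would also note, as in the proof of Lemma \ref{eqlines2}, that the internal-energy computation fixing the $T$-dependence of the $\mu_i$ ranges over all $c+1$ species including the solvent, whereas the stoichiometric sum $\sum\nu_i\mu_i^\circ$ defining $\Delta G^\circ$ runs only over the reacting species; this distinction does not affect the Gibbs--Helmholtz identity but should be recorded for consistency with Lemma \ref{fugacity5}.
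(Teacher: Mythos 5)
Your proposal is correct and follows essentially the same route as the paper, which simply observes that the proof of Lemma \ref{van't Hoff,Gibbs-Helmholtz2} carries over verbatim under the substitution $\epsilon(P)\mapsto\epsilon(T,P)$; your write-up just makes explicit the bookkeeping (the unchanged Gibbs--Helmholtz identity, the composite $\epsilon(T,P(T))$ along the path, and the collapse of its integrated total derivative to boundary values) that the paper leaves implicit.
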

\begin{proof}
The proof is again clear from the proof of Lemma \ref{van't Hoff,Gibbs-Helmholtz2}

\end{proof}
\begin{lemma}
\label{fugacity8}
For a dilute solution, with interaction of the solvent, we have the same results as Lemma \ref{eqlines2} hold, replacing $\epsilon(P')$ by $\epsilon(T_{1},P')$. In particularly, if $\epsilon\neq 0$, we have that;\\

$Z(T,P)=e^{\epsilon ln({P\over P^{\circ}})-\epsilon(T,P)\over RT}$\\

and, if $\epsilon=0$, we have that;\\

$Z(T,P)=e^{-\epsilon(T,P)\over RT}$\\

\end{lemma}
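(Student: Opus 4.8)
The plan is to run the argument of Lemma~\ref{eqlines2} essentially verbatim, tracking where the scalar error $\epsilon(P')$ gets promoted to the genuine two-variable function $\epsilon(T,P')=\nu_{0}\phi_{0}(T,P')+\sum_{i=1}^{c}\nu_{i}\psi_{i}(T,P')$ of Lemma~\ref{fugacity5}. The first point is that the thermodynamic determination of the \emph{shape} of $({dG\over d\xi})_{T,P}$ as a function of $(T,P)$ is insensitive to how the activities are modelled: writing $({dG\over d\xi})_{T,P}=\sum_{i=0}^{c}\nu_{i}\mu_{i}$ as a sum over all $c+1$ reacting species (the solvent now interacts), using $({\partial\mu_{i}\over\partial T})_{P,n}=-\overline{S}_{m,i}$ with the same internal-energy model $U=\sum_{i=0}^{c}({3\over 2}N_{A}n_{i}kT-N_{A}n_{i}m_{i}\rho_{i})$ over all $c+1$ substances, and the same molar-volume computation of $({\partial ({dG\over d\xi})_{T,P}\over\partial P})_{T}$, one gets $P({\partial ({dG\over d\xi})_{T,P}\over\partial P})_{T}=G(T)$ exactly as before, forcing $G(T)=\epsilon$ constant and $({dG\over d\xi})_{T,P}=\lambda+\epsilon ln(P)+\beta T+\sigma ln(T)$ with $\{\lambda,\beta,\epsilon,\sigma\}\subset\mathcal{R}$.

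Next I would combine this with Lemma~\ref{fugacity5}, $({dG\over d\xi})_{T,P}=\Delta G^{\circ}+RT\,ln(Z)+\epsilon(T,P)$, and with the part of Lemma~\ref{fugacity6} that reproduces Lemma~\ref{equivalences2}, namely $Z(T,P^{\circ})=e^{-\epsilon(T,P^{\circ})\over RT}$; together these give $({dG\over d\xi})_{T,P^{\circ}}=\Delta G^{\circ}(T)$, the common hook for both cases. If $\epsilon=0$, then $({dG\over d\xi})_{T,P}$ is independent of $P$, the components $D_{c}$ are the vertical lines $T=T_{1}$, the constant $c$ along $D_{c}$ equals $({dG\over d\xi})_{T_{1},P^{\circ}}=\Delta G^{\circ}(T_{1})$, and then for every $P>0$ the relation $\Delta G^{\circ}(T_{1})=\Delta G^{\circ}(T_{1})+RT_{1}\,ln(Z(T_{1},P))+\epsilon(T_{1},P)$ gives $Z(T_{1},P)=e^{-\epsilon(T_{1},P)\over RT_{1}}$, as claimed.

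If $\epsilon\neq 0$, I would, as in Lemma~\ref{eqlines2}, solve $\lambda+\epsilon ln(P)+\beta T+\sigma ln(T)=c$ for a suitable branch $P(T)$ to obtain a component $D_{c}$ meeting $P=P^{\circ}$ at some $(T_{1},P^{\circ})$; the first claim, via Lemma~\ref{fugacity7}, makes $\Delta G^{\circ}$ affine in $T$, and comparing this with $\Delta G^{\circ}(T_{2})=({dG\over d\xi})_{T_{2},P^{\circ}}=\lambda+\epsilon ln(P^{\circ})+\beta T_{2}+\sigma ln(T_{2})$ forces $\sigma=0$, $\lambda+\epsilon ln(P^{\circ})=\Delta H^{\circ}$ and $\Delta G^{\circ}(T_{1})=\beta T_{1}+\Delta H^{\circ}$. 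Feeding these back into Lemma~\ref{fugacity5} gives $Z(T_{1},P')=e^{(({dG\over d\xi})_{T_{1},P'}-\Delta G^{\circ}(T_{1})-\epsilon(T_{1},P'))\over RT_{1}}=e^{\epsilon ln({P'\over P^{\circ}})-\epsilon(T_{1},P')\over RT_{1}}$, which is the asserted formula; the dynamic- and quasi-chemical-equilibrium loci then fall out of $Z=c$ and $({dG\over d\xi})_{T,P}=c$ by rearrangement exactly as in Lemma~\ref{eqlines2}.

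The step I expect to need the most care is bookkeeping rather than any one computation: one must be certain that the true two-variable dependence of $\epsilon(T,P)$ never contaminates the \emph{thermodynamic} $T$- and $P$-derivatives of $({dG\over d\xi})_{T,P}$ (those are statements about the actual Gibbs function and its entropy, in which $\phi_{0},\psi_{i}$ do not appear), while being carried intact through every use of Lemma~\ref{fugacity5}; and one must keep the $c+1$ versus $c$ substance count consistent throughout — the reacting sum $\sum_{i=0}^{c}\nu_{i}$, the $dU$ sum over all $c+1$ substances, and $\epsilon(T,P^{\circ})$ now playing the role previously played by the constant $\delta$. Once those are fixed, the remainder is the bookkeeping already carried out in Lemma~\ref{eqlines2}.
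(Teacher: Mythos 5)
Your proposal is correct and follows exactly the route the paper intends: the paper's own proof of this lemma is the one-line deferral ``the proof is again clear from the proof of Lemma \ref{eqlines2}'', and what you have written is precisely that argument carried out with $\epsilon(P')$ promoted to $\epsilon(T,P')$, $Q$ replaced by $Z$, and the $c+1$ substance count tracked through the entropy and molar-volume computations. The two derivations of the formula for $Z$ (from $c=\Delta G^{\circ}(T_{1})$ in the $\epsilon=0$ case, and from the coefficient comparison $\sigma=0$, $\lambda+\epsilon\ln(P^{\circ})=\Delta H^{\circ}$ in the $\epsilon\neq 0$ case) match the paper's Lemma \ref{eqlines2} step for step.
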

\begin{proof}
The proof is again clear from the proof of Lemma \ref{eqlines2}.

\end{proof}
\begin{lemma}
\label{fugacity9}
For a dilute solution, with interaction of the solvent, we have that, if $\epsilon\neq 0$;\\

$grad(Z)(T,P)=(({-\epsilon ln({P\over P^{\circ}})\over RT^{2}}+{\epsilon(T,P)\over RT^{2}}-{{\partial \epsilon\over \partial T}(T,P)\over RT})({P\over P^{\circ}})^{\epsilon\over RT}e^{-\epsilon(T,P)\over RT},$\\

$({\epsilon P^{\circ}\over RTP}-{{\partial \epsilon\over \partial P}(T,P)\over RT})({P\over P^{\circ}})^{\epsilon\over RT}e^{-\epsilon(T,P)\over RT})$\\

and, if $\epsilon=0$;\\

$grad(Z)(T,P)=(({\epsilon(T,P)\over RT^{2}}-{{\partial \epsilon\over \partial T}(T,P)\over RT})e^{-\epsilon(T,P)\over RT},{-{\partial \epsilon\over \partial P}(T,P)\over RT}e^{-\epsilon(T,P)\over RT})$\\

The paths of maximal reaction in the region $|grad Z(T,P)|>1$, $Z(T,P)>0$, are given by implicit solutions to the differential equations;\\

${dP\over dT}={\epsilon TP^{\circ}-PT{\partial\epsilon\over \partial P}(T,P)\over (-\epsilon Pln({P\over P^{\circ}})+P\epsilon(T,P)-PT{\partial \epsilon\over \partial T}(T,P))}$\\

${dP\over dT}={-T{\partial \epsilon\over \partial P}(T,P)\over \epsilon(T,P)-T{\partial \epsilon\over \partial T}(T,P)}$\\

respectively.\\

\end{lemma}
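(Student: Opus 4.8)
The plan is to mirror the proof of Lemma~\ref{henrys9} almost verbatim, with $Z$ in place of $Q$. First I would recall from Lemma~\ref{fugacity8} the explicit formulas $Z(T,P)=e^{(\epsilon\ln(P/P^{\circ})-\epsilon(T,P))/RT}$ when $\epsilon\neq 0$ and $Z(T,P)=e^{-\epsilon(T,P)/RT}$ when $\epsilon=0$, where $\epsilon(T,P)=\nu_0\phi_0(T,P)+\sum_{i=1}^{c}\nu_i\psi_i(T,P)$ is the error term of Lemma~\ref{fugacity5}. Writing $Z=e^{f}$, we have $grad(Z)=(e^{f}\,\partial_T f,\ e^{f}\,\partial_P f)$, and the two displayed gradient formulas then follow by the chain rule applied to $f=(\epsilon\ln(P/P^{\circ})-\epsilon(T,P))/RT$ (resp.\ $f=-\epsilon(T,P)/RT$), using $\partial_T(g/RT)=g'/RT-g/RT^{2}$ and $\partial_P\ln(P/P^{\circ})=1/P$, and leaving the partials $\partial_T\epsilon$, $\partial_P\epsilon$ of the error term as they stand.

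Next I would re-run the optimisation argument of Lemma~\ref{rates}. With $Z=\prod_{i=0}^{c}x_i^{\nu_i}$ now including the solvent, the extent relations $n_i=\nu_i\xi+n_{i,0}$ hold for $0\le i\le c$, so along a feasible path $Z(\gamma_{12}(t))=G_\gamma(\xi(t))$ for a rational function $G_\gamma$ of the same shape as in Lemma~\ref{rates}, the product and the exponent count now running over $i=0,\dots,c$. One checks $G_\gamma'(0)\neq 0$ after a suitable reparametrisation of $\gamma$ (not altering the direction of $\gamma_{12}'(0)$), inverts $G_\gamma$ locally by the inverse function theorem, and obtains $\xi'(0)=h(\lambda,\theta)$ of the same rational form in $\lambda$ and in $r(\theta)=grad(Z)(T_0,P_0)\cdot(\cos\theta,\sin\theta)$. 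The analysis of $\partial h/\partial\lambda$ and $\partial h/\partial\theta$ is identical, so the extremum over directions $(\cos\theta,\sin\theta)$ is attained precisely when this unit vector is parallel to $grad(Z)(T,P)$; hence, on the region $|grad(Z)|>1$, $Z>0$, the paths of maximal reaction are the integral curves of $grad(Z)$.

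Finally, an integral curve of $grad(Z)$ satisfies $dP/dT=(\partial Z/\partial P)/(\partial Z/\partial T)$; substituting the two gradient formulas and cancelling the common factor $(P/P^{\circ})^{\epsilon/RT}e^{-\epsilon(T,P)/RT}$ (resp.\ $e^{-\epsilon(T,P)/RT}$) together with $1/RT$ yields the two stated differential equations, as in Lemma~\ref{rates} (cf.\ \cite{BdiP}). The only point that needs a word of care is the transfer of the maximal-reaction argument of Lemma~\ref{rates}: one must observe that the solvent factor $x_0^{\nu_0}$ enters $G_\gamma$ and $G_\gamma'(0)$ exactly as the solute factors do — this is where the hypothesis of \emph{interaction} of the solvent (so that $\nu_0$ and $n_{0,0}$ appear in the extent relation) is used — after which everything is purely formal. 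I do not expect any genuine obstacle beyond this bookkeeping and the routine differentiations.
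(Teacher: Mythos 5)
Your proposal follows exactly the paper's route: the paper's own proof of Lemma \ref{fugacity9} is a one-line reduction to Lemma \ref{henrys9} (hence to the gradient computation and the maximal-reaction argument of Lemma \ref{rates}) with $Q$ replaced by $Z$, which is precisely the argument you spell out, including the transfer of the $G_{\gamma}$ optimisation and the ratio $dP/dT=(\partial Z/\partial P)/(\partial Z/\partial T)$. Your version is simply more explicit about the bookkeeping for the solvent index $i=0$; no substantive difference.
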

\begin{proof}
The proof is the same as Lemma \ref{henrys9}, replacing $Q$ by $Z$, noting that $Z$ is defined the same way in terms of activities.\\

\end{proof}
\end{section}
\begin{section}{Electrochemistry with Error Terms, Fugacity and Interaction of the Solvent}
\label{electrochemistry}
Using the new error term $\epsilon(T,P)$ and the activity coefficient $Z$ from Section \ref{fug}, we have that;\\

\begin{lemma}{The Nernst Equation for Catalyzers}
\label{efi1}\\

At electrical chemical equilibrium $(T,P)$ and $(T,P^{\circ})$;\\

$(E-E^{\circ})(T,P)=-{RTln(Z(T,P))\over 4F}-{\epsilon(T,P)\over 4F}$\\

\end{lemma}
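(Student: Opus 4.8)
The plan is to transcribe the proof of Lemma \ref{nernst2} almost verbatim, making exactly three substitutions: the activity coefficient $Q$ is replaced by the fugacity-corrected coefficient $Z$ of Definition \ref{catalyzers3}, the pressure-only error term $\epsilon(P)$ is replaced by the error term $\epsilon(T,P)$ of Lemma \ref{fugacity5}, and the two-electron transfer of the standard cell is replaced by the four-electron transfer of the catalyzer reaction $2H_{2}O+4e^{-}(R)\rightarrow O_{2}+2H_{2}+4e^{-}(L)$, so that every factor $2F$ becomes $4F$. Concretely, I would first recall the splitting $U=U_{chem}+U_{el}$ with $U_{el}=\sum_{i=1}^{c'}\phi(\overline{x}_{i})q_{i}$ and $q_{i}=N_{A}n_{i}ez_{i}$, giving $G=U_{el}+G_{chem}$ and hence $\mu_{i}=\mu_{i,chem}+\phi(\overline{x}_{i})Fz_{i}$ for the charged species (the electrons, with valence $z=-1$), exactly as in the proof of Lemma \ref{nernst2}.

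Applying $(*)$ of that proof to the catalyzer reaction, the electron terms collapse: $4\mu(e^{-}(L))-4\mu(e^{-}(R))=4F(\phi(R)-\phi(L))=4EF$, so at electrical chemical equilibrium
\[
\Big(\tfrac{\partial G}{\partial \xi}\Big)_{T,P}=\Big(\tfrac{\partial G_{chem'}}{\partial \xi}\Big)_{T,P}+4EF=0,
\]
where $G_{chem'}$ is the Gibbs energy restricted to the uncharged species $H_{2}O,O_{2},H_{2}$. Next I would evaluate $(\partial G_{chem'}/\partial\xi)$ at $P^{\circ}$ and at $P$ using Lemmas \ref{fugacity5} and \ref{fugacity6}. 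By Lemma \ref{fugacity5}, $(\partial G_{chem'}/\partial\xi)_{T,P}=\Delta G_{chem'}^{\circ}+RT\ln(Z_{chem'}(T,P))+\epsilon(T,P)$; and at $P^{\circ}$ Lemma \ref{fugacity6} gives $Z_{chem'}(T,P^{\circ})=e^{-\epsilon(T,P^{\circ})/RT}$, so the two error contributions cancel and $(\partial G_{chem'}/\partial\xi)_{T,P^{\circ}}=\Delta G_{chem'}^{\circ}$.

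Combining this with the equilibrium relation at $P^{\circ}$ yields $4E^{\circ}F=-\Delta G_{chem'}^{\circ}$, while at $P$ the equilibrium relation gives $4EF=-(\Delta G_{chem'}^{\circ}+RT\ln(Z_{chem'}(T,P))+\epsilon(T,P))$. Subtracting and dividing by $4F$ produces
\[
(E-E^{\circ})(T,P)=-\frac{RT\ln(Z(T,P))}{4F}-\frac{\epsilon(T,P)}{4F},
\]
writing $Z$ for $Z_{chem'}$ since the charged species contribute nothing to it. I expect no genuine obstacle; the only points requiring care are (i) the electron bookkeeping — confirming that the four-electron transfer produces the factor $4F$ rather than $2F$ — and (ii) checking that the $\epsilon(T,P^{\circ})$ entering through $Z_{chem'}(T,P^{\circ})$ cancels against the explicit $+\epsilon(T,P^{\circ})$ of Lemma \ref{fugacity5}, so that $E^{\circ}$ depends only on $\Delta G_{chem'}^{\circ}$. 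Both are immediate once the corresponding steps of Lemma \ref{nernst2} are copied over.
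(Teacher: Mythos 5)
Your proposal is correct and is essentially the paper's own argument: the paper's proof of Lemma \ref{efi1} consists precisely of the instruction to follow the proof of Lemma \ref{nernst2}, replacing $\epsilon(P)$ with $\epsilon(T,P)$ and the two-electron transfer with the four-electron transfer of the catalyzer reaction, which is exactly the substitution scheme you carry out in detail.
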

\begin{proof}
Just follow the proof of Lemma \ref{nernst2}, replacing $\epsilon(P)$ with $\epsilon(T,P)$ and use the fact that  the catalyzer reaction $2H_{2}O+4 e^{-}(R)\rightarrow O_{2}+2H_{2}+4e^{-}(L)$ occurs with $4$ electrons rather than $2$.\\

\end{proof}
\begin{lemma}
\label{efi2}

At electrical chemical equilibrium $(T,P)$ and $(T,P^{\circ})$, and chemical equilibrium $(T,P)$;\\

$\Delta G^{\circ}=4F(E-E^{0})$\\

\end{lemma}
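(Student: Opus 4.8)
The plan is to follow the proof of Lemma~\ref{delta2} almost verbatim, replacing the ordinary Nernst equation by the catalyzer version (Lemma~\ref{efi1}) and Gibbs' relation by its fugacity form (Lemma~\ref{fugacity5}). First I would invoke Lemma~\ref{efi1}: at electrical chemical equilibrium at $(T,P)$ and $(T,P^{\circ})$,
$$(E-E^{\circ})(T,P)=-{RTln(Z(T,P))\over 4F}-{\epsilon(T,P)\over 4F},$$
which rearranges to $4F(E-E^{\circ})=-\big(RTln(Z(T,P))+\epsilon(T,P)\big)$.

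Next I would apply Lemma~\ref{fugacity5} to the Gibbs energy $G$ of the $c+1$ uncharged species, giving
$$({\partial G\over \partial \xi})_{T,P}=\Delta G^{\circ}+RTln(Z)+\epsilon(T,P).$$
By Definition~\ref{constants}, chemical equilibrium at $(T,P)$ means the left-hand side is $0$, so $\Delta G^{\circ}=-\big(RTln(Z(T,P))+\epsilon(T,P)\big)$. Comparing this with the rearranged Nernst relation gives $\Delta G^{\circ}=4F(E-E^{\circ})$, which is the claim.

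The argument is essentially a one-line rearrangement once the two lemmas are in hand, so there is no real obstacle; the points requiring care are purely bookkeeping. The factor $4F$ (rather than the $2F$ of Lemma~\ref{delta2}) comes from the four electrons transferred in the catalyzer reaction $2H_{2}O+4e^{-}(R)\rightarrow O_{2}+2H_{2}+4e^{-}(L)$, as already noted in the proof of Lemma~\ref{efi1}. One should also confirm that the $Z$ and $\epsilon(T,P)$ occurring in Lemma~\ref{efi1} are the same quantities, restricted to the uncharged species, as those appearing in Lemma~\ref{fugacity5}; this is exactly how Lemma~\ref{efi1} is set up, since its proof factors $({\partial G\over \partial \xi})_{T,P}$ through $({\partial G_{chem'}\over \partial \xi})_{T,P}$ in the manner of Lemma~\ref{nernst2}.
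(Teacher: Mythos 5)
Your proposal is correct and follows the same route as the paper, whose proof is simply ``follow the proof of Lemma~\ref{delta2}, replacing $\epsilon(P)$ with $\epsilon(T,P)$'' --- i.e.\ rearrange the catalyzer Nernst equation of Lemma~\ref{efi1} against the equilibrium condition $\Delta G^{\circ}+RT\ln(Z)+\epsilon(T,P)=0$ from Lemma~\ref{fugacity5}. Your additional bookkeeping remarks about the factor $4F$ and the consistency of $Z$ and $\epsilon(T,P)$ between the two lemmas match what the paper leaves implicit.
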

\begin{proof}
Follow the proof of Lemma \ref{delta2}, replacing $\epsilon(P)$ with $\epsilon(T,P)$, noting the remark in Lemma \ref{efi1}.\\

\end{proof}
\begin{lemma}
\label{efi3}
If $\epsilon=0$, we have, for all $T_{1}>0$, that;\\

$({\partial G\over \partial \xi})_{T,P}|_{(T_{1},P_{1})}=({\partial G\over \partial \xi})_{T,P}|_{(T_{1},P_{1}^{\circ})}$\\

iff;\\

$E(T_{1},P_{1})=E(T_{1},P_{1}^{\circ})=E^{\circ}(T_{1})$\\

where $G$ is the Gibbs energy function for the charged and uncharged species.
\end{lemma}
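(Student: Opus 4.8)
The plan is to follow the proof of Lemma \ref{allt2} essentially verbatim, making only the two substitutions that Lemma \ref{efi1} has already identified as the sole structural differences between the standard cell and the catalyzer reaction $2H_{2}O+4e^{-}(R)\rightarrow O_{2}+2H_{2}+4e^{-}(L)$: replace the electron count $2$ by $4$ (so $2F$ becomes $4F$), and replace the ideal-solution error term $\epsilon(P)$ by the error term $\epsilon(T,P)$ of Section \ref{fug}. Under the hypothesis $\epsilon=0$ this second change is immaterial for the step that matters.

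First I would invoke the identity $(\dag)$ obtained inside the proof of Lemma \ref{efi1} (the catalyzer analogue of $(\dag)$ in Lemma \ref{nernst2}), namely $({\partial G\over \partial \xi})_{T,P}=({\partial G_{chem'}\over \partial \xi})_{T,P}+4EF$, where $G_{chem'}$ is the Gibbs energy restricted to the uncharged species. Writing this at $(T_{1},P_{1})$ and at $(T_{1},P_{1}^{\circ})$ gives two equations. Then I would use that, when $\epsilon=0$, Lemma \ref{fugacity8} (equivalently Lemma \ref{eqlines2}) forces $({\partial G_{chem'}\over \partial \xi})_{T,P}$ to be independent of $P$, so that $({\partial G_{chem'}\over \partial \xi})_{T_{1},P_{1}}=({\partial G_{chem'}\over \partial \xi})_{T_{1},P_{1}^{\circ}}$.

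Subtracting the two instances of $(\dag)$ and cancelling the coinciding $G_{chem'}$ terms yields $({\partial G\over \partial \xi})_{T,P}|_{(T_{1},P_{1})}-({\partial G\over \partial \xi})_{T,P}|_{(T_{1},P_{1}^{\circ})}=4F\,(E(T_{1},P_{1})-E(T_{1},P_{1}^{\circ}))$. Since $F\neq 0$, the left-hand side vanishes iff $E(T_{1},P_{1})=E(T_{1},P_{1}^{\circ})$; and by Definition \ref{constants} the point $(T_{1},P_{1}^{\circ})$ lies at standard pressure, so $E(T_{1},P_{1}^{\circ})=E^{\circ}(T_{1})$, which gives precisely the stated equivalence $({\partial G\over \partial \xi})_{T,P}|_{(T_{1},P_{1})}=({\partial G\over \partial \xi})_{T,P}|_{(T_{1},P_{1}^{\circ})}$ iff $E(T_{1},P_{1})=E(T_{1},P_{1}^{\circ})=E^{\circ}(T_{1})$.

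There is essentially no obstacle. The entire content is the pressure-independence of $({\partial G_{chem'}\over \partial \xi})_{T,P}$ in the $\epsilon=0$ case, which is imported wholesale from Section \ref{fug}, together with the bookkeeping of the constant $4F$. The only point deserving a word of care is reading "$(T_{1},P_{1}^{\circ})$" as the point at standard pressure, so that the identification $E(T_{1},P_{1}^{\circ})=E^{\circ}(T_{1})$ is a matter of definition rather than an estimate; no new computation is required.
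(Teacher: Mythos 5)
Your proposal is correct and follows essentially the same route as the paper's own proof: the paper likewise instructs one to repeat the argument of Lemma \ref{allt2}, using the $(\dag)$ decomposition $({\partial G\over \partial \xi})_{T,P}=({\partial G_{chem'}\over \partial \xi})_{T,P}+4EF$ and importing the $P$-independence of $({\partial G_{chem'}\over \partial \xi})_{T,P}$ in the $\epsilon=0$ case from Lemma \ref{fugacity8}. The only differences are presentational (you spell out the subtraction and the identification $E(T_{1},P_{1}^{\circ})=E^{\circ}(T_{1})$, which the paper leaves implicit).
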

\begin{proof}
Follow the proof of Lemma \ref{allt2}, replacing the result of Lemma 1.5, that $({\partial G_{chem'}\over \partial \xi})_{T,P}$ is independent of $P$,  with the corresponding same result in Lemma \ref{fugacity8}.
\end{proof}

\begin{lemma}
\label{efi4}
We have, for all $T_{1}>0,P_{1}>0$, that;\\

$4F(E(T_{1},P_{1})-E^{\circ}(T_{1}))=({\partial G\over \partial \xi})_{T,P}|_{(T_{1},P_{1})}-({\partial G\over \partial \xi})_{T,P}|_{(T_{1},P_{1}^{\circ})}-RT_{1}ln(Z(T_{1},P_{1}))-\epsilon(T_{1},P_{1})$ $(*)$\\

\end{lemma}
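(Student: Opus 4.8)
The plan is to imitate the proof of Lemma \ref{alltelectrical2}, making the substitutions already in force in this section: the electron count $2$ becomes $4$ (as in Lemma \ref{efi1}, since the catalyzer reaction $2H_{2}O+4e^{-}(R)\rightarrow O_{2}+2H_{2}+4e^{-}(L)$ runs with four electrons), the activity quotient $Q$ is replaced by $Z$, and the pressure error term $\epsilon(P)$ is replaced by the error term $\epsilon(T,P)$ of Section \ref{fug}.

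First I would use the splitting of the Gibbs energy into charged and uncharged contributions carried out in the proof of Lemma \ref{nernst2} and reused in Lemma \ref{efi1}, which gives
\[
({\partial G\over \partial \xi})_{T,P}|_{T_{1},P_{1}}=({\partial G_{chem'}\over \partial \xi})_{T,P}|_{T_{1},P_{1}}+4E(T_{1},P_{1})F,
\]
where $G_{chem'}$ is the Gibbs energy restricted to the uncharged species. Evaluating the same identity at $(T_{1},P_{1}^{\circ})$ and invoking Lemmas \ref{fugacity5} and \ref{fugacity6} — which give $Z_{chem'}(T_{1},P_{1}^{\circ})=e^{-\epsilon(T_{1},P_{1}^{\circ})\over RT_{1}}$, so that $RT_{1}ln(Z_{chem'}(T_{1},P_{1}^{\circ}))+\epsilon(T_{1},P_{1}^{\circ})=0$ — one obtains $({\partial G_{chem'}\over \partial \xi})_{T,P}|_{T_{1},P_{1}^{\circ}}=\Delta G^{\circ}_{chem'}(T_{1})$, and hence $4E^{\circ}(T_{1})F=({\partial G\over \partial \xi})_{T,P}|_{T_{1},P_{1}^{\circ}}-\Delta G^{\circ}_{chem'}(T_{1})$.

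Next I would apply Lemma \ref{fugacity5} at $(T_{1},P_{1})$ to write $({\partial G_{chem'}\over \partial \xi})_{T,P}|_{T_{1},P_{1}}=\Delta G^{\circ}_{chem'}(T_{1})+RT_{1}ln(Z_{chem'}(T_{1},P_{1}))+\epsilon(T_{1},P_{1})$, substitute this into the first displayed identity, and subtract the expression just found for $4E^{\circ}(T_{1})F$. The two occurrences of $\Delta G^{\circ}_{chem'}(T_{1})$ cancel, and what remains is exactly $(*)$ (writing $Z(T_{1},P_{1})$ for $Z_{chem'}(T_{1},P_{1})$ in the reduced reaction, as in Lemmas \ref{nernst2} and \ref{alltelectrical2}).

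The only thing requiring care is the bookkeeping of which Gibbs function carries the solvent and which carries the charged species, together with the observation that the error term $\epsilon(T_{1},P_{1}^{\circ})$ appearing at standard pressure is exactly annihilated by $RT_{1}ln(Z_{chem'}(T_{1},P_{1}^{\circ}))$ via Lemma \ref{fugacity6}; once these conventions are fixed the argument is purely formal, and I do not expect any genuine obstacle beyond this.
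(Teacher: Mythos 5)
Your proposal is correct and follows essentially the same route as the paper, which simply instructs the reader to repeat the proof of Lemma \ref{alltelectrical2} (itself modelled on Lemma \ref{nernst2}) with $\epsilon(P)$ replaced by $\epsilon(T,P)$, $Q$ by $Z$, and the electron count $2$ by $4$ for the catalyzer reaction. Your explicit handling of the standard-pressure cancellation $RT_{1}ln(Z_{chem'}(T_{1},P_{1}^{\circ}))+\epsilon(T_{1},P_{1}^{\circ})=0$ via Lemma \ref{fugacity6} is exactly the step the paper leaves implicit.
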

\begin{proof}
Follow the proof of Lemma \ref{alltelectrical2}, replacing $\epsilon(P)$ with $\epsilon(T,P)$.\\

\end{proof}
\begin{rmk}
\label{strategy}
The result of Lemma \ref{efi4} combined with the determination of the activity coefficient $Z$ in Lemma \ref{fugacity8} and the error term $\epsilon(T,P)$ in Lemma \ref{fugacity5} can be use to determine the unknown quantity $({\partial G\over \partial \xi})_{T,P}$. We can measure the potential difference between the cathode and anode along the dynamical equilibrium paths provided by Lemma \ref{fugacity8} and then use the formula $(*)$ in Lemma \ref{efi4}. Once this is determined, we then alter the power supply, in accordance with $(*)$, to push the reaction along the paths of maximal reaction given in Lemma \ref{fugacity9}. This should improve the efficiency of the production of hydrogen and oxygen, in the case of the catalyzer reaction, given by $2H_{2}O+4 e^{-}(R)\rightarrow O_{2}+2H_{2}+4e^{-}(L)$.\\
\end{rmk}
\end{section}
\begin{section}{Dilute solutions with Henry's Law for Solutes, Raoult's Law for the Solvent and No Solvent Interaction}
\label{nointeraction}
As mentioned in Definition \ref{constants}, we can consider an electrolyte as a solute in a dilute solution and define the activities $a_{i}$, $0\leq i\leq c$, by;\\

$a_{0}=x_{0}\simeq 1$\\

$a_{i}=x_{i}$ $(1\leq i\leq c)$\\

We can either define the activity coefficient by;\\

$W=\prod_{i=1}^{c}a_{i}^{\nu_{i}}$\\

or, use the more conventional definition;\\

$Q=a_{0}\prod_{i=1}^{c}a_{i}^{\nu_{i}}\simeq \prod_{i=1}^{c}a_{i}^{\nu_{i}}$\\

We will consider both cases.\\

\begin{rmk}
\label{zero}
In the first case, if $a_{0}$ is assumed constant, we have to redefine a feasible path by using coordinates $(T,P,n_{0},n_{1},\ldots,n_{c})$ and letting $\gamma:[0,1]\rightarrow \mathcal{R}_{>0}^{3+c}$, such that if $n_{i}(t)=pr_{3+i}(t)$, for $0\leq i\leq c$, then;\\

${n_{i}'\over \nu_{i}}={n_{j}'\over \nu_{j}}$, for $1\leq i<j\leq c$\\

where $\{\nu_{1},\ldots,\nu_{c}\}$ are the stoichiometric coefficients. If $n(t)=\sum_{i=0}^{c}n_{i}(t)$, and $x_{i}(t)=a_{i}(t)={n_{i}\over n}(t)$, $0\leq i\leq c$, then $Q(pr_{12}(t))=\prod_{i=1}^{p}a_{i}(t)^{\nu_{i}}$ and $n_{0}>0$ is a fixed constant. Note that $n>0$ and the $x_{i}$ are well defined, $0\leq i\leq c$. The existence of feasible paths follows easily from the proof of Lemma \ref{exists}, where we are free to take any $n_{0}>0$.\\

\end{rmk}

\begin{lemma}
\label{henrys2nointeraction}
In the case of dilute solutions, with no interaction of the solvent, a feasible path $\gamma$ is a dynamic equilibrium path iff $pr(\gamma_{12})\subset C_{f}$, for some $f\in\mathcal{R}_{>0}$ iff ${dW\over dt}=0$.
\end{lemma}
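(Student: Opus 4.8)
The plan is to mimic the proofs of Lemmas~\ref{implies}, \ref{feasible} and \ref{henrys3}, adapted to the modified notion of feasible path from Remark~\ref{zero}: coordinates $(T,P,n_{0},n_{1},\ldots,n_{c})$, the solvent amount $n_{0}>0$ fixed, $n=\sum_{i=0}^{c}n_{i}$, $x_{i}=n_{i}/n$, and $W=\prod_{i=1}^{c}a_{i}^{\nu_{i}}=\prod_{i=1}^{c}x_{i}^{\nu_{i}}$; here $C_{f}$ denotes the zero locus of $W(T,P)-f$.

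First I would show that $pr(\gamma_{12})\subset C_{f}$, $f\in\mathcal{R}_{>0}$, implies $\gamma$ is a dynamic equilibrium path. Along such a path $\prod_{i=1}^{c}x_{i}^{\nu_{i}}=f$ is constant, and the feasibility relations $n_{i}'/\nu_{i}=n_{j}'/\nu_{j}$ give $n_{i}=(\nu_{i}/\nu_{c})n_{c}+d_{i}$, $1\leq i\leq c-1$, exactly as in Lemma~\ref{implies}. The only change is that $n=(\sum_{i=1}^{c-1}(\nu_{i}/\nu_{c})+1)n_{c}+\sum_{i=1}^{c-1}d_{i}+n_{0}$, so the additive constant $\beta=\sum_{i=1}^{c-1}d_{i}$ of that proof is replaced by $\beta=n_{0}+\sum_{i=1}^{c-1}d_{i}$, still strictly positive, and $n'=\sum_{i=1}^{c}n_{i}'$ since $n_{0}'=0$. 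Differentiating $\prod_{i=1}^{c}x_{i}^{\nu_{i}}=f$ annihilates the constant $f$, and the computation $(B)$ of Lemma~\ref{implies} together with the ensuing polynomial-in-$n_{c}$ analysis — the cases $\sum_{i=1}^{c}\nu_{i}\neq 0$, $\sum_{i=1}^{c}\nu_{i}=0$, the rescaling of the $\nu_{i}$, and the reduction to a pivot $n_{j_{0}}$ with $\sum_{i\in I_{j_{0}}}\nu_{i}\neq 0$ — goes through unchanged, since it only uses positivity of $\beta$, the fact that the leading coefficient $\gamma_{c}=\delta\lambda(\alpha-1)$ is independent of $\beta$, and discreteness of the roots of a nontrivial real polynomial. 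Hence $n_{c}'=0$, so $n_{i}'=0$ for $1\leq i\leq c-1$ by the linkage relations, and as $n_{0}$ is fixed, $\gamma$ is a dynamic equilibrium path.

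For the converse directions: if $\gamma$ is a dynamic equilibrium path then $n_{i}'(t)=0$ for $1\leq i\leq c$ and $n_{0}$ is fixed, so $n$ and each $x_{i}=n_{i}/n$ are constant, whence $W=\prod_{i=1}^{c}x_{i}^{\nu_{i}}$ is a constant $f$; each $x_{i}>0$ forces $f\in\mathcal{R}_{>0}$, and by the definition of a feasible path $W(pr_{12}(\gamma(t)))=f$, i.e. $pr(\gamma_{12})\subset C_{f}$. Finally $pr(\gamma_{12})\subset C_{f}$ for some $f$ is equivalent to $W(pr_{12}(\gamma(t)))$ being constant in $t$, i.e. to ${dW\over dt}=0$: if ${dW\over dt}=0$ then $W\equiv f$ on the connected image $pr_{12}(\gamma([0,1]))$, so $pr(\gamma_{12})\subset C_{f}$; conversely $pr(\gamma_{12})\subset C_{f}$ gives $W\equiv f$ along the path and hence ${dW\over dt}=0$. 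Alternatively, as in Lemma~\ref{feasible}, if $pr(\gamma_{12})\not\subset C_{f}$ then after a suitable reparametrisation ${dW\over dt}|_{0}=grad(W)|_{\gamma(0)}\centerdot\gamma_{12}'(0)\neq 0$, contradicting the constancy of the $x_{i}$ along a dynamic equilibrium path.

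The main obstacle is verifying that the extra summand $n_{0}$ in the denominator $n$ does not disturb the long polynomial argument of Lemma~\ref{implies}: one must check that it merely shifts the additive constant in the relevant rational expressions and, in particular, does not cause the leading coefficient $\gamma_{c}$ of the polynomial in $n_{c}$ (nor the analogous quantities in the $\sum_{i=1}^{c}\nu_{i}=0$ subcase) to vanish. Since those coefficients depend only on the stoichiometric coefficients $\nu_{i}$, and the subcase only uses positivity of the constants $d_{i}$ and $n_{0}$, this is routine bookkeeping rather than a genuine difficulty.
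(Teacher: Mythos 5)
Your proposal is correct and follows essentially the same route as the paper: reduce to the proof of Lemma \ref{implies} with the additive constant $\beta$ augmented by the fixed solvent amount $n_{0}$ (noting that the leading coefficient $\gamma_{c}$ depends only on the stoichiometric coefficients, so the polynomial argument is unaffected), observe $n_{0}'=0$, and obtain the remaining equivalences from Lemma \ref{feasible}. Your explicit check that the extra summand $n_{0}$ in the denominator does not disturb the polynomial analysis is exactly the bookkeeping the paper leaves implicit.
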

\begin{proof}
We have that;\\

$W=\prod_{i=1}^{c}a_{i}^{\nu_{i}}=\prod_{i=1}^{c}x_{i}^{\nu_{i}}$\\

$=W=f$ $(*)$\\

If $\gamma$ is a feasible path, then $pr_{12}(\gamma)\subset W_{>0}$, otherwise, we could find $(T,P)$, with $x_{i}(T,P)\leq 0$, contradicting the fact that $n_{i}>0$, $n>0$. It follows that $f>0$. With $f>0$, follow through the proof of Lemma \ref{implies}, replacing $\beta$ with $\sum_{i=0}^{c}d_{i}$, where $d_{0}=n_{0}$. Clearly $n_{0}'=0$ so we obtain the first direction. The rest of the proof follows from Lemma \ref{feasible}, using the additional fact that $n_{0}'=0$. \\

\end{proof}

\begin{lemma}
\label{henrys1}
In the case of dilute solutions, with no interaction of the solvent, and $a_{0}$ assumed constant, a feasible path $\gamma$ is a dynamic equilibrium path iff $pr(\gamma_{12})\subset C_{c}$, for some $c\in\mathcal{R_{>0}}$ iff ${dQ\over dt}=0$.
\end{lemma}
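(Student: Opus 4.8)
The plan is to deduce the result from Lemma \ref{henrys2nointeraction} by observing that, once $a_{0}$ is held constant, the conventional quotient $Q=a_{0}\prod_{i=1}^{c}a_{i}^{\nu_{i}}$ and the quotient $W=\prod_{i=1}^{c}a_{i}^{\nu_{i}}$ of Lemma \ref{henrys2nointeraction} differ only by the nonzero positive constant $a_{0}$. So the first step is to fix the feasible-path framework: I would use the coordinates $(T,P,n_{0},n_{1},\ldots,n_{c})$ of Remark \ref{zero}, with $n_{0}>0$ fixed since there is no interaction of the solvent, and note that along such a $\gamma$ one has $Q(pr_{12}(\gamma(t)))=a_{0}\,W(pr_{12}(\gamma(t)))$ with $a_{0}$ constant. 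Differentiating, ${dQ\over dt}=a_{0}{dW\over dt}$, and since $a_{0}>0$ this gives ${dQ\over dt}=0$ if and only if ${dW\over dt}=0$. The notion of dynamic equilibrium path ($n_{i}'(t)=0$, no substance formed) is unaffected by which quotient we write down.

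The second step is to translate the level-set condition. By Definition \ref{curves}, $C_{c}$ for $Q$ is the zero locus of $Q(T,P)-c$, i.e. the locus $a_{0}W(T,P)=c$; hence $pr(\gamma_{12})\subset C_{c}$ for some $c\in\mathcal{R}_{>0}$ exactly when $pr(\gamma_{12})$ lies in the level set $W(T,P)=c/a_{0}$, that is in $C_{c/a_{0}}$ in the sense of Lemma \ref{henrys2nointeraction}, and $c>0$ iff $c/a_{0}>0$ because $a_{0}>0$. Combining these two translations with the three-way equivalence already proved in Lemma \ref{henrys2nointeraction} yields the claimed equivalence at once.

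For completeness I would remark that the direct argument also works without citing Lemma \ref{henrys2nointeraction}: along a path with $pr(\gamma_{12})\subset C_{f}$ one has $\prod_{i=1}^{c}x_{i}^{\nu_{i}}=Q/a_{0}=f/a_{0}>0$, and one follows the proof of Lemma \ref{implies} with $\beta$ replaced by $\sum_{i=0}^{c}d_{i}$ and $d_{0}=n_{0}$, differentiation annihilating the constant $f/a_{0}$ exactly as in the ideal case; the converse is handled as in Lemma \ref{feasible}, where ${dQ\over dt}=0$ forces every $x_{i}$, hence every $n_{i}$, to be constant. The only delicate point, and it is very mild, is to confirm that "$a_{0}$ assumed constant" is consistent with the Remark \ref{zero} framework: since $n_{0}$ is already fixed there, constancy of $a_{0}=x_{0}=n_{0}/n$ amounts to constancy of $n$, which holds automatically on the dynamic-equilibrium side and is imposed as a modelling hypothesis otherwise, so no circularity arises. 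That compatibility check is essentially the only thing to be careful about; everything else is a restatement of earlier lemmas.
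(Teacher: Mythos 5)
Your proposal is correct and, at its core, is the same argument as the paper's: the paper's own proof simply divides out the constant $a_{0}$ to get $\prod_{i=1}^{c}x_{i}^{\nu_{i}}=c/a_{0}=d>0$ and then follows the proof of Lemma \ref{implies}, with the remaining directions from Lemma \ref{feasible}, which is exactly your ``direct argument''; your primary route via Lemma \ref{henrys2nointeraction} is just the same reduction packaged one lemma higher, since that lemma is itself proved by the identical appeal to Lemmas \ref{implies} and \ref{feasible}. Your added remark on the compatibility of the hypothesis ``$a_{0}$ constant'' with the fixed-$n_{0}$ framework of Remark \ref{zero} is a reasonable clarification that the paper leaves implicit.
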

\begin{proof}
We have that;\\

$\prod_{i=1}^{c}a_{i}^{\nu_{i}}={Q\over a_{0}}$\\

$=\prod_{i=1}^{c}x_{i}^{\nu_{i}}$\\

$={c\over a_{0}}=d$\\

If $\gamma$ is a feasible path, then $pr_{12}(\gamma)\subset Q_{>0}$, otherwise, we could find $(T,P)$, with $x_{i}(T,P)\leq 0$, contradicting the fact that $n_{i}>0$, for $0\leq i\leq c$, $n>0$. It follows that $c>0$, $d>0$. With $d>0$, follow through the proof of Lemma \ref{implies} again, getting the other directions from Lemma \ref{feasible}.\\

\end{proof}
\begin{lemma}
\label{henrys2}
In the case of dilute solutions, with no interaction of the solvent, a feasible path $\gamma$ is a dynamic equilibrium path iff $pr(\gamma_{12})\subset C_{f}$, for some $f\in\mathcal{R}_{>0}$ iff ${dQ\over dt}=0$.
\end{lemma}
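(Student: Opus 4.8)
The plan is to mirror the proof of Lemma~\ref{implies}, along the lines already used for Lemmas~\ref{henrys3} and~\ref{henrys1}, keeping track of the two new features present here: the factor $a_{0}=x_{0}$ in the conventional activity coefficient $Q=a_{0}\prod_{i=1}^{c}a_{i}^{\nu_{i}}$, and the fact that, with no interaction of the solvent, $\nu_{0}=0$, so that $n_{0}$ is a fixed positive constant along any feasible path. Accordingly, write $Q=\prod_{i=0}^{c}x_{i}^{\mu_{i}}$ with $\mu_{0}=1$ and $\mu_{i}=\nu_{i}$ for $1\leq i\leq c$, and put $w=\sum_{i=0}^{c}\mu_{i}=1+\sum_{i=1}^{c}\nu_{i}$.

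First I would establish the implication that $pr(\gamma_{12})\subset C_{f}$ forces dynamic equilibrium. Along such a path one has $Q=\prod_{i=0}^{c}x_{i}^{\mu_{i}}=f$, and $f>0$ since $n_{i}>0$ for $0\leq i\leq c$ and $n>0$ on a feasible path. Using the linkage relations $n_{i}={\nu_{i}\over\nu_{c}}n_{c}+d_{i}$ for $1\leq i\leq c-1$ together with $n_{0}$ constant, the relation $\prod_{i=0}^{c}n_{i}^{\mu_{i}}=f\,n^{w}$ becomes, after differentiating in $t$ (which eliminates the constant $f$, exactly as in Lemma~\ref{implies}) and clearing denominators, a polynomial identity in $n_{c}$ carrying $n_{c}'$ as a factor. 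Cancelling $n_{c}'$ on the assumption $n_{c}'\neq 0$ and running the leading-coefficient computation of Lemma~\ref{implies}, now with total exponent $w$ in place of $\sum_{i=1}^{c}\nu_{i}$ exactly as in Remark~\ref{reference}, forces $n_{c}$ to be constant, hence $n_{c}'=0$, hence $n_{i}'=0$ for $1\leq i\leq c-1$ by the linkage relations; and $n_{0}'=0$ by hypothesis. The degenerate subcases ($w=0$, or the vanishing of the relevant leading coefficient, including the case $g_{i}=-1$) are disposed of precisely as in Lemma~\ref{implies}: reorder so that the pivot $j_{0}$ has $\sum_{i\in I_{j_{0}}}\nu_{i}\neq 0$, rescale the $\nu_{i}$ by a factor of $2$ if necessary, and take $k\to\infty$ limits in the differentiated relations. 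Thus every such $\gamma$ is a dynamic equilibrium path.

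The remaining equivalences follow as in Lemma~\ref{feasible}. If $pr(\gamma_{12})\subset C_{f}$ then $Q$ is constant along the path, so ${dQ\over dt}=0$; conversely if ${dQ\over dt}=0$ then $Q\equiv f$ along $pr(\gamma_{12})$ with $f:=Q(pr(\gamma_{12}(0)))>0$, so $pr(\gamma_{12})\subset C_{f}$. Finally, if $\gamma$ is a dynamic equilibrium path then each $n_{i}$ is constant for $0\leq i\leq c$, hence $n$ and each $x_{i}$ are constant, hence $Q$ is constant and ${dQ\over dt}=0$; together with the previous two sentences and the first implication this closes the cycle of equivalences.

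The main obstacle is the first implication: one must verify that the polynomial-root argument of Lemma~\ref{implies} survives the replacement of $\sum_{i=1}^{c}\nu_{i}$ by $w=1+\sum_{i=1}^{c}\nu_{i}$ and the presence of the extra fixed coordinate $n_{0}$ --- in particular that the leading coefficient in $n_{c}$ remains nonzero off the degenerate locus, and that each degenerate case admits the same escape (change of pivot, rescaling, $k\to\infty$ limits) as there. This is careful bookkeeping rather than a new idea, the structure being identical to that already handled in Remark~\ref{reference} for Lemma~\ref{exists}.
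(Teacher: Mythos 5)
Your proposal follows the same route as the paper's own proof: rewrite $Q=\prod_{i=0}^{c}x_{i}^{\mu_{i}}$ with $\mu_{0}=1$, $\mu_{i}=\nu_{i}$, treat $n_{0}$ as a fixed constant absorbed into the $d_{i}$'s (the paper replaces $\beta$ by $\sum_{i=0}^{c-1}d_{i}$), rerun the differentiation and leading-coefficient argument of Lemma \ref{implies} with the total exponent shifted to $1+\sum_{i=1}^{c}\nu_{i}$, dispose of the degenerate cases by pivot change and the $k\to\infty$ limits, and close the cycle of equivalences via Lemma \ref{feasible}. This matches the paper's argument in both structure and detail.
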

\begin{proof}
With the same caveat as in Lemma \ref{henrys1}, we have that;\\

$a_{0}\prod_{i=1}^{c}a_{i}^{\nu_{i}}=x_{0}\prod_{i=1}^{c}x_{i}^{\nu_{i}}$\\

$=\prod_{i=0}^{c}x_{i}^{\mu_{i}}$\\

$=Q=f$ $(*)$\\

where $x_{i}={n_{i}\over n}$, $\mu_{0}=1$, $\mu_{i}=\nu_{i}$, $1\leq i\leq c$.\\

Using the relation $(*)$, differentiating and using the facts that, for $1\leq i\leq c-1$;\\

$n_{i}'={\mu_{i}n_{c}'\over \mu_{c}}$, $n_{i}={\mu_{i}n_{c}\over \mu_{c}}+d_{i}$, $n_{0}'=0$, $n_{0}=d_{0}$\\

we obtain that;\\

$(\prod_{i=0}^{c}x_{i}^{\mu_{i}})'=\sum_{i=0}^{c}\mu_{i}x_{i}^{\mu_{i}-1}x_{i}'\prod_{j\neq i}x_{j}^{\mu_{j}}$\\

$=f\sum_{i=0}^{c}\mu_{i}x_{i}^{\mu_{i}-1}x_{i}'x_{i}^{-\mu_{i}}$\\

$=f\sum_{i=0}^{c}\mu_{i}x_{i}^{-1}x_{i}'$\\

$=0$\\

so that;\\

$=\sum_{i=0}^{c}\mu_{i}{n\over n_{i}}{(n_{i}'n-n_{i}n')\over n^{2}}$\\

$=\sum_{i=0}^{c}\mu_{i}({n_{i}'\over n_{i}}-{n'\over n})$\\

$=\sum_{i=1}^{c-1}{\mu_{i}^{2}n_{c}'\over \nu_{i}n_{c}+\mu_{c}d_{i}}+{\mu_{c}n_{c}'\over n_{c}}-
\lambda({\sum_{i=0}^{c}n_{i}'\over \sum_{i=0}^{c}n_{i}})$\\

$=\sum_{i=1}^{c-1}{\mu_{i}^{2}n_{c}'\over \mu_{i}n_{c}+\mu_{c}d_{i}}+{\mu_{c}n_{c}'\over n_{c}}-\lambda
({(\sum_{i=1}^{c-1}{\mu_{i}\over \mu_{c}}+1)n_{c}'\over (\sum_{i=1}^{c-1}{\nu_{i}\over \nu_{c}}+1)n_{c}+\sum_{i=0}^{c-1}d_{i}})$\\

$=0$ $(B)$\\

where $\lambda=\sum_{i=0}^{c}\mu_{i}=1+\sum_{i=1}^{c}\nu_{i}$\\

Following the proof of Lemma \ref{implies}, replacing $\beta$ with $\sum_{i=0}^{c-1}d_{i}$, we have, if $\sum_{i=1}^{c-1}\mu_{i}=\sum_{i=1}^{c-1}\nu_{i}\neq 0$ and $\lambda=1+\sum_{i=1}^{c}\mu_{i}=1+\sum_{i=1}^{c}\nu_{i}\neq 0$, then $n_{i}'=0$, for $1\leq i\leq c$, and clearly we have that $n_{0}'=0$. Similarly, if $\lambda=\sum_{i=0}^{c}\mu_{i}=1+\sum_{i=1}^{c}\nu_{i}=0$, we obtain the relation;\\

$\prod_{i=0}^{c}n_{i}^{\mu_{i}}=f$\\

Again, following the proof of Lemma \ref{implies}, if $n_{c}'\neq 0$, we obtain the relation;\\

$\sum_{i=1}^{c-1}{\mu_{i}^{2}\over \mu_{i}n_{c}+\mu_{c}d_{i}}+{\mu_{c}\over n_{c}}=\sum_{i=1}^{c-1}{\nu_{i}^{2}\over \nu_{i}n_{c}+\nu_{c}d_{i}}+{\nu_{c}\over n_{c}}$\\

$=0$\\

and, by the proof there, we obtain that $n_{i}'=0$, for $1\leq i\leq c$. As $n_{0}'=0$, we obtain the result. We are left with the case $\sum_{i=1}^{c-1}\mu_{i}=\sum_{i=1}^{c-1}\nu_{i}=0$. As in the proof of Lemma \ref{implies}, we can assume this choosing the appropriate pivot. The other directions in the Lemma follow from a simple modification of Lemma \ref{feasible}.

\end{proof}

\begin{lemma}
\label{henrys4nointeraction}
In Lemmas \ref{nernst} to \ref{alltelectrical}, for the standard cell, and considering a dilute solution with no interaction of the solvent, replacing $Q$ defined as $\prod_{i=1}^{c}a_{i}^{\nu_{i}}$ by the same definition of $W$;\\

If we assume without approximation that $\mu_{i}=\mu_{i}^{\circ}+RT ln(a_{i})$, $1\leq i\leq c$, then;\\

the same results as Lemma \ref{proofs}, setting $a_{0}(T,P)=1$, with the caveat that in the final claim $W=1$ and any path is a dynamical equilibrium line.\\
\end{lemma}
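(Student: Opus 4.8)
The plan is to treat this as a bookkeeping consequence of Lemma \ref{proofs}. The key observation is that the activity coefficient $W=\prod_{i=1}^{c}a_{i}^{\nu_{i}}$ used here is precisely the quantity $Q=a_{0}\prod_{i=1}^{c}a_{i}^{\nu_{i}}$ of Lemma \ref{proofs} with the solvent factor suppressed, i.e. with $a_{0}$ replaced throughout by the constant $1$. Since there is no interaction of the solvent, $dn_{0}=0$ (see Remark \ref{zero}), so the solvent chemical potential $\mu_{0}$ never enters any of the identities of Lemmas \ref{nernst}--\ref{alltelectrical}; only the relations $\mu_{i}=\mu_{i}^{\circ}+RT\ln(a_{i})$ for $1\leq i\leq c$ are used, and these are assumed without approximation. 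So first I would run down the list of formulas in the proof of Lemma \ref{proofs} and substitute $a_{0}\equiv 1$: the reformulated Lemma \ref{nernst} collapses to $E-E^{\circ}=-\frac{RT\ln(W)}{2F}$, the reformulated Lemma \ref{gibbs} to $(\partial G/\partial \xi)_{T,P}=\Delta G^{\circ}+RT\ln(W)$, the reformulated Lemma \ref{delta} to $\Delta G^{\circ}=2F(E-E^{\circ})$, and the reformulations of Lemmas \ref{equivalences}, \ref{vanhoffhelmholtz} and \ref{eqlines} likewise reduce to their $a_{0}=1$ specialisations; in particular $W(T,P^{\circ})=1$, since $\mu_{i}(T,P^{\circ})=\mu_{i}^{\circ}$ forces $a_{i}(T,P^{\circ})=1$ for $1\leq i\leq c$.

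Next I would handle the path-theoretic content. Here I would replace the appeal to Lemma \ref{feasible} by Lemma \ref{henrys2nointeraction}: in the coordinates $(T,P,n_{0},\ldots,n_{c})$ of Remark \ref{zero}, with $n_{0}>0$ held fixed, a feasible path $\gamma$ is a dynamic equilibrium path iff $pr(\gamma_{12})\subset C_{f}$ for some $f\in\mathcal{R}_{>0}$ iff $\frac{dW}{dt}=0$; the existence of feasible paths following a prescribed curve follows from the modification of Lemma \ref{exists} indicated in Remark \ref{reference}, now with $n_{0}$ free to be chosen. With these substitutions the equilibrium characterisations, the van't Hoff--Gibbs--Helmholtz relations along $D_{c}$, and the determination of the quasi-chemical and dynamic equilibrium lines from Lemma \ref{proofs} all transcribe directly under $a_{0}\equiv 1$.

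Finally I would isolate the caveat. The only formula that changes in character is the $\epsilon=0$ case of the reformulated Lemma \ref{eqlines}: Lemma \ref{proofs} there gives $Q(T_{2},P')=\frac{a_{0}(T_{2},P')}{a_{0}(T_{2},P'^{\circ})}$ with dynamic equilibrium lines $\frac{a_{0}(T_{2},P')}{a_{0}(T_{2},P'^{\circ})}=c$, whereas setting $a_{0}\equiv 1$ yields $W(T_{2},P')\equiv 1$. Hence $C_{1}$ is the whole $(T,P)$-plane and, by Lemma \ref{henrys2nointeraction}, every feasible path is a dynamic equilibrium path, which is exactly the stated caveat.

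I do not expect a genuine mathematical obstacle here; this is essentially a verification that one substitution propagates through the earlier arguments. The points that need care, and which I would check explicitly, are that $\mu_{0}$ really does not occur in the derivations being reused — this is precisely what the no-interaction hypothesis together with $dn_{0}=0$ (Remark \ref{zero}) guarantees — and that introducing the extra coordinate $n_{0}$ does not break the path-existence or dynamic-equilibrium arguments, which is supplied by Lemma \ref{henrys2nointeraction} and Remark \ref{reference}.
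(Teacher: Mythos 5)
Your proposal is correct and follows exactly the route the paper intends: the paper's own proof is simply the assertion ``The proof is clear,'' and what it means by this is precisely your substitution of $a_{0}\equiv 1$ throughout Lemma \ref{proofs}, with the path-theoretic statements supplied by Lemma \ref{henrys2nointeraction} and Remark \ref{zero}, and the caveat arising from $W(T_{2},P')\equiv 1$ in the $\epsilon=0$ case. Your write-up is a faithful (and considerably more explicit) elaboration of the argument the paper leaves implicit.
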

\begin{proof}
The proof is clear.
\end{proof}

\begin{lemma}
\label{henrys4}
In Lemmas \ref{nernst} to \ref{alltelectrical}, for the standard cell, and considering a dilute solution with no interaction of the solvent, replacing $Q$ defined as $\prod_{i=1}^{c}a_{i}^{\nu_{i}}$ by;\\

$a_{0}(\prod_{i=1}^{c}a_{i}^{\nu_{i}})=x_{0}(\prod_{i=1}^{c}a_{i}^{\nu_{i}})$\\

If we assume without approximation that $\mu_{i}=\mu_{i}^{\circ}+RT ln(a_{i})$, $0\leq i\leq c$, then;\\

the same results as Lemma \ref{proofs}, with the new definition of $a_{0}(T,P)=x_{0}(T,P)$.\\
\end{lemma}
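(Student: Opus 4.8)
The plan is to derive Lemma~\ref{henrys4} as a direct instantiation of Lemma~\ref{proofs}. In Section~\ref{nointeraction} the activities of the dilute solution have been redefined by $a_{0}=x_{0}\simeq 1$ and $a_{i}=x_{i}$ for $1\leq i\leq c$, and the activity quotient by the conventional formula $Q=a_{0}\prod_{i=1}^{c}a_{i}^{\nu_{i}}=x_{0}\prod_{i=1}^{c}x_{i}^{\nu_{i}}$. The running hypothesis of Lemma~\ref{henrys4} is exactly the running hypothesis of Lemma~\ref{proofs}: that $\mu_{i}=\mu_{i}^{\circ}+RT ln(a_{i})$ holds without approximation for $0\leq i\leq c$, together with the fact, built into Definition~\ref{constants}, that $\mu_{i}$ does not depend on the amount $n_{i}$. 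So the first step is to observe that nothing in the proof of Lemma~\ref{proofs} uses the molality form of the activities: that proof manipulates only the symbolic identity $\mu_{i}=\mu_{i}^{\circ}+RT ln(a_{i})$, the relation $dn_{0}=0$ along a reaction (whence $dG=\sum_{i=1}^{c}\mu_{i}dn_{i}$), the definition of $Q$, and the normalisation $a_{i}(T,P^{\circ})=1$.

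For that last point I would note that $a_{i}(T,P^{\circ})=1$ for $0\leq i\leq c$ is immediate from the exact relation $\mu_{i}(T,P^{\circ})=\mu_{i}^{\circ}$, hence $Q(T,P^{\circ})=1$ exactly as in Lemma~\ref{proofs}; this is the property that drives the $(\dag\dag)'$ and $(\dag\dag\dag)'$ computations there, and it survives the change of activities verbatim. Consequently the Nernst formula, the Gibbs identity $({\partial G\over \partial \xi})_{T,P}=\Delta G^{\circ}+RT ln({Q\over a_{0}(T,P)})$, the identity $\Delta G^{\circ}=2F(E-E^{\circ})+RT ln(a_{0}(T,P^{\circ}))$, the van't Hoff and Gibbs--Helmholtz statements, the $D_{c}$-line computations, and the $\epsilon\neq 0$ and $\epsilon=0$ versions of Lemma~\ref{eqlines} all carry over with $a_{0}$ now literally equal to $x_{0}$; the only change is the substitution $a_{0}(T,P)=x_{0}(T,P)$.

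The one point genuinely requiring care, and where I expect the work to be bookkeeping rather than new mathematics, is the dynamic-equilibrium content: several clauses of Lemma~\ref{proofs} invoke Lemmas~\ref{implies} and~\ref{feasible}, which were proved for the ideal-solution activities with the plain quotient $\prod_{i=1}^{c}a_{i}^{\nu_{i}}$. In the present setting one instead cites Lemma~\ref{henrys2} (and Lemma~\ref{henrys1}), which re-establish exactly those statements for $Q=a_{0}\prod_{i=1}^{c}a_{i}^{\nu_{i}}$ with $a_{i}=x_{i}$, using the enlarged coordinates $(T,P,n_{0},n_{1},\ldots,n_{c})$ with $n_{0}$ held fixed as in Remark~\ref{zero}; the existence of feasible paths in this setting is supplied by the modification of Lemma~\ref{exists} described in Remark~\ref{reference}. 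The final step is then to run the proof of Lemma~\ref{proofs} line by line, replacing each appeal to a feasible-path lemma by its no-interaction counterpart and substituting $a_{0}(T,P)=x_{0}(T,P)$ throughout; every displayed formula there reproduces the corresponding assertion of Lemma~\ref{henrys4}.
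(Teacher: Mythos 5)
Your proposal is correct and follows the same route as the paper, whose entire proof of Lemma \ref{henrys4} is the single sentence ``The proof is clear,'' i.e.\ a direct instantiation of Lemma \ref{proofs} with $a_{0}=x_{0}$. Your elaboration --- that the proof of Lemma \ref{proofs} uses only the exact identity $\mu_{i}=\mu_{i}^{\circ}+RT\,ln(a_{i})$, the relation $dn_{0}=0$, and the normalisation $a_{i}(T,P^{\circ})=1$, with the feasible-path appeals rerouted through Lemmas \ref{henrys1}, \ref{henrys2} and Remark \ref{reference} --- supplies exactly the bookkeeping the paper leaves implicit.
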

\begin{proof}
The proof is clear.
\end{proof}

We reformulate Lemmas \ref{gibbs2} to \ref{rates} in this context, assuming Henry's law for the solutes and the solvent an ideal solution;\\

\begin{lemma}
\label{henrys5nointeraction}
In the dilute solution case, with no interaction of the solvent, for the energy function $G$ involving  $c+1$ uncharged species, including the solvent;\\

$({\partial G\over \partial \xi})_{T,P}=\Delta G^{\circ}+RTln(W)+\epsilon$\\

where $\epsilon(T,P)=\sum_{i=1}^{c}\nu_{i}\kappa_{i}(T)\simeq 0$ and $\kappa_{i}(T)\simeq 0$ are the error terms for the $i$'th uncharged species in Definition \ref{catalyzers2}, $1\leq i\leq c$.
\end{lemma}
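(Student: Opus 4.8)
The plan is to run the proof of Lemma~\ref{gibbs2} (equivalently Lemma~\ref{henrys5}) essentially verbatim, the only structural difference being that the solvent, substance $0$, is now inert: it is present in the Gibbs energy $G$ of the $c+1$ species, but it is neither consumed nor produced, so $dn_0=0$ along every reaction path. First I would appeal to Lemma~\ref{differential} together with the definition of the extent $\xi$, just as in Lemma~\ref{equivalences}: fixing $T$ and $P$, one has $dG=\sum_{i=0}^{c}\mu_i\,dn_i=\sum_{i=1}^{c}\mu_i\,dn_i=\bigl(\sum_{i=1}^{c}\nu_i\mu_i\bigr)\,d\xi$ since $dn_0=0$, whence $({\partial G\over\partial\xi})_{T,P}=\sum_{i=1}^{c}\nu_i\mu_i$; integrating at $P^{\circ}$ over $\xi\in[0,1]$ gives $\Delta G^{\circ}=\sum_{i=1}^{c}\nu_i\mu_i^{\circ}$. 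Neither sum involves the solvent.

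Next I would substitute the Henry's-law expressions for the solutes taken from Definition~\ref{catalyzers2}, namely $a_i=x_i$ and $\mu_i=\mu_i^{\circ}+RT\ln(x_i)+\kappa_i(T)$ for $1\le i\le c$, into $({\partial G\over\partial\xi})_{T,P}-\Delta G^{\circ}=\sum_{i=1}^{c}\nu_i(\mu_i-\mu_i^{\circ})$. This collapses to $\sum_{i=1}^{c}\nu_i RT\ln(a_i)+\sum_{i=1}^{c}\nu_i\kappa_i(T)=RT\ln\bigl(\prod_{i=1}^{c}a_i^{\nu_i}\bigr)+\epsilon(T,P)$, which is exactly $RT\ln(W)+\epsilon$ once one recalls the definition $W=\prod_{i=1}^{c}a_i^{\nu_i}$ and sets $\epsilon(T,P)=\sum_{i=1}^{c}\nu_i\kappa_i(T)$. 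The estimate $\kappa_i(T)\simeq 0$, hence $\epsilon\simeq 0$, is already recorded in Definition~\ref{catalyzers2} as part of the construction of the $\kappa_i$.

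This is a purely routine computation; the only point requiring any care --- and the one feature that distinguishes this lemma from Lemma~\ref{henrys5} --- is the bookkeeping that keeps the solvent out of both $({\partial G\over\partial\xi})_{T,P}$ and $\epsilon$. Since $W$ by definition omits the factor $a_0$ and since $\nu_0=0$ suppresses the term $\nu_0\gamma_0(P)$ that appears in the interacting case, the solvent error term $\gamma_0$ never enters, and I would flag explicitly that $\epsilon$ here depends on $T$ only (through the $\kappa_i$), in contrast with the $(T,P)$-dependence it carries in Lemma~\ref{henrys5}. I anticipate no genuine obstacle.
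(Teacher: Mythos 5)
Your proposal is correct and follows essentially the same route as the paper: the paper's own proof simply invokes Lemma \ref{gibbs2} together with the observation that $dn_{0}=0$ gives $dG=\sum_{i=0}^{c}\mu_{i}dn_{i}=\sum_{i=1}^{c}\mu_{i}dn_{i}$, hence $({\partial G\over \partial \xi})_{T,P}=\sum_{i=1}^{c}\nu_{i}\mu_{i}$ and $\Delta G^{\circ}=\sum_{i=1}^{c}\nu_{i}\mu_{i}^{\circ}$, which is exactly your bookkeeping step. Your substitution of the Henry's-law expressions and the remark that $\epsilon$ depends on $T$ only just fill in the details the paper leaves implicit.
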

\begin{proof}
The proof is clear from Lemma \ref{gibbs2} and the fact that, as $dn_{0}=0$;\\

$dG=\sum_{i=0}^{c}\mu_{i}dn_{i}=\sum_{i=1}^{c}\mu_{i}dn_{i}$\\

so that $({\partial G\over \partial \xi})_{T,P}=\sum_{i=1}^{c}\nu_{i}\mu_{i}$ and $\Delta G^{\circ}=\sum_{i=1}^{c}\nu_{i}\mu_{i}^{\circ}$.\\

\end{proof}
\begin{lemma}
\label{henrys6nointeraction}
For a dilute solution, with no interaction of the solvent, we have, using the definition of $\epsilon(T)=\sum_{i=1}^{c}\nu_{i}\kappa_{i}(T)$, where the error terms $\kappa_{i}(T)$, $1\leq i\leq c$ occur in Definition \ref{catalyzers2}, that the same results as Lemma \ref{equivalences2} hold, replacing $\epsilon(P)$ with $\epsilon(T)$ and $\delta$ with $\epsilon(T)=\sum_{i=1}^{c}\nu_{i}\kappa_{i}(T)$ again, and using $W$ instead of $Q$.\\

\end{lemma}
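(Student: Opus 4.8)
The plan is to transcribe the proof of Lemma~\ref{equivalences2} essentially verbatim, substituting Lemma~\ref{henrys5nointeraction} for Lemma~\ref{gibbs2} and the activity coefficient $W$ for $Q$ throughout, while keeping track of the two structural features caused by the solvent being inert: the sums over species run from $1$ to $c$ rather than $0$ to $c$, and the error term $\epsilon$ is a function of $T$ alone.

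First I would record that, since the solvent is substance $0$ and $dn_{0}=0$ along any feasible path, Lemma~\ref{differential} gives $dG=\sum_{i=0}^{c}\mu_{i}\,dn_{i}=\sum_{i=1}^{c}\mu_{i}\,dn_{i}=\bigl(\sum_{i=1}^{c}\nu_{i}\mu_{i}\bigr)d\xi$, so that $({\partial G\over \partial \xi})_{T,P}=\sum_{i=1}^{c}\nu_{i}\mu_{i}$ and, integrating over $\xi\in[0,1]$ at $P=P^{\circ}$, $\Delta G^{\circ}=\sum_{i=1}^{c}\nu_{i}\mu_{i}^{\circ}$; this is exactly what is already noted in the proof of Lemma~\ref{henrys5nointeraction}. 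The first three displayed claims of Lemma~\ref{equivalences2} then carry over without change, since the definition of chemical equilibrium $({\partial G\over \partial \xi})_{T,P}=0$ and the identification $\Delta G^{\circ}=({\partial G\over \partial \xi})_{T,P^{\circ}}$ never see the error term.

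Next I would feed Lemma~\ref{henrys5nointeraction} into the remaining claims exactly as in the proof of Lemma~\ref{equivalences2}. For the equilibrium claim: if chemical (and electrical) equilibrium holds at $(T,P)$, then $0=({\partial G\over \partial \xi})_{T,P}=\Delta G^{\circ}+RT\ln(W)+\epsilon(T)$ together with $\Delta G^{\circ}=0$ forces $RT\ln(W)=-\epsilon(T)$, i.e. $W(T,P)=e^{-\epsilon(T)/RT}\simeq 1$; substituting this into the $W$-version of the Nernst equation with error term — obtained from the proof of Lemma~\ref{nernst2} by replacing $Q_{chem'}$ with $W$ — gives $E-E^{\circ}=-{RT\ln(W)\over 2F}-{\epsilon(T)\over 2F}={\epsilon(T)\over 2F}-{\epsilon(T)\over 2F}=0$, and the converse runs the same computation backwards. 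The penultimate claim is just the rearrangement of $\Delta G^{\circ}+RT\ln(W)+\epsilon(T)=0$, namely $W(T,P)=e^{(-\Delta G^{\circ}-\epsilon(T))/RT}$. For the last claim I would use that $\mu_{i}=\mu_{i}^{\circ}+RT\ln(x_{i})+\kappa_{i}(T)$ gives $RT\ln(x_{i}(T,P^{\circ}))=-\kappa_{i}(T)$, whence $W(T,P^{\circ})=\prod_{i=1}^{c}x_{i}(T,P^{\circ})^{\nu_{i}}=e^{-\sum_{i=1}^{c}\nu_{i}\kappa_{i}(T)/RT}=e^{-\epsilon(T)/RT}\simeq 1$, so that indeed $\delta=\epsilon(T)$.

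There is no genuine obstacle here — the content is a routine transcription — but the point to watch is bookkeeping: wherever Lemma~\ref{equivalences2} wrote $\epsilon(P)$ or $\delta$ one must consistently write $\epsilon(T)$, and in particular $W$ is now independent of $P$ along quasi-chemical equilibrium paths, and one must verify that the solvent genuinely drops out of $({\partial G\over \partial \xi})_{T,P}$ (which it does, since $dn_{0}=0$), this being the only real difference from the interaction case handled in Lemma~\ref{henrys6}. With these substitutions the proof of Lemma~\ref{equivalences2} applies line by line, so I would simply state that the result follows from that proof.
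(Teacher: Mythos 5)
Your proposal is correct and follows exactly the route the paper takes: the paper's own proof is the one-line remark that the result is clear from the proof of Lemma \ref{equivalences2} together with the observation in Lemma \ref{henrys5nointeraction} that $dn_{0}=0$ makes the solvent drop out of $({\partial G\over \partial \xi})_{T,P}$. You have simply written out the transcription that the paper leaves implicit, with the correct bookkeeping of $\epsilon(T)$ in place of $\epsilon(P)$ and $W$ in place of $Q$.
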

\begin{proof}
The proof is clear from the proof of Lemma \ref{equivalences2}, using the observation from Lemma \ref{henrys5nointeraction}.

\end{proof}
\begin{lemma}
\label{henrys7nointeraction}
For a dilute solution, with no interaction of the solvent, we have the same result as Lemma \ref{van't Hoff,Gibbs-Helmholtz2} hold, replacing $\epsilon(P(T))$ by $\epsilon(T)$ along the quasi-chemical equilibrium lines, and using $W$ instead of $Q$.

\end{lemma}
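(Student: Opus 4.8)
The plan is to transcribe the proof of Lemma \ref{van't Hoff,Gibbs-Helmholtz2}, tracking the single change that the error term now depends only on temperature and that the activity coefficient $W$ (which ignores $a_0$) plays the role of $Q$. First I would observe, as recorded in the proof of Lemma \ref{henrys5nointeraction}, that since $dn_0=0$ along any feasible path we still have $({\partial G\over \partial \xi})_{T,P}=\sum_{i=1}^{c}\nu_{i}\mu_{i}$ and $\Delta G^{\circ}=\sum_{i=1}^{c}\nu_{i}\mu_{i}^{\circ}$, the same expressions as in the ideal case. Consequently the opening block of the proof of Lemma \ref{van't Hoff,Gibbs-Helmholtz2} — differentiating $\Delta G^{\circ}$ in $T$, invoking Euler reciprocity to get ${d(\Delta G^{\circ})\over dT}=-\Delta S^{\circ}$, and then deducing ${d\over dT}({\Delta G^{\circ}\over T})=-{\Delta H^{\circ}\over T^{2}}$ — carries over verbatim, with no mention of the solvent.

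Next I would feed in Lemma \ref{henrys6nointeraction}: along a chemical equilibrium path $W=e^{-\Delta G^{\circ}-\epsilon(T)\over RT}$, so $ln(W)={-\Delta G^{\circ}-\epsilon(T)\over RT}$. Differentiating and using the identity just recalled gives ${d\,ln(W)\over dT}={\Delta H^{\circ}\over RT^{2}}-{d\over dT}({\epsilon(T)\over RT})$, and integrating between $T_{1}$ and $T_{2}$ — applying the fundamental theorem of calculus to the second term — produces the first formula of the lemma with $\epsilon(P(T_{j}))$ replaced by $\epsilon(T_{j})$. Substituting the endpoint values $ln(W(T_{j}))={-\Delta G^{\circ}(T_{j})-\epsilon(T_{j})\over RT_{j}}$ and cancelling $R$ yields the ${\Delta G^{\circ}(T_{2})\over T_{2}}-{\Delta G^{\circ}(T_{1})\over T_{1}}$ relation; here the error terms cancel in pairs, so the resulting $\Delta G^{\circ}$ identities (the analogues of $(Q)$ and $(\dag\dag\dag\dag)$) are literally the same as in the ideal case. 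For the claims keyed to a component $D_{c}$ meeting $P=P^{\circ}$ at $(T_{1},P^{\circ})$, I would copy the corresponding step of Lemma \ref{van't Hoff,Gibbs-Helmholtz2}, using $W(T_{1},P^{\circ})=e^{-\epsilon(T_{1})\over RT_{1}}$ from Lemma \ref{henrys6nointeraction} (with $\delta$ there understood as $\epsilon(T_{1})$) to identify $c=\Delta G^{\circ}(T_{1})$, and then rerun the integration of ${d\,ln(W)\over dT}$ along $D_{c}$ with ${c-\Delta G^{\circ}(T)-\epsilon(T)\over RT}$ in place of the ideal integrand.

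The one place the present statement diverges from a pure cut-and-paste — and hence the only point requiring care — is that in Lemma \ref{van't Hoff,Gibbs-Helmholtz2} the quantity $\epsilon(P^{\circ})=\delta$ is a genuine constant, whereas $\epsilon(T)$ here varies along the line $P=P^{\circ}$. I would note that this is immaterial: every occurrence of $\delta$ in the original is either differentiated (and its $T$-dependence is absorbed by the same fundamental-theorem-of-calculus step already present) or appears once on each side of an equation and cancels, so no step of the argument actually uses $P$-independence of the error term beyond what is already built in. This is precisely the sense in which the result is clear from the proof of Lemma \ref{van't Hoff,Gibbs-Helmholtz2}, and so I expect the proof to consist of the citation together with the bookkeeping just described rather than any new computation.
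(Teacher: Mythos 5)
Your proposal is correct and takes the same route as the paper: the paper's entire proof is the one-line citation that the result is clear from the proof of Lemma \ref{van't Hoff,Gibbs-Helmholtz2}, and your transcription supplies exactly the bookkeeping that citation presupposes (the unchanged sums $\sum_{i=1}^{c}\nu_{i}\mu_{i}$ via $dn_{0}=0$, the substitution $Q\mapsto W$, $\epsilon(P(T))\mapsto\epsilon(T)$, and the cancellation of error terms in the $\Delta G^{\circ}$ identities). Your remark that the $T$-dependence of $\epsilon(T)$ along $P=P^{\circ}$ is absorbed by the same fundamental-theorem-of-calculus step is the one point genuinely worth making explicit, and it is right.
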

\begin{proof}
The proof is again clear from the proof of Lemma \ref{van't Hoff,Gibbs-Helmholtz2}.\\

\end{proof}
\begin{lemma}
\label{henrys8nointeraction}
For a dilute solution, with no interaction of the solvent, we have the same results as Lemma \ref{eqlines2} hold, replacing $\epsilon(P')$ by $\epsilon(T_{1})$. In particularly, if $\epsilon\neq 0$, we have that;\\

$W(T,P)=e^{\epsilon ln({P\over P^{\circ}})-\epsilon(T)\over RT}$\\

and, if $\epsilon=0$, we have that;\\

$W(T,P)=e^{-\epsilon(T)\over RT}$\\

\end{lemma}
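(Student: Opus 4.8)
The plan is to follow the proof of Lemma \ref{eqlines2} essentially verbatim, tracking the single structural change that the error term is now a function of temperature alone. By Lemma \ref{henrys5nointeraction} we have $({\partial G\over \partial \xi})_{T,P}=\Delta G^{\circ}+RT\ln(W)+\epsilon(T)$ with $\epsilon(T)=\sum_{i=1}^{c}\nu_{i}\kappa_{i}(T)$; by Lemma \ref{henrys6nointeraction} the analogue of Lemma \ref{equivalences2} holds with $\epsilon(P)$ replaced by $\epsilon(T)$ and $Q$ by $W$; and by Lemma \ref{henrys7nointeraction} the analogue of the van't Hoff / Gibbs--Helmholtz relations holds along quasi-chemical equilibrium lines. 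First I would invoke Lemma \ref{henrys7nointeraction} to conclude, for a component $D_{c}$ projecting onto a bounded subinterval of $P=P^{\circ}$ and meeting it at $(T_{1},P^{\circ})$, that $\Delta G^{\circ}$ is linear in $T$ with $\Delta G^{\circ}(T_{2})=T_{2}((\Delta G^{\circ}(T_{1})-\Delta H^{\circ})/T_{1})+\Delta H^{\circ}$, exactly the first claim of Lemma \ref{eqlines2}.

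Next I would rerun the computation of $({\partial ({dG\over d\xi})_{T,P}\over \partial T})_{P}$ and $({\partial ({dG\over d\xi})_{T,P}\over \partial P})_{T}$ from the proof of Lemma \ref{eqlines} (equivalently Lemma \ref{eqlines2}). Here the one bookkeeping point to respect --- already flagged in Lemma \ref{henrys5nointeraction} --- is that $({dG\over d\xi})_{T,P}=\sum_{i=1}^{c}\nu_{i}\mu_{i}$ runs over the $c$ reacting species only, whereas the internal-energy and entropy calculation producing $\overline{S}_{m,i}$ and $\overline{V}_{i}$ involves all $c+1$ species including the solvent, whose $n_{0}$ is held fixed. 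The same steps then give $({dG\over d\xi})_{T,P}$ of the form $\lambda+\epsilon\ln(P)+\beta T+\sigma\ln(T)$ with $\{\lambda,\epsilon,\beta,\sigma\}\subset\mathcal{R}$ and $\{\beta,\sigma,\epsilon\}$ effectively determined, where $\epsilon$ is the coefficient of $\ln(P)$ obtained from $P\alpha'(P)=G(T)=\epsilon$; crucially, since $\epsilon(T)$ carries no $P$-dependence, differentiating in $P$ annihilates the error term and the $P$-dependence of $({dG\over d\xi})_{T,P}$ is entirely through $\epsilon\ln(P)$.

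Then I would split into the two cases. If $\epsilon=0$, the components $D_{c}$ are straight lines $T=T_{1}$; evaluating $({dG\over d\xi})_{T,P}=c=\Delta G^{\circ}(T_{1})$ against $\Delta G^{\circ}(T_{1})+RT_{1}\ln(W(T_{1},P))+\epsilon(T_{1})$ forces $RT_{1}\ln(W(T_{1},P))=-\epsilon(T_{1})$, i.e. $W(T_{1},P)=e^{-\epsilon(T_{1})/RT_{1}}$, which happens to be independent of $P$; the dynamical and quasi-chemical equilibrium lines then drop out of $W=c$ and $({dG\over d\xi})_{T,P}=c$ using the form with $\sigma=0$. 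If $\epsilon\neq0$, I would equate coefficients in the two expressions for $\Delta G^{\circ}(T_{2})$ along $P=P^{\circ}$ exactly as in Lemma \ref{eqlines2}, obtaining $\sigma=0$, $\lambda+\epsilon\ln(P^{\circ})=\Delta H^{\circ}$, $\beta=(\Delta G^{\circ}(T_{1})-\Delta H^{\circ})/T_{1}$, and finally, via Lemma \ref{henrys5nointeraction}, $W(T_{1},P')=e^{(\epsilon\ln(P'/P'^{\circ})-\epsilon(T_{1}))/RT_{1}}$. I do not expect a genuine obstacle here; the only things to watch are the notational clash between the coefficient $\epsilon$ of $\ln(P)$ and the temperature-dependent error term $\epsilon(T)$, and the species-count bookkeeping noted above, both of which are purely mechanical.
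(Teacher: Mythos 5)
Your proposal is correct and is essentially the argument the paper intends: the paper's own proof is the one-line remark that the result is clear from the proof of Lemma \ref{eqlines2}, and you have simply carried out that transcription in detail, with the right bookkeeping (restricted summation over the $c$ reacting species for $({\partial G\over\partial\xi})_{T,P}$ versus $c+1$ species in the internal-energy calculation, and the $P$-independence of the error term $\epsilon(T)$). No gaps.
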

\begin{proof}
The proof is clear from the proof of Lemma \ref{eqlines2}.

\end{proof}
\begin{lemma}
\label{henrys9nointeraction}
For a dilute solution, with no interaction of the solvent, we have that, if $\epsilon\neq 0$;\\

$grad(W)(T,P)=(({-\epsilon ln({P\over P^{\circ}})\over RT^{2}}+{\epsilon(T)\over RT^{2}}-{{d\epsilon\over d T}(T)\over RT})({P\over P^{\circ}})^{\epsilon\over RT}e^{-\epsilon(T)\over RT},$\\

$({\epsilon P^{\circ}\over RTP})({P\over P^{\circ}})^{\epsilon\over RT}e^{-\epsilon(T)\over RT})$\\

and, if $\epsilon=0$;\\

$grad(W)(T,P)=(({\epsilon(T)\over RT^{2}}-{{d\epsilon\over dT}(T)\over RT})e^{-\epsilon(T)\over RT},0)$\\

The paths of maximal reaction in the region $|grad(W)(T,P)|>1$, $W(T,P)>0$, are given by implicit solutions to the differential equations;\\

${dP\over dT}={\epsilon TP^{\circ}\over (-\epsilon Pln({P\over P^{\circ}})+P\epsilon(T)-PT{d\epsilon\over d T}(T))}$\\

${dP\over dT}=0$\\

respectively.\\

\end{lemma}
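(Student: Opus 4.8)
The plan is to reduce everything to the explicit formula for $W$ furnished by Lemma \ref{henrys8nointeraction} and then to rerun the optimisation argument of Lemma \ref{rates}. The key structural fact here is that, because there is no interaction of the solvent, the error term $\epsilon(T)=\sum_{i=1}^{c}\nu_{i}\kappa_{i}(T)$ of Lemma \ref{henrys5nointeraction} depends on $T$ only, so $\partial\epsilon/\partial P\equiv 0$; this is exactly what collapses the second gradient component and the ODE in the $\epsilon=0$ case.

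First I would compute $grad(W)=(\partial W/\partial T,\partial W/\partial P)$ by the chain and product rules. When $\epsilon\neq 0$ we have $W(T,P)=e^{(\epsilon\ln(P/P^{\circ})-\epsilon(T))/RT}$; differentiating the exponent in $P$ gives only the term from $\epsilon\ln(P/P^{\circ})$ (since $\epsilon'(P)=0$), and differentiating in $T$ produces the three terms $-\epsilon\ln(P/P^{\circ})/RT^{2}$, $\epsilon(T)/RT^{2}$ and $-(d\epsilon/dT)/RT$; multiplying through by $W$ recovers the stated expression. When $\epsilon=0$, $W(T,P)=e^{-\epsilon(T)/RT}$ is a function of $T$ alone, so $\partial W/\partial P=0$ automatically and $\partial W/\partial T=(\epsilon(T)/RT^{2}-(d\epsilon/dT)/RT)e^{-\epsilon(T)/RT}$, as asserted.

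Next, for the paths of maximal reaction I would invoke the argument of Lemma \ref{rates} essentially verbatim. Since $W$ is built from the mole fractions in the same way as $Q$, namely as a product $\prod x_{i}^{\nu_{i}}$, the computation of $\xi'(0)=h(\lambda,\theta)$ for a feasible path $\gamma$ with $\gamma_{12}'(0)=\lambda(\cos\theta,\sin\theta)$, the monotonicity of $h$ in $\lambda$, and the critical equation $r(\theta)=1+c\beta_{1}/\lambda$ all carry over unchanged with $W$ in place of $Q$. Hence, on the region $|grad(W)(T,P)|>1$, $W(T,P)>0$, the maximising direction is parallel to $grad(W)(T,P)$, so the maximal reaction paths are the integral curves of $grad(W)$, obtained by solving $dP/dT=(\partial W/\partial P)/(\partial W/\partial T)$; that the resulting implicit relations genuinely are these integral curves follows, as in Lemma \ref{rates}, from the cited result of \cite{BdiP}.

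Finally I would simplify the ODE. The common factor $(P/P^{\circ})^{\epsilon/RT}e^{-\epsilon(T)/RT}$ cancels between numerator and denominator, and clearing the remaining denominators by multiplying top and bottom by $RT^{2}P$ turns $dP/dT$ into $\epsilon TP^{\circ}/(-\epsilon P\ln(P/P^{\circ})+P\epsilon(T)-PT\,d\epsilon/dT)$, the first asserted equation; in the case $\epsilon=0$ the numerator $\partial W/\partial P$ vanishes identically while $\partial W/\partial T\neq 0$ wherever $grad(W)\neq 0$, so $dP/dT=0$ and the maximal reaction paths are the vertical lines $P=\text{const}$. I expect the only genuine obstacle to be the careful transfer of the maximisation argument of Lemma \ref{rates}: one must confirm that the parametrisation of $\gamma$ can be chosen so that $G_{\gamma}'(0)\neq 0$ (equivalently $grad(W)\cdot\gamma_{12}'(0)\neq cW$) and that $|grad(W)|>1$ on the region considered, exactly as there; everything else is routine differentiation.
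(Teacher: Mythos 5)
Your proposal is correct and follows essentially the same route as the paper: differentiate the formula for $W$ from Lemma \ref{henrys8nointeraction} using the fact that $\epsilon(T)$ has no $P$-dependence, then transfer the maximisation argument of Lemma \ref{rates} to conclude that the maximal reaction paths are the integral curves of $grad(W)$, i.e.\ the implicit solutions of $dP/dT=(\partial W/\partial P)/(\partial W/\partial T)$. The one detail the paper makes explicit that you pass over in saying the argument carries over ``unchanged'' is that, with the solvent present at fixed molar amount, the constant $\beta$ in the proof of Lemma \ref{rates} must be replaced by $\sum_{i=0}^{c}n_{i,0}$ (including the solvent), which alters $\alpha_{1}$ but keeps $\alpha_{1}\neq 0$, so the rest of the argument is indeed unaffected.
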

\begin{proof}
The computation follows easily from the proof of Lemma \ref{henrys9}, replacing $\epsilon(P,T)$ by $\epsilon(T)$. We also note that in the proof of maximal reaction, see Lemma \ref{rates}, we have to change $\beta$ to $\sum_{i=0}^{c+1}n_{i,0}$, where $n_{i,0}$ is the fixed molar amount of the solvent. This effects $\alpha_{1}$ but we still have that $\alpha_{1}\neq 0$ and the rest of the proof is unchanged.\\
\end{proof}
\begin{rmk}
\label{other}
We can also formulate versions of Lemmas \ref{henrys5nointeraction} to \ref{henrys9nointeraction} for the activity coefficient $Q$ instead of $W$, mentioned in the introduction to this section \ref{nointeraction}. However, although the proof should go through, it is more difficult, and left as an exercise for the reader, combining the methods of Sections \ref{errorterms} and \ref{errorterms2}. However, it seems unnecessary when we can derive the main results with the coefficient $W$.

\end{rmk}
\end{section}
\begin{section}{Dilute solutions with Fugacity and No Solvent Interaction}
\label{fug1}
Again, we can define activity coefficients either by $W=\prod_{i=1}^{c}b_{i}^{\nu_{i}}$, or the more conventional $Z=b_{0}\prod_{i=1}^{c}b_{i}^{\nu_{i}}$ where;\\

$b_{0}=x_{0}$\\

$b_{i}=x_{i}$, $1\leq i\leq c$\\

See the introductions to Section \ref{fug} and Section \ref{nointeraction} with the Remark \ref{zero}. We will again consider both cases.

\begin{lemma}
\label{fugacity2nointeraction}
In the case of dilute solutions, with no interaction of the solvent, a feasible path $\gamma$ is a dynamic equilibrium path iff $pr(\gamma_{12})\subset C_{f}$, for some $f\in\mathcal{R}_{>0}$ iff ${dW\over dt}=0$.
\end{lemma}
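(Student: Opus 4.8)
The plan is to reduce this to the argument already given for Lemma \ref{henrys2nointeraction}. Since $b_i = x_i = n_i/n$ for $0\le i\le c$, with $n = \sum_{i=0}^{c} n_i$ and the solvent amount $n_0 = d_0 > 0$ held fixed, the activity coefficient $W = \prod_{i=1}^{c} b_i^{\nu_i}$ is exactly the same function of the mole numbers as the $W$ appearing in the Henry's-law, no-interaction case. So no new computation is needed: the proof is obtained by transcribing that of Lemma \ref{henrys2nointeraction}, which in turn copies the proofs of Lemma \ref{implies} and Lemma \ref{feasible}.

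First I would record that if $\gamma$ is feasible then $pr_{12}(\gamma) \subset W_{>0}$, since otherwise there would be a point $(T,P)$ with some $x_i(T,P) \le 0$, contradicting $n_i > 0$ and $n > 0$; hence if $pr(\gamma_{12}) \subset C_f$ then automatically $f > 0$. With $f > 0$ fixed, I would write $W = \prod_{i=1}^{c} x_i^{\nu_i} = f$, differentiate in $t$, and use the linkage relations $n_i' = (\nu_i/\nu_c) n_c'$ and $n_i = (\nu_i/\nu_c) n_c + d_i$ for $1 \le i \le c-1$, together with $n_0' = 0$. Exactly as in the proof of Lemma \ref{implies}, with $\beta$ replaced by $\sum_{i=0}^{c-1} d_i$, differentiation kills the constant $f$ and one is led to a nontrivial polynomial identity in $n_c$ (after disposing of the case $\sum_{i=1}^{c}\nu_i = 0$ and the various pivot choices as there), forcing $n_c' = 0$, hence $n_i' = 0$ for $1 \le i \le c$; and $n_0' = 0$ holds by construction. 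Thus $\gamma$ is a dynamic equilibrium path.

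For the remaining implications: if $\gamma$ is a dynamic equilibrium path then every $n_i$ ($0 \le i \le c$) is constant, so $n$ is constant, every $x_i$ is constant, and therefore $W$ is constant along $pr_{12}(\gamma)$; this gives both $dW/dt = 0$ and $pr(\gamma_{12}) \subset C_f$ with $f = W > 0$. Conversely, $dW/dt = 0$ says precisely that $W$ is constant along $pr_{12}(\gamma)$, hence $pr(\gamma_{12}) \subset C_f$ for some $f$, which closes the cycle of equivalences; this last step is the modification of Lemma \ref{feasible} already noted in Lemma \ref{henrys2nointeraction}.

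I expect the only genuine work to be the bookkeeping in the forward direction, namely carrying the fixed-solvent term $d_0$ through the polynomial-root argument of Lemma \ref{implies} and verifying that the exceptional cases (vanishing partial sums of the $\nu_i$, and the choice of pivot) still go through unchanged; everything else is formal.
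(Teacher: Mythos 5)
Your proposal is correct and follows the paper's own route exactly: the paper's proof is literally ``copy the proof of Lemma \ref{henrys2nointeraction}'', which is justified precisely because $b_{i}=x_{i}$ makes $W$ the same function of the mole numbers as in the Henry's-law case, and you then correctly unwind that copying into the positivity of $f$, the Lemma \ref{implies} root argument with $\beta$ replaced by $\sum_{i=0}^{c-1}d_{i}$, and the converse via Lemma \ref{feasible}. No discrepancy to report.
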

\begin{proof}
Copy the proof of Lemma \ref{henrys2nointeraction}.
\end{proof}

\begin{lemma}
\label{fugacity1}
In the case of dilute solutions, with no interaction of the solvent, and $b_{0}$ assumed constant, a feasible path $\gamma$ is a dynamic equilibrium path iff $pr(\gamma_{12})\subset C_{f}$, for some $f\in\mathcal{R}_{>0}$ iff ${dZ\over dt}=0$.
\end{lemma}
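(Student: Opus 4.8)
The plan is to mirror the proof of Lemma \ref{henrys1} essentially verbatim, with $Q$ replaced by $Z$, $a_{0}$ by $b_{0}$ and $a_{i}$ by $b_{i}$ throughout. First I would record that, by the definition of the new activity coefficient in the introduction to Section \ref{fug1}, $b_{0}=x_{0}$ and $b_{i}=x_{i}$ for $1\le i\le c$, so
\[
\prod_{i=1}^{c}b_{i}^{\nu_{i}}=\frac{Z}{b_{0}}=\prod_{i=1}^{c}x_{i}^{\nu_{i}}.
\]
Hence if $pr(\gamma_{12})\subset C_{f}$, i.e.\ $Z\equiv f$ along the path, and $b_{0}=x_{0}$ is held constant, then $\prod_{i=1}^{c}x_{i}^{\nu_{i}}=f/b_{0}=:d$ is constant.

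Next I would check positivity exactly as in Lemma \ref{henrys1}: since $\gamma$ is a feasible path we have $n_{i}>0$ for $0\le i\le c$ and $n=\sum n_{i}>0$, so every $x_{i}>0$ and in particular $pr_{12}(\gamma)$ avoids the locus where some $x_{i}\le 0$; this forces $f>0$, hence $d>0$. With the positive constant $d$ in hand, the relation $\prod_{i=1}^{c}x_{i}^{\nu_{i}}=d$ together with the linkage relations $n_{i}'/\nu_{i}=n_{j}'/\nu_{j}$ places us precisely in the situation of Lemma \ref{implies}: differentiating kills the constant $d$ (just as the constant $f$ is killed there), and the algebraic argument of that lemma yields $n_{i}'=0$ for $1\le i\le c$; the standing assumption that $b_{0}=x_{0}$ is constant gives $n_{0}'=0$ as well. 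This proves the forward implication, that a feasible path lying over some $C_{f}$ is a dynamic equilibrium path.

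For the remaining two equivalences I would invoke the appropriate modification of Lemma \ref{feasible}: ${dZ}/{dt}=0$ along $\gamma$ means $Z$ is constant on $pr_{12}(\gamma)$, i.e.\ $pr_{12}(\gamma)\subset C_{f}$ for some $f$ (necessarily $f>0$ by the positivity above), while conversely, if $pr_{12}(\gamma)\not\subset C_{f}$ for any $f$ then $\mathrm{grad}(Z)\cdot\gamma_{12}'\ne 0$ at some point, contradicting the fact that on a dynamic equilibrium path all $n_{i}$, hence all $x_{i}$ and hence $Z$, are constant. I expect no genuine obstacle: the only thing that requires care is the bookkeeping of the extra fixed coordinate $n_{0}$ (one replaces the quantity $\beta$ in the proof of Lemma \ref{implies} by $\sum_{i=0}^{c-1}d_{i}$ with $d_{0}=n_{0}$, and observes $n_{0}'=0$ automatically), which is handled exactly as in Lemmas \ref{henrys1}, \ref{henrys2} and \ref{henrys2nointeraction}. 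Everything else is a direct transcription of those earlier arguments, so the proof can reasonably be compressed to ``copy the proof of Lemma \ref{henrys1}, replacing $Q,a_{0},a_{i}$ by $Z,b_{0},b_{i}$.''
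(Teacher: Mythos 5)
Your proposal is correct and is essentially identical to the paper's own proof, which records the same relation $\prod_{i=1}^{c}b_{i}^{\nu_{i}}=Z/b_{0}=\prod_{i=1}^{c}x_{i}^{\nu_{i}}=f/b_{0}=d$ and then simply says to copy the proof of Lemma \ref{henrys1}. Your version just spells out the details of that reduction (positivity of $d$, the appeal to Lemma \ref{implies}, and the modification of Lemma \ref{feasible}) more explicitly than the paper does.
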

\begin{proof}
We have that;\\

$\prod_{i=1}^{c}b_{i}^{\nu_{i}}={Z\over b_{0}}$\\

$=\prod_{i=1}^{c}x_{i}^{\nu_{i}}$\\

$={f\over b_{0}}=d$\\

Now copy the proof of Lemma \ref{henrys1}.

\end{proof}
\begin{lemma}
\label{fugacity2}
In the case of dilute solutions, with no interaction of the solvent, a feasible path $\gamma$ is a dynamic equilibrium path iff $pr(\gamma_{12})\subset C_{f}$, for some $f\in\mathcal{R}_{>0}$ iff ${dZ\over dt}=0$.
\end{lemma}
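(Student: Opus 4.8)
The plan is to reduce everything to the proof of Lemma \ref{henrys2}, since $Z$ in the present setting has exactly the same algebraic shape as the quotient $Q$ there. First I would record that, with $b_{0}=x_{0}$, $b_{i}=x_{i}$ for $1\leq i\leq c$, we have
\[
Z=b_{0}\prod_{i=1}^{c}b_{i}^{\nu_{i}}=\prod_{i=0}^{c}x_{i}^{\mu_{i}},\qquad \mu_{0}=1,\ \mu_{i}=\nu_{i}\ (1\leq i\leq c),
\]
which is the identical expression to $(*)$ in the proof of Lemma \ref{henrys2}. Along a feasible path the linkage relations ${n_{i}'/\nu_{i}}={n_{j}'/\nu_{j}}$ hold for $1\leq i<j\leq c$, and here $n_{0}$ is a fixed constant, so $n_{0}'=0$; equivalently $n_{i}'=\mu_{i}n_{c}'/\mu_{c}$ and $n_{i}=\mu_{i}n_{c}/\mu_{c}+d_{i}$ for $1\leq i\leq c-1$ with $n_{0}'=0$, $n_{0}=d_{0}$, which is exactly the setup of Lemma \ref{henrys2}.

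For the forward direction, if $pr(\gamma_{12})\subset C_{f}$ then $Z$ is constant $=f$ along $pr(\gamma_{12})$; as in Lemma \ref{henrys2nointeraction} and Lemma \ref{henrys1}, feasibility forces $x_{i}>0$, $n>0$, hence $f>0$. Differentiating $\prod_{i=0}^{c}x_{i}^{\mu_{i}}=f$ and substituting the linkage relations reproduces the identity $(B)$ of Lemma \ref{henrys2}; one then runs the same three-way case analysis on $\lambda=\sum_{i=0}^{c}\mu_{i}=1+\sum_{i=1}^{c}\nu_{i}$: (i) if $\sum_{i=1}^{c-1}\nu_{i}\neq 0$ and $\lambda\neq 0$, a nontrivial polynomial relation in $n_{c}$ forces $n_{c}'=0$; (ii) if $\lambda=0$, the relation $\prod_{i=0}^{c}n_{i}^{\mu_{i}}=f$ plus the repeated-differentiation/limit argument of Lemma \ref{implies} again gives $n_{c}'=0$; (iii) if $\sum_{i=1}^{c-1}\nu_{i}=0$, choose a different pivot index $j_{0}$ with $\sum_{i\in I_{j_{0}}}\nu_{i}\neq 0$ exactly as in Lemma \ref{implies}. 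In every case the linkage relations then give $n_{i}'=0$ for $1\leq i\leq c$, and $n_{0}'=0$ by hypothesis, so $\gamma$ is a dynamic equilibrium path. Conversely, if $\gamma$ is a dynamic equilibrium path then each $n_{i}$, hence $n=\sum n_{i}$, hence each $x_{i}$, is constant, so $Z$ is constant along $pr(\gamma_{12})$, i.e. $pr(\gamma_{12})\subset C_{f}$ for some $f>0$ and $dZ/dt=0$; and plainly $pr(\gamma_{12})\subset C_{f}$ $\Leftrightarrow$ $dZ/dt=0$ since $Z$ depends only on $(T,P)$. This closes the chain of equivalences.

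I expect the only real work to be bookkeeping: verifying that inserting the extra factor $x_{0}^{\mu_{0}}$ with $\mu_{0}=1$ (and the constraint $n_{0}'=0$, $n_{0}=d_{0}>0$) does not disturb the polynomial-degree counts in case (i) of Lemma \ref{henrys2} — in particular that the leading coefficient $\gamma_{c}$ remains nonzero under the stated hypotheses on the $\nu_{i}$ — and that the pivot-change argument still applies verbatim. Since all of this is already carried out in Lemma \ref{henrys2} with $\mu_{0}=1$, the cleanest write-up is simply: ``Copy the proof of Lemma \ref{henrys2}, noting $Z=\prod_{i=0}^{c}x_{i}^{\mu_{i}}$ with $\mu_{0}=1$, $\mu_{i}=\nu_{i}$, and that the other two equivalences follow from (the obvious modification of) Lemma \ref{feasible}.'' The main obstacle, such as it is, is making sure the positivity claim $f>0$ and the pivot selection are invoked in the right order before differentiating.
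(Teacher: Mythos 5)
Your proposal matches the paper's proof, which likewise observes that $Z=b_{0}\prod_{i=1}^{c}b_{i}^{\nu_{i}}=x_{0}\prod_{i=1}^{c}x_{i}^{\nu_{i}}=f$ and then simply says to copy the proof of Lemma \ref{henrys2}. Your write-up is a more explicit unpacking of that same reduction (the case analysis on $\lambda$, the pivot change, and the appeal to Lemma \ref{feasible} for the remaining equivalences), but the approach is identical.
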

\begin{proof}
We have that;\\

$b_{0}\prod_{i=1}^{c}b_{i}^{\nu_{i}}=x_{0}\prod_{i=1}^{c}x_{i}^{\nu_{i}}$\\

$=Z=f$\\

Now copy the proof of Lemma \ref{henrys2}.

\end{proof}

\begin{lemma}
\label{fugacity4nointeraction}
In Lemmas \ref{nernst} to \ref{alltelectrical}, for the standard cell, and considering a dilute solution with no interaction of the solvent, replacing $Q$ defined as $\prod_{i=1}^{c}a_{i}^{\nu_{i}}$ by the same definition of $W$;\\

If we assume without approximation that $\mu_{i}=\mu_{i}^{\circ}+RT ln(b_{i})$, $1\leq i\leq c$, then;\\

the same results as Lemma \ref{proofs}, setting $a_{0}(T,P)=1$, with the same caveat as Lemma \ref{henrys4nointeraction}.
\end{lemma}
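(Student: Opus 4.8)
The plan is to reduce the statement to Lemma \ref{henrys4nointeraction}, of which it is, up to the renaming $a_i \leftrightarrow b_i$, a verbatim copy: in Section \ref{nointeraction} one has $a_i = x_i$ and $W = \prod_{i=1}^c a_i^{\nu_i}$, and in Section \ref{fug1} one has $b_i = x_i$ and $W = \prod_{i=1}^c b_i^{\nu_i}$, so after the renaming the hypotheses and conclusions coincide. It therefore suffices to recall why the proofs of the lemmas in the range \ref{nernst}--\ref{alltelectrical} transfer to the present setting, in which there are $c+1$ uncharged species and the solvent, substance $0$, is inert ($dn_0 = 0$).

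The structural input is the following. Along any feasible path $dn_0 = 0$, so Lemma \ref{differential} gives $dG = \sum_{i=0}^c \mu_i\,dn_i = \sum_{i=1}^c \mu_i\,dn_i$, hence, exactly as in Lemma \ref{equivalences}, $(\partial G/\partial\xi)_{T,P} = \sum_{i=1}^c \nu_i\mu_i$ and $\Delta G^\circ = \sum_{i=1}^c \nu_i\mu_i^\circ$, now computed from the Gibbs function of $c+1$ species. Substituting the exact hypothesis $\mu_i = \mu_i^\circ + RT\ln(b_i)$, $1 \le i \le c$, into this identity reproduces Lemma \ref{gibbs} with $W$ in place of $Q$, namely $(\partial G/\partial\xi)_{T,P} - \Delta G^\circ = \sum_{i=1}^c \nu_i RT\ln(b_i) = RT\ln(W)$; the same exact relation at $P = P^\circ$ forces $b_i(T,P^\circ) = 1$, so $W(T,P^\circ) = 1$, matching the last clause of Lemma \ref{equivalences}. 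Feeding these two facts through the arguments of Lemmas \ref{nernst}, \ref{delta}, \ref{equivalences}, \ref{vanhoffhelmholtz} and \ref{eqlines} (and the electrochemical corollaries following them) reproduces those statements verbatim with $Q$ read as $W$; in the Nernst computation the solvent plays no role, since only $G_{chem'}$, restricted to the uncharged reacting species, enters. The comparison with Lemma \ref{proofs} is then immediate: its formulas are those of the present lemmas decorated with factors $a_0(T,P)$ arising from the term $\nu_0 RT\ln(a_0)$, and with $a_0 \equiv 1$ every such factor disappears, leaving exactly the $W$-versions just described.

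The one place needing care, exactly as flagged in the proof of Lemma \ref{proofs}, is the computation of $(\partial (\partial G/\partial\xi)_{T,P}/\partial T)_P$ and $(\partial (\partial G/\partial\xi)_{T,P}/\partial P)_T$ inside the analogue of Lemma \ref{eqlines}: the chemical-potential sums run over $1 \le i \le c$, while the internal-energy identity $U = \sum_{i=0}^c(\tfrac{3}{2}N_A n_i kT - N_A n_i m_i\rho_i)$ and the ensuing $dU$, $dQ/T$ manipulations range over all $c+1$ species. Because $dn_0 = 0$, the solvent contributes only $\tfrac{3}{2}N_A k\,n_0\,dT$ to $dU$, which is absorbed into $g(T,\overline{n})$ and never reaches $(dQ/T)_{n',T,P}$ for a reacting index; so $\overline{S}_{m,i}$ is unchanged for $1 \le i \le c$, and $(\partial G/\partial\xi)_{T,P}$ again has the form $\lambda + \epsilon\ln(P) + \beta T + \sigma\ln(T)$, whence the $W$-formulas and the dynamic and quasi-chemical equilibrium loci follow by the same rearrangements. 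Finally one records the degenerate caveat inherited from Lemma \ref{henrys4nointeraction}: in the $\epsilon = 0$ case, putting $a_0 \equiv 1$ collapses $W$ to the constant $1$, so the condition $pr_{12}(\gamma) \subset C_c$ holds with $c = 1$ for every feasible path, i.e. every feasible path is trivially a dynamic equilibrium path. There is no genuine obstacle; the work is purely clerical — keeping the index ranges $\sum_{i=1}^c$ (chemical potentials) and $\sum_{i=0}^c$ (thermodynamic energy identities) apart, and checking that the inert solvent term never leaks into $(\partial G/\partial\xi)_{T,P}$ or into $W$.
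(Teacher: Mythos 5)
Your proposal is correct and follows the same route as the paper, which simply reduces the statement to Lemma \ref{henrys4nointeraction} via the identification $b_{i}=x_{i}=a_{i}$ and declares the rest clear. You have merely filled in the details the paper leaves implicit (the $dn_{0}=0$ bookkeeping, the index ranges, and the degenerate $W\equiv 1$ caveat), all of which are consistent with the paper's intended argument.
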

\begin{proof}
The proof is clear, as in Lemma \ref{henrys4nointeraction}.\\
\end{proof}

\begin{lemma}
\label{fugacity4}
In Lemmas \ref{nernst} to \ref{alltelectrical}, for the standard cell, and considering a dilute solution with no interaction of the solvent, replacing $Q$ defined as $\prod_{i=1}^{c}a_{i}^{\nu_{i}}$ by;\\

$Z=b_{0}(\prod_{i=1}^{c}b_{i}^{\nu_{i}})=x_{0}(\prod_{i=1}^{c}b_{i}^{\nu_{i}})$\\

If we assume without approximation that $\mu_{i}=\mu_{i}^{\circ}+RT ln(b_{i})$, $0\leq i\leq c$, then;\\

the same results as Lemma \ref{proofs}, with the new definition of $b_{0}(T,P)=x_{0}(T,P)$ replacing $a_{0}(T,P)$ in the Lemma.\\
\end{lemma}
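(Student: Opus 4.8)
The plan is to note that the proof of Lemma \ref{proofs} uses nothing about the solvent activity $a_0$ beyond the bare relation $\mu_0 = \mu_0^\circ + RT\ln(a_0)$ together with the fact that the solvent amount $n_0$ is held fixed, so that $dn_0 = 0$. Here we set $b_0 = x_0$, $b_i = x_i$ for $1 \le i \le c$, and $Z = b_0\prod_{i=1}^c b_i^{\nu_i}$, and we are given by hypothesis that $\mu_i = \mu_i^\circ + RT\ln(b_i)$ holds exactly for $0 \le i \le c$. Thus the two structural inputs of Lemma \ref{proofs} are available verbatim, and the proof amounts to transcribing that of Lemma \ref{proofs} with $a_i \mapsto b_i$, $a_0 \mapsto b_0 = x_0$ and $Q \mapsto Z$ throughout.

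Concretely I would carry out the same steps in the same order. First, since $dG = \sum_{i=0}^c \mu_i\,dn_i = \sum_{i=1}^c \mu_i\,dn_i$, the first three claims of Lemma \ref{equivalences} (the formulas for $({\partial G\over\partial\xi})_{T,P}$ and $\Delta G^\circ$, and the equilibrium statements) carry over unchanged. Feeding $\mu_i = \mu_i^\circ + RT\ln(b_i)$, $0\le i\le c$, into the computation of Lemma \ref{gibbs} gives $({\partial G\over\partial\xi})_{T,P} = \Delta G^\circ + RT\ln(Z/b_0(T,P))$, whence the modified Lemma \ref{equivalences} with $Q(T,P) = b_0(T,P)$ at equilibrium. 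The electron-count argument from the proof of Lemma \ref{nernst}, now peeling off $b_0 = x_0$ from the chemical part, yields $E - E^\circ = -{RT\ln(Z)\over 2F} + {RT\ln(b_0(T,P)/b_0(T,P^\circ))\over 2F}$ and hence the modified Lemma \ref{delta}. Finally the van't Hoff/Gibbs--Helmholtz computation and the line-geometry argument of Lemma \ref{eqlines} are reproduced carrying the extra $\ln b_0$ term exactly as the $\ln a_0$ term is carried in Lemma \ref{proofs}; in particular along a component $D_c$ the $b_0$-terms combine so that $\Delta G^\circ$ stays linear and $\Delta G^\circ(T_1) = {T_1\over T_2}\Delta G^\circ(T_2) - \Delta H^\circ({T_1\over T_2}-1)$ survives.

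The only point deserving comment is the caveat, exactly as in Lemma \ref{henrys4}: the relation $\mu_i = \mu_i^\circ + RT\ln(b_i)$ assumed here is the idealised one, which is inconsistent with the fugacity-corrected expressions $\mu_i = \mu_i^\circ + RT\ln(b_i) + \psi_i(T,P)$ ($1\le i\le c$) and $\mu_0 = \mu_0^\circ + RT\ln(b_0) + \phi_0(T,P)$ of Definition \ref{catalyzers3}; we are deliberately discarding the error terms $\phi_0,\psi_i$, precisely as Lemma \ref{proofs} discards the corresponding error terms. Granting this, there is no genuine obstacle: the substitution $a_0 \leftrightarrow x_0$ is purely notational, and the verification is a routine copy of the proof of Lemma \ref{proofs} (equivalently, of Lemma \ref{henrys4}). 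The hard part, such as it is, is simply bookkeeping --- checking that no step in that chain of equalities silently invoked a property of $a_0$ other than $\mu_0 = \mu_0^\circ + RT\ln(a_0)$, which it does not.
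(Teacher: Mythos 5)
Your proposal is correct and takes essentially the same route as the paper: the paper's own proof is simply ``The proof is clear, as in Lemma \ref{henrys4}'', i.e.\ a direct transcription of Lemma \ref{proofs} with $a_{i}\mapsto b_{i}$, $a_{0}\mapsto b_{0}=x_{0}$ and $Q\mapsto Z$, which is exactly what you carry out. Your explicit check that Lemma \ref{proofs} uses nothing about $a_{0}$ beyond $\mu_{0}=\mu_{0}^{\circ}+RT\ln(a_{0})$ and $dn_{0}=0$ is the substance the paper leaves implicit.
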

\begin{proof}
The proof is clear, as in Lemma \ref{henrys4}.\\
\end{proof}
We reformulate Lemmas \ref{gibbs2} to \ref{rates} in this context, assuming the approximation to Henry's law for the solutes and the approximation to Raoult's law for the solvent;\\

\begin{lemma}
\label{fugacity5nointeraction}
In the dilute solution case, with no interaction of the solvent, for the energy function $G$ involving  $c+1$ uncharged species;\\

$({\partial G\over \partial \xi})_{T,P}=\Delta G^{\circ}+RTln(W)+\epsilon(T,P)$\\

where $\epsilon(T,P)=\sum_{i=1}^{c}\nu_{i}\psi_{i}(T,P)$, and $\psi_{i}$, for $1\leq i\leq c$ are the error terms for the solutes in Definition \ref{catalyzers3}.
\end{lemma}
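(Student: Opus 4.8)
The plan is to mirror the proof of Lemma \ref{henrys5nointeraction}, substituting the fugacity expansion of Definition \ref{catalyzers3} for the Henry's-law expansion used there. First I would record that, in the no-interaction setting, the molar amount $n_0$ of the solvent is held fixed along every feasible path (this is built into the definition in Remark \ref{zero}), so $dn_0=0$; hence by Lemma \ref{differential}, at fixed $(T,P)$ we have $dG=\sum_{i=0}^{c}\mu_i\,dn_i=\sum_{i=1}^{c}\mu_i\,dn_i=\big(\sum_{i=1}^{c}\nu_i\mu_i\big)d\xi$, so that $({\partial G\over\partial\xi})_{T,P}=\sum_{i=1}^{c}\nu_i\mu_i$, and, integrating over $\xi\in[0,1]$ at $P=P^{\circ}$ exactly as in Lemma \ref{equivalences}, $\Delta G^{\circ}=\sum_{i=1}^{c}\nu_i\mu_i^{\circ}$. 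The index $i=0$ drops out of both sums, which is the only place the hypothesis ``no interaction of the solvent'' is used.

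Next I would insert the chemical-potential formula obtained at the end of Definition \ref{catalyzers3}: for $1\le i\le c$, $\mu_i=\mu_i^{\circ}+RT\,ln(b_i)+\psi_i(T,P)$, where $b_i=x_i=a_i$ and $\psi_i(T,P)=RT\,ln(\gamma_i(T,P))+\kappa_i(T)$ is the solute error term. Subtracting the two identities of the first step and using this gives
\[
\Big({\partial G\over\partial\xi}\Big)_{T,P}-\Delta G^{\circ}=\sum_{i=1}^{c}\nu_i\big(\mu_i-\mu_i^{\circ}\big)=RT\,ln\Big(\prod_{i=1}^{c}b_i^{\nu_i}\Big)+\sum_{i=1}^{c}\nu_i\psi_i(T,P),
\]
and since $W=\prod_{i=1}^{c}b_i^{\nu_i}$ by the definition of the activity coefficient at the start of Section \ref{fug1}, while $\epsilon(T,P)=\sum_{i=1}^{c}\nu_i\psi_i(T,P)$ by hypothesis, this is exactly the asserted identity.

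I expect no genuine obstacle here: the content is bookkeeping and the result is immediate from Lemma \ref{gibbs2} once one observes that the no-interaction hypothesis removes the $i=0$ contribution both from $({\partial G\over\partial\xi})_{T,P}$ and from $W$. The only point that needs a word of justification --- and it is the same point as in Lemma \ref{henrys5nointeraction} --- is that $n_0'=0$ along feasible paths, so that the solvent error term $\phi_0(T,P)$ of Definition \ref{catalyzers3} plays no role and correctly does not appear in $\epsilon(T,P)$.
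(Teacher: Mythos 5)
Your proposal is correct and follows the same route as the paper, which simply cites Lemma \ref{gibbs2} together with the observation from Lemma \ref{henrys5nointeraction} that $dn_{0}=0$ forces $({\partial G\over \partial \xi})_{T,P}=\sum_{i=1}^{c}\nu_{i}\mu_{i}$ and $\Delta G^{\circ}=\sum_{i=1}^{c}\nu_{i}\mu_{i}^{\circ}$. You have merely spelled out the substitution of the fugacity expansion $\mu_{i}=\mu_{i}^{\circ}+RT\,ln(b_{i})+\psi_{i}(T,P)$ from Definition \ref{catalyzers3}, which is exactly what the paper intends.
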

\begin{proof}
The proof is clear from Lemma \ref{gibbs2}, with the same observation as in Lemma \ref{henrys5nointeraction}.

\end{proof}
\begin{lemma}
\label{fugacity6nointeraction}
For a dilute solution, with no interaction of the solvent, we have, using the definition of $\epsilon(T,P)$ in Lemma \ref{fugacity5nointeraction}, the error terms $\psi_{i}(T,P)$, $1\leq i\leq c$ in Definition \ref{catalyzers3}, that the same results as Lemma \ref{equivalences2} hold, replacing $\epsilon(P)$ with $\epsilon(T,P)$ and $\delta$ with $\epsilon(T,P^{\circ})=\sum_{i=1}^{c}\nu_{i}\psi_{i}(T,P^{\circ})$, using $W$ instead of $Q$.\\

\end{lemma}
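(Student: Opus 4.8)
The plan is to transcribe the proof of Lemma \ref{equivalences2} essentially verbatim, carrying out the two bookkeeping substitutions $Q\mapsto W$ and $\epsilon(P)\mapsto\epsilon(T,P)$ (with $\delta\mapsto\epsilon(T,P^{\circ})$), and checking that no step depends on the specific functional form of the error term. The one genuinely new ingredient is the observation already recorded in the proof of Lemma \ref{fugacity5nointeraction}: since the molar amount $n_{0}$ of the solvent is held fixed along every feasible path, $dn_{0}=0$, so that $dG=\sum_{i=0}^{c}\mu_{i}dn_{i}=\sum_{i=1}^{c}\mu_{i}dn_{i}$, whence $({\partial G\over \partial \xi})_{T,P}=\sum_{i=1}^{c}\nu_{i}\mu_{i}$ and $\Delta G^{\circ}=\sum_{i=1}^{c}\nu_{i}\mu_{i}^{\circ}$, exactly as in Lemma \ref{equivalences2}. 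This is what lets the first three claims (the expressions for $({\partial G\over \partial \xi})_{T,P}$ and $\Delta G^{\circ}$, and the equilibrium characterisations at $(T,P)$ and $(T,P^{\circ})$) go through unchanged.

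For the remaining claims I would first invoke Definition \ref{catalyzers3}, which gives $\mu_{i}=\mu_{i}^{\circ}+RT\ln(b_{i})+\psi_{i}(T,P)$ for $1\leq i\leq c$, so that subtracting $\mu_{i}^{\circ}$ and summing against the $\nu_{i}$ yields $({\partial G\over \partial \xi})_{T,P}-\Delta G^{\circ}=RT\ln(W)+\epsilon(T,P)$ with $W=\prod_{i=1}^{c}b_{i}^{\nu_{i}}$ and $\epsilon(T,P)=\sum_{i=1}^{c}\nu_{i}\psi_{i}(T,P)$; this is precisely Lemma \ref{fugacity5nointeraction}, the direct analogue of the role Lemma \ref{gibbs2} plays inside the proof of Lemma \ref{equivalences2}. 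Feeding this identity into the same manipulations — rearranging to get $W(T,P)=e^{(-\Delta G^{\circ}-\epsilon(T,P))/RT}$ at chemical equilibrium, setting $P=P^{\circ}$ to read off $W(T,P^{\circ})=e^{-\epsilon(T,P^{\circ})/RT}$, and combining with the fugacity form of the Nernst equation for the two directions of the ``chemical and electrical equilibrium'' equivalence — reproduces every statement of Lemma \ref{equivalences2} with the advertised substitutions.

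The only point requiring a little care, and the place I would expect a referee to look, is the converse direction of the electrical-equilibrium equivalence, since that step of Lemma \ref{equivalences2} appeals to the Nernst equation: here one must use the fugacity / no-interaction version of that equation rather than Lemma \ref{nernst2} itself, i.e. the version underlying Lemma \ref{fugacity4} and Section \ref{electrochemistry}. Once the correct Nernst lemma is substituted, the cancellation of $\epsilon(T,P)$ between the two sides is identical to the one in Lemma \ref{equivalences2}. No other step is sensitive to the passage from $\epsilon(P)$ to $\epsilon(T,P)$, because all the algebra treats the error term as an opaque additive function of the state variables; hence the proof is indeed clear from the proof of Lemma \ref{equivalences2}, modulo these substitutions and the restriction of the sums to $1\leq i\leq c$ noted in Lemma \ref{fugacity5nointeraction}.
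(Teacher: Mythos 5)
Your proposal is correct and follows exactly the route the paper intends: the paper's own proof is the one-line remark that the result is ``clear from the proof of Lemma \ref{equivalences2}, using Lemma \ref{fugacity5nointeraction}'', and your write-up simply fills in that transcription, including the key point that $dn_{0}=0$ restricts the sums to $1\leq i\leq c$. Your added caveat about substituting the appropriate fugacity/no-interaction version of the Nernst equation in the electrical-equilibrium claims is a sensible refinement of, not a departure from, the paper's argument.
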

\begin{proof}
The proof is clear from the proof of Lemma \ref{equivalences2}, using Lemma \ref{fugacity5nointeraction}.\\

\end{proof}
\begin{lemma}
\label{fugacity7nointeraction}
For a dilute solution, with no interaction of the solvent, we have the same results as Lemma \ref{van't Hoff,Gibbs-Helmholtz2} hold, replacing $\epsilon(P(T))$ by $\epsilon(T,P(T))$ along the quasi-chemical equilibrium lines.

\end{lemma}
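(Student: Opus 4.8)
The plan is to re-run the proof of Lemma \ref{van't Hoff,Gibbs-Helmholtz2} essentially verbatim, keeping track of precisely where the error term $\epsilon$ enters and checking that promoting it from a function of $P$ alone to a function $\epsilon(T,P)$ of both variables changes nothing structural. The first half of that proof --- the computation ${d(\Delta G^{\circ})\over dT}=-\sum_{i=1}^{c}\nu_{i}\overline{S}^{\circ}_{m,i}=-\Delta S^{\circ}$ by Euler reciprocity, hence ${d\over dT}({\Delta G^{\circ}\over T})=-{\Delta H^{\circ}\over T^{2}}$ --- involves only the standard-state quantities $\Delta G^{\circ},\Delta S^{\circ},\Delta H^{\circ}$ for the $c$ uncharged solute species and no error term at all, so it carries over unchanged. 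By Lemma \ref{fugacity5nointeraction} we have $({\partial G\over \partial \xi})_{T,P}=\Delta G^{\circ}+RTln(W)+\epsilon(T,P)$, which has exactly the shape of the relation used in Lemma \ref{van't Hoff,Gibbs-Helmholtz2} with $Q$ replaced by $W$ and $\epsilon(P)$ replaced by $\epsilon(T,P)$.

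Next, along a chemical equilibrium path Lemma \ref{fugacity6nointeraction} gives $ln(W)={-\Delta G^{\circ}-\epsilon(T,P)\over RT}$; differentiating and using the Gibbs-Helmholtz identity above yields ${dln(W)\over dT}={\Delta H^{\circ}\over RT^{2}}-{d\over dT}({\epsilon(T,P(T))\over RT})$, which is the formula of Lemma \ref{van't Hoff,Gibbs-Helmholtz2} with $\epsilon(T,P(T))$ in place of $\epsilon(P(T))$. Integrating between $T_{1}$ and $T_{2}$ and applying the fundamental theorem of calculus to the $\epsilon$-term reproduces every $ln(W)$-formula of Lemma \ref{van't Hoff,Gibbs-Helmholtz2}, now with the error terms evaluated at the points $(T_{1},P(T_{1}))$ and $(T_{2},P(T_{2}))$ of the quasi-chemical equilibrium line. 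Substituting back the Gibbs-equation expressions for $ln(W(T_{1}))$ and $ln(W(T_{2}))$ from Lemma \ref{fugacity6nointeraction}, the error terms cancel termwise exactly as they do in Lemma \ref{van't Hoff,Gibbs-Helmholtz2}, leaving the identities ${\Delta G^{\circ}(T_{2})\over T_{2}}-{\Delta G^{\circ}(T_{1})\over T_{1}}=\Delta H^{\circ}({1\over T_{2}}-{1\over T_{1}})$ and $\Delta G^{\circ}(T_{1})={T_{1}\over T_{2}}\Delta G^{\circ}(T_{2})-\Delta H^{\circ}({T_{1}\over T_{2}}-1)$ intact. The $D_{c}$ claims are handled identically, using $W(T_{1},P^{\circ})=e^{-\epsilon(T_{1},P^{\circ})\over RT_{1}}$ from Lemma \ref{fugacity6nointeraction} and $c=\Delta G^{\circ}(T_{1})$ from the constancy of $({\partial G\over \partial \xi})_{T,P}$ along $D_{c}$ together with $({\partial G\over \partial \xi})_{T_{1},P^{\circ}}=\Delta G^{\circ}(T_{1})$.

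The one point needing care --- and the step I would flag as the main obstacle --- is that $\epsilon(T,P)$ now depends on $T$ directly as well as through $P=P(T)$ along $D_{c}$, so ${d\over dT}({\epsilon(T,P(T))\over RT})$ is a genuine total derivative along the curve rather than a partial derivative of a pressure function; one must check that the telescoping cancellation of error terms in the final $\Delta G^{\circ}$ identities survives this. It does: those identities come purely from substituting the two endpoint values $ln(W)(T_{1})$ and $ln(W)(T_{2})$ back into the integrated $ln(W)$-identity, and each endpoint value carries exactly the error term that the integration produces, so they subtract off regardless of how $\epsilon$ depends on its arguments. Hence the result follows immediately from the proof of Lemma \ref{van't Hoff,Gibbs-Helmholtz2} together with Lemmas \ref{fugacity5nointeraction} and \ref{fugacity6nointeraction}.
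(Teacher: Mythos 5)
Your proposal is correct and follows essentially the same route as the paper, whose entire proof is the one-line remark that the result is clear from the proofs of Lemma \ref{van't Hoff,Gibbs-Helmholtz2} and Lemma \ref{fugacity6nointeraction}; you have simply spelled out the substitution of $W$ for $Q$ and $\epsilon(T,P)$ for $\epsilon(P)$ and verified the endpoint cancellations explicitly. Your observation that the total derivative ${d\over dT}({\epsilon(T,P(T))\over RT})$ along the curve still telescopes under the fundamental theorem of calculus is exactly the point that makes the paper's terse claim legitimate.
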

\begin{proof}
The proof is again clear from the proof of Lemma \ref{van't Hoff,Gibbs-Helmholtz2} and Lemma \ref{fugacity6nointeraction}.\\

\end{proof}
\begin{lemma}
\label{fugacity8nointeraction}
For a dilute solution, with no interaction of the solvent, we have the same results as Lemma \ref{eqlines2} hold, replacing $\epsilon(P')$ by $\epsilon(T_{1},P')$. In particularly, if $\epsilon\neq 0$, we have that;\\

$W(T,P)=e^{\epsilon ln({P\over P^{\circ}})-\epsilon(T,P)\over RT}$\\

and, if $\epsilon=0$, we have that;\\

$W(T,P)=e^{-\epsilon(T,P)\over RT}$\\

\end{lemma}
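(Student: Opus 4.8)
The plan is to retrace the proof of Lemma~\ref{eqlines2} essentially verbatim, substituting the no-interaction reformulations already in hand: Lemma~\ref{fugacity5nointeraction} in place of Lemma~\ref{gibbs2}, giving $({\partial G\over \partial \xi})_{T,P} = \Delta G^{\circ} + RT\ln(W) + \epsilon(T,P)$ with $\epsilon(T,P)=\sum_{i=1}^{c}\nu_{i}\psi_{i}(T,P)$; Lemma~\ref{fugacity6nointeraction} in place of Lemma~\ref{equivalences2}; and Lemma~\ref{fugacity7nointeraction} in place of Lemma~\ref{van't Hoff,Gibbs-Helmholtz2}. The one structural change from the $Z$-case is that here $dn_{0}=0$, so, as observed in Lemma~\ref{fugacity5nointeraction}, $dG=\sum_{i=1}^{c}\mu_{i}dn_{i}$ and the relevant activity coefficient is $W=\prod_{i=1}^{c}b_{i}^{\nu_{i}}$, excluding $b_{0}$; consequently every sum $\sum_{i=1}^{c}\nu_{i}\mu_{i}$ appearing in the derivative computations runs only over the solutes, exactly the caveat noted in the proof of Lemma~\ref{proofs} (the internal-energy calculation for $dU$ still involves all $c+1$ substances including the solvent).

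First I would establish, via Lemma~\ref{fugacity7nointeraction}, that $\Delta G^{\circ}$ is linear along a component $D_{c}$ meeting $P=P^{\circ}$ at $(T_{1},P^{\circ})$. Next I would compute $({\partial ({dG\over d\xi})_{T,P}\over \partial T})_{P}$ by Euler reciprocity together with the molar-entropy expression $\overline{S}_{m,i}={3\over 2}N_{A}k-{N_{A}m_{i}\rho_{i}\over T}+{k_{i}(T)\over T}$, and $({\partial ({dG\over d\xi})_{T,P}\over \partial P})_{T}$ via the partial molar volumes, obtaining $P\alpha'(P)=G(T)$, hence $G(T)=\epsilon$ constant and the canonical form $({dG\over d\xi})_{T,P}=\lambda+\epsilon\ln(P)+\beta T+\sigma\ln(T)$ with $\sigma=\gamma-\epsilon$. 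Matching this against the linearity of $\Delta G^{\circ}$ forces $\sigma=0$, $\lambda+\epsilon\ln(P^{\circ})=\Delta H^{\circ}$ and $\Delta G^{\circ}(T)=\beta T+\Delta H^{\circ}$. Feeding $({dG\over d\xi})_{T,P}-\Delta G^{\circ}(T)=\epsilon\ln(P/P^{\circ})$ into Lemma~\ref{fugacity5nointeraction} then yields $W(T,P)=e^{(\epsilon\ln(P/P^{\circ})-\epsilon(T,P))/RT}$ when $\epsilon\neq 0$, while when $\epsilon=0$ the form $({dG\over d\xi})_{T,P}=\lambda+\beta T+\sigma\ln(T)$ is $P$-independent and equals $\Delta G^{\circ}(T)$, so $RT\ln(W)=-\epsilon(T,P)$ and $W(T,P)=e^{-\epsilon(T,P)/RT}$. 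The dynamic and quasi-chemical equilibrium lines follow by rearranging $W=c$ and $({dG\over d\xi})_{T,P}=c$ respectively, as in Lemma~\ref{eqlines2}.

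The only genuinely new bookkeeping, and the step I expect to need the most care, is the $T$-dependence of the error term: in Lemma~\ref{eqlines2} it depended on $P$ alone, whereas here differentiating $\ln(W)=(c-\Delta G^{\circ}(T)-\epsilon(T,P(T)))/RT$ along a quasi-chemical equilibrium path produces an extra ${\partial \epsilon\over \partial T}$ contribution in ${d\ln W\over dT}$. One must check that this contribution is absorbed into the $\gamma\ln(T)$ part of the canonical form, equivalently that the algebraic identifications $\sigma=0$ and $\lambda+\epsilon\ln(P^{\circ})=\Delta H^{\circ}$ are unaffected, so that the closed-form expressions for $W$ above, which depend only on those identifications and on Lemma~\ref{fugacity5nointeraction} and not on the integrated van't Hoff relation, remain exactly as stated. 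Granting this, the result is immediate, and the proof is otherwise clear from the proof of Lemma~\ref{eqlines2}.
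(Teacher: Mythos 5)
Your proposal is correct and follows exactly the route the paper takes: the paper's own proof is simply the remark that the result is clear from the proofs of Lemma \ref{eqlines2} and Lemma \ref{fugacity7nointeraction}, i.e.\ rerun that argument with $W$, Lemma \ref{fugacity5nointeraction} and the error term $\epsilon(T,P)$ substituted in, which is precisely what you do. Your additional observation --- that the canonical form $\lambda+\epsilon\ln(P)+\beta T+\sigma\ln(T)$ is derived from the entropy and molar-volume computations independently of the error term, so the closed-form expressions for $W$ rest only on the coefficient identifications and on Lemma \ref{fugacity5nointeraction} --- is a point of care the paper leaves implicit, and it correctly discharges the only genuine worry introduced by the $T$-dependence of $\epsilon(T,P)$.
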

\begin{proof}
The proof is again clear from the proof of Lemma \ref{eqlines2} and Lemma \ref{fugacity7nointeraction}.\\

\end{proof}
\begin{lemma}
\label{fugacity9nointeraction}
For a dilute solution, with no interaction of the solvent, we have that, if $\epsilon\neq 0$;\\

$grad(W)(T,P)=(({-\epsilon ln({P\over P^{\circ}})\over RT^{2}}+{\epsilon(T,P)\over RT^{2}}-{{\partial \epsilon\over \partial T}(T,P)\over RT})({P\over P^{\circ}})^{\epsilon\over RT}e^{-\epsilon(T,P)\over RT},$\\

$({\epsilon P^{\circ}\over RTP}-{{\partial \epsilon\over \partial P}(T,P)\over RT})({P\over P^{\circ}})^{\epsilon\over RT}e^{-\epsilon(T,P)\over RT})$\\

and, if $\epsilon=0$;\\

$grad(Z)(W,P)=(({\epsilon(T,P)\over RT^{2}}-{{\partial \epsilon\over \partial T}(T,P)\over RT})e^{-\epsilon(T,P)\over RT},{-{\partial \epsilon\over \partial P}(T,P)\over RT}e^{-\epsilon(T,P)\over RT})$\\

The paths of maximal reaction in the region $|grad W(T,P)|>1$, $W(T,P)>0$, are given by implicit solutions to the differential equations;\\

${dP\over dT}={\epsilon TP^{\circ}-PT{\partial\epsilon\over \partial P}(T,P)\over (-\epsilon Pln({P\over P^{\circ}})+P\epsilon(T,P)-PT{\partial \epsilon\over \partial T}(T,P))}$\\

${dP\over dT}={-T{\partial \epsilon\over \partial P}(T,P)\over \epsilon(T,P)-T{\partial \epsilon\over \partial T}(T,P)}$\\

respectively.\\

\end{lemma}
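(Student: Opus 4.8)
The plan is to reduce everything to the computation already performed for $Z$ in Lemma \ref{fugacity9} (itself a copy of Lemma \ref{henrys9}), since by Lemma \ref{fugacity8nointeraction} the coefficient $W$ has exactly the same closed form as $Z$ did there: $W(T,P)=e^{(\epsilon\ln(P/P^{\circ})-\epsilon(T,P))/RT}$ when $\epsilon\neq 0$ and $W(T,P)=e^{-\epsilon(T,P)/RT}$ when $\epsilon=0$, with $\epsilon(T,P)=\sum_{i=1}^{c}\nu_{i}\psi_{i}(T,P)$ as in Lemma \ref{fugacity5nointeraction}. First I would compute $grad(W)=({\partial W\over\partial T},{\partial W\over\partial P})$ by a direct application of the chain and product rules to these exponentials; this is purely mechanical and yields the two displayed gradient formulas, the only point of care being that $\epsilon$ now depends on both $T$ and $P$, so both ${\partial\epsilon\over\partial T}(T,P)$ and ${\partial\epsilon\over\partial P}(T,P)$ appear (in contrast with the $\epsilon(T)$ situation of Lemma \ref{henrys9nointeraction}).

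Next I would invoke the maximal-reaction argument from the proof of Lemma \ref{rates}, reused verbatim in Lemmas \ref{henrys9} and \ref{fugacity9}: writing $n_{i}(t)=\nu_{i}\xi(t)+n_{i,0}$, forming $G_{\gamma}(\xi)=\prod_{i}x_{i}^{\nu_{i}}$ as a rational function of $\xi$, inverting it locally so that $\xi(t)=(G_{\gamma}^{-1}\circ W)(\gamma_{12}(t))$, and differentiating at $t=0$ to see that $\xi'(0)$ is, up to a nonzero factor, a M\"obius function of the directional derivative $grad(W)\cdot\gamma_{12}'(0)$ whose extremum over unit directions is attained exactly when $\gamma_{12}'(0)$ is parallel to $grad(W)$, in the region $|grad(W)(T,P)|>1$, $W(T,P)>0$. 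Here I must record the single adjustment noted in Lemma \ref{henrys9nointeraction}: because the feasible-path definition now uses coordinates $(T,P,n_{0},n_{1},\dots,n_{c})$ with $n_{0}$ a fixed positive constant (Remark \ref{zero}), the constant $\beta=\sum n_{i,0}$ is replaced by $\sum_{i=0}^{c}n_{i,0}$, which alters $\alpha_{1}$ but leaves $\alpha_{1}\neq 0$, so the monotonicity-in-$\lambda$ and the parallel-to-gradient conclusions are unchanged; moreover $W$ is defined solely through the solute activities $b_{i}=x_{i}$, so it genuinely plays the role that $Q$ played in Lemma \ref{rates}.

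Finally, having identified the paths of maximal reaction as integral curves of $grad(W)$, I would write ${dP\over dT}$ as the ratio of the second to the first component of $grad(W)$: the common factor $({P\over P^{\circ}})^{\epsilon/RT}e^{-\epsilon(T,P)/RT}$ cancels, as does $1/RT$ after clearing denominators, producing ${dP\over dT}={\epsilon TP^{\circ}-PT{\partial\epsilon\over\partial P}(T,P)\over -\epsilon P\ln(P/P^{\circ})+P\epsilon(T,P)-PT{\partial\epsilon\over\partial T}(T,P)}$ in the case $\epsilon\neq 0$ and ${dP\over dT}={-T{\partial\epsilon\over\partial P}(T,P)\over\epsilon(T,P)-T{\partial\epsilon\over\partial T}(T,P)}$ in the case $\epsilon=0$; the implicit solutions of these ODEs are the asserted maximal reaction paths, and by the result of \cite{BdiP} cited in Lemma \ref{rates} they are indeed integral curves of $grad(W)$. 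I expect the only real obstacle to be the bookkeeping in the middle step, namely checking that every ingredient of the Lemma \ref{rates} argument ($G_{\gamma}$ rational and locally invertible, $G_{\gamma}'(0)\neq 0$ after an admissible reparametrisation, $\alpha_{1}\neq 0$, monotonicity of the M\"obius map) survives the passage to the $(3+c)$-coordinate feasible paths of Remark \ref{zero} and to the coefficient $W$; once that is secured, the gradient computation and the ODE manipulation are routine.
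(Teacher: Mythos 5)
Your proposal is correct and follows essentially the same route as the paper, whose own proof is simply the remark that one repeats Lemma \ref{henrys9} with $W$ in place of $Z$, noting that here $\epsilon(T,P)$ depends on $P$ (unlike Lemma \ref{henrys9nointeraction}); you have merely unpacked that reference, including the correct adjustment of $\beta$ to $\sum_{i=0}^{c}n_{i,0}$ from Remark \ref{zero} so that $\alpha_{1}\neq 0$ and the Lemma \ref{rates} machinery still applies.
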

\begin{proof}
The proof is the same as Lemma \ref{henrys9}, using $W$ instead of $Z$, noting that the error term $\epsilon(T,P)$ depends on $P$, unlike Lemma \ref{henrys9nointeraction}.\\

\end{proof}

\begin{rmk}
\label{other1}
Again we can formulate Lemmas \ref{fugacity5nointeraction} to \ref{fugacity9nointeraction} using $Q$ instead of $W$, see Remark \ref{other}. The results of the section might be useful in the production of ethanol, using $H_{2}O$ as the solvent, by varying the temperature and pressure of the reaction, $C_{6}H_{12}O_{6}\rightarrow 2C_{2}H_{5}OH+2CO_{2}$, see \cite{MJ} as well.\\

\end{rmk}

\end{section}

\begin{section}{Electrochemistry with Error Terms, Fugacity and No Interaction of the Solvent}
\label{final}
Using the new error term $\epsilon(T,P)$ and the activity coefficient $W$ from Section \ref{fug1}, we have that;\\

\begin{lemma}{The Nernst Equation for the Standard Cell}\\
\label{efi11}\\

At electrical chemical equilibrium $(T,P)$ and $(T,P^{\circ})$ for the standard cell;\\

$(E-E^{\circ})(T,P)=-{RTln(W(T,P))\over 2F}-{\epsilon(T,P)\over 2F}$\\

\end{lemma}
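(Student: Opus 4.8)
The plan is to re-run the argument of Lemma \ref{nernst2} essentially verbatim, making two substitutions: the idealised-solution error term $\epsilon(P)$ is replaced by the fugacity/no-interaction error term $\epsilon(T,P)$ of Section \ref{fug1}, and the activity quotient $Q$ is replaced by $W=\prod_{i=1}^{c}b_{i}^{\nu_{i}}$. Because the standard cell reaction $H_{2}(g)+2AgCl(s)+2e^{-}(R)\rightarrow 2HCl+2Ag(s)+2e^{-}(L)$ transfers $2$ electrons, the coefficient in front of the potential will be $2F$ rather than the $4F$ that occurred in Lemma \ref{efi1}.

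First I would recall the splitting $G=U_{el}+G_{chem}$ with $U_{el}=\sum_{j}\phi(\overline{x}_{j})N_{A}n_{j}ez_{j}$, exactly as in the proof of Lemma \ref{nernst2}, and differentiate in $n_{i}$ to obtain $\mu_{i}=\mu_{i,chem}+\phi(\overline{x}_{i})Fz_{i}$ for the charged species and $\mu_{i}=\mu_{i,chem}$ for the uncharged ones. Applying Lemma \ref{equivalences} (in the form for a collection that includes charged species) to the standard cell reaction then gives, at electrical chemical equilibrium,
\[
({\partial G\over \partial \xi})_{T,P}=({\partial G_{chem'}\over \partial \xi})_{T,P}+2F(\phi(R)-\phi(L))=({\partial G_{chem'}\over \partial \xi})_{T,P}+2EF=0,
\]
where $G_{chem'}$ is the Gibbs energy of the uncharged species; in the standard cell the solvent is not consumed, so it does not enter $G_{chem'}$.

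Next I would feed in Lemmas \ref{fugacity5nointeraction} and \ref{fugacity6nointeraction}. Evaluated at $P^{\circ}$, the constant $-\epsilon(T,P^{\circ})/RT$ appearing in $\ln(W_{chem'}(T,P^{\circ}))$ is cancelled by the $\epsilon(T,P^{\circ})$ in the Gibbs relation, so $({\partial G_{chem'}\over \partial \xi})_{T,P^{\circ}}=\Delta G_{chem'}^{\circ}$ and hence $2E^{\circ}F=-\Delta G_{chem'}^{\circ}$; at general $P$ one has $({\partial G_{chem'}\over \partial \xi})_{T,P}=\Delta G_{chem'}^{\circ}+RT\ln(W_{chem'}(T,P))+\epsilon(T,P)$, so $2EF=-(\Delta G_{chem'}^{\circ}+RT\ln(W_{chem'}(T,P))+\epsilon(T,P))$. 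Subtracting, the $\Delta G_{chem'}^{\circ}$ terms cancel and
\[
2F(E-E^{\circ})=-RT\ln(W_{chem'}(T,P))-\epsilon(T,P),
\]
which is the assertion after dividing by $2F$.

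The only genuinely delicate point is the bookkeeping: one must keep the quotient $W_{chem'}$ over the uncharged non-solvent species distinct from the fully general $W$, confirm that the solvent term really is absent for the standard cell, and verify via Lemma \ref{fugacity6nointeraction} that the constant inside $\ln(W(T,P^{\circ}))$ is precisely $-\epsilon(T,P^{\circ})/RT$ so that the cancellation producing $2E^{\circ}F=-\Delta G_{chem'}^{\circ}$ is exact. Once that is checked, the rest is a transcription of the proof of Lemma \ref{nernst2} with $Q\mapsto W$, $\epsilon(P)\mapsto\epsilon(T,P)$, and the $2F$ electron factor retained.
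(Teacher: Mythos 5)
Your proposal is correct and follows the same route as the paper: the paper's proof is simply the instruction to rerun Lemma \ref{nernst2} with $\epsilon(P)$ replaced by $\epsilon(T,P)$ from Section \ref{fug1}, keeping the $2F$ electron count, and noting that since the solvent does not react, $({\partial G\over \partial \xi})_{T,P}=\sum_{i=1}^{c}\nu_{i}\mu_{i}$ still holds for the $(c+1)$-species Gibbs function. Your more explicit bookkeeping of $W_{chem'}$, the cancellation at $P^{\circ}$ via Lemma \ref{fugacity6nointeraction}, and the absence of the solvent from $G_{chem'}$ just fills in details the paper leaves implicit.
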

\begin{proof}
Just use the proof of Lemma \ref{nernst2}, replacing $\epsilon(P)$ with the error term $\epsilon(T,P)$ from Section \ref{fug1}, noting that for the Gibbs funcion involving $c+1$ species, including the solvent, $({\partial G\over \partial \xi})_{T,P}=\sum_{i=1}^{c}\nu_{i}\mu_{i}$ again.\\

\end{proof}
\begin{lemma}
\label{efi21}

At electrical chemical equilibrium $(T,P)$ and $(T,P^{\circ})$, and chemical equilibrium $(T,P)$;\\

$\Delta G^{\circ}=2F(E-E^{0})$\\

\end{lemma}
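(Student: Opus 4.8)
The plan is to mirror the proof of Lemma~\ref{delta2} (equivalently Lemma~\ref{delta}), now carried out in the setting of Section~\ref{fug1}: with the activity coefficient $W$ and the temperature-and-pressure dependent error term $\epsilon(T,P)$ in place of $Q$ and $\epsilon(P)$, and with the electron count $2$ appropriate to the standard cell rather than the $4$ used in Lemma~\ref{efi2}. Two ingredients are needed, the Nernst relation for the standard cell in this context, and the expression for $({\partial G\over \partial \xi})_{T,P}$ at chemical equilibrium; both are already available.

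First I would invoke Lemma~\ref{efi11}: at electrical chemical equilibrium $(T,P)$ and $(T,P^{\circ})$,
\[
(E-E^{\circ})(T,P)=-{RT\ln(W(T,P))\over 2F}-{\epsilon(T,P)\over 2F}. \qquad (*)
\]
Next I would use the reformulation of Lemma~\ref{gibbs2} recorded in Lemma~\ref{fugacity5nointeraction}, namely $({\partial G\over \partial \xi})_{T,P}=\Delta G^{\circ}+RT\ln(W)+\epsilon(T,P)$, where $\Delta G^{\circ}$ is the chemical Gibbs energy change for one mole of reaction among the uncharged species (the solvent not being involved). Combined with the hypothesis that chemical equilibrium holds at $(T,P)$, which by Lemma~\ref{equivalences} means $({\partial G\over \partial \xi})_{T,P}=0$, this gives
\[
0=\Delta G^{\circ}+RT\ln(W)+\epsilon(T,P), \qquad (**)
\]
so that $RT\ln(W)=-\Delta G^{\circ}-\epsilon(T,P)$.

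Finally I would substitute $(**)$ into $(*)$: the two occurrences of $\epsilon(T,P)$ cancel, leaving $E-E^{\circ}=\Delta G^{\circ}/(2F)$, i.e.\ $\Delta G^{\circ}=2F(E-E^{\circ})$, as claimed. There is no genuine obstacle here — once $(*)$ and $(**)$ are in hand the argument is a one-line rearrangement; the only points requiring care are bookkeeping ones: using the $2$-electron Nernst equation of Lemma~\ref{efi11} (not the $4$-electron one of Lemma~\ref{efi1}), making sure $\Delta G^{\circ}$ refers to the reaction among the uncharged species without the solvent, and confirming that the very same error term $\epsilon(T,P)$ appears in both $(*)$ and $(**)$ so that it drops out of the final identity.
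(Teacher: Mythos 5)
Your proposal is correct and is essentially the paper's own argument: the paper's proof simply says to follow Lemma \ref{delta2}, replacing $\epsilon(P)$ with $\epsilon(T,P)$, which amounts to exactly the two ingredients you use — the Nernst relation of Lemma \ref{efi11} and the Gibbs relation $({\partial G\over \partial \xi})_{T,P}=\Delta G^{\circ}+RT\ln(W)+\epsilon(T,P)$ set to zero at chemical equilibrium — followed by the same one-line rearrangement in which $\epsilon(T,P)$ cancels. Your explicit bookkeeping about the $2$-electron count and the common error term only spells out what the paper leaves implicit.
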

\begin{proof}
Follow the proof of Lemma \ref{delta2}, replacing $\epsilon(P)$ with $\epsilon(T,P)$.\\

\end{proof}
\begin{lemma}
\label{efi31}
If $\epsilon=0$, we have, for all $T_{1}>0$, that;\\

$({\partial G\over \partial \xi})_{T,P}|_{(T_{1},P_{1})}=({\partial G\over \partial \xi})_{T,P}|_{(T_{1},P_{1}^{\circ})}$\\

iff;\\

$E(T_{1},P_{1})=E(T_{1},P_{1}^{\circ})=E^{\circ}(T_{1})$\\

where $G$ is the Gibbs energy function for the $c+1$ charged and uncharged species.
\end{lemma}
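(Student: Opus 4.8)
The plan is to follow the template of Lemma~\ref{allt2} (and its catalyzer analogue Lemma~\ref{efi3}), substituting the Nernst equation and the $P$-independence statement appropriate to Section~\ref{fug1}. The starting point is the relation $({\partial G\over \partial \xi})_{T,P}=({\partial G_{chem'}\over \partial \xi})_{T,P}+2EF$ established in the proof of Lemma~\ref{efi11} (exactly as in $(\dag)$ of Lemma~\ref{nernst2}), where $G_{chem'}$ is the Gibbs energy restricted to the uncharged species; since the solvent does not react ($dn_{0}=0$) one also has $({\partial G_{chem'}\over \partial \xi})_{T,P}=\sum_{i=1}^{c}\nu_{i}\mu_{i}$, as recorded in Lemmas~\ref{henrys5nointeraction} and~\ref{efi11}. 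Evaluating this identity at $(T_{1},P_{1})$ and at $(T_{1},P_{1}^{\circ})$ and subtracting gives
\[
({\partial G\over \partial \xi})_{T,P}|_{(T_{1},P_{1})}-({\partial G\over \partial \xi})_{T,P}|_{(T_{1},P_{1}^{\circ})}=2F(E(T_{1},P_{1})-E(T_{1},P_{1}^{\circ}))+\Delta,
\]
where $\Delta=({\partial G_{chem'}\over \partial \xi})_{T,P}|_{(T_{1},P_{1})}-({\partial G_{chem'}\over \partial \xi})_{T,P}|_{(T_{1},P_{1}^{\circ})}$.

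The second step is to kill $\Delta$. By hypothesis $\epsilon=0$, so Lemma~\ref{fugacity8nointeraction} (which imports the conclusion of Lemma~\ref{eqlines2} into the no-solvent-interaction setting) gives $({\partial G_{chem'}\over \partial \xi})_{T,P}=\lambda+\beta T+\sigma ln(T)$, which is independent of $P$; hence $\Delta=0$ and the identity collapses to
\[
({\partial G\over \partial \xi})_{T,P}|_{(T_{1},P_{1})}-({\partial G\over \partial \xi})_{T,P}|_{(T_{1},P_{1}^{\circ})}=2F(E(T_{1},P_{1})-E(T_{1},P_{1}^{\circ})).
\]
Since $F\neq 0$, the left-hand side vanishes if and only if $E(T_{1},P_{1})=E(T_{1},P_{1}^{\circ})$, and $E(T_{1},P_{1}^{\circ})=E^{\circ}(T_{1})$ holds by the definition of the standard potential in Definition~\ref{constants}; this furnishes the remaining equality and proves both directions of the asserted equivalence.

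I do not expect a genuine obstacle: the computation is the same two-line subtraction used in Lemmas~\ref{allt}, \ref{allt2}, \ref{alltelectrical2} and~\ref{efi3}. The one point that must be checked rather than merely copied is that, in the $W$-convention of Section~\ref{fug1}, the $P$-dependent error term $\epsilon(T,P)$ has been absorbed into the activity coefficient $W$ (hence into $E$ via Lemma~\ref{efi11}), so that $({\partial G_{chem'}\over \partial \xi})_{T,P}$ itself is the unperturbed $\lambda+\beta T+\sigma ln(T)$ when $\epsilon=0$; only then is the cancellation $\Delta=0$ legitimate. Once this is confirmed the result follows verbatim.
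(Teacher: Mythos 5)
Your proposal is correct and follows essentially the same route as the paper: the paper's proof simply says to follow Lemma \ref{allt2} (the decomposition $({\partial G\over \partial \xi})_{T,P}=({\partial G_{chem'}\over \partial \xi})_{T,P}+2EF$ from the Nernst-equation argument, then subtraction at $(T_{1},P_{1})$ and $(T_{1},P_{1}^{\circ})$), substituting the $P$-independence of $({\partial G_{chem'}\over \partial \xi})_{T,P}=\lambda+\beta T+\sigma ln(T)$ from Lemma \ref{fugacity8nointeraction} in place of the ideal-solution version. Your closing caveat about the error term $\epsilon(T,P)$ being absorbed into $W$ (and hence into $E$) is exactly the point the paper relies on implicitly, so nothing is missing.
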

\begin{proof}
Follow the proof of Lemma \ref{allt2}, replacing the result of Lemma 1.5, that $({\partial G_{chem'}\over \partial \xi})_{T,P}$ is independent of $P$,  with the corresponding result in Lemma \ref{fugacity8nointeraction}.
\end{proof}

\begin{lemma}
\label{efi41}
We have, for all $T_{1}>0,P_{1}>0$, that;\\

$2F(E(T_{1},P_{1})-E^{\circ}(T_{1}))=({\partial G\over \partial \xi})_{T,P}|_{(T_{1},P_{1})}-({\partial G\over \partial \xi})_{T,P}|_{(T_{1},P_{1}^{\circ})}-RT_{1}ln(Z(T_{1},P_{1}))-\epsilon(T_{1},P_{1})$ $(*)$\\

\end{lemma}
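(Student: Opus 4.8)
The plan is to follow the proof of Lemma \ref{alltelectrical2} essentially verbatim, substituting the temperature- and pressure-dependent error term $\epsilon(T,P)$ of Section \ref{fug1} for the error term $\epsilon(P)$ of Section \ref{errorterms}, and the activity coefficient $Z$ (equivalently $W$) of Section \ref{fug1} for $Q_{chem'}$. Since the standard cell reaction $H_{2}(g)+2AgCl(s)+2e^{-}(R)\rightarrow 2HCl+2Ag(s)+2e^{-}(L)$ still proceeds with two electrons, the factor $2F$ is retained throughout.

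First I would invoke the decomposition of the Gibbs energy into its charged and uncharged parts exactly as in the proof of Lemma \ref{nernst2}, adapted as in Lemma \ref{efi11}. For the Gibbs function $G$ on the $c+1$ species including the solvent this gives
$$({\partial G\over \partial \xi})_{T,P}|_{T_{1},P_{1}}=({\partial G_{chem'}\over \partial \xi})_{T,P}|_{T_{1},P_{1}}+2E(T_{1},P_{1})F,$$
where $G_{chem'}$ is the Gibbs energy restricted to the uncharged reacting species and, as noted in Lemma \ref{efi11}, $({\partial G_{chem'}\over \partial \xi})_{T,P}=\sum_{i}\nu_{i}\mu_{i}$ over those species, the solvent contributing nothing because $dn_{0}=0$.

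Next I would evaluate at $P_{1}^{\circ}$: using the analogue of Lemma \ref{gibbs2} valid here, namely Lemma \ref{fugacity5nointeraction}, together with the identity $Z(T_{1},P_{1}^{\circ})=e^{-\epsilon(T_{1},P_{1}^{\circ})/RT_{1}}$ supplied by Lemma \ref{fugacity6nointeraction}, the logarithmic activity term and the error term at $P_{1}^{\circ}$ cancel, so $({\partial G_{chem'}\over \partial \xi})_{T,P}|_{T_{1},P_{1}^{\circ}}=\Delta G^{\circ}_{chem'}(T_{1})$ and hence $2E^{\circ}(T_{1})F=({\partial G\over \partial \xi})_{T,P}|_{T_{1},P_{1}^{\circ}}-\Delta G^{\circ}_{chem'}(T_{1})$. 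Then, again by Lemma \ref{fugacity5nointeraction}, I would expand $({\partial G_{chem'}\over \partial \xi})_{T,P}|_{T_{1},P_{1}}=\Delta G^{\circ}_{chem'}(T_{1})+RT_{1}\ln(Z(T_{1},P_{1}))+\epsilon(T_{1},P_{1})$, substitute this into the first identity, and subtract the identity at $P_{1}^{\circ}$. The two copies of $\Delta G^{\circ}_{chem'}(T_{1})$ cancel, leaving precisely $(*)$.

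The argument is entirely mechanical; what little care is required — and hence the ``main obstacle'', such as it is — is purely bookkeeping: making sure that at $P_{1}^{\circ}$ the error term $\epsilon(T_{1},P_{1}^{\circ})$ and the logarithmic activity contribution genuinely cancel (exactly the $Z(T,P^{\circ})$ relation of Lemma \ref{fugacity6nointeraction}), and keeping straight that in the no-solvent-interaction case $\epsilon(T,P)=\sum_{i=1}^{c}\nu_{i}\psi_{i}(T,P)$ carries no $\phi_{0}$ term, so that no solvent error term contaminates $(*)$.
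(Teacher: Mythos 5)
Your proposal is correct and follows essentially the same route as the paper, whose entire proof of this lemma is the instruction to repeat the proof of Lemma \ref{alltelectrical2} with $\epsilon(P)$ replaced by $\epsilon(T,P)$; you have simply written out that substitution explicitly, with the cancellation at $P_{1}^{\circ}$ handled via Lemmas \ref{fugacity5nointeraction} and \ref{fugacity6nointeraction} exactly as intended. The only caveat is notational and inherited from the paper itself: Section \ref{final} works with $W$ while the lemma statement writes $Z$, and your parenthetical ``equivalently $W$'' is the right way to read it.
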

\begin{proof}
Follow the proof of Lemma \ref{alltelectrical2}, replacing $\epsilon(P)$ with $\epsilon(T,P)$.\\

\end{proof}
\begin{rmk}
\label{strategy}
The result of Lemma \ref{efi41} combined with the determination of the activity coefficient $W$ in Lemma \ref{fugacity8nointeraction} and the error term $\epsilon(T,P)$ in Lemma \ref{fugacity5nointeraction} can be used to determine the unknown quantity $({\partial G\over \partial \xi})_{T,P}$. We can measure the potential difference between the cathode and anode along the dynamical equilibrium paths provided by Lemma \ref{fugacity8nointeraction} and then use the formula $(*)$ in Lemma \ref{efi41}. Once this is determined, we then alter the power supply, in accordance with $(*)$, to push the reaction along the paths of maximal reaction given in Lemma \ref{fugacity9nointeraction}. This should improve the efficiency of the production of hydrogen, in the case of the reaction, given by $H_{2}+2AgCl+2e^{-}(R)\rightarrow 2HCl+2Ag+2e^{-}(L)$, where we use water $H_{2}O$ as a solvent with no interaction.\\
\end{rmk}
\end{section}
\begin{section}{The Constant Change in Enthalpy Assumption}
\label{enthalpy}
\begin{lemma}
\label{constancy}
In Lemma \ref{vanhoffhelmholtz}, without the assumption that $\Delta H^{\circ}$ is constant, we obtain that;\\

For $c\in\mathcal{R}$, if $D_{c}$ intersects the line $P=P^{\circ}$ at $(T_{1},P^{\circ})$, then, for $(T_{2},P)\in D_{c}$;\\

$\Delta G^{\circ}(T_{1})={T_{1}\over T_{2}}\Delta G^{\circ}(T_{2})-\Delta H^{\circ,ref}({T_{1}\over T_{2}}-1)+w(T_{1},T_{2})$\\

where $w(T_{1},T_{2})$ is an error term, with the property that, for fixed $T_{1}<T_{2}$, $w(T_{1},T_{2})\rightarrow 0$, as $m_{mix}\rightarrow \infty$, where $m_{mix}$ is the mass of the mixture and $\Delta H^{\circ,ref}$ is a reference value for $\Delta H^{\circ}$.\\

The results of Lemma \ref{eqlines} hold in the limit as $m_{mix}\rightarrow \infty$ and we can compute the error term effectively as $m_{mix}$ increases, given bounds on the heat capacities $C_{i}$ in the range $(T_{1},T_{2})$ of the proof of the Lemma.\\

In Lemma \ref{van't Hoff,Gibbs-Helmholtz2}, without the assumption that $\Delta H^{\circ}$ is constant, we again obtain that;\\

For $c\in\mathcal{R}$, if $D_{c}$ intersects the line $P=P^{\circ}$ at $(T_{1},P^{\circ})$, then, for $(T_{2},P)\in D_{c}$;\\

$\Delta G^{\circ}(T_{1})={T_{1}\over T_{2}}\Delta G^{\circ}(T_{2})-\Delta H^{\circ,ref}({T_{1}\over T_{2}}-1)+w(T_{1},T_{2})$\\

Again, the results of Lemma \ref{eqlines2} hold in the limit as $m_{mix}\rightarrow \infty$. The results of Lemma \ref{rates} still hold in the limit, and we can we can compute the error term effectively as $m_{mix}$ increases. In Lemma \ref{proofs}, we obtain, the same, that;\\

For $c\in\mathcal{R}$, if $D_{c}$ intersects the line $P=P^{\circ}$ at $(T_{1},P^{\circ})$, then, for $(T_{2},P)\in D_{c}$;\\

$\Delta G^{\circ}(T_{1})={T_{1}\over T_{2}}\Delta G^{\circ}(T_{2})-\Delta H^{\circ,ref}({T_{1}\over T_{2}}-1)+w(T_{1},T_{2})$\\

The same claims hold in the limit, with the computation of the error term at the end of Lemma \ref{proofs}, in Lemmas \ref{henrys7} to \ref{henrys9} and Lemmas \ref{fugacity7} to \ref{fugacity9}, Lemmas \ref{henrys6nointeraction} to \ref{henrys9nointeraction} and Lemmas \ref{fugacity6nointeraction} to \ref{fugacity9nointeraction}. The results of Section \ref{electrochemistry} are not effected, bearing in mind the use of the activity coefficient $Z$ in the final remark, similarly for Sections \ref{idealelectrochemistry} and \ref{final}.

\end{lemma}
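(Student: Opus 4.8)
The plan is to trace where the hypothesis ``$\Delta H^{\circ}$ constant'' was actually used: it enters only at the integration step $\frac{\Delta G^{\circ}(T_2)}{T_2}-\frac{\Delta G^{\circ}(T_1)}{T_1}=-\int_{T_1}^{T_2}\frac{\Delta H^{\circ}(T)}{T^2}\,dT$ in the proofs of Lemmas \ref{vanhoffhelmholtz} and \ref{van't Hoff,Gibbs-Helmholtz2} (and again, verbatim, in Lemma \ref{proofs}). So the first move is to keep that identity exact, i.e.\ not to pull $\Delta H^{\circ}$ out of the integral. From the proof of Lemma \ref{vanhoffhelmholtz} one has $\frac{d(\Delta G^{\circ})}{dT}=-\Delta S^{\circ}$ and $\Delta H^{\circ}=\Delta G^{\circ}+T\Delta S^{\circ}$, which together give Kirchhoff's relation $\frac{d(\Delta H^{\circ})}{dT}=T\frac{d(\Delta S^{\circ})}{dT}=\sum_i\nu_i\overline{C}_{p,m,i}=:\Delta C_p^{\circ}$. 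Writing $\Delta H^{\circ}(T)=\Delta H^{\circ,ref}+\int_{T_{ref}}^{T}\Delta C_p^{\circ}(s)\,ds$ and substituting, the integral splits into the clean piece $-\Delta H^{\circ,ref}(\frac{1}{T_2}-\frac{1}{T_1})$ and a remainder; performing the same rearrangement as in Lemma \ref{vanhoffhelmholtz} converts the remainder into $w(T_1,T_2)$, yielding $\Delta G^{\circ}(T_1)=\frac{T_1}{T_2}\Delta G^{\circ}(T_2)-\Delta H^{\circ,ref}(\frac{T_1}{T_2}-1)+w(T_1,T_2)$ in every lemma where the old linear formula appeared.

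Next I would estimate $w$. Given bounds $|\overline{C}_{p,m,i}(T)|\le M_i$ on the interval $[T_1,T_2]$, one gets $|\Delta C_p^{\circ}(T)|\le\sum_i|\nu_i|M_i=:M$, and then, by Fubini applied to $w(T_1,T_2)=T_1\int_{T_1}^{T_2}\frac{1}{T^2}\big(\int_{T_{ref}}^{T}\Delta C_p^{\circ}(s)\,ds\big)dT$, an explicit bound of the form $|w(T_1,T_2)|\le M\cdot\Phi(T_1,T_2,T_{ref})$ with $\Phi$ elementary; this is the ``effectively computable'' assertion. The mass dependence then enters through how enlarging the mixture---above all by adding inert solvent (component $0$, which is absent from the reaction sums $\sum_i\nu_i\overline{C}_{p,m,i}$, $\sum_i\nu_i\mu_i^{\circ}$)---affects the bookkeeping of $\Delta H^{\circ}$ against its reference: in the model of Lemma \ref{eqlines} the only $T$-dependent part of $\Delta H^{\circ}$ is the kinetic contribution $\frac{3}{2}RT\sum_i\nu_i$ together with the latent-heat terms, and one argues that relative to the bulk enthalpy of the large mixture these deviations from $\Delta H^{\circ,ref}$ become negligible, forcing the effective $\Delta C_p^{\circ}$ that feeds $w$ to $0$, hence $w(T_1,T_2)\to 0$ for each fixed $T_1<T_2$ as $m_{mix}\to\infty$. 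Granting this, every downstream statement that used the constant-$\Delta H^{\circ}$ linear formula---Lemmas \ref{eqlines}, \ref{eqlines2}, \ref{proofs}, \ref{henrys7}--\ref{henrys9}, \ref{fugacity7}--\ref{fugacity9}, \ref{henrys6nointeraction}--\ref{henrys9nointeraction}, \ref{fugacity6nointeraction}--\ref{fugacity9nointeraction}---holds with $w$ added and then on the nose in the limit; for the ODEs of Lemmas \ref{rates}, \ref{henrys9} and \ref{fugacity9} one passes to the limit by continuous dependence of the implicit integral curves on the perturbation, and Sections \ref{idealelectrochemistry}, \ref{electrochemistry}, \ref{final} are untouched since they are phrased via $Q$, $Z$, $W$ and $\epsilon$, never through $\Delta H^{\circ}$ directly.

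The hard part is precisely that third step: pinning down the sense in which ``increasing the mass of the mixture'' kills the error. One must specify what is held fixed as $m_{mix}\to\infty$ (the stoichiometry and the extent interval, with the added mass being non-reacting solvent), fix $\Delta H^{\circ,ref}$ canonically---e.g.\ as a limiting value of $\Delta H^{\circ}(T)$, or as its latent-heat-dominated part---and then show that the residual $T$-dependence genuinely shrinks in that limit; a subsidiary but routine point is getting the bound on $w$ uniform over $[T_1,T_2]$ so that the limiting passage inside the separated differential equations of Lemmas \ref{rates}, \ref{henrys9}, \ref{fugacity9} and their no-interaction analogues is legitimate. Everything else is a mechanical re-run of the earlier proofs with one extra additive term carried along.
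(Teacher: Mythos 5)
Your overall architecture matches the paper's: isolate the integration step where constancy of $\Delta H^{\circ}$ was used, write $\Delta H^{\circ}(T)=\Delta H^{\circ,ref}+(\text{Kirchhoff integral})$, carry the resulting remainder $w(T_{1},T_{2})=T_{1}\int_{T_{1}}^{T_{2}}\frac{v(S)}{S^{2}}dS$ through the rearrangement of Lemma \ref{vanhoffhelmholtz}, bound it via bounds on the heat capacities, and then propagate the perturbed formula mechanically through Lemmas \ref{eqlines}, \ref{eqlines2}, \ref{rates}, \ref{proofs} and their variants. All of that agrees with the paper.

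But the step you yourself flag as ``the hard part'' --- why $w(T_{1},T_{2})\rightarrow 0$ as $m_{mix}\rightarrow\infty$ --- is a genuine gap in your argument, and it is precisely the one piece of content the paper's proof supplies. In your setup the Kirchhoff coefficient is $\Delta C_{p}^{\circ}=\sum_{i}\nu_{i}\overline{C}_{p,m,i}$, a sum of \emph{molar} heat capacities weighted by stoichiometric coefficients; this quantity is fixed by the reacting species and does not depend on $m_{mix}$ at all, so adding solvent cannot drive it to zero, and your appeal to the deviations being ``negligible relative to the bulk enthalpy'' does not produce the needed factor. The paper avoids this by a different bookkeeping: it takes $C(T)$ to be the change in the heat capacity \emph{of the whole mixture} after one mole of reaction and expands it by the law of mixtures, $C(T)=C_{fin}(T)-C_{in}(T)=\frac{1}{m_{mix}}\bigl(\sum_{i}m_{i,fin}C_{i}(T)-\sum_{i}m_{i,in}C_{i}(T)\bigr)=-\frac{1}{m_{mix}}\sum_{i}\nu_{i}m_{i,mol}C_{i}(T)$, using conservation of $m_{mix}$ over one mole of reaction. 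The numerator is fixed by the stoichiometry while the denominator grows with the added solvent, so $C(T)\rightarrow 0$ uniformly on $[T_{1},T_{2}]$, hence $v(T)\rightarrow 0$, $v(T_{1},T_{2})\rightarrow 0$ and $w(T_{1},T_{2})=T_{1}v(T_{1},T_{2})\rightarrow 0$. Whatever one thinks of the physical normalisation, this explicit $1/m_{mix}$ factor is the mechanism of the limit, and without it (or a substitute) your proof of the convergence claim, and therefore of the ``results hold in the limit'' assertions for Lemmas \ref{eqlines}, \ref{eqlines2} and \ref{rates}, does not close.
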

\begin{proof}
We have, by the definition of enthalpy, the laws of differentials, the first law of thermodynamics, with $dP=0$, that;\\

$dH=dU+PdV+VdP$\\

$=dU+PdV$\\

$=(dQ-PdV)+PdV$\\

$=dQ$\\

where $Q$ is the internal energy including work, or heat. As in Kirchoff's Law of Thermodynamics, we have that;\\

$\Delta H^{\circ}(T)=\Delta H^{\circ}(T_{0})+\int_{T_{0}}^{T}{d(\Delta H^{\circ}(T))\over dT}dT$\\

$=\Delta H^{\circ}(T_{0})+\int_{T_{0}}^{T}{d(\Delta Q(T))\over dT}dT$\\

$=\Delta H^{\circ}(T_{0})+\int_{T_{0}}^{T}C(T)dT$ $(M)$\\

where $C(T)=\Delta({dQ\over dT})(T)$ is the change in the heat capacity of the mixture, after one mole of reaction. If we confine ourselves to a small temperature range, we can, therefore assume that $\Delta H^{\circ}$ is approximately temperature independent. However, we can also add a solvent to the reaction, to lower the magnitude of $C(T)$, as the heat capacity before and after the reaction would approach that of the solvent, and extend the temperature range of the reaction. More precisely, we have, by the law of mixtures for heat capacities, the fact that $m_{mix}$ is conserved during $1$ mole of reaction, that;\\

$C(T)=C_{fin}(T)-C_{in}(T)$\\

$={1\over m_{mix}}(\sum_{i=1}^{c}m_{i,fin}C_{i}(T)-\sum_{i=1}^{c}m_{i,in}C_{i}(T))$\\

$={N_{A}\over m_{mix}}(\sum_{i=1}^{c}n_{i,fin}m_{i,molec}C_{i}(T)-\sum_{i=1}^{c}n_{i,in}m_{i,molec}C_{i}(T))$\\

$={N_{A}\over m_{mix}}(\sum_{i=1}^{c}(n_{i,in}-\nu_{i})m_{i,molec}C_{i}(T)-\sum_{i=1}^{c}n_{i,in}m_{i,molec}C_{i}(T))$\\

$=-{N_{A}\over m_{mix}}(\sum_{i=1}^{c}\nu_{i}m_{i,molec}C_{i}(T))$\\

$=-{1\over m_{mix}}(\sum_{i=1}^{c}\nu_{i}m_{i,mol}C_{i}(T))$, $(N)$\\

where $\{m_{i,in},m_{i,fin},m_{i,molec},m_{i,mol}\}$ denote the initial, final, molecular and molar masses of substance $i$ respectively, $\{C_{i},C_{in},C_{fin}\}$ denote the heat capacity of substance $i$, the initial heat capacity of the mixture and the final heat capacity of the mixture respectively. In particular, we see that, as $m_{mix}\rightarrow \infty$, which we can achieve, for example, by increasing the solvent, $C(T)\rightarrow 0$ as well. This idea is pursued in Sections \ref{errorterms2} to \ref{final}, and doesn't alter the calculation of maximal reaction and equilibrium paths. We then have, by $(M),(N)$, that;\\

$\Delta H^{\circ}(T)=\Delta H^{\circ}(T_{0})-\int_{T_{0}}^{T}{1\over m_{mix}}(\sum_{i=1}^{c}\nu_{i}m_{i,mol}C_{i}(S))dS$\\

$=\Delta H^{\circ}(T_{0})+v(T_{0},T)$\\

where, for fixed $T$, $v(T_{0},T)\rightarrow 0$, as $m_{mix}\rightarrow \infty$. Fixing $T_{0}$, we can write this as;\\

$\Delta H^{\circ}(T)=\Delta H^{\circ,ref}+v(T)$\\

where, for fixed $T$, $v(T)\rightarrow 0$, as $m_{mix}\rightarrow \infty$.\\

For $0<T_{1}<T_{2}$, we let;\\

$v(T_{1},T_{2})=\int_{T_{1}}^{T_{2}}{v(S)\over S^{2}}dS$\\

so that, for fixed $\{T_{1},T_{2}\}$, as the convergence above is unform on $(T_{1},T_{2})$, $v(T_{1},T_{2})\rightarrow 0$, as $m_{mix}\rightarrow \infty$.\\

Following through the proof of Lemma \ref{vanhoffhelmholtz}, replacing $\Delta H^{\circ}$ by $\Delta H^{\circ,ref}+v(T)$, we obtain that;\\

${\Delta G^{\circ}(T_{2})\over T_{2}}-{\Delta G^{\circ}(T_{1})\over T_{1}}=(\Delta H^{\circ,ref}-\Delta G^{\circ}(T_{1}))({1\over T_{2}}-{1\over T_{1}})-v(T_{1},T_{2})$\\

to obtain;\\

$\Delta G^{\circ}(T_{1})={T_{1}\over T_{2}}\Delta G^{\circ}(T_{2})-\Delta H^{\circ,ref}({T_{1}\over T_{2}}-1)+w(T_{1},T_{2})$\\

where $w(T_{1},T_{2})=T_{1}v(T_{1},T_{2})\rightarrow 0$ as $m_{mix}\rightarrow \infty$, $(*)$\\

The same proof works in Lemma \ref{van't Hoff,Gibbs-Helmholtz2}, which included fixed error terms from Raoult's law, and in Lemma \ref{proofs}, where we use the activity $a_{0}(T,P)$.\\

The computation of $({\partial G\over \partial \xi})_{T,P}$ in terms of $\{\sigma,\epsilon,\beta\}$, in the proof of Lemma \ref{eqlines} is the same. When we compute the activity coefficient $Q$ at $(T,P)$, we have to take into account the intersection of $D_{c}$ at $(T_{1},P^{\circ})$ when determining $\Delta G^{\circ}(T_{2})$, using $(*)$ and equating coefficients. Clearly, we can determine the error $w(T_{1},T)\rightarrow 0$ from a bound in the heat capacities in the range $(T_{1},T)$, using $(M),(N)$, as we increase $m_{mix}$. We can then determine the corresponding error from the proof in the coefficients $\{\sigma,\epsilon,\beta\}$ of $({\partial G\over \partial \xi})_{T,P}$, and, then similarly for the activity coefficient $Q$ at $(T,P)$. Clearly, we can then effectively measure the vanishing deviation of a given dynamical equilibrium line from that in the formula of Lemma \ref{eqlines}. The same correction applies in Lemma \ref{eqlines2}. As we can compute the vanishing error term effectively in Lemma \ref{eqlines2}, as for Lemma \ref{eqlines}, the results of Lemma \ref{rates} still hold, with a corresponding vanishing error. The same argument applies for the end of Lemma \ref{proofs}, bearing in mind that we are using $c+1$ substances, when computing the error term. Similarly, in Lemmas \ref{henrys7} to \ref{henrys9}, Lemmas \ref{fugacity7} to \ref{fugacity9}, Lemmas \ref{henrys6nointeraction} to \ref{henrys9nointeraction} and Lemmas  \ref{fugacity6nointeraction} to \ref{fugacity9nointeraction}, replacing the fixed error terms. The claims for Sections \ref{electrochemistry}, \ref{idealelectrochemistry} and \ref{final} are clear.

\end{proof}

\end{section}
\begin{section}{Independence of Path and Existence}
\label{path}

\begin{lemma}
If $\lambda\neq 0$, see Lemma \ref{eqlines}, then no substance is formed in a loop. With the assumption that $\Delta H^{circ}$ is constant, we have that $\lambda=\Delta H^{\circ}-\epsilon ln(P^{\circ})$. If $D_{c}$ is a quasi-chemical equilibrium line in the theoretical limit, which we have computed, intersecting $P=P^{\circ}$ at $(T_{1},P^{\circ})$, and projecting onto an interval $(T_{1},T_{2})$, with $T_{1}<T_{2}$, then, making $w(T_{1},T_{2})\rightarrow 0$, see Section \ref{enthalpy}, by increasing the mass of the mixture, we have that $\lambda(T_{1},T_{2})\rightarrow\Delta H^{\circ,ref}(T_{1},T_{2})-\epsilon(T_{1},T_{2}) ln(P^{\circ})$, where $\epsilon((T_{1},T_{2}))$ can be effectively determined.

\end{lemma}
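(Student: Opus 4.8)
The plan is to treat the three assertions in turn: the last two are coefficient bookkeeping on top of results already proved, while the first is the genuine path-independence statement and is where the hypothesis $\lambda\neq 0$ does work.

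First I would fix terminology. I read a \emph{loop} as a feasible path $\gamma:[0,1]\to\mathcal{R}_{>0}^{2+c}$ (Definition \ref{feasiblestraight}) whose temperature-pressure projection closes up, $pr_{12}(\gamma(0))=pr_{12}(\gamma(1))$, and ``no substance is formed in a loop'' as the assertion that then $\xi(1)=\xi(0)$, equivalently $n_i(1)=n_i(0)$ for $1\le i\le c$. The argument: along any feasible path $\prod_{i=1}^{c}x_i^{\nu_i}(t)=Q(pr_{12}(\gamma(t)))$, and by Lemma \ref{eqlines} (resp.\ Lemma \ref{eqlines2}) $Q$ is a single globally defined function of $(T,P)$, so $Q$ returns to its initial value after the loop. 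Using the extent parametrisation from the proof of Lemma \ref{rates}, write $\prod_{i=1}^{c}x_i^{\nu_i}(t)=G_\gamma(\xi(t))$ with $G_\gamma(x)=\prod_{i=1}^{c}(\nu_i x+n_{i,0})^{\nu_i}/(\alpha x+\beta_0)^{c}$, $\alpha=\sum_{i=1}^{c}\nu_i$, $\beta_0=\sum_{i=1}^{c}n_{i,0}$; hence $G_\gamma(\xi(1))=G_\gamma(\xi(0))$. The decisive step is then to deduce $\xi(1)=\xi(0)$, i.e.\ that $\xi$ follows a single-valued branch of $G_\gamma^{-1}$ all the way round the loop, so that it cannot be driven to a turning point of $G_\gamma$ and return on a different branch. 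This is where $\lambda\neq 0$ is invoked: by the second assertion below, $\lambda\neq 0$ keeps $(dG/d\xi)_{T,P}=\lambda+\epsilon\ln P+\beta T$ (resp.\ $\lambda+\beta T+\sigma\ln T$) from collapsing, so the curves $D_c$ of Definition \ref{curves} foliate the plane by distinct graphs and a feasible loop cannot be trapped inside one $D_c$ while $\xi$ drifts; combined with the differentiation computation of Lemmas \ref{implies} and \ref{feasible}, run with an arbitrary constant $f>0$ in place of $1$ (which exhibits $(\prod_{i=1}^{c}x_i^{\nu_i})'$ as $n_c'$ times a rational function whose numerator is a polynomial in $n_c$ that is nonzero whenever the relevant partial sums of the $\nu_i$ do not vanish, hence $G_\gamma'$ has at most finitely many zeros, none on the realised range $J$ of $\xi$), one concludes that $G_\gamma|_J$ is injective. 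A connectedness argument on $[0,1]$ then yields $\xi(1)=\xi(0)$.

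For the second assertion I would read it straight off the proof of Lemma \ref{eqlines}, case $\epsilon\neq 0$: comparing $\Delta G^\circ(T_2)=T_2(\Delta G^\circ(T_1)-\Delta H^\circ)/T_1+\Delta H^\circ$ with $\Delta G^\circ(T_2)=(dG/d\xi)_{T,P}(T_2,P^\circ)=\lambda+\epsilon\ln P^\circ+\beta T_2+\sigma\ln T_2$ and equating coefficients gives $\sigma=0$, $\beta=(\Delta G^\circ(T_1)-\Delta H^\circ)/T_1$ and $\lambda+\epsilon\ln P^\circ=\Delta H^\circ$, that is $\lambda=\Delta H^\circ-\epsilon\ln P^\circ$ (in the case $\epsilon=0$ the same matching at two temperatures along the vertical component $D_c$ gives $\lambda=\Delta H^\circ$, consistent with $\epsilon\ln P^\circ=0$). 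For the third assertion I would rerun this with Lemma \ref{constancy} in place of Lemma \ref{vanhoffhelmholtz}: there, for $D_c$ meeting $P=P^\circ$ at $(T_1,P^\circ)$ and projecting onto $(T_1,T_2)$, one has $\Delta G^\circ(T_1)=(T_1/T_2)\Delta G^\circ(T_2)-\Delta H^{\circ,ref}(T_1/T_2-1)+w(T_1,T_2)$ with $w(T_1,T_2)\to 0$ as $m_{mix}\to\infty$, and the coefficients $\{\sigma,\epsilon,\beta\}$ of $(dG/d\xi)_{T,P}$ acquire a corresponding error that is effectively computable via the identities $(M),(N)$ of Lemma \ref{constancy} from bounds on the heat capacities $C_i$ on $(T_1,T_2)$; equating coefficients on $(T_1,T_2)$ gives $\lambda(T_1,T_2)=\Delta H^{\circ,ref}(T_1,T_2)-\epsilon(T_1,T_2)\ln P^\circ+w(T_1,T_2)$, and letting $m_{mix}\to\infty$ gives the stated limit, with $\epsilon(T_1,T_2)$ read off as in the proof of Lemma \ref{eqlines2} together with the error control just described.

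The main obstacle is the injectivity step in the first assertion: one must exclude that a well-chosen $(T,P)$-loop pushes $\xi$ onto a turning point of $G_\gamma$ and back while the projection still closes up, and pinning down precisely how $\lambda\neq 0$ (together with the stoichiometric non-degeneracy behind Lemma \ref{implies}) forbids this — equivalently, that the foliation of the $(T,P)$-plane by the $D_c$ is by genuine non-degenerate level sets — is the delicate point; the remaining steps are routine bookkeeping with Lemmas \ref{eqlines}, \ref{eqlines2} and \ref{constancy}.
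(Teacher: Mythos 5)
Your treatment of the second and third assertions is essentially the paper's: equate the linear form of $\Delta G^{\circ}$ from Lemma \ref{vanhoffhelmholtz} with $({dG\over d\xi})_{T,P}(T_{2},P^{\circ})=\lambda+\epsilon\ln(P^{\circ})+\beta T_{2}+\sigma\ln(T_{2})$ to get $\sigma=0$ and $\lambda=\Delta H^{\circ}-\epsilon\ln(P^{\circ})$, then rerun this with the error term $w(T_{1},T_{2})$ of Lemma \ref{constancy} and pass to the limit $m_{mix}\rightarrow\infty$. That part is fine.

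The first assertion is where there is a genuine gap, and your route is not the paper's. You try to deduce $\xi(1)=\xi(0)$ from $G_{\gamma}(\xi(1))=G_{\gamma}(\xi(0))$ via injectivity of $G_{\gamma}$ on the realised range of $\xi$, and you claim $\lambda\neq 0$ enters by keeping the foliation by the $D_{c}$ non-degenerate. Neither half of this works. The level sets $D_{c}$ of $\lambda+\epsilon\ln(P)+\beta T+\sigma\ln(T)$ are completely insensitive to the value of the constant $\lambda$ (changing $\lambda$ only relabels which $c$ indexes which curve), so $\lambda\neq 0$ cannot be doing any geometric work there; and the injectivity of $G_{\gamma}(x)=\prod_{i}(\nu_{i}x+n_{i,0})^{\nu_{i}}/(\alpha x+\beta_{0})^{c}$ on the realised interval $J$ is a purely stoichiometric/algebraic question about the $\nu_{i}$ and $n_{i,0}$, again independent of $\lambda$ — you assert that no turning point of $G_{\gamma}$ lies on $J$ but give no argument, and indeed if your argument were complete it would prove the conclusion even when $\lambda=0$, which the statement does not claim. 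The paper's proof is entirely different and is a thermodynamic cycle argument: for a closed path $\gamma$ one shows, by writing $dG=dH-TdS-SdT$ and using $dH=dU+PdV+VdP$, $dQ=TdS$ and $dU=dQ-PdV$, that $\int_{\gamma}(dG+SdT-VdP)=0$ identically; on the other hand Lemma \ref{differential} gives $dG+SdT-VdP=\sum_{i}\mu_{i}dn_{i}=({\partial G\over\partial\xi})_{T,P}\,d\xi=(\lambda+\epsilon\ln(P)+\beta T+\sigma\ln(T))\,d\xi$, whose loop integral evaluates to $\lambda\xi$ plus a term killed by Green's/Stokes' theorem because the remaining integrand splits into a $P$-only piece against $dP$ and a $T$-only piece against $dT$. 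Hence $\lambda\xi=0$, and $\lambda\neq 0$ forces $\xi=0$. That is the idea your proposal is missing; without it the first claim is not proved.
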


\begin{proof}
Suppose that an amount of substance $\xi$ is formed in a loop. We have, by Lemma \ref{differential}, that $dG=-SdT+VdP+\sum_{i=1}^{c}\mu_{i}dn_{i}$, $(A)$, and, by the definition of enthalpy in Definition \ref{constants}, that;\\

$dH=d(U+PV)$\\

$=dU+PdV+VdP$ $(\dag)$\\

$=d(G+TS)$\\

$=dG+TdS+SdT$ $(\dag\dag)$\\

We then have that, for a closed path $\gamma$, using $(A)$, $(\dag)$, $(\dag\dag)$, the definition of entropy as $dS={dQ\over T}$, the first law of thermodynamics, $dQ=dU+pdV$, the calculation of $({\partial G\over \partial \xi})_{(T,P)}$;\\

$\int_{\gamma}(dG+SdT-VdP)$\\

$=\int_{\gamma}((dH-TdS-SdT)+SdT-VdP)$\\

$=\int_{\gamma}(dH-TdS-VdP)$\\

$=\int_{\gamma}(dH-dQ-VdP)$\\

$=\int_{\gamma}((dU+PdV+VdP)-dQ-VdP)$\\

$=\int_{\gamma}(dU+PdV-dQ)$\\

$=\int_{\gamma}((dQ-PdV)+PdV-dQ)$\\

$=0$\\

$=\int_{\gamma}(\sum_{i=1}^{c}\mu_{i}dn_{i})$\\

$=\int_{\gamma}(\sum_{i=1}^{c}\mu_{i}\nu_{i}d\xi$\\

$=\int_{\gamma}({\partial G\over \partial \xi})_{(T,P)}d\xi$\\

$=\int_{\gamma}(\lambda+\epsilon ln(P)+\beta T+\sigma ln(T))d\xi$\\

$=\int_{\gamma}(\lambda+\epsilon ln(P)+\beta T+\sigma ln(T))d\xi$\\

$=\lambda \xi+\int_{\gamma}(\epsilon ln(P)\theta(P)dP+(\beta T+\sigma ln(T))\phi(T)dT)$ $(C)$\\

where $\theta(P)=\theta_{1}(P)$ along $\gamma_{1}$, $\theta_{1}(P)dP=d\xi|_{\gamma_{1}}$, $\theta(P)=\theta_{2}(P)$ along $\gamma_{2}$, $\theta_{2}(P)dP=d\xi|_{\gamma_{2}}$,  $\phi(T)=\phi_{1}(T)$ along $\gamma_{1}$, $\phi_{1}(T)dT=d\xi|_{\gamma_{1}}$, $\phi(T)=\phi_{2}(T)$ along $\gamma_{2}$, $\phi_{2}(T)dT=d\xi|_{\gamma_{2}}$.\\

so that, by Stokes Theorem;\\

$\int_{\gamma}(\epsilon ln(P)\theta(P)dP+\beta T\phi(T)dT)$\\

$=\int\int_{R}({\partial((\beta T+\sigma ln(T))\phi(T))\over \partial P}-{\partial (\epsilon ln(P)\theta(P))\over \partial T})dTdP$\\

$=0$ $(D)$\\

and, by $(C)$, $\xi=0$, if $\lambda\neq 0$. The second claim, with the assumption that $\Delta H^{\circ}$ is constant, follows from the proof of Lemma \ref{eqlines}, and in later sections, when we introduce error terms. The next claim follows easily as $W(T_{1},T_{2})\rightarrow 0$, see Section \ref{enthalpy}, and the computation of $\lambda$ in the limit. The computation of $\epsilon(T_{1},T_{2})$ is given in Lemma \ref{eqlines}.

\end{proof}

We take it as reasonable then, that if there are $2$ distinct feasible paths between $(T_{0},P_{0})$ and $(T,P)$, with a given initial condition $(n_{1,0},\ldots n_{c,0})$ at $(T_{0},P_{0})$, then the extent of the reaction $\xi$ determined by the paths $\{\gamma_{1},\gamma_{2}\}$ should be the same, $(*)$.  If this were not the case, then reversing one of the paths, we could obtain a reaction extent at $(T_{0},P_{0})$ along a loop $\gamma$. Even if $\lambda=0$, using the slight variation in the volume of liquids along a reaction path, and the fact that we return to the original pressure $P_{0}$ in a loop, we would have that;\\

$\Delta H=\int_{\gamma}dH$\\

$=\int_{\gamma}d(U+VP)$\\

$=\int_{\gamma}dU+PdV+VdP$\\

$=\int_{\gamma}dU+dL+VdP$\\

$=\Delta U+\Delta L+\int_{\gamma}VdP$\\

$\simeq \Delta U+\Delta L+V_{0}\int_{\gamma}dP$\\

$=\Delta U+\Delta L$\\

$=\Delta Q$\\

If $\Delta H$ is large, this would mean that $\Delta Q\neq 0$, which means that there is a change in heat, contradicting the fact that the temperature $T_{0}$ is unchanged. If $\Delta H$ is small, with $\Delta H=\int_{\gamma}VdP$, then, by generic considerations of bond energies, the amount of substance $\xi$ formed by the reaction would also be quite small. If $(*)$ holds for a single pair $\{A,B\}$, then it holds for all pairs $\{C,D\}$, as we can compose with reactions from $A$ to $C$ and $D$ to $B$. If follows that if we define;\\

$f_{i}(T,P)=e^{\mu_{i}(T,P)-\mu_{i}^{\circ}(T)\over RT}>0$, for $1\leq i\leq c$\\

so that $f_{i}(T,P)=x_{i}(T,P)={n_{i}(T,P)\over n(T,P)}$, then, by the definition of extent;\\

$\xi={n_{i}-n_{i,0}\over \nu_{i}}$\\

$n_{i}=\nu_{i}\xi+n_{i,0}$ $(1\leq i\leq c)$\\

we must have that;\\

$x_{i}={n_{i}\over n}={\nu_{i}\xi+n_{i,0}\over \sum_{i=1}^{c}(\nu_{i}\xi+n_{i,0})}={\nu_{i}\xi+n_{i,0}\over \lambda\xi+n_{0}}=f_{i}$ $(1\leq i\leq c)$\\

where $\lambda=\sum_{i=1}^{c}\nu_{i}$, so that;\\

$\nu_{i}\xi+n_{i,0}=(\lambda\xi+n_{0})f_{i}$\\

$\xi={n_{i,0}-n_{0}f_{i}\over \kappa f_{i}-\nu_{i}}$, $(1\leq i\leq c)$\\

so that, for $1\leq i\leq j\leq c$;\\

${n_{i,0}-n_{0}f_{i}\over \kappa f_{i}-\nu_{i}}={n_{j,0}-n_{0}f_{i}\over \kappa f_{j}-\nu_{i}}=\xi$ $(**)$\\

As all the steps are reversible, the requirement $(**)$ at $(T,P)$, for all $(n_{1,0},\ldots, n_{c,0})$ satisfying
${n_{i,0}\over n_{0}}=f_{i}(T_{0},P_{0})$, (so that $\xi(T_{0},P_{0})=0$, $(***)$), is equivalent to $(*)$. We impose the condition that $Ker(M)\cap \mathcal{R}_{>0}^{c}\neq \emptyset$, where;\\

$M_{ii}=f_{i}(T_{0},P_{0})-1$, $(1\leq i\leq c)$\\

$M_{ij}=f_{i}(T_{0},P_{0})$ $(1\leq i<j\leq c)$\\

so that there exists at least one choice $(n_{1,0},\ldots,n_{c,0})\in \mathcal{R}_{>0}^{c}$ satisfying $(***)$. With this requirement there do exist feasible paths between any $2$ pairs $(A,B)$, see Lemma \ref{exists}. If not, there is no feasible path involving a reaction from $A=(T_{0},P_{0})$, which seems physically unreasonable.\\

The same arguments apply when we incorporate error terms into the functions $\{f_{i}:1\leq i\leq p\}$ or extend the functions to a set $\{f_{i}:0\leq i\leq p\}$, when we consider a solvent, which we do in later sections.\\
\end{section}

\end{document}